\setlist[enumerate]{itemsep=1pt, topsep=2pt}
\setlist[itemize]{itemsep=1pt, topsep=2pt}
\DeclareMathOperator{\op}{op}
\DeclareMathOperator{\rep}{Rep}
\DeclareMathOperator{\Tr}{Tr}
\DeclareMathOperator{\en}{End}
\DeclareMathOperator{\id}{Id}
\newcommand{\oset}[3][0ex]{%
  \mathrel{\mathop{#3}\limits^{
    \vbox to#1{\kern-2\ex@
    \hbox{$\scriptstyle#2$}\vss}}}}
\newcommand{\tl}[1]{\tilde{#1}}
\newcommand{\ve}{\varepsilon}
\newcommand{\A}{\alpha}
\newcommand{\B}{\beta}
\newcommand{\G}{\gamma}
\newcommand{\D}{\delta}
\newcommand{\la}{\lambda}
\newcommand{\om}{\omega}
\newcommand{\oto}{\leftrightarrow}
\newcommand{\xto}[1]{\xrightarrow{#1}}
\newcommand{\se}{\subseteq}
\newcommand{\C}{\mathbb{C}}
\newcommand{\Z}{\mathbb{Z}}
\newcommand{\tb}[1]{\textbf{#1}}
\newcommand{\ti}[1]{\textit{#1}}
\newcommand{\ul}[1]{\underline{#1}}
\newcommand{\ol}[1]{\overline{#1}}
\newcommand{\mr}[1]{\mathrm{#1}}
\newcommand{\cl}[1]{\mathcal{#1}}
\newcommand{\fk}[1]{\mathfrak{#1}}
\newcommand{\sr}[1]{\mathscr{#1}}
\newcommand{\p}[1]{\begin{proof}#1\end{proof}}
\newcommand{\dia}[1]{\vspace{-2mm}\begin{center}\begin{tikzcd}[ampersand replacement = \&]#1\end{tikzcd}\end{center}\vspace{-1mm}}
\newcommand{\ddia}[2]{\vspace{-2mm}\begin{center}\begin{tikzcd}[ampersand replacement = \&, #1]#2\end{tikzcd}\end{center}\vspace{-1mm}}
\newcommand{\eq}[1]{\begin{equation}\begin{aligned}#1\end{aligned}\end{equation}}
\numberwithin{equation}{section}
\newcommand{\comment}[1]{}
\newcommand{\bee}[1]{$$\begin{aligned}#1\end{aligned}$$}
\theoremstyle{plain}
\newtheorem{thm}{Theorem}[section]
\newtheorem{lem}[thm]{Lemma}
\newtheorem{prp}[thm]{Proposition}
\newtheorem{cor}[thm]{Corollary}
\newtheorem{alg}[thm]{Algorithm}
\newtheorem{con}[thm]{Conjecture}
\theoremstyle{definition}
\newtheorem{dfn}[thm]{Definition}
\theoremstyle{remark}
\newcommand{\eem}{\textcolor{red}}
\newcommand{\kd}{\textcolor{orange}}
\author{}
\date{\today}
\title{Intertwiners of Representations of Untwisted Quantum Affine Algebras and Yangians Revisited}
\begin{document}

\author{Keshav Dahiya and Evgeny Mukhin} 
\address{EM: Department of Mathematical Sciences,
Indiana University Indianapolis,
402 N. Blackford St., LD 270, 
Indianapolis, IN 46202, USA}
\email{emukhin@iu.edu} 

\address{KD: Department of Mathematical Sciences,
Indiana University Indianapolis,
402 N. Blackford St., LD 270, 
Indianapolis, IN 46202, USA}
\email{kkeshav@iu.edu} 

\begin{abstract}
    We discuss applications of the $q$-characters to the computation of the $R$-matrices. In particular, we describe the $R$-matrix acting in the tensor square of the first fundamental representation of E$_8$ and in a number of other cases, where the decomposition of the tensor squares with respect to non-affine quantum algebra has non-trivial multiplicities. As an illustration, we also recover $R$-matrices acting in the multiplicity free-case on the tensor squares of the first fundamental representations of all other types of untwisted quantum affine algebras. The answer is   written in terms of projectors related to the decomposition of the tensor squares with respect to non-affine quantum algebras. Then we give explicit expressions for the $R$-matrices in terms of matrix units with respect to a natural basis (except for the case of E$_8$). We give similar formulas for the Yangian $R$-matrices.

\medskip

\centerline{
  \textbf{\textit{Keywords:\ }}{R-matrices, Quantum Yang-Baxter equation, E$_8$.}}

  \centerline{
  \textbf{\textit{AMS Classification numbers:\ }}{16T25, 17B38, 18M15 (primary), 17B37, 81R12.}}
\end{abstract}

\maketitle


\section{Introduction}
It is well-known that the solutions of Quantum Yang-Baxter equation (QYBE) or $R$-matrices, are the main source of commutative families of Hamiltonians. Quite generally, if $R_{i,j}\in\en(V_i\otimes V_j)$ are invertible operators such that $R_{12}R_{13}R_{23}=R_{23}R_{13}R_{12}\in \en (V_1\otimes V_2\otimes V_3)$ then $T_1=\Tr_{V_1}R_{13}$ and $T_2=\Tr_{V_2}R_{23}$ commute in $\en(V_3)$ since 
$$
T_1T_2=\Tr_{V_1\otimes V_2} (R_{13} R_{23}) =\Tr_{V_1\otimes V_2} (R_{12}^{-1} R_{23}R_{13} R_{12})=\Tr_{V_1\otimes V_2} ( R_{23}R_{13})=T_2T_1.
$$

The majority of the known $R$-matrices are obtained from the quantum affine algebras. Given a quantum affine algebra $U_q\tl{\fk{g}}$ corresponding to a simple Lie algebra $\mathfrak{g}$, one has an invertible element $\fk{R}\in U_q\tl{\fk{g}}\,\widetilde{\otimes} U_q\tl{\fk{g}}$ which satisfies QYBE. The element $\fk{R}$ is called the universal $R$-matrix. Evaluation of the universal $R$-matrix $\fk{R}$ on the tensor product $V_i\otimes V_j$ of any two $U_q\tl{\fk{g}}$ finite-dimensional irreducible modules results in the $R$-matrices. Moreover, we have the shift of the spectral parameter automorphism of   $U_q\tl{\fk{g}}$ which given a module $V$ produces a family of modules $V(z)$ depending on $z\in\C^\times$. The $R$-matrix computed on $V_i(z)\otimes V_j$ has a rational dependence on the parameter $z$, satisfies QYBE with a parameter, see Lemma  \ref{lemR} (\ref{tqybe1}), and it is used to construct various integrable systems. 
We call such $R$-matrices trigonometric $R$-matrices.

Taking the limit $q\to 1$ one obtains another family of $R$-matrices which we call rational $R$-matrices. Rational $R$-matrices come from Yangians and satisfy the rational version of QYBE with a parameter, \eqref{rqybe1}.

\medskip

There was a considerable effort to compute the $R$-matrices explicitly. The operator $\check{R}_{ij}(z)=PR_{ij}(z): V_i(z)\otimes V_j\to V_j\otimes V_i(z) $, where $P$ is the flip operator, is an intertwiner of  $U_q\tl{\fk{g}}$-modules. Therefore, in principle,  one can compute $\check{R}_{ij}(z)$ by solving a linear system of equations. 
However, such calculations are pretty heavy.  Another approach allows to compute the $R$-matrices in terms of projectors.

Let $V=V_i=V_j$ and let
$$
V\otimes V\cong \mathop{\oplus}_k M_k \otimes V_k, \qquad m_k=\dim M_k.
$$
be the decomposition as $U_q\fk{g}$-modules, where $M_k$ are multiplicity spaces and $V_k$ irreducible $U_q\fk{g}$-modules. Then, clearly, 
$$
\check{R}(z)=\sum_k f_k(z)P_k,
$$
where $f_k(z)\in \operatorname{End} M_k$ and $P_k$ are projectors of $V\otimes V$ to $M_k \otimes V_k$ along other summands. We say that multiplicities are trivial if $m_k=1$ for all $k$. Then $f_k(z)$ are rational scalar functions. With some knowledge of action of $E_0$ generator and Casimir operators, one  
can  compute function $f_k$ recursively using Jimbo's equation, \cite{J89}.

Other methods and formulas for $R$-matrices are described in \cite{Ma14} and \cite{DF24}.

Much less is known when the multiplicities are non-trivial, see \cite{Ju20}.

In this paper we discuss how the theory of $q$-characters can be used for the computation of $R$-matrices. The method of $q$-characters provides an alternative to the computation of the cases with trivial multiplicities and gives a way to compute some non-trivial multiplicity cases up to a few signs under the assumption that the poles of $R$-matrix are simple (see Conjecture \ref{conj:simple poles}). In addition, it improves our understanding of the final answer. We illustrate how it works for the case when $V$ is the first fundamental module of $U_q\tl{\fk{g}}$. For that case we have non-trivial multiplicities only in the case of E$_8$. 

In addition, we choose a weighted orthonormal (with respect to Shapovalov form) basis in those representations. Such a basis is  (up to a common constant) characterized by the condition that generators $E_i$ of $U_q\fk{g}$ are transposes of $F_i$ generators, cf. Lemma \ref{lin alg lemma}. Then we describe the $R$-matrices in terms of matrix units (except for the case of 
E$_8$), which seems to have been missing in literature for the exceptional types. The entries of $R$-matrices can be interpreted as Boltzmann weights in XXZ-type models. The formulas involve some lists given in the Section \ref{app}.  

The $R$-matrices for the first fundamental representations except for type E$_8$ have been computed explicitly in terms of projectors in \cite{M90}, \cite{BGZD94}, \cite{DGZ94}. The rational $R$-matrices in classical types in matrix units are given in \cite{KS82}; for G$_2$ in  \cite{O86}. Trigonometric $R$-matrices in classical types in matrix units are given in \cite{J86}, for type G$_2$ in \cite{K90}. The case of E$_8$ was considered in \cite{Ju20}.

\medskip

The $q$-characters encode eigenfunctions of Cartan generators in $U_q\tl{\fk{g}}$ and can be used to find the decomposition of $V(z)\otimes V$ in the Grothendieck ring, see \cite{FR98}, \cite{FM01}. In the trivial multiplicity case that allows to compute all poles and zeroes of $f_k(z)$. Keeping in mind that $\check{R}(1)=I$ which implies $f_k(1)=1$, this completely determines these functions provided that zeroes and poles are simple. We give an easy general argument that this is the case,  see Proposition \ref{order of zeroes prp}. This argument does not apply for types C$_r$, F$_4$ and G$_2$. Another argument which applies to all cases uses the knowledge of $\check{R}(0)$ in terms of values of Casimir element, see Lemma \ref{R0} and Theorem \ref{order of zeroes thm}. 

All $q$-characters we use in this paper can be computed using an algorithm described in \cite{FM01}. We use Theorem \ref{q char arg} to show that all the participating $q$-characters have only one dominant monomial, and therefore the algorithm is justified.

\medskip

Here we give an example of G$_2$ for $V=\tl{L}_1$, the $7$-dimensional first fundamental module. 
In this case 
$$\underbracket[0.1ex]{L_{\omega_1}}_{7}\otimes \underbracket[0.1ex]{L_{\omega_1}}_{7}\cong \underbracket[0.1ex]{L_{2\omega_1}}_{27}\oplus \underbracket[0.1ex]{L_{\omega_2}}_{14}\oplus \underbracket[0.1ex]{L_{\omega_1}}_{7}\oplus \underbracket[0.1ex]{L_{\omega_0}}_{1}\ ,\qquad \check{R}(z)=P_{2\omega_1}^q+f_1(z)\,P_{\omega_2}^q+f_2(z)\,P_{\omega_1}^q+f_3(z)\,P_{\omega_0}^q\ ,$$ 
where $L_\lambda$ are irreducible $U_q\fk{g}$-modules of highest weight $\lambda$ and  $P_{\lambda}^q$ projectors onto $L_{\lambda}$.
The module $V$ is isomorphic to $L_{\omega_1}$ as $U_q{\fk{g}}$-module, and its $q$-character reads  
$$
\chi_q(\mr{1}_{0})=\mr{1}_{0} + \ul{\mr{1}_{2}^{-1}\mr{2}_{1}} + \mr{1}_{4}\mr{1}_{6}\mr{2}_{7}^{-1} + \ul{\mr{1}_{8}^{-1}\mr{1}_{4}} + \mr{1}_{6}^{-1}\mr{1}_{8}^{-1}\mr{2}_{5} + \mr{1}_{10}\mr{2}_{11}^{-1} + \ul{\mr{1}_{12}^{-1}}\ .$$
The $q$-character $\chi_q(\mr{1}_{a})$ of $V(q^a)$ is obtained from $\chi_q(\mr{1}_0)$ by adding $a$ to all indices.
The product $V(q^a)\otimes V$ is irreducible unless $\chi_q(\mr{1}_a)\chi_q(\mr{1}_0)$ has a dominant monomial (the one which has no $\mr{1}_{a}^{-1}$ and no $\mr{2}_{a}^{-1}$) different from $\mr{1}_{a}\mr{1}_{0}$. Clearly, such a monomial occurs only at $a=\pm 2$, $a=\pm 8$, $a=\pm 12$. For $a=2,8,12$ this monomial is a product of $\mr{1}_a$ to one of the underlined monomials in $\chi_q(\mr {1}_0)$.  For $a=-2,-8,-12$ such a monomial is a product of $\mr{1}_0$ to a monomial in $\chi_q(\mr {1}_a)$ corresponding to an underlined monomial. For each such case the product $\chi_q(\mr{1}_a)\chi_q(\mr{1}_0)$ is written as a sum of two $q$-characters. For example, 
\eq{\label{qchar arg for G2}
\chi_q(\mr{1}_{-8})\chi_q(\mr{1}_{0})=\chi_q(\mr{1}_{-8}\mr{1}_{0})+\chi_q(\mr{1}_{-4}).
}
We claim that $\chi_q(\mr{1}_{-8}\mr{1}_{0})$ has only one dominant monomial and therefore can be computed by the algorithm of \cite{FM01}. In fact, the product $\chi_q(\mr{1}_{-8})\chi_q(\mr{1}_{0})$ has two dominant monomials: $\mr{1}_{-8}\mr{1}_{0}$ and $\mr{1}_{-4}$. We use Theorem \ref{q char arg} to show that $\mr{1}_{-4}$ is not in $\chi_q(\mr{1}_{-8}\mr{1}_{0})$.
Then using the algorithm, we see that $\chi_q(\mr{1}_{-8}\mr{1}_{0})$ 
has 42 terms, and corresponds to the direct sum $L_{2\omega_1}\oplus L_{\omega_2}\oplus L_{\omega_0}$.
The summand $\chi_q(\mr{1}_{-4})$ has 7 terms and it corresponds to the remaining summand $L_{\omega_1}$. A $U_q\tl{\fk{g}}$-submodule which does not contain the product of highest weight vectors occurs only for $a<0$, see \cite{C00}, \cite{Ka02}, and it becomes the kernel of $\check{R}(z)$.
Thus $f_2$ has a zero when $z=q^{-8}$ and a pole when $z=q^{8}$. 

Similarly, we obtain that $z=q^{-2}$ is a zero of $f_1$ and $f_3$ while $z=q^{-12}$ is a zero of $f_3$. That way we find all zeroes and all poles of $f_i(z)$. Since zeroes and poles are simple, see Theorem \ref{order of zeroes thm}, we determine $f_i$ up to a constant which is obtained from $f_i(1)=1$. So
$$
f_1(z)= -q^{-2}\frac{1-q^{2}z}{1-q^{-2}z},\qquad f_2(z)=-q^{-8}\frac{1-q^{8}z}{1-q^{-8}z} , \qquad f_3(z)= q^{-14}\frac{(1-q^{2}z)(1-q^{12}z)}{(1-q^{-2}z)(1-q^{-12}z)}.
$$

\medskip

Finally, let us discuss the cases with non-trivial multiplicities.
After a choice of a basis of singular vectors, $f_k(z)$ become $m_k\times m_k$ matrices whose entries are rational functions. The $q$-characters tell us for which $z$ the matrices $f_k(z)$ are degenerate or have a pole and describe the ranks of these matrices. We have additional equations $f_k(1)=\id$, $f_k(z) f_k(z^{-1})=\operatorname{Id}$, and we also know $f_k(0)$ and $f_k(\infty)$, see \cite{R88} or equation (3.16) in \cite{DGZ94}. We also know how $f_k(z)$ commute with the flip operator $P$, see Lemma \ref{R and flip commute}. Finally, the $R$-matrix is self-adjoint, see part (\ref{R is self adjoint}) of Lemma \ref{lemR}.
In the cases we consider, this information determines the matrices $f_k(z)$ up to a sign, provided that the poles of the $R$-matrix are simple, see the proof of Theorem \ref{thm:R E8}. It is easy to guess the remaining signs but for the proof, we resort to checking (partly) the commutativity with $E_0$.

Alternatively, some examples of $R$-matrices with non-trivial multiplicities can be computed using the well-known fusion procedure for $R$-matrices. We give an example of the evaluation adjoint module $V=L_{\omega_1+\omega_2}$ for $\fk{sl}_3$, where we have a $2\times 2$ matrix, see Section \ref{mul cases A2}, and of the second fundamental module for G$_2$,
 $V=L_{\omega_2}\oplus L_{\omega_0}$, where we have a $2\times 2$ and a $3\times 3$ matrices, see Section \ref{mul cases G2}.

The new, most challenging and interesting case is E$_8$ where the $R$-matrix is of size $62001\times 62001$. In terms of projectors it has a $2\times 2$ matrix and a $3\times 3$ matrix. There is a one parameter freedom in these matrices due to the choice of rescaling of the basis. After using our techniques, we have only a sign in each matrix to fix. For that we use a computer calculation. This is the only result in the paper which we could not do by hand. The answer and details are given in Section \ref{E8 sec}. For E$_8$ we do not give an answer in terms of matrix units. However, for the final computer assisted calculation we are forced to choose a basis in the $L_{\omega_1}$ for $E_8$ which presents some interest on its own. The essential information about the basis is given in picture in Section \ref{E8 app}.

In all examples we computed, matrices $f_k(z)$ have some remarkable similarities, we plan to address this issue in the future publications.

\medskip

Rational $R$-matrices are easily obtained by the appropriate limit $q\to1$ of trigonometric ones. We give the answers in all cases.

\medskip 

The zoo of all possible $R$-matrices coming from quantum affine algebras is too large to give explicit formulas for all cases. However, on demand, one can make such computations using fusion process and the matrices given in this paper. 
This paper paves a way to compute examples related to the twisted and supersymmetric cases. The twisted cases we are discussing in \cite{DM25}. The supersymmetric cases we plan to treat in subsequent publications.

\medskip

The structure of the paper is as follows. In Section \ref{prel sec} we recall the quantum affine algebras, $R$-matrices, representations, and the $q$-characters.  In Section \ref{meth sec} we describe the details of our approach to computation of explicit formulas of $R$-matrices. In Section \ref{cla sec} we present the $R$-matrices for the first fundamental modules in the classical types. In Section \ref{ex sec} we give $R$-matrices  for the first fundamental modules in exceptional types. In particular, Section \ref{E8 sec} contains the E$_8$ matrix. In Section \ref{mul cases} we write examples of $R$-matrices of types A$_2$ and G$_2$ which contain non-trivial multiplicities. In Section \ref{app} we collect the various data about the choices of bases and expressions for projectors in terms of these bases.

\section{Preliminaries}\label{prel sec}

In this section, we recall well known facts about quantum affine algebras and their representations.\\ 
See \cite{CP1}, \cite{FM01} for details.

\subsection{Quantum affine algebras}

We use the following general notations.

\begin{enumerate}
\item Let $\mr{I}=\{1,\dots,r\}$ and $\tl{\mr{I}}=\{0,1,\dots,r\}$.

\item Let $\fk{g}$ be a simple finite-dimensional Lie algebra of rank $r$ with Cartan matrix $C=(C_{ij})_{i,j\in\mr{I}}$ and $D=\text{diag}(d_1,\dots,d_r)$ be such that $B=DC$ is symmetric and $d_i\in\Z_{>0}$ are minimal possible. The matrix $B$ is called the symmetrized Cartan matrix.

\item Let $\A_1,\dots,\A_r$ be simple roots, $\omega_1,\dots,\omega_r$ fundamental weights, ${\cl{P}}=\oplus_{i\in\mr{I}}\Z\omega_i$ the corresponding weight lattice and $\cl{P}_+=\oplus_{i\in\mr{I}}\Z_{\ge0}\omega_i$ the cone of dominant weights.
We set $\omega_0=0\in\cl{P}_+$.

\item Let $\tl{\fk{g}}=\fk{g}\otimes \C[t,t^{-1}]$ be the loop Lie algebra associated to $\fk{g}$. Let  $\tl{C}=(\tl{C}_{ij})_{i,j\in\tl{\mr{I}}}$ and $\tl{B}=(\tl{B}_{ij})_{i,j\in\tl{\mr{I}}}$ be the corresponding affine Cartan and symmetrized Cartan matrices.

\item Let $a=(a_0,\dots,a_r)$ be the sequence of positive integers such that $\tl Ca^t=0$ and such that $a_0,\dots, a_r$ are relatively prime.

\item Let $q\in\C^\times$ be such that $q$ is not a root of unity. We fix a square root $q^{1/2}$.
Let $q_j=q^{d_j}$, $j\in \tl{\mr{I}}$. For $k\in\frac{1}{2}\Z$ and $n\in\Z$, set
$$[n]_k=\frac{q^{kn}-q^{-kn}}{q^k-q^{-k}}\,\,,\quad [n]_k^{\mr{i}}=\frac{q^{kn}+(-1)^{n-1}q^{-kn}}{q^k+q^{-k}}.$$ 
Both $[n]_k$ and $[n]_k^{\mr{i}}$ are Laurent polynomials in $q^{1/2}$. We write $[n]_1$ as $[n]$ and $[n]_1^{\mr{i}}$ as $[n]^{\mr{i}}$.\\
Note that $\lim_{q\to 1}\,[n]_k=n$, $\lim_{q\to 1}[n]_k^{\mr{i}}=1$ if $n$ is odd, and $\lim_{q\to 1}[n]_k^{\mr{i}}=0$ if $n$ is even.

\item All representations are assumed to be finite-dimensional. We consider quantum affine algebras of level zero only. All representations of quantum affine algebras are assumed to be of type 1.

\item  For $n\in\Z_{>0}$ let $\kappa_n(q)=q^{-\phi(n)}\Phi_n(q^2)$ be the symmetric form of the $n$-th cyclotomic polynomial $\Phi_n(q)$, where $\phi(n)$ is the Euler function. We have $\kappa(q^{-1})=\kappa(q)$. For example, for $n=2^{i} \cdot 3^{j}$, $i,j\in\Z_{\geq 0}$, we have $\kappa_{6n}(q)=[3]_{n/2}^{\mr{i}}=q^{n}-1+q^{-n}$. 
\end{enumerate}

\begin{dfn}[Drinfeld-Jimbo realization] 
The quantum affine algebra $U_q\tl{\fk{g}}$ of level zero associated to $\fk{g}$ is an associative algebra over $\C$ with generators $E_i$, $F_i$, $K_i^{\pm1}$, $i\in\tl{\mr{I}}$, and relations: 
$$K_iK_i^{-1}=K_i^{-1}K_i=1\,\,,\quad K_iK_j=K_jK_i\,\,,\quad K_0^{a_0}K_1^{a_1}\cdots K_r^{a_r}=1\,\,,$$
$$K_iE_jK_i^{-1}=q^{\tl{B}_{ij}}E_j\,\,,\quad K_iF_jK_i^{-1}=q^{-\tl{B}_{ij}}F_j\,\,,\quad [E_i,F_j]=\D_{ij}\frac{K_i-K_i^{-1}}{q_i-q_i^{-1}}\,\,,$$
$$\sum_{m=0}^{1-\tl{C}_{ij}}(-1)^m\binom{1-\tl{C}_{ij}}{m}_{q_i}E_i^mE_jE_i^{1-\tl{C}_{ij}-m}=0\,\,,\quad\sum_{m=0}^{1-\tl{C}_{ij}}(-1)^m\binom{1-\tl{C}_{ij}}{m}_{q_i}F_i^mF_jF_i^{1-\tl{C}_{ij}-m}=0\ ,\quad i\ne j\,\,.$$
\end{dfn}

The algebra $U_q\tl{\fk{g}}$ has a Hopf algebra structure with comultiplication $\Delta$ given on the generators by 
\eq{\label{coproduct}
\Delta(K_i)=K_i\otimes K_i\,\,,\quad\Delta(E_i)=E_i\otimes K_i^{1/2}+K_i^{-1/2}\otimes E_i\,\,,\quad\Delta(F_i)=F_i\otimes K_i^{1/2}+K_i^{-1/2}\otimes F_i\,\,,\,\,\,\,i\in\tl{\mr{I}}.
}

The Hopf subalgebra of $U_q\tl{\fk{g}}$ generated by $K_i^{\pm1}$, $E_i$, $F_i$, $i\in\mr{I}$, is isomorphic to the quantum algebra $U_q\fk{g}$ associated to $\fk{g}$.

In what follows we also use the notation $U_q($A$_r)$,   $U_q($ E$_7)$, $U_q($A$_r^{(1)})$, etc., for quantum algebras $U_q\fk{g}$ of type A$_r$, E$_7$, quantum affine algebra $U_q\tl{\fk{g}}$ of type  A$_r^{(1)}$, etc.

\begin{thm}[Drinfeld's new realization]
The algebra $U_q\tl{\fk{g}}$ is isomorphic to the algebra with generators\\ $X_{i,n}^{\pm}$ $(i\in\mr{I}, n\in\Z)$, $K_i^{\pm1}$ $(i\in{\mr{I}})$, $H_{i,m}$ $(i\in\mr{I},m\in\Z\setminus\{0\})$, and relations: 
$$K_iK_i^{-1}=K_i^{-1}K_i=1\,\,,\quad [\Phi_i^\pm(z),\Phi_j^\pm(w)]=[\Phi_i^\pm(z),\Phi_j^\mp(w)]=0\,\,,$$
$$(q^{\pm B_{ij}}z-w)\,\Phi_i^\epsilon(z)X_j^\pm(w)=(z-q^{\pm B_{ij}}w)\,X_j^\pm(w)\Phi_i^\epsilon(z)\,\,\text{ for }\epsilon=\pm\,,$$
$$(q^{\pm B_{ij}}z-w)\,X_i^{\pm}(z)X_j^\pm(w)=(z-q^{\pm B_{ij}}w)\,X_j^\pm(w)X_i^\pm(z)\,\,,$$
$$[X_{i}^+(z),X_{j}^-(w)]=\D_{ij}\,\D\bigg(\frac{z}{w}\bigg)\,\frac{\Phi_{i}^+(z)-\Phi_{i}^-(z)}{q_i-q_i^{-1}}\,\,,\text{ where }\D(t)=\sum_{i\in\Z}t^i\,\in\C[[t,t^{-1}]]\,\,,$$ 
$$\sum_{\pi\in S_{1-C_{ij}}}\sum_{k=0}^{1-C_{ij}}(-1)^k\binom{1-C_{ij}}{k}_{q_i}X_{i,n_{\pi(1)}}^{\pm}\cdots X_{i,n_{\pi(k)}}^{\pm}X_{j,m}^{\pm}X_{i,n_{\pi(k+1)}}^{\pm}\cdots X_{i,n_{\pi(1-C_{ij})}}^{\pm}=0$$
for all sequences of integers $m,n_1,\dots, n_{1-C_{ij}}$ and $i\ne j$, where $S_{1-C_{ij}}$ is the symmetric group on $1-C_{ij}$ letters.

Here: 
$$\Phi_i^{\pm}(z)=K_i^{\pm1}\exp\bigg(\pm(q_i-q_i^{-1})\sum_{m=1}^\infty H_{i,\pm m}\,z^{\pm m}\bigg)\in U_q\tl{\fk{g}}[[z^{\pm1}]]\,,$$
$$X_i^\pm(z)=\sum_{n\in\Z}X_{i,n}^\pm z^{n}\in U_q\tl{\fk{g}}[[z,z^{-1}]]\,\,.$$

\qed
\end{thm}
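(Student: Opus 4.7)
The plan is to follow the strategy pioneered by Beck, with the details completed by Damiani (and, in rank one, by Jing). The starting point is Lusztig's braid group action $\{T_i\}_{i\in\tl{\mr{I}}}$ on $U_q\tl{\fk{g}}$ in its Drinfeld-Jimbo presentation. For each $i\in\mr{I}$, the extended affine Weyl group contains a translation element $t_{\omega_i}$, which lifts to a braid element $T_{\omega_i}$; iterated application of the $T_{\omega_i}^{\pm n}$ to $E_i$ and $F_i$ produces a family of "real affine root vectors" inside $U_q\tl{\fk{g}}$. These are the proposed images of the loop generators $X_{i,n}^\pm$. The "imaginary root vectors" packaged into $\Phi_i^\pm(z)$ via the $H_{i,m}$ are then defined as suitable commutators $[X_{i,m}^+, X_{i,-m}^-]$ modulo Cartan factors, and reorganized into the generating series $\Phi_i^\pm(z)$.

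First, I would verify that these images satisfy the Drinfeld relations. The commutativity $[\Phi_i^\pm(z), \Phi_j^\pm(w)]=0$ and the $\Phi$-$X$ intertwining relations follow from braid-group equivariance together with the standard weight calculus under $T_{\omega_i}$. The $X^+$-$X^-$ commutation collapses to the definition of the imaginary root vectors. The delicate step is the quadratic $X^\pm$-$X^\pm$ relations and the loop-form Serre relations: these reduce, by standard root-subsystem arguments, to checks inside the rank-two subalgebras, where they can be verified by explicit $q$-commutator manipulations using the defining Drinfeld-Jimbo Serre relations.

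Second, I would construct a candidate inverse map. On $\mr{I}$ send $E_i\mapsto X_{i,0}^+$, $F_i\mapsto X_{i,0}^-$, $K_i\mapsto K_i$. For the affine node, use the level-zero condition $K_0^{a_0}K_1^{a_1}\cdots K_r^{a_r}=1$ to set $K_0=(K_1^{a_1}\cdots K_r^{a_r})^{-1}$, and define $E_0, F_0$ as the specific iterated $q$-commutators of $X_{i,\mp 1}^\mp$ corresponding to the expansion of $-\alpha_0$ in terms of simple roots (the root vector at the highest root, translated by one unit in the loop parameter). That these expressions satisfy the Drinfeld-Jimbo relations, including the Serre relations involving the node $0$, is then a dual calculation to the one carried out in the first step.

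Finally, one must show that the two maps are mutually inverse. The main obstacle is the verification of the loop-form Serre relations: these are highly non-obvious and are the technical heart of the theorem. Granting them, mutual invertibility reduces to a Hilbert-series comparison. On the Drinfeld-Jimbo side, Lusztig's real-root PBW basis, supplemented by polynomials in the imaginary root vectors, gives a triangular decomposition of the correct size; on the Drinfeld-new side, the defining relations yield a triangular decomposition $U^-\otimes U^0\otimes U^+$ with each factor having an explicit monomial spanning set. Matching characters against the graded dimension of $U(\tl{\fk{g}})$ shows both sides are free of the expected size, forcing both spanning sets to be bases and both maps to be isomorphisms.
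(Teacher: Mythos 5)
The paper does not prove this theorem: it is stated with a \qed as a recalled standard result, with \cite{CP1} and \cite{FM01} cited for details, so there is no in-paper argument to compare against. Your outline is an accurate description of the standard route in the literature (Beck's braid-group construction of the loop generators via the translation elements $T_{\omega_i}$ of the extended affine braid group, Damiani's completion of the relation checks, Jing's rank-one case, and the identification of $E_0$, $F_0$ with iterated $q$-commutators of the $X^{\mp}_{i,\mp 1}$ attached to the highest root). That said, what you have written is a roadmap rather than a proof: the three steps you yourself flag as delicate --- the loop-form Serre relations for the images of the $X_{i,n}^{\pm}$, the verification that the candidate $E_0$, $F_0$ satisfy the Drinfeld--Jimbo Serre relations at the affine node, and the linear-independence half of the PBW comparison that upgrades the spanning sets to bases --- are precisely where the entire technical content of the theorem lives, and each occupies many pages in Beck and Damiani. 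In particular, the ``Hilbert-series comparison'' at the end presupposes a specialization or filtration argument (e.g.\ reduction to the classical enveloping algebra at $q=1$, or Damiani's explicit computation of commutators of root vectors) that you have not supplied; without it, one only gets surjectivity of the two maps, not injectivity. As a citation-level justification of the theorem your sketch is fine and consistent with the sources the paper points to; as a self-contained proof it is incomplete at exactly the points you identify as the heart of the matter.
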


\begin{prp}[The shift of spectral parameter automorphism $\tau_a$]
For any $a\in\C^\times$, there is a Hopf algebra automorphism $\tau_a$ of $U_q\tl{\fk{g}}$ defined by: 
$$\tau_a(X_i^\pm(z))=X_{i}^\pm(az)\,\,,\quad\tau_a(\Phi_{i}^\pm(z))=\Phi_{i}^\pm(az)\,\,,\quad i\in{\mr{I}}\,.$$ 
\qed
\end{prp}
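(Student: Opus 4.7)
The plan is to view $\tau_a$ as the grading automorphism associated to the loop $\Z$-grading on $U_q\tl{\fk{g}}$ in which $X_{i,n}^\pm$ has degree $n$, $H_{i,m}$ has degree $m$, and $K_i^{\pm1}$ has degree $0$; everything then reduces to checking that this grading is well-defined and compatible with the coproduct. First I would verify that $\tau_a$ extends to an algebra homomorphism by checking the Drinfeld new relations under the simultaneous substitution $z\mapsto az$, $w\mapsto aw$: the bilinear $\Phi_i^\epsilon(z)$--$X_j^\pm(w)$ relations and the quadratic $X_i^\pm(z)X_j^\pm(w)$ relations each acquire a single common factor of $a$ on both sides; the Serre identities are polynomial in the $X_{i,n}^\pm$ and every monomial scales by the same power of $a$ given by its total loop degree; and the formal delta is unchanged since $\D(az/aw)=\D(z/w)$. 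Invertibility is immediate from $\tau_{a^{-1}}\circ\tau_a=\id$.

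Second, to verify $\Delta\circ\tau_a=(\tau_a\otimes\tau_a)\circ\Delta$ I would translate $\tau_a$ into the Drinfeld-Jimbo presentation via Beck's isomorphism. The Chevalley generators $E_i, F_i, K_i^{\pm1}$ for $i\in\mr{I}$ correspond to elements of loop-degree $0$ and are fixed by $\tau_a$, whereas $E_0, F_0, K_0^{\pm1}$ sit in loop-degrees $+1, -1, 0$, so $\tau_a$ sends them to $aE_0, a^{-1}F_0, K_0^{\pm1}$. Plugging into \eqref{coproduct}, every summand has the same loop-degree as the left-hand side, e.g.\ $(\tau_a\otimes\tau_a)\Delta(E_0)=aE_0\otimes K_0^{1/2}+aK_0^{-1/2}\otimes E_0=\Delta(aE_0)$, so $\tau_a$ is a coalgebra map; the antipode compatibility is then automatic for a bialgebra automorphism of a Hopf algebra.

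The main obstacle is pinning down the loop-degrees of $E_0$ and $F_0$ under Beck's isomorphism in a way compatible with the normalization in the proposition. A clean workaround is to define the loop grading directly on the Drinfeld-Jimbo presentation by the assignment above, and to verify homogeneity of every Chevalley and quantum Serre relation (the affine Serre relation at node $0$ has total loop degree $1-\tl{C}_{0j}$ or $-(1-\tl{C}_{0j})$, uniformly across the sum). The associated scaling map is then a Hopf algebra automorphism by inspection of \eqref{coproduct}, and one identifies it with $\tau_a$ post hoc by computing its action on the new Drinfeld generators, which are by construction homogeneous of the expected degrees in this grading.
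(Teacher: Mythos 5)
The paper states this proposition with a \qed and no proof, treating it as a standard fact (cf.\ \cite{CP1}), so there is no in-paper argument to compare against; your sketch is the standard one and is correct. The only step carrying genuine content is the one you flag: that under the isomorphism between the Drinfeld--Jimbo and new realizations the Chevalley generators $E_0$, $F_0$ are homogeneous of loop degrees $+1$, $-1$ (while $E_i,F_i,K_i^{\pm1}$, $i\in\mr{I}$, have degree $0$), which is exactly what makes $(\tau_a\otimes\tau_a)\circ\Delta=\Delta\circ\tau_a$ hold on generators via \eqref{coproduct}; this is consistent with the paper's normalization, where the action of $E_0$ on $V(a)$ carries the factor $a$ and that of $F_0$ the factor $a^{-1}$. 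The remaining points --- homogeneity of every defining relation (including $\D(az/aw)=\D(z/w)$ and the uniform degree of the Serre monomials), invertibility via $\tau_{a^{-1}}$, and the automatic compatibility of a bialgebra automorphism with the antipode --- are routine, as you say.
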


Given a $U_q\tl{\fk{g}}$-module $V$ and $a\in\C^\times$, we denote by $V(a)$ the pull-back of $V$ by $\tau_a$.

\begin{dfn}[Weight space]
Given a $U_q\fk{g}$-module $V$ and $\lambda=\sum_{i\in \mr{I}}\lambda_i\omega_i\in\cl{P}$, define the subspace of weight $\lambda$ to be
$$V_\lambda=\{v\in V:K_iv=q_i^{\lambda_i}v, \ i\in\mr{I}\}.$$
If $V_\lambda\ne0$, $\lambda$ is called a weight of $V$. A nonzero vector $v\in V_\lambda$ is called a vector of weight $\lambda$.
\end{dfn}
For every representation $V$ of $U_q\fk{g}$ we have $V=\oplus_{\lambda}V_{\lambda}$.

\begin{dfn}[$\ell$-weight]
Given a $U_q\tl{\fk{g}}$-module $V$ and $\G=(\G_i^\pm(z))_{ i\in \mr{I}}$, $\G_i^\pm(z)\in\C[[z^{\pm1}]]$, a sequence of formal power series in $z^{\pm1}$, define the subspace of generalized eigenvectors of $\ell$-weight $\G$ to be
$$V[\G]=\{v\in V: (\Phi_i^\pm(z)-\G_i^\pm(z))^{\dim(V)}\, v=0, \ i\in\mr{I}\}.$$
If $V[\G]\ne0$, $\G$ is called an $\ell$-weight of $V$.

For every representation $V$ of $U_q\tl{\fk{g}}$ we have $V=\oplus_{\G}V[\G]$ and for every $\lambda\in\cl{P}$, $V_\lambda=\oplus_{\G} (V_\lambda\cap V[{\G}])$.

A non-zero vector $v$ is a vector of $\ell$-weight $\G$ if 
$$(\Phi_i^\pm(z)-\G_i^\pm(z))\ v=0, \ i\in\mr{I}.$$

\end{dfn}

\begin{dfn}[Highest $\ell$-weight representations]
A nonzero vector $v$ of $\ell$-weight $\G$ in some $U_q\tl{\fk{g}}$-module $V$ is called an $\ell$-singular vector if $$X_i^+(z)\,v=0\,,\ i\in{\mr{I}}\,.$$ 
A representation $V$ of $U_q\tl{\fk{g}}$ is called a highest $\ell$-weight representation if $V=U_q\tl{\fk{g}}\,v$ for some $\ell$-singular vector $v$. In such case $v$ is called the highest $\ell$-weight vector.
\end{dfn}

Let $\cl{U}$ be the set of all $\mr{I}$-tuples $ p=(p_i)_{i\in\mr{I}}$ of polynomials $p_i\in\C[z]$, with constant term $1$.

\begin{thm}
\leavevmode
\begin{enumerate}
\item Every irreducible representation of $U_q\tl{\fk{g}}$ is a highest $\ell$-weight representation. 
\item Let $V$ be an irreducible representation of $U_q\tl{\fk{g}}$ of highest $\ell$-weight $\big(\G_i^\pm(z)\big)_{i\in\mr{I}}$. Then there exists $p=(p_i)_{i\in\mr{I}}\in\cl{U}$ such that $$\G_i^\pm(z)=q_i^{\deg(p_i)}\frac{p_i(zq_i^{-1})}{p_i(zq_i)}\in\C[[z^{\pm1}]]\,.$$
\item Assigning to $V$ the $\mr{I}$-tuple $p\in\cl{U}$ defines a bijection between $\cl{U}$ and the set of isomorphism classes of irreducible representations of $U_q\tl{\fk{g}}$.
\end{enumerate}
\qed
\end{thm}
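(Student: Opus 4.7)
The plan is to prove the three parts in sequence, with part (2) being the core computation and parts (1) and (3) following more formally.

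For part (1), I would use finite-dimensionality and the $\ell$-weight decomposition $V=\oplus_\G V[\G]$. Each $\ell$-weight $\G$ has an underlying $U_q\fk{g}$-weight $\lambda(\G)\in\cl{P}$ read off from the constant term $\G_i^+(0)=q_i^{\lambda(\G)_i}$. Choose $\G_0$ so that $\lambda(\G_0)$ is maximal in the standard dominance order on $\cl{P}$ among weights appearing in $V$. The Drinfeld relation $(q^{B_{ij}}z-w)\Phi_i^\epsilon(z)X_j^+(w)=(z-q^{B_{ij}}w)X_j^+(w)\Phi_i^\epsilon(z)$ implies that $X_j^+(w)$ raises the underlying weight by $\A_j$, so maximality of $\lambda(\G_0)$ forces $X_j^+(z)\,V[\G_0]=0$ for every $j\in\mr{I}$. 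Any nonzero $v\in V[\G_0]$ is then $\ell$-singular, and irreducibility gives $V=U_q\tl{\fk{g}}\cdot v$.

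For part (2), fix $i\in\mr{I}$ and restrict to the rank-one subalgebra $U_i\se U_q\tl{\fk{g}}$ generated by $X_{i,n}^\pm$, $K_i^{\pm 1}$, $H_{i,m}$; this is (a quotient of) the quantum affine algebra of $\fk{sl}_2$-type with parameter $q_i$. The cyclic module $U_i\cdot v$ is finite-dimensional. Combining $X_i^+(z)v=0$ with the Drinfeld bracket $[X_i^+(z),X_i^-(w)]=\D(z/w)(\Phi_i^+(z)-\Phi_i^-(z))/(q_i-q_i^{-1})$ applied to iterated $X_{i,n}^-$-images of $v$ produces recursions showing that $\G_i^\pm(z)$ are the expansions at $z=0$ and $z=\infty$ of a single rational function $R_i(z)$. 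Finite-dimensionality of $U_i\cdot v$ forces $R_i(z)$ to have only finitely many zeros and poles, all in $\C^\times$, and matching the $K_i$-eigenvalue of $v$ normalizes $R_i(z)$ to $q_i^{\deg p_i}\,p_i(zq_i^{-1})/p_i(zq_i)$ for a unique $p_i\in\C[z]$ with $p_i(0)=1$.

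For part (3), injectivity follows from the standard fact that two irreducible highest $\ell$-weight modules with the same $\ell$-weight are both isomorphic to the unique irreducible quotient of the universal highest $\ell$-weight module attached to $p$. For surjectivity, I would construct for each $i\in\mr{I}$ and $a\in\C^\times$ a fundamental representation whose Drinfeld polynomial has $p_j(z)=1-az$ if $j=i$ and $p_j(z)=1$ otherwise, using evaluation modules in type A and affinizations of fundamental $U_q\fk{g}$-modules in general. For arbitrary $p\in\cl{U}$, factor each $p_i(z)=\prod_k(1-a_{i,k}z)$, form the tensor product of the corresponding fundamental modules, and take the unique irreducible subquotient generated by the tensor product of highest weight vectors; the coproduct formula for $\Phi_i^\pm(z)$ then identifies its Drinfeld polynomial as exactly $p$.

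The hardest step is the rationality in part (2): translating finite-dimensionality of $U_i\cdot v$ into a genuine polynomial identity on the formal series $\G_i^\pm(z)$ requires careful manipulation of the Drinfeld bracket on iterated vectors $X_{i,n_k}^-\cdots X_{i,n_1}^-v$ and bookkeeping of the two expansion regimes at $0$ and $\infty$. Surjectivity in part (3) is also nontrivial in the exceptional types, where an explicit construction of the fundamental modules is delicate and one ultimately appeals to the existence results of \cite{CP1}.
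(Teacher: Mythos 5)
This theorem is stated in the paper as a recalled standard result with no proof given (the \qed marks it as quoted; the paper points to \cite{CP1} and \cite{FM01} for details), so there is no in-paper argument to compare against. Your outline is the standard Chari--Pressley proof --- maximal-weight vector for (1), reduction to the rank-one $\widehat{\mathfrak{sl}}_2$ computation for rationality in (2), uniqueness of irreducible quotients plus tensor products of fundamental modules for (3) --- and it is correct in substance, up to the minor point that in (1) you should take a genuine common eigenvector of the commuting $\Phi_{i}^{\pm}$ inside the top weight space rather than an arbitrary element of the generalized eigenspace $V[\G_0]$.
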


The polynomials $p_i(z)$ are called \ti{Drinfeld polynomials}. We denote the irreducible 
$U_q\tl{\fk{g}}$-module with  Drinfeld polynomials $p$ by $\tl{L}_p$. 



\begin{dfn}[Fundamental representations]
For each $i\in\mr{I}$, let $\tl{L}_{i}=\tl{L}_{p^{(i)}}$ be the irreducible $U_q\tl{\fk{g}}$-module corresponding to the Drinfeld polynomials given by: $$p^{(i)}=(1-\D_{ij}z)_{j\in\mr{I}}.$$
We call $\tl{L}_{i}(a)$ the $i^\text{th}$ fundamental representation of $U_q\tl{\fk{g}}$.
\end{dfn}

The category $\fk{Rep}\big(U_q\tl{\fk{g}}\big)$ of representations of $U_q\tl{\fk{g}}$ is an abelian monoidal category.  Denote by $\rep U_q\tl{\fk{g}}$ the
 Grothendieck ring of $\fk{Rep}\big(U_q\tl{\fk{g}}\big)$.
 
The category $\fk{Rep}\big(U_q\fk{g}\big)$ of representations of $U_q\fk{g}$ is an abelian monoidal semi-simple category. We denote the corresponding Grothendieck ring by $\rep U_q\fk{g}$. Irreducible modules in $\fk{Rep}\big(U_q\fk{g}\big)$ are parameterized by integral dominant weights. For $\lambda\in \cl{P}_+$, 
denote the corresponding irreducible $U_q\fk{g}$-module by $L_\lambda$.


The module $L_\la$ has a unique (up to a scalar) symmetric bilinear form $(\ ,\ )$, called Shapovalov form, such that $E_i^*=F_i$, $i\in \mr{I}$. The Shapovalov form is non-degenerate. 

We use Shapovalov form on factors to define the form on $L_\la\otimes L_\mu$. We call this form tensor Shapovalov form. The tensor Shapovalov form is non-degenerate, and
because of our symmetric choice of coproduct \eqref{coproduct}, we have 
\eq{\label{ad coproduct}
\big(\Delta(E_i)\big)^*=\Delta(F_i) \quad \text{and}\quad \big(\Delta(F_i)\big)^*=\Delta(E_i)\ ,\quad i\in \mr{I}\ .
}

In what follows we will choose a weighted basis of $L_{\omega_1}$ such that $E_i^T=F_i$, $i\in \mr{I}$, where $T$ stands for transposition. This basis is automatically orthonormal with respect to the Shapovalov form (for an appropriate choice of normalization of the latter) due to the following simple lemma of linear algebra.

\begin{lem}\label{lin alg lemma} 
Let $V$ be a vector space with a non-zero symmetric bilinear form $(\ ,\ )$. Let $\{v_1,\dots,v_d\}$ be a basis of $V$. Let $F_1,\dots, F_r$ be linear operators on $V$ which are strictly lower triangular in the basis of $v_i$. Assume that $V$ is cyclic with respect to the algebra generated by $F_1,\dots, F_r$ with cyclic vector $v_1$. Then if $F_i^*=F_i^T$ for all $i$, then $(v_i,v_j)=c\delta_{ij}$ for some nonzero constant $c$. \qed
\end{lem}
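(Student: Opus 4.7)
The plan is to let $G=((v_i,v_j))_{i,j}$ be the Gram matrix of the form in the basis $\{v_k\}$ and to show that $G=cI$. Writing vectors as columns in this basis, $(u,w)=u^T G w$, and the hypothesis $F_i^*=F_i^T$ reads $F_i^T G=G F_i^T$; transposing and using $G^T=G$ (symmetry of the form), I obtain $G F_i=F_i G$. Hence $G$ commutes with the entire unital algebra $A=\langle F_1,\dots,F_r\rangle$ acting on $V$.

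The key step is to show that $G v_1=c\,v_1$ for some scalar $c$. Since each $F_i$ is strictly lower triangular in the basis $\{v_k\}$, its transpose $F_i^T$ is strictly upper triangular, and in particular $F_i^T v_1=0$. By repeated use of the adjoint relation $(F_i u,w)=(u,F_i^T w)$, for any word of positive length in the generators I get
\[
(F_{i_1}F_{i_2}\cdots F_{i_m} v_1,\,v_1)=(v_1,\,F_{i_m}^T\cdots F_{i_1}^T v_1)=0.
\]
Let $A_+\subseteq A$ denote the span of such words of length $\geq 1$, so that $A=\C\cdot\id\oplus A_+$. Strict lower triangularity of the $F_i$ gives $A_+ v_1\subseteq\spn(v_2,\dots,v_d)$, and combined with the cyclicity assumption $A v_1=V$, a dimension count forces $A_+ v_1=\spn(v_2,\dots,v_d)$. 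Therefore, for each $j>1$, I can write $v_j=a_j v_1$ with $a_j\in A_+$, and then $(v_j,v_1)=(a_j v_1,v_1)=0$ by the display above. Thus $G v_1=(v_1,v_1)\,v_1=:c\,v_1$.

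Finally, for any $w\in V$ I pick $a\in A$ with $w=a v_1$; commutativity of $G$ with $A$ yields
\[
G w=G a v_1=a G v_1=c\,a v_1=c\,w,
\]
so $G=cI$, which is exactly the desired identity $(v_i,v_j)=c\,\delta_{ij}$. The constant $c$ must be nonzero: if $c=0$ then $G=0$ and the form $(\ ,\ )$ would vanish identically, contradicting the hypothesis. The argument is quite self-contained, and I do not anticipate a serious obstacle — it is essentially a Schur-type argument, with the cyclicity of $v_1$ under the lower-triangular generators playing the role usually played by irreducibility.
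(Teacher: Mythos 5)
Your proof is correct. The paper states this lemma without proof (it is marked as an elementary linear-algebra fact), so there is no argument to compare against; your Schur-type reasoning — showing the Gram matrix $G$ commutes with the algebra generated by the $F_i$, that $v_1$ is a $G$-eigenvector because $(A_+v_1,v_1)=0$ and $A_+v_1=\spn(v_2,\dots,v_d)$ by the triangularity-plus-cyclicity dimension count, and then propagating the eigenvalue to all of $V$ by cyclicity — is exactly the intended argument and is complete, including the nonvanishing of $c$.
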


\subsection{\texorpdfstring{$q$}--characters}
For each $i\in\mr{I}$, $a\in\C^\times$, let $Y_{i,a}$ be an $r$-tuple of rational functions given by: $$Y_{i,a}(z)=\bigg(\underbrace{1,\dots,1}_{i-1},\,q_i\frac{1-q_i^{-1}za}{1-q_iza}\,,\underbrace{1,\dots,1}_{r-i}\bigg).$$
The $r$-tuple $Y_{i,a}$ is the highest $\ell$-weight of $\tl{L}_{i}(a)$.

Let $\cl{Y}$ be the abelian group of $r$-tuples of rational functions generated by $\{Y_{i,a}^{\pm1}\}_{i\in\mr{I},\,a\in\C^\times}$ with component-wise multiplication. It is well-known that the $\ell$-weights of representations of $U_q\tl{\fk{g}}$ belong to $\cl{Y}$.

\begin{dfn}[$q$-character]
The $q$-character of a $U_q\tl{\fk{g}}$-module $V$ is the formal sum $$\chi_q(V)=\sum_{\G\in\cl{Y}}\dim(V[\G])\,\G\,\,\in\Z[\cl{Y}].$$
\end{dfn}

\begin{thm}
The $q$-character map $\chi_q:\rep U_q\tl{\fk{g}}\to\Z[\cl{Y}],$ sending $V\mapsto \chi_q(V)$, is an injective ring homomorphism.
\qed
\end{thm}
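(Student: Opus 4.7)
Three properties need to be verified: $\chi_q$ is well-defined on the Grothendieck ring (additivity on short exact sequences), $\chi_q(V\otimes W)=\chi_q(V)\chi_q(W)$ (multiplicativity), and $\chi_q$ is injective. For additivity, given a short exact sequence $0\to V'\to V\to V''\to 0$ of $U_q\tl{\fk{g}}$-modules, the commuting operators $\Phi_i^\pm(z)$ preserve $V'$ and act block-upper-triangularly on $V$, so each generalized eigenspace satisfies $\dim V[\G]=\dim V'[\G]+\dim V''[\G]$. Hence $\chi_q$ descends to $\rep U_q\tl{\fk{g}}$.

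For multiplicativity, the plan is to show that on $V\otimes W$ the Drinfeld currents $\Phi_i^\pm(z)$ act (in an appropriate completion) as $\Phi_i^\pm(z)\otimes\Phi_i^\pm(z)$ plus corrections of strictly lower $U_q\fk{g}$-weight. This follows from the coproduct \eqref{coproduct} combined with the defining relations of the Drinfeld new realization: $\Phi_i^\pm(z)$ is expressible in terms of $K_i^{\pm 1}$ and commutators $[X_i^+(z),X_i^-(w)]$, and a direct expansion of $\Delta$ on these yields a diagonal contribution plus terms of strictly smaller weight. On weight vectors of maximal weight in $V\otimes W$ the correction terms vanish, and an induction down the weight lattice identifies the generalized eigenspaces of $V\otimes W$ with $\bigoplus_{\G^V\G^W=\G} V[\G^V]\otimes W[\G^W]$, giving $\chi_q(V\otimes W)=\chi_q(V)\chi_q(W)$.

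For injectivity, every simple $U_q\tl{\fk{g}}$-module is $\tl{L}_p$ for some $p\in\cl{U}$, and its highest $\ell$-weight corresponds to the dominant monomial $m_p=\prod_{i,a}Y_{i,a}^{n_{i,a}(p)}$, where $n_{i,a}(p)$ is the multiplicity of $a$ as a root of $p_i$; distinct $p$'s yield distinct $m_p$'s. Setting $A_{j,b}=Y_{j,bq_j^{-1}}Y_{j,bq_j}\prod_{k\ne j}Y_{k,b}^{-C_{kj}}$ and declaring $m\preceq m'$ when $m'/m$ is a product of $A_{j,b}$'s defines a partial order on $\cl{Y}$. Since $\tl{L}_p$ is generated from the highest $\ell$-weight vector by the lowering currents $X_i^-(z)$, and these shift $\ell$-weights by $A_{i,b}^{-1}$'s, every monomial in $\chi_q(\tl{L}_p)$ satisfies $m\preceq m_p$, and $m_p$ itself appears with multiplicity one. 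Therefore, in any finite nontrivial relation $\sum_p n_p\chi_q(\tl{L}_p)=0$ with $n_p\in\Z$, picking a $p$ with $m_p$ maximal among the supports yields a contradiction from the coefficient of $m_p$, so $\chi_q$ is injective on the $\Z$-basis $\{[\tl{L}_p]\}_{p\in\cl{U}}$.

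The main obstacle is the multiplicativity step: the Drinfeld--Jimbo coproduct \eqref{coproduct} does not act diagonally on the Drinfeld currents $\Phi_i^\pm(z)$, so relating the $\ell$-weight decomposition of $V\otimes W$ to those of $V$ and $W$ requires careful tracking of the off-diagonal corrections. One may either invoke the alternative ``Drinfeld coproduct,'' which is diagonal on the currents modulo a completion and related to \eqref{coproduct} by a twist by the universal $R$-matrix, or carry out the direct weight-filtered analysis sketched above, relying on the triangularity of the commutators $[X_i^+(z), X_i^-(w)]$ when applied to tensor products of highest weight vectors.
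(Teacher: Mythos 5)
The paper does not actually prove this statement: it is recalled as a known theorem (with a \qed and the blanket references to \cite{CP1}, \cite{FM01} at the head of the section), the original argument being that of Frenkel--Reshetikhin \cite{FR98}. Your outline reproduces that standard argument, and its architecture is sound. The additivity step is routine, and your injectivity step --- highest monomial $m_p$ of multiplicity one, every other monomial of $\chi_q(\tl L_p)$ lying below $m_p$ in the partial order generated by the $A_{i,a}^{-1}$, then a maximality argument on the finite support of a putative relation --- is exactly the argument of \cite{FR98}; the input you need, that $X_i^-(w)$ shifts $\ell$-weights by factors $A_{i,c}^{-1}$, is the same lemma from \cite{Y14}, \cite{MY14} that the paper itself invokes in the proof of Theorem \ref{q char arg}.

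The one place where you lean on something you have not established is multiplicativity, and you correctly identify it as the crux. The congruence $\Delta(\Phi_i^\pm(z))\equiv\Phi_i^\pm(z)\otimes\Phi_i^\pm(z)$ modulo terms that strictly lower the weight of the first tensor factor does not fall out of ``a direct expansion'' of the coproduct \eqref{coproduct} applied to $[X_i^+(z),X_i^-(w)]$: the Drinfeld generators $H_{i,m}$ admit no closed-form expression in the Chevalley generators, and the required statement $\Delta(H_{i,m})\equiv H_{i,m}\otimes 1+1\otimes H_{i,m}$ modulo the appropriate subalgebras is a nontrivial theorem of Damiani (Proposition 1 of \cite{FR98}), not a computation. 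Once that is granted, your triangularity argument works, with one precision caveat: the generalized eigenspaces of $V\otimes W$ are not literally $\oplus_{\G^V\G^W=\G}V[\G^V]\otimes W[\G^W]$; the filtration by the weight of the first factor only identifies dimensions on the associated graded, which is all the character needs. A final small slip: your formula $A_{j,b}=Y_{j,bq_j^{-1}}Y_{j,bq_j}\prod_{k\ne j}Y_{k,b}^{-C_{kj}}$ is the simply-laced one; in general the factors $Y_{k,\cdot}^{-1}$ are distributed over several spectral parameters, as in the paper's definition of the simple $\ell$-roots. Neither point breaks the proof, but the appeal to Damiani's coproduct formula should be made explicit rather than presented as an expansion one could carry out directly.
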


\begin{dfn}[Dominant $\ell$-weights]
For an $i\in\mr{I}$, an $\ell$-weight is called $i$-dominant if the $\ell$-weight is a monomial in variables $\{Y_{i,a}, Y_{j,a}^{\pm}\}_{j\in\mr{I}, j\neq i\,,\,a\in\C^\times}$. An $\ell$-weight is called dominant if it is $i$-dominant for all $i\in \mr{I}$.

The set of dominant $\ell$-weights will be denoted by $\cl{Y}_+$.
\end{dfn}

A $U_q\tl{\fk{g}}$-module $V$ is called special if $\chi_q(V)$ contains a unique dominant monomial.

The semi-group $\cl{Y}_+$ is naturally identified with $\cl{U}$. For $m_+\in \cl{Y}_+$, let $p(m_+)\in \cl{U}$ be the corresponding set of Drinfeld polynomials.

\begin{dfn}[Simple $\ell$-roots]
For each $i\in\mr{I}$ and $a\in\C^\times$, let $A_{i,a}\in\cl{Y}$ be given by $$A_{i,a}(z)=\bigg(q^{B_{ij}}\frac{1-q^{-B_{ij}}za}{1-q^{B_{ij}}za}\bigg)_{j\in\mr{I}}.$$
We call $A_{i,a}$ a simple $\ell$-root of color $i$.
\end{dfn}

Denote $Y_{1,q^k}$ by $\mr{1}_k$ , $Y_{2,q^k}$ by $\mr{2}_k$ and so on.  
For $m_+\in\cl{Y}_+$, denote $\tl{L}_{p(m_+)}$ by $\tl L_{m_+}$ and $\chi_q(\tl{L}_{p(m_+)})$ by $\chi_q(m_+)$.

If $V$ is a special $U_q\tl{\fk{g}}$-module then the $q$-character can be computed by a recursive algorithm, see \cite{FM01}.

We prepare a theorem which allows us to eliminate some monomials from $\chi_q(V)$ and to show that $V$ is special.

\begin{thm}\label{q char arg} 
Let $V$ be an irreducible  $U_q\tl{\fk{g}}$-module. Let $m$ be an $i$-dominant monomial in $\chi_q(V)$ of multiplicity one  for some $i\in\mr{I}$. Let $b\in\C^\times$ and $m_-=m A^{-1}_{i,b}$. Suppose 
\begin{enumerate}
    \item \label{q char arg 0} The power of $Y_{i,bq_i^{-1}}$ in $m$ is not greater than the power of $Y_{i,b q_i}$ in $m$.

    \item\label{q char arg 1} $m A_{i,c}\not\in\chi_q(V)$ for all $c\in\C^\times$. 


     \item\label{q char arg 2} $m_-A_{j,c}\not\in\chi_q(V)$ for all $j\in \mr{I}$, $c\in\C^\times$ unless $(j,c)=(i,b)$.

     \item\label{q char arg 3} The multiplicity of $m_-$ in $\chi_q(V)$ is not greater than one.
\end{enumerate}
Then  multiplicity of $m_-$ in $\chi_q(V)$ is zero,  $m_-\notin \chi_q(V)$.
\end{thm}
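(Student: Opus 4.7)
The strategy is to argue by contradiction using the standard Frenkel--Mukhin restriction to the $U_{q_i}\widetilde{\mathfrak{sl}_2}$ subalgebra attached to the node $i$. Let $\pi_i$ denote the ring homomorphism that sends $Y_{j,a}\mapsto 1$ for $j\neq i$ and $Y_{i,a}\mapsto Y_{i,a}$. Then $\pi_i(\chi_q(V))=\chi_{q_i}(V|_{U_{q_i}\widetilde{\mathfrak{sl}_2}})$, and the restricted module decomposes as a direct sum of simples $L(\lambda_\nu)$ whose highest $\ell$-weights are precisely the $i$-projections of the $i$-dominant monomials of $\chi_q(V)$, counted with multiplicity. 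Concretely, $\chi_q(V)$ decomposes into ``lifts'' indexed by the $i$-dominant monomials $m'$, the lift from $m'$ consisting of the monomials $m'\cdot\prod_k A_{i,c_k}^{-1}$ for which $\pi_i(m')\cdot\prod_k A_{c_k}^{-1}\in \chi_{q_i}(L(\pi_i(m')))$. Assuming for contradiction that $m_-\in \chi_q(V)$, the monomial $m_-$ must appear in some such lift; comparing $j$-parts ($j\neq i$) of $m'\cdot\prod_k A_{i,c_k}^{-1}=m_-$ uniquely determines the multiset $\{c_k\}$ from $m'$.

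\textbf{Case A: $\lambda_\nu:=\pi_i(m')=\bar m$ where $\bar m:=\pi_i(m)$.} Comparing the $j$-parts, and noting that $m_-=m\cdot A_{i,b}^{-1}$ in all components, forces $m'=m$ and $\prod_k A_{i,c_k}^{-1}=A_{i,b}^{-1}$, i.e.\ a single descent at $b$. Hence $\bar m_-:=\bar m A_b^{-1}$ must be a depth-one descendant in $\chi_{q_i}(L(\bar m))$. By the Chari--Pressley classification, $L(\bar m)$ is a tensor product of Kirillov--Reshetikhin modules associated with the maximal $q_i^2$-strings in the multiset support of $\bar m$, and its depth-one descendants are exactly the monomials $\bar m A_{cq_i^{2k-1}}^{-1}$ where $cq_i^{2k-2}$ is the maximum of some such KR string. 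Condition (\ref{q char arg 0}) says the multiplicity of $bq_i^{-1}$ in $\bar m$ is at most that of $bq_i$, so in the canonical greedy string partition every occurrence of $bq_i^{-1}$ is paired with a later occurrence of $bq_i$; thus no KR string has $bq_i^{-1}$ as its maximum, and $\bar m A_b^{-1}\notin \chi_{q_i}(L(\bar m))$, a contradiction.

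\textbf{Case B: $\lambda_\nu\neq \bar m$.} Choose any descent path $\lambda_\nu\to\lambda_\nu A_{b_1}^{-1}\to\cdots\to \lambda_\nu A_{b_1}^{-1}\cdots A_{b_r}^{-1}=\bar m_-$ in $\chi_{q_i}(L(\lambda_\nu))$ of some length $r\geq 1$. Lifting this chain into $\chi_q(V)$, the monomial at position $r-1$ equals $m_-\cdot A_{i,b_r}\in\chi_q(V)$. By condition (\ref{q char arg 2}) the only such monomial above $m_-$ is $m$, forcing $b_r=b$ and identifying the position-$(r-1)$ monomial with $m$. If $r=1$ then $\lambda_\nu$ itself equals $\bar m$, contradicting the case hypothesis. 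If $r\geq 2$, the monomial at position $r-2$ in the lift equals $m\cdot A_{i,b_{r-1}}\in \chi_q(V)$, contradicting condition (\ref{q char arg 1}).

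Both cases being impossible, the multiplicity of $m_-$ in $\chi_q(V)$ is $0$. The main obstacle is Case A, where one must make the KR tensor-product factorization of $L(\bar m)$ and its descent structure fully precise in order to extract from condition (\ref{q char arg 0}) exactly the absence of $bq_i^{-1}$ as a string maximum; the subtlety is handling multiplicities in the support of $\bar m$ via the canonical maximal string partition. Condition (\ref{q char arg 3}) ensures the conclusion matches the lift-by-lift analysis but is not invoked in the contradiction itself.
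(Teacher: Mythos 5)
Your argument is correct, but it is a genuinely different proof from the one in the paper. The paper works directly with $\ell$-weight vectors $v$, $v_-$ and the delta-function form of the matrix coefficients of $X_i^{\pm}(z)$ (Lemma 3.1 of \cite{Y14}): it splits on whether the $v\to v_-$ matrix coefficient of $X_i^-(w)$ vanishes, and in either case manufactures a vector forbidden by irreducibility (either $v_-$ lies outside the span of the images of the lowering operators, or $v_-$ is a highest $\ell$-weight vector); condition (\ref{q char arg 0}) enters there as the absence of a pole of the $\Phi_i$-eigenfunction of $v$ at $z=b^{-1}$. You instead push all of the representation theory into the $\fk{sl}_2$-reduction of the $q$-character (the decomposition of $\chi_q(V)$ into $i$-lifts of irreducible $U_{q_i}\widehat{\fk{sl}}_2$ $q$-characters, cf.\ \cite{FM01}, \cite{MY14}) and then argue purely combinatorially with the Chari--Pressley string description; your use of condition (\ref{q char arg 0}) as ``$bq_i^{-1}$ is not the maximum of any maximal string'' is literally the same inequality $\mathrm{mult}(Y_{i,bq_i^{-1}})\le \mathrm{mult}(Y_{i,bq_i})$ as the no-pole condition in the paper, in a different guise, and your Case B reproduces the role of conditions (\ref{q char arg 1}) and (\ref{q char arg 2}). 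What your route buys is a cleaner, checkable combinatorial argument and, notably, a slightly stronger statement: you never use hypothesis (\ref{q char arg 3}) nor the multiplicity-one assumption on $m$, both of which the paper needs to have well-defined one-dimensional $\ell$-weight components. What it costs is that the entire representation-theoretic content is hidden in the lift decomposition, which you should state and cite precisely (every monomial of $\chi_q(V)$, counted with multiplicity, belongs to a lift based at an $i$-dominant monomial of $\chi_q(V)$, with the $j$-parts, $j\ne i$, rigidly determined by the $A_{i,c}^{-1}$-descents) --- that statement is essentially equivalent to the current-algebra input the paper uses directly. The only wording to fix: in your Case A it is the comparison of the $i$-parts (not the $j$-parts) of $m'\prod_kA_{i,c_k}^{-1}=mA_{i,b}^{-1}$, together with the multiplicative independence of the $A_{i,c}$, that forces $\{c_k\}=\{b\}$ and hence $m'=m$.
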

\begin{proof} Assume $m_-\in\chi_q(V)$. Then by \eqref{q char arg 3}, the multiplicity of $m_-$ is exactly one.

Let $v,v_{-}\in V$ be non-zero vectors of $\ell$-weight $m,m_-$, respectively. 

Then by Lemma 3.1 in \cite{Y14},  the matrix coefficients of the action of $X_i^-(w)$ are linear combinations of derivatives of delta functions. These coefficients are non-zero only if the $\ell$-weights differ by $A_{i,c}^{-1}$ for some nonzero $c\in \C$ (cf. also Proposition 3.8 in \cite{MY14}), in which case the support of delta functions is at $c^{-1}$.
Thus the  action of $X_i^-(w)$  on $v$ takes the form 
$$
X^-_i(w)\,v=c_-\delta(b w)\,v_-+ \sum_s c_s(\delta (b_s w))\, v_s,
$$
where the sum is over some finite set of values of $s$, $ c_-, b_s\in \C$ with $b_s\neq 0$, $c_s=\sum_j c_{s,j}\partial_w^j \in \C[\partial_w]$,  and  $v_s$ are generalized $\ell$-weight vectors  of weight  $mA^{-1}_{i,b_s}$. By \eqref{q char arg 3},  we have $b_s\neq b$ for all $s$.

If $c_{-}=0$  then $v_-$ is not in the sum of images of $X^-_{j}(z)$. Indeed, by (\ref{q char arg 2}), if $u$ is a generalized $\ell$-vector,  then the vector  $X^-_{j}(w)u$ does not have an $m_-$ $\ell$-weight component unless maybe for $j=i$ and $u=c v$ for some $c\in\C$. But the latter is also zero if $c_-=0$.
Since $V$ is irreducible, all $\ell$ weighted vectors in $V$ except the highest $\ell$-weight vector, are obtained by the action of $X^-_{j}(w)$, therefore such a vector $v_-$ does not exist, and the theorem follows.

Let $c_{-}\neq 0$.
Using Lemma 3.1 in \cite{Y14} once again, we obtain
$$
X^+_i(z)\,v_-=\tilde c_-\delta(bz)\,v, \qquad X^+_i(z)\,v_s=\tilde c_s(\delta(b_sz))\,v+\dots,
$$
where $\tilde c_-\in\C$, $\tilde c_s=\sum_j \tilde c_{s,j}\partial_z^j \in \C[\partial_z]$, and the dots denote sum of vectors of $\ell$-weights different from $m$. In the first equation such terms are absent by the assumption (\ref{q char arg 2}). 
By (\ref{q char arg 1}), $X_i^+(z)\,v=0$.   Then we compute 
$$
[X^+_i(z),X^-_i(w)]v= X^+_i(z) X^-_i(w)v= \Big(
c_-\tilde c_-\delta(bz)\delta(bw)+\sum_s c_s(\delta(b_sw) \tilde c_s(\delta(b_sz) \Big)\ v + \dots\ .
$$
On the other hand from the relation in the algebra and $m$ we have
$$
[X^+_i(z),X^-_i(w)]\,v=\delta(z/w)\frac{\Phi_i^+(z)-\Phi_i^-(z)}{q_i-q_i^{-1}}\,v.
$$ 
The vector $v$ is of $\ell$-weight $m$, therefore it is an eigenvector of $\Phi_i^\pm(z)$ with eigenfunction which is a rational function. By \eqref{q char arg 0}, that eigenfunction has no pole at $z=b^{-1}$. 

It follows that $\tilde c_-=0$ (moreover, all terms with $b_s$ which are not poles of the eigenfunction should cancel out).
Then $X_i^+(z)\,v_-=0$. By \eqref{q char arg 2},  we also have $X_j^+(z)\,v_-=0$. Thus, $v_-$ is a highest $\ell$-weight vector. Since $V$ is irreducible,  such a vector $v_-$ does not exist, and the theorem follows.
\end{proof}

We apply Theorem \ref{q char arg} to extract $\chi_q(V)$ from a known tensor product. In all our cases this tensor product has two dominant monomials and we use Theorem \ref{q char arg} to show that one of them is not in $\chi_q(V)$.  That allows us to easily identify $\chi_q(V)$.
Note that the conditions in Theorem \ref{q char arg} are completely combinatorial and therefore can be easily checked.

\subsection{\texorpdfstring{$R$}c-matrices}

There is a quasitriangular structure on the Hopf algebra $U_q\tl{\fk{g}}$.

\begin{prp}\label{R prop}
The Hopf algebra $U_q\tl{\fk{g}}$ is almost cocommutative and quasitriangular, that is, there exists an invertible element $\fk{R}\in U_q\tl{\fk{g}}\,\hat{\otimes}\,U_q\tl{\fk{g}}$ of a completion of the tensor product, such that $$\Delta^{\op}(a)=\fk{R}\,\Delta(a)\,\fk{R}^{-1}\,,\,\,\,a\in U_q\tl{\fk{g}}\,,$$
where $\Delta^{\op}(a)=P\circ\Delta(a)$, $P$ is the flip operator, and
$$(\Delta\otimes\id)(\fk{R})=\fk{R}_{13}\fk{R}_{23}\,,\quad(\id\otimes\Delta)(\fk{R})=\fk{R}_{13}\fk{R}_{12}\,,\quad\fk{R}_{12}\fk{R}_{13}\fk{R}_{23}=\fk{R}_{23}\fk{R}_{13}\fk{R}_{12}\,.$$
\qed
\end{prp}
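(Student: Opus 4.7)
The plan is to invoke the standard Drinfeld quantum double construction. First I would set up the Borel subalgebras $U_q\tl{\fk{b}^\pm}$, where $U_q\tl{\fk{b}^+}$ is the Hopf subalgebra generated by $K_i^{\pm 1/2}$ and $E_i$ for $i \in \tl{\mr{I}}$, and $U_q\tl{\fk{b}^-}$ the one generated by $K_i^{\pm 1/2}$ and $F_i$. Using the triangular decomposition $U_q\tl{\fk{g}} \cong U_q\tl{\fk{n}^-} \otimes U_q\tl{\fk{h}} \otimes U_q\tl{\fk{n}^+}$, one sees that $U_q\tl{\fk{g}}$ is a quotient of the free product $U_q\tl{\fk{b}^+} \ast U_q\tl{\fk{b}^-}$ by the relations identifying the two copies of the Cartan.

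Next, I would equip the pair $(U_q\tl{\fk{b}^+}, U_q\tl{\fk{b}^-})$ with the standard skew Hopf pairing $\langle -,-\rangle$ determined on generators by $\langle E_i, F_j\rangle = \delta_{ij}/(q_i - q_i^{-1})$ and $\langle K_i, K_j\rangle = q^{-\tl{B}_{ij}}$. Checking the compatibility axioms on Chevalley generators (which amounts to verifying the Serre relations are in the radical of the pairing) shows that this is well-defined and non-degenerate after modding out the Cartan identification. One then realizes $U_q\tl{\fk{g}}$ as the Drinfeld quantum double $D(U_q\tl{\fk{b}^+})$ modulo the identification of the two Cartans. By the general theory of the quantum double, the canonical element
\[
\fk{R} = q^{\sum_{i,j} (\tl{B}^{-1})_{ij}\, H_i \otimes H_j} \cdot \sum_{\alpha \in \tl{Q}_+} \sum_{k} a^{(\alpha)}_k \otimes b^{(\alpha)}_k,
\]
where $\{a^{(\alpha)}_k\}$ and $\{b^{(\alpha)}_k\}$ are dual bases of the weight-$\alpha$ components of $U_q\tl{\fk{n}^+}$ and $U_q\tl{\fk{n}^-}$ with respect to $\langle -,- \rangle$, automatically satisfies the three identities $\Delta^{\op} = \fk{R}\Delta(\cdot)\fk{R}^{-1}$, $(\Delta \otimes \id)(\fk{R}) = \fk{R}_{13}\fk{R}_{23}$, and $(\id \otimes \Delta)(\fk{R}) = \fk{R}_{13}\fk{R}_{12}$. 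The Yang-Baxter identity $\fk{R}_{12}\fk{R}_{13}\fk{R}_{23} = \fk{R}_{23}\fk{R}_{13}\fk{R}_{12}$ is then a purely formal consequence of the first three.

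The main obstacle is that, unlike in the finite-dimensional case, the positive root system $\tl{Q}_+$ is infinite, so the sum defining $\fk{R}$ is infinite and does not lie in the algebraic tensor product $U_q\tl{\fk{g}} \otimes U_q\tl{\fk{g}}$. This forces one to pass to the completed tensor product $\hat{\otimes}$ indicated in the statement, and to verify convergence in a suitable topology (e.g. ensuring that on any tensor product of finite-dimensional modules all but finitely many terms act as zero on a fixed vector). The cleanest resolution is to invoke the explicit Khoroshkin--Tolstoy product formula, which expresses $\fk{R}$ as an ordered product of $q$-exponentials of root vectors along a convex ordering of $\tl{\Delta}_+$; this makes convergence on finite-dimensional modules transparent. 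I would therefore simply cite this construction rather than reproduce the (well-documented) verification of the hexagon identities on generators.
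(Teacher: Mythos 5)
The paper does not actually prove this proposition: it is stated with a \qed as a standard fact (existence of the universal $R$-matrix for quantum affine algebras, due to Drinfeld and made explicit by Khoroshkin--Tolstoy and others), so your sketch of the quantum double construction is precisely the argument the paper implicitly defers to. The overall strategy --- realize $U_q\tl{\fk{g}}$ as a quotient of the double of a Borel subalgebra, take the canonical element, obtain the hexagon identities from the general theory of the double, and deduce the Yang--Baxter equation formally from $\Delta^{\op}=\fk{R}\,\Delta(\cdot)\,\fk{R}^{-1}$ together with $(\Delta\otimes\id)(\fk{R})=\fk{R}_{13}\fk{R}_{23}$ --- is correct and standard.

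Two of your supporting claims are inaccurate as stated, though. First, the Cartan factor $q^{\sum_{i,j}(\tl{B}^{-1})_{ij}H_i\otimes H_j}$ is not defined, because the symmetrized affine Cartan matrix $\tl{B}$ is singular (it annihilates the vector $a=(a_0,\dots,a_r)$). The standard fix is to extend the Cartan subalgebra by the degree operator $d$, on which the invariant form becomes nondegenerate; the resulting factor then involves terms of the form $c\otimes d+d\otimes c$, and it is only on the level-zero quotient considered in this paper that those terms act trivially. Second, your proposed convergence mechanism --- ``all but finitely many terms act as zero on a fixed vector'' --- is the finite-type phenomenon and fails in the affine case: the positive roots include the imaginary roots $n\delta$, whose root vectors (essentially the $H_{i,n}$) act nontrivially on a finite-dimensional module for \emph{every} $n$, so infinitely many terms of $\fk{R}$ contribute even on a fixed vector. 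Convergence is obtained only after twisting one tensor factor by $\tau_z$, whereupon the degree-$n$ piece acquires a factor $z^n$ and the sum becomes a formal power series in $z$, rational after normalization --- which is exactly why $\tilde R^{V,W}(z)$ depends nontrivially on $z$, cf.\ Lemma \ref{lemR}\,(\ref{try}). Since you ultimately cite the Khoroshkin--Tolstoy product formula, where both points are handled correctly, these slips do not sink the argument, but the justifications you attach to them would not survive scrutiny and should be replaced by the correct ones above.
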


The element $\fk{R}$ is called the universal $R$-matrix of $U_q\tl{\fk{g}}$.

The universal $R$-matrix has weight zero and homogeneous degree zero:
$$
(K_i\otimes K_i) \fk{R}=\fk{R} (K_i\otimes K_i), \qquad (\tau_z\otimes \tau_z) \fk{R}=\fk{R} (\tau_z\otimes \tau_z),\qquad i\in \tl{\mr{I}},\ z\in\C^\times.
$$

\begin{dfn}[Trigonometric $R$-matrix]
Let $V$ and $W$ be two representations of $U_q\tl{\fk{g}}$ and $\pi_V$, $\pi_W$ be the respective representations maps. The map $$\tilde R^{V,W}(z)=(\pi_{V(z)}\otimes\pi_{W})(\fk{R}):V(z)\otimes W\to V(z)\otimes W$$ is called the $R$-matrix of $U_q\tl{\fk{g}}$ evaluated in $V(z)\otimes W$. 
\end{dfn}

\begin{dfn}[Normalized $R$-Matrix]
Let $V$, $W$ be representations of $U_q\tl{\fk{g}}$ with highest $\ell$-weight vectors $v$ and $w$ respectively. Denote by ${R}^{V,W}(z)\in\en(V\otimes W)$ the normalized $R$-matrix satisfying: 
$${R}^{V,W}(z)=f_{V,W}^{-1}(z)\,\tilde R^{V,W}(z)\,,$$ where $f_{V,W}(z)$ is the scalar function defined by $\tilde {R}^{V,W}(z)(v\otimes w)=f_{V,W}(z)\,v\otimes w.$ 
\end{dfn}

The map 
\eq{\label{R check}
\check{R}^{V,W}(z)=P\circ R^{V,W}(z):V(z)\otimes W\to W\otimes V(z)
}
(if it exists) is an intertwiner (or a homomorphism) of $U_q\tl{\fk{g}}$-modules. If $V$, $W$ are irreducible, then the module $V(z)\otimes W$ is irreducible for all but finitely many $z\in\C^\times$. If for some $z$, the module $V(z)\otimes W$ is irreducible, then $W\otimes V(z)$ is also irreducible and the intertwiner is unique up to a constant. 

\begin{lem}
    Let $V_i$, $i=1,2,3$, be representations of $U_q\tl{\fk{g}}$. 
\begin{enumerate}
    \item \label{tqybe1}$\displaystyle R^{V_1,V_2}_{12}(z)\,R^{V_1,V_3}_{13}(zw)\,R^{V_2,V_3}_{23}(w)=R^{V_2,V_3}_{23}(w)\,R^{V_1,V_3}_{13}(zw)\,R^{V_1,V_2}_{12}(z)\,.$
    \item \label{tqybe} $\displaystyle\check{R}^{V_1,V_2}_{23}(z)\,\check{R}^{V_1,V_3}_{12}(zw)\,\check{R}^{V_2,V_3}_{23}(w)=\check{R}^{V_2,V_3}_{12}(w)\,\check{R}^{V_1,V_3}_{23}(zw)\,\check{R}^{V_1,V_2}_{12}(z)\,.$
\end{enumerate}
\qed
\end{lem}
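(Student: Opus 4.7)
The plan is to derive both identities from the universal Yang-Baxter equation $\fk R_{12}\fk R_{13}\fk R_{23}=\fk R_{23}\fk R_{13}\fk R_{12}$ of Proposition \ref{R prop}, combined with the translational invariance $(\tau_a\otimes\tau_a)\fk R=\fk R(\tau_a\otimes\tau_a)$ of the universal $R$-matrix. For part (\ref{tqybe1}), I first evaluate the universal QYBE on the three-fold tensor product $V_1(a)\otimes V_2(b)\otimes V_3(c)$ via the corresponding representation maps. Writing $\tilde R_{ij}(u,v):=(\pi_{V_i(u)}\otimes\pi_{V_j(v)})(\fk R)$ for the operator acting on factors $i,j$, this yields
\[
\tilde R_{12}(a,b)\,\tilde R_{13}(a,c)\,\tilde R_{23}(b,c)=\tilde R_{23}(b,c)\,\tilde R_{13}(a,c)\,\tilde R_{12}(a,b).
\]
Translational invariance forces $\tilde R_{ij}(u,v)$ to depend only on the ratio $u/v$, so setting $z=a/b$ and $w=b/c$ (whence $a/c=zw$) rewrites the identity in the unnormalized form of (\ref{tqybe1}).

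To pass to normalized $R$-matrices, I would evaluate both sides of the preceding identity on the tensor product $v_1\otimes v_2\otimes v_3$ of highest $\ell$-weight vectors. By the very definition of $f_{V_i,V_j}$, each factor $\tilde R_{ij}$ multiplies the corresponding highest $\ell$-weight pair by the scalar $f_{V_i,V_j}$, so both sides acquire the same overall prefactor $f_{V_1,V_2}(z)\,f_{V_1,V_3}(zw)\,f_{V_2,V_3}(w)$. Dividing through by this rational function (nonzero at generic $(z,w)$ since $\fk R$ is invertible) converts each $\tilde R$ into the corresponding normalized $R$, and the identity persists as one of rational operator-valued expressions, giving (\ref{tqybe1}).

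For part (\ref{tqybe}), the strategy is to conjugate (\ref{tqybe1}) by flip operators using $\check R^{V,W}(z)=P\,R^{V,W}(z)$. Writing $P_{ij}$ for the transposition of the $i$-th and $j$-th tensor factors, I substitute $R^{V_i,V_j}_{ij}=P_{ij}\,\check R^{V_i,V_j}_{ij}$ on both sides of (\ref{tqybe1}), then apply the braid relation $P_{12}P_{23}P_{12}=P_{23}P_{12}P_{23}$ together with the conjugation rule $P_{ij}\,A_{kl}\,P_{ij}=A_{\sigma(k)\sigma(l)}$ for $\sigma=(i\,j)$ to collect all flip operators on one side. Both sides then carry the same total permutation, namely the reversal $V_1\otimes V_2\otimes V_3\to V_3\otimes V_2\otimes V_1$; cancelling this common factor leaves exactly the composition displayed in (\ref{tqybe}).

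The hard part will be the index bookkeeping in the second step: the position labels $ij$ on each $\check R$-factor refer to the tensor slots occupied at the instant that factor is applied, so every insertion of a $P_{ij}$ both relocates the operator and renames its superscripts $V_i,V_j$ according to the current permutation of factors. Drawing the corresponding braid diagram makes this bookkeeping transparent and reduces (\ref{tqybe}) to a routine rewriting of (\ref{tqybe1}).
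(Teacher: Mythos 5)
Your proof is correct and is the standard derivation: part (\ref{tqybe1}) is the universal QYBE of Proposition \ref{R prop} evaluated on $V_1(a)\otimes V_2(b)\otimes V_3(c)$, using the homogeneity $(\tau_z\otimes\tau_z)\fk{R}=\fk{R}(\tau_z\otimes\tau_z)$ to reduce the dependence to ratios, with the common scalar $f_{V_1,V_2}(z)f_{V_1,V_3}(zw)f_{V_2,V_3}(w)$ cancelling upon normalization; part (\ref{tqybe}) is the usual rewriting via $R_{ij}=P_{ij}\check{R}_{ij}$ and the braid relation for the flips. The paper states this lemma without proof as well known, and your argument is precisely the one it implicitly relies on.
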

The above two properties are called trigonometric QYBE.

The $R$-matrix $\check{R}^{V,W}(z)$ depends on the choice of the coproduct. In this paper we use coproduct $\Delta$ given by \eqref{coproduct}. Let $\mathfrak{R}_{\text{op}}$ be the universal $R$ matrix  corresponding to coproduct $\Delta^{\text{op}}$ and $R^{V,W}_{\text{op}}(z)$ be that $R$-matrix evaluated in $V(z)\otimes W$. Then $\mathfrak{R}_{\text{op}}=P\,\mathfrak{R}P$ and
\eq{\label{Rop}
\check{R}^{V,W}_{\text{op}}(z)=P(\pi_V\otimes \pi_W)\big((\tau_{z}\otimes 1)(\mathfrak{R}_{\text{op}})\big) = P\check{R}^{W,V}(z^{-1})P.
}
We collect a few properties of the $R$-matrices.
\begin{lem} \label{lemR}
Let $V_i$, $i=1,2$, be representations of $U_q\tl{\fk{g}}$. 
\begin{enumerate}
    \item  The normalized intertwiner $\check{R}^{V_1,V_2}(z)$ is a rational function of $z$. \label{try}
    \item If $V_1=\tl {L}_i(a)$ is fundamental, then $\check{R}^{V_1,V_1}(1)=\id$.
    \item $\check{R}^{V_1,V_2}(z;q)=P\check{R}^{V_2,V_1}(z^{-1};q^{-1})P$.
    \item\label{inversion relation} $\check{R}^{V_1,V_2}(z)\,\check{R}^{V_2,V_1}(z^{-1})=\id$.
    \item\label{R is self adjoint} $\check{R}^{V_1,V_2}(z)$ is self-adjoint with respect to the tensor Shapovalov form.
\end{enumerate}
\end{lem}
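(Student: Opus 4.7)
All five parts follow from a common principle: for generic $z$ the module $V_1(z)\otimes V_2$ is irreducible, so a $U_q\tl{\fk{g}}$-intertwiner $V_1(z)\otimes V_2\to V_2\otimes V_1(z)$ is unique up to scalar and determined by its value on $v_1\otimes v_2$; any auxiliary map shown to have the same intertwining property and normalization must coincide with $\check{R}^{V_1,V_2}(z)$, and once rationality is in hand such identities extend from generic $z$ to all $z$.

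For~(\ref{try}), the intertwining equations $\check{R}^{V_1,V_2}(z)\,(\pi_{V_1(z)}\otimes\pi_{V_2})(\Delta(x))=(\pi_{V_2}\otimes\pi_{V_1(z)})(\Delta(x))\,\check{R}^{V_1,V_2}(z)$ applied to $x\in\{E_i,F_i,K_i^{\pm 1}\}_{i\in\tl{\mr{I}}}$ form a finite linear system in the matrix entries of $\check{R}^{V_1,V_2}(z)$ whose coefficients are Laurent-polynomial in $z$; combined with the normalization, Cramer's rule produces a rational solution. For~(2), both $\id_{V_1\otimes V_1}$ and $\check{R}^{V_1,V_1}(1)$ are $U_q\tl{\fk{g}}$-endomorphisms of $V_1\otimes V_1=V_1(1)\otimes V_1$ fixing $v_1\otimes v_1$, and the standard cyclicity of $V_1\otimes V_1$ on $v_1\otimes v_1$ for fundamental $V_1$ forces equality. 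For~(\ref{inversion relation}), the composition $\check{R}^{V_1,V_2}(z)\,\check{R}^{V_2,V_1}(z^{-1})$ is a $U_q\tl{\fk{g}}$-endomorphism of $V_2\otimes V_1(z)$ that fixes $v_2\otimes v_1$, since $\check{R}^{V_2,V_1}(z^{-1})(v_2\otimes v_1)=v_1\otimes v_2$ and then $\check{R}^{V_1,V_2}(z)(v_1\otimes v_2)=v_2\otimes v_1$; irreducibility at generic $z$ forces the composition to equal $\id$, and rationality extends to all $z$.

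For~(3), the substitution $q\mapsto q^{-1}$ combined with a swap of the two tensor factors preserves the Drinfeld--Jimbo relations, and the universal $R$-matrix transforms as $\fk{R}(q^{-1})=P\,\fk{R}(q)\,P$; propagating this through the definition of $\check{R}^{V_1,V_2}(z;q)$ and matching normalizations yields the stated identity. For~(\ref{R is self adjoint}), the Shapovalov anti-involution $\omega$ on $U_q\tl{\fk{g}}$, characterized by $\omega(E_i)=F_i$, $\omega(F_i)=E_i$, $\omega(K_i)=K_i$, satisfies $(\Delta(x))^*=\Delta(\omega(x))$ on the tensor Shapovalov form by \eqref{ad coproduct}; taking adjoints of the intertwining relation for $\check{R}^{V_1,V_2}(z)$ shows that its Shapovalov adjoint is again a $U_q\tl{\fk{g}}$-intertwiner with the same action on highest weight vectors, so uniqueness forces self-adjointness. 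The main obstacle is a rigorous verification of the universal identity $(\omega\otimes\omega)\fk{R}=\fk{R}_{21}$ at the affine level; I would handle this via the Khoroshkin--Tolstoy factorization of $\fk{R}$ as a product of $q$-exponentials in positive and negative root vectors, noting that $\omega$ swaps positive and negative root vectors and thereby interchanges the factors of the product with those of $\fk{R}_{21}$.
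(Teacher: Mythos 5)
Your proposal is correct and follows essentially the same route as the paper: every part is reduced to rationality in $z$ plus uniqueness of the normalized intertwiner on the (generically irreducible) tensor product, with part (3) handled by the $q\mapsto q^{-1}$ twist combined with the flip (the paper implements your universal-$R$-matrix claim rigorously at the level of a Hopf algebra isomorphism $\nu:(U_q\tl{\fk{g}},\Delta)\to(U_{q^{-1}}\tl{\fk{g}},\Delta^{\mathrm{op}})$ and module twists, avoiding the universal element altogether). Your closing worry about verifying $(\omega\otimes\omega)\fk{R}=\fk{R}_{21}$ via the Khoroshkin--Tolstoy factorization is unnecessary: the uniqueness-of-intertwiner argument you already gave, together with \eqref{ad coproduct}, suffices for part (\ref{R is self adjoint}), which is exactly how the paper proceeds.
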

\begin{proof}
   The intertwiner $\check{R}^{V_1,V_2}(z)$ is uniquely determined by commuting with $E_i, F_i$, $i\in\tilde I$. The action of these operators is given by Laurent polynomials in $z$. The first property follows.
   
   The second property follows from the well-known fact that the module $\tl{L}_i(a)\otimes \tl{L}_i(a)$ is irreducible.
 
 We provide a proof of the third property. 
Let $\nu: (U_q\tl{\fk{g}},\Delta)\to (U_{q^{-1}}\tl{\fk{g}},\Delta^{\text{op}})$ be an isomorphism of Hopf algebras sending $E_i\mapsto E_i$, $F_i\mapsto F_i$, $K_i\mapsto K_i^{-1}$, $i\in \tl{\mr{I}}$, and $q$ to $q^{-1}.$ Here we think of  $q$ as an extra variable.

For a $U_q\tl{\fk{g}}$-module $V^q$, let $V^q_\nu$ be the $U_{q^{-1}}\tl{\fk{g}}$-module obtained from $V^q$ by twisting with $\nu$. Then the identity map is an isomorphism of  $U_{q^{-1}}\tl{\fk{g}}$-modules $V^{q^{-1}}\xto{\smash{\raisebox{-0.65ex}{\ensuremath{\sim}}}} V^q_\nu$.

The $R$-matrix commutes with action of $g\in U_q\tl{\fk{g}}$, therefore it commutes with action of $\nu(g)$. Then the  $R$-matrix $\check{R}^{V_1,V_2}(z;q)$ maps the  $U_{q^{-1}}\tl{\fk{g}}$-modules
\bee{
(V^q_1(z)\otimes V^q_2)_\nu = (V^q_1(z))_\nu\mathop{\otimes}_{\text{op}}\, (V^q_2)_\nu=V^{q^{-1}}_1(z)\mathop{\otimes}_{\text{op}} V^{q^{-1}}_2 \to (V^q_2\otimes V^q_1(z))_\nu=V_2^{q^{-1}}\mathop{\otimes}_{\text{op}} V_1^{q^{-1}}(z).
}
Thus we obtain
\bee{
\check{R}^{V_1,V_2}(z;q)=\check{R}^{V_1,V_2}_{\text{op}}(z;q^{-1}).
}
Now the third property is obtained by combining this with \eqref{Rop}.

The fourth property is well-known and straightforward.

The fifth property follows by the uniqueness of the intertwiner, since by \eqref{ad coproduct}, we have $\big(\check{R}^{V_1,V_2}(z)\big)^*$ is an intertwiner. 
\end{proof}

\begin{lem}\label{R0}
Let $V_1$, $V_2$ be irreducible representations of $U_q\tl{\fk{g}}$ such that as $U_q\fk{g}$-modules, $V_1$, $V_2$ are irreducible of highest weights $\la$, $\mu$ respectively. Suppose that the tensor product $L_\la\otimes L_\mu=\oplus_{\nu}\,L_\nu$ has trivial multiplicities. Then
\eq{\label{R check 0}
\check{R}^{V_1,V_2}(0)=\sum_{\nu}(-1)^{\nu}\,q^{(C(\nu)-C(\la+\mu))/2}P_\nu\ ,
}
where $P_{\nu}$ are projectors onto $L_{\nu}$, $(-1)^\nu=\pm 1$ is the eigenvalue of the flip operator $P$ on the $q\to 1$ limit of $L_\nu$, and $C(\nu)=(\nu,\nu+2\rho)$, with $\rho$ being the half sum of all positive roots, and $(\ ,\ )$ be the standard scalar product given on simple roots by $(\A_i,\A_j)=B_{ij}$.
\end{lem}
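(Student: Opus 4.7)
The proof strategy combines Schur's lemma with the identification of $\check{R}(0)$ as (a dressed version of) the braiding coming from the non-affine quantum group $U_q\fk{g}$.

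First, since $U_q\fk{g}\subset U_q\tl{\fk{g}}$ is a Hopf subalgebra whose coproduct is $z$-independent, the intertwiner $\check{R}^{V_1,V_2}(z)$ commutes with the $U_q\fk{g}$-action. The trivial-multiplicity hypothesis together with Schur's lemma then yields $\check{R}(z)=\sum_\nu f_\nu(z)\,P_\nu$ with scalar $f_\nu(z)$, where $P_\nu\colon V_1\otimes V_2\to V_2\otimes V_1$ is the $U_q\fk{g}$-intertwiner supported on the $L_\nu$-isotypic component, normalized so that $\sum_\nu(-1)^\nu P_\nu$ agrees with the flip $P$ in the classical limit $q\to 1$. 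The formula is anchored at the top component by acting on $v_\la\otimes v_\mu$: by the defining normalization of $R$, $R(z)(v_\la\otimes v_\mu)=v_\la\otimes v_\mu$, so $\check{R}(z)(v_\la\otimes v_\mu)=v_\mu\otimes v_\la$, consistent with the formula at $\nu=\la+\mu$ since the exponent $C(\la+\mu)-C(\la+\mu)$ vanishes.

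Second, for the remaining $\nu$, I would compare $\check{R}(0)$ with the non-affine universal $R$-matrix $\fk{R}^0$ of $U_q\fk{g}$. Using the Damiani/Khoroshkin--Tolstoy product expansion of $\fk{R}(z)$ over positive affine roots, the contributions from real affine roots $\alpha+n\delta$ with $n\neq 0$ carry explicit $z^n$ dependence and trivialize at $z=0$, leaving $\fk{R}^0$ dressed by a diagonal Cartan factor coming from the imaginary-root part. The action of $P\fk{R}^0$ on $L_\nu\subset L_\la\otimes L_\mu$ is the classical scalar $(-1)^\nu\,q^{(C(\nu)-C(\la)-C(\mu))/2}$: indeed, $(P\fk{R}^0)^2=\fk{R}^0_{21}\fk{R}^0$ acts on $L_\nu$ as $q^{C(\nu)-C(\la)-C(\mu)}$ via the quantum Casimir, and the sign is pinned down by the $q\to 1$ limit. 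Combining with the normalization $f_{V_1,V_2}(0)=q^{(\la,\mu)}$ (read off from $\tl{R}(0)(v_\la\otimes v_\mu)$ using the Cartan factor) and the identity $C(\la+\mu)=C(\la)+C(\mu)+2(\la,\mu)$ produces the stated formula.

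The main obstacle is the careful bookkeeping of normalizations, particularly the exact form of the Cartan correction in the $z\to 0$ limit of $\fk{R}(z)$ and the precise value of $f_{V_1,V_2}(0)$. An elementary alternative bypassing the universal $R$-matrix proceeds by induction on the height of the highest weight $\nu$: starting from the anchor at $\nu=\la+\mu$, one computes $f_\nu(0)$ on the highest weight vectors $w_\nu\in L_\nu\subset L_\la\otimes L_\mu$ using that $\check{R}(0)$ commutes with the lowering operators $\Delta(F_i)$, combined with the explicit Casimir eigenvalue $C(\nu)$ on each $L_\nu$.
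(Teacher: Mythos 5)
Your proposal is correct and follows essentially the same route as the paper: at $z=0$ the normalized intertwiner reduces to the finite-type braiding $P\mathscr{R}$, whose square is computed on each isotypic component via the identity $\mathscr{R}_{\mathrm{op}}\mathscr{R}=(v\otimes v)\Delta(v^{-1})$ and the Casimir eigenvalue $q^{C(\nu)-C(\la)-C(\mu)}$, with the sign of the square root fixed by the $q\to 1$ limit and the exponent rewritten via the normalization (equivalently, $C(\la+\mu)-C(\la)-C(\mu)=2(\la,\mu)$). Your extra bookkeeping of the Cartan factor and the explicit value $f_{V_1,V_2}(0)=q^{(\la,\mu)}$ is just a more detailed version of the paper's "follows after normalization."
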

\begin{proof}
The proof is same as in \cite{DGZ94}. We provide a few extra details here. At $z=0$, the intertwiner $\check{R}^{V_2,V_1}(z)$ (up to a normalization constant) reduces to $P\,R^{\,\mu,\la}$ where $R^{\,\mu,\la}$ is the $R$-matrix for $U_q\fk{g}$ evaluated in $L_\mu\otimes L_\la$.

The quasitriangular Hopf algebra $U_q\fk{g}$ has a distinct central element $v$ satisfying $\sr{R}_{\text{op}}\sr{R}=(v\otimes v)\Delta(v^{-1})$. Here $\sr{R}$ is the universal $R$-matrix for finite type quantum algebra $U_q\fk{g}$. On the irreducible representation $L_\la$ of $U_q\fk{g}$, $v$ acts as $q^{-C}$ where $C$ is the Casimir element for $U(\fk{g})$ (see \cite{CP1} Section 8.3, Proposition 8.3.14). 
The Casimir element $C$ acts in the irreducible representation of $U(\fk{g})$ of highest weight $\lambda$  by the constant $C(\la)=(\la,\la+2\rho)$.

Then as in \cite{DGZ94}, we have 
\eq{\label{R0 unnormalized}
\sum_\nu f_\nu(0)^2\,P_\nu &\ = PR^{\,\la,\mu}PR^{\,\mu,\la} = R_{\text{op}}^{\,\mu,\la}R^{\,\mu,\la} = (\pi_\mu\otimes\pi_\la)(\sr{R}_{\text{op}}\sr{R}) \\ 
& = (\pi_\mu(v)\otimes\pi_\la(v))(\pi_\mu\otimes\pi_\la)\big(\Delta(v^{-1})\big)=\sum_\nu q^{C(\nu)-C(\mu)-C(\la)}\,P_\nu\ .
}
Thus, $f_\nu(0)=\pm q^{(C(\nu)-C(\mu)-C(\la))/2}$. Now \eqref{R check 0} follows after normalization.
\end{proof}

It is known that the submodules of tensor products of fundamental modules correspond to to zeroes and poles of $R$-matrices.
\begin{thm}[\cite{FM01}]\label{poles thm}
The tensor product $\tl{L}_{s_1}(a_1)\otimes\cdots\otimes \tl{L}_{s_n}(a_n)$ of fundamental representations of $U_q\tl{\fk{g}}$, is reducible if and only if for some $i,j\in\{1,\dots,n\}$, $i\ne j$, the normalized $R$-matrix $R^{V,W}(z)$ has a pole at $z=a_i/a_j$ where $V=\tl{L}_{s_i}(1)$, $W=\tl{L}_{s_j}(1)$. In that case, $a_i/a_j$ is necessarily equal to $q^k$, where $k$ is an integer. \qed
\end{thm}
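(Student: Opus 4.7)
The plan is to prove the theorem in three stages: reduce to the two-factor case, identify reducibility with poles of the normalized R-matrix in that case, and verify that the critical spectral ratios must be integer powers of $q$.

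For the reduction, I would invoke the cyclicity results cited earlier in the excerpt (\cite{C00}, \cite{Ka02}): any ordering of $\tl{L}_{s_1}(a_1) \otimes \cdots \otimes \tl{L}_{s_n}(a_n)$ is a highest $\ell$-weight module with the product of highest $\ell$-weight vectors as cyclic vector. It then follows that the full tensor product is irreducible if and only if every pairwise product $\tl{L}_{s_i}(a_i) \otimes \tl{L}_{s_j}(a_j)$ is irreducible. One direction is immediate: a reducible pair produces a proper submodule of the whole via the cyclic structure. For the converse, if all pairs are irreducible, then the pairwise intertwiners let one permute factors freely while preserving the highest $\ell$-weight cyclic property, so all orderings yield isomorphic cyclic modules with identical $\ell$-weight multiplicities, forcing irreducibility.

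For the two-factor case, let $V = \tl{L}_{s_i}(1)$, $W = \tl{L}_{s_j}(1)$, and set $z = a_i/a_j$. By Lemma~\ref{lemR}(\ref{try}), $\check{R}^{V,W}(z)$ is rational in $z$, and by construction it sends the tensor of highest $\ell$-weight vectors to itself. When $V(z) \otimes W$ is irreducible, $W \otimes V(z)$ is also irreducible and $\check{R}^{V,W}(z)$ is an isomorphism between irreducibles, hence regular and invertible there. When $V(z) \otimes W$ is reducible, the inversion relation $\check{R}^{V,W}(z)\,\check{R}^{W,V}(z^{-1}) = \id$ of Lemma~\ref{lemR}(\ref{inversion relation}), combined with the nontriviality of the kernel of the intertwiner in the opposite direction, forces a pole of $\check{R}^{V,W}$ at $z$. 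This cannot be merely a zero or a removable singularity: the value on the highest $\ell$-weight vector is pinned to the identity, and the inversion relation prevents both $\check{R}^{V,W}$ and $\check{R}^{W,V}$ from being simultaneously regular and non-degenerate at mutually inverse arguments when one of the products is reducible.

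Finally, to see that the critical ratios lie in $q^{\Z}$, I would invoke the explicit structure of $q$-characters of fundamental modules: $\chi_q(\tl{L}_{s_i}(a))$ is a Laurent polynomial in variables $Y_{k, aq^m}$ for a finite set of integer exponents $m$ determined by the type. Consequently the product $\chi_q(\tl{L}_{s_i}(a_i)) \cdot \chi_q(\tl{L}_{s_j}(a_j))$ has a unique dominant monomial (equivalently, the tensor product is irreducible) unless the two $q$-character variable sets admit cancellations of the form $Y_{k,c} Y_{k,c}^{-1}$, which requires $a_i/a_j$ to be an integer power of $q$. The main obstacle in the argument is pinning down the precise nature of the singularity of $\check{R}^{V,W}$ at a reducibility locus; this is controlled by combining the uniqueness of the intertwiner up to scalar, the explicit normalization at the highest $\ell$-weight vector, and the inversion relation of Lemma~\ref{lemR}.
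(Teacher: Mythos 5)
The paper does not prove this statement: it is quoted from \cite{FM01} and stated with a \qed. So your sketch has to stand on its own, and while its architecture (reduce to pairs, convert reducibility into poles via the inversion relation, use $q$-characters to force $a_i/a_j\in q^{\Z}$) is the right one — your third stage is essentially the argument of \cite{FM01} and is sound, up to the parenthetical ``equivalently'': a unique dominant monomial implies irreducibility, but the converse is false in general, and fortunately only the first implication is needed there — the middle of the argument contains a step that fails. You claim that reducibility of $V(z_0)\otimes W$ forces a pole of $\check{R}^{V,W}$ at $z_0$. This is not true: at a reducibility point the normalized $R$-matrix of exactly \emph{one} of the two orderings acquires a pole, while the other is regular there but degenerate. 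This zero/pole dichotomy is used explicitly elsewhere in this paper (``$z=q^k$ with $k<0$ corresponds to the cyclic tensor products\dots and therefore to zeroes of the $R$-matrix''), and it is the reason the theorem is phrased with ``for some $i\ne j$'' over \emph{ordered} pairs. If your implication held as stated, it would apply symmetrically to $W(z_0^{-1})\otimes V$, which is reducible for the same ratio, and would place poles on both $\check{R}^{V,W}$ at $z_0$ and $\check{R}^{W,V}$ at $z_0^{-1}$, which is incompatible with the very inversion relation $\check{R}^{V,W}(z)\,\check{R}^{W,V}(z^{-1})=\id$ that you invoke. The missing ingredient is the cyclicity theorem of \cite{C00}, \cite{Ka02}: for a reducible pair one specific ordering is generated by the product of highest $\ell$-weight vectors, hence its normalized intertwiner is regular with nontrivial kernel, and only then does the inversion relation push a pole onto the opposite ordering. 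Your phrase ``the nontriviality of the kernel of the intertwiner in the opposite direction'' asserts exactly this premise without justifying it, and without identifying which ordering it applies to.

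The same issue undermines your reduction to the two-factor case. You begin by asserting that \emph{any} ordering of $\tl{L}_{s_1}(a_1)\otimes\cdots\otimes\tl{L}_{s_n}(a_n)$ is cyclic on the product of highest $\ell$-weight vectors; this is false already for $U_q(\mathrm{A}_1^{(1)})$, where one of the two orderings of $\tl{L}_1(1)\otimes\tl{L}_1(q^{2})$ is not generated by the highest weight vector. The results of \cite{C00}, \cite{Ka02} guarantee cyclicity only for suitably ordered products (no pole of $R^{V_i,V_j}$ at $a_i/a_j$ for $i<j$, roughly), and the true reduction argument runs through precisely that refinement: order the factors so that both the product and its reversal are cyclic/cocyclic, and use the composite $R$-matrix as an isomorphism onto the head. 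The pairwise-irreducibility criterion you are reducing to is a genuine theorem, but your justification of it, and of the two-factor pole criterion, both collapse to the same unproven cyclicity input.
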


The following lemma is used for the computation of the $R$-matrix in the case of $E_8$.

Let $V$ be the first fundamental representation of $U_q\tl{\fk{g}}$ where $\fk{g}$ is not of type A or E$_6$. Then we choose a basis $\{v_i\}_{i=1}^d$ of $V$ with the following properties. Denote $\bar v_i=v_{\bar i}=v_{d+1-i}$ if weight of $v_i$ is not zero and $\bar v_i=v_i$ otherwise. Then we require
that the sum of weights of $v_i$ and $\bar v_i$ is zero and, moreover, 

\eq{\label{choice of basis}
E_j v_i=\sum_{k} a_{ik}^{(j)} v_k \quad \text{ if\ and\ only\ if }\quad  F_j \bar v_i=\sum_{k} a_{ik}^{(j)} \bar v_k\ ,\quad j\in\mr{I}\ .
}
We construct such a basis for each type by a direct computation.  
In fact, the basis we choose is also orthonormal with respect to the Shapovalov form, and in addition to \eqref{choice of basis} we have $E_j^T=F_j$, $j\in \mr{I}$.

Let $t:V\to V$ be a linear map such that $v_i\mapsto \bar v_i$. Note that $t^2=\id$.

\begin{lem}\label{R and flip commute}
Let $V$ be the first fundamental representation of $U_q\tl{\fk{g}}$ where $\fk{g}$ is not of type A or E$_6$. Then
\eq{\label{t and Rop}
\check{R}^{V,V}(z)=(t\otimes t)P\check{R}^{V,V}(z)P(t\otimes t)\ .
}
Here $P$ is the flip operator.
\p{
Let $\upsilon:(U_q\tl{\fk{g}},\Delta)\to (U_q\tl{\fk{g}},\Delta^{\text{op}})$ be an isomorphism of Hopf algebras sending $E_i\mapsto F_i$, $F_i\mapsto E_i$, $K_i\mapsto K_i^{-1}$, $i\in \tl{\mr{I}}$. Let $V_\upsilon$ be the $U_q\tl{\fk{g}}$-module obtained from $V$ by twisting with $\upsilon$.

Clearly  $t:\, V \xto{\smash{\raisebox{-0.65ex}{\ensuremath{\sim}}}} V_\upsilon$ is an isomorphism of $U_q\tl{\fk{g}}$-modules. Since $\tau_z\circ \upsilon=\upsilon\circ \tau_{z^{-1}}$, we have 
$$t:V(z^{-1}) \xto{\smash{\raisebox{-0.65ex}{\ensuremath{\sim}}}} V_\upsilon(z^{-1})=\big(V(z)\big)_\upsilon\ .$$

The $R$-matrix commutes with action of $g\in U_q\tl{\fk{g}}$, therefore it commutes with action of $\upsilon(g)$. Then we have a map of  $U_{q}\tl{\fk{g}}$-modules
\bee{
\check{R}^{V,V}(z):\ \big(V(z)\otimes V\big)_\upsilon  \to \big(V \otimes V (z)\big)_\upsilon \ ,
}
Moreover, 
\bee{
\big(V(z)\otimes V\big)_\upsilon = \big(V(z)\big)_\upsilon\mathop{\otimes}_{\text{op}}\, V_\upsilon = (t\otimes t) \big(V(z^{-1}) \mathop{\otimes}_{\text{op}} V \big),\quad \text{and}\quad  
\big(V \otimes V (z)\big)_\upsilon= (t\otimes t)\big(V \mathop{\otimes}_{\text{op}} V(z^{-1})\big).
}
Therefore, $\check{R}^{V,V}(z)=(t\otimes t)\check{R}_{\text{op}}^{V,V}(z^{-1})(t\otimes t)$. Now \eqref{t and Rop} follows by combining this with \eqref{Rop}.
}
\end{lem}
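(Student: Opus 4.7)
The plan is to exhibit $t$ as an isomorphism between $V$ and a twist of $V$ by a Hopf algebra isomorphism that turns $\Delta$ into $\Delta^{\op}$; the ordinary $R$-matrix intertwining property then pulls back to yield \eqref{t and Rop} once combined with the comparison \eqref{Rop} between $\check R$ and $\check R_{\op}$.

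First, I would construct the Hopf algebra isomorphism $\upsilon:(U_q\tl{\fk{g}},\Delta)\xto{\sim}(U_q\tl{\fk{g}},\Delta^{\op})$ defined on the Drinfeld--Jimbo generators by $E_i\mapsto F_i$, $F_i\mapsto E_i$, $K_i\mapsto K_i^{-1}$ for $i\in\tl{\mr{I}}$. The quantum Serre relations are manifestly symmetric under $E\leftrightarrow F$, and the commutator and $K$-weight relations are preserved once $K\mapsto K^{-1}$; the formulas \eqref{coproduct} then show that $\Delta$ is sent to $\Delta^{\op}$. The basis condition \eqref{choice of basis} is precisely the statement that the linear map $t:V\to V$, $v_i\mapsto\bar v_i$, intertwines the original $U_q\fk{g}$-action with the $\upsilon$-twisted one, giving an isomorphism $t:V\xto{\sim}V_\upsilon$ as $U_q\fk{g}$-modules. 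Since $V$ is a fundamental $U_q\tl{\fk{g}}$-module and $\upsilon$ interchanges the Drinfeld currents $X_i^\pm(z)$, one checks $\tau_z\circ\upsilon=\upsilon\circ\tau_{z^{-1}}$, and $t$ lifts to a $U_q\tl{\fk{g}}$-isomorphism $V(z^{-1})\xto{\sim}\bigl(V(z)\bigr)_\upsilon$.

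With these identifications in hand, the rational intertwiner $\check R^{V,V}(z):V(z)\otimes V\to V\otimes V(z)$, which commutes with every $g\in U_q\tl{\fk{g}}$, also commutes with $\upsilon(g)$. Tracking the twists yields
\[
(V(z)\otimes V)_\upsilon=(t\otimes t)\bigl(V(z^{-1})\mathop{\otimes}_{\op} V\bigr),\qquad (V\otimes V(z))_\upsilon=(t\otimes t)\bigl(V\mathop{\otimes}_{\op} V(z^{-1})\bigr),
\]
so $\check R^{V,V}(z)$ transports to an intertwiner of the corresponding $\otimes_{\op}$-modules. Up to normalization, that intertwiner must coincide with $\check R^{V,V}_{\op}(z^{-1})$ by uniqueness, yielding $\check R^{V,V}(z)=(t\otimes t)\,\check R^{V,V}_{\op}(z^{-1})\,(t\otimes t)$. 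Substituting $\check R^{V,V}_{\op}(z^{-1})=P\check R^{V,V}(z)P$ from \eqref{Rop} then produces \eqref{t and Rop}.

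The main obstacle is justifying that $t$ extends from a $U_q\fk{g}$-isomorphism to a full $U_q\tl{\fk{g}}$-isomorphism $V\xto{\sim}V_\upsilon$: condition \eqref{choice of basis} is imposed only for $j\in\mr{I}$, so one still owes compatibility at the affine node $0$. The exclusion of types A and $\mathrm{E}_6$ is exactly what makes the induced diagram involution on $L_{\omega_1}$ trivial, so the fundamental module is self-dual as a $U_q\fk{g}$-module and its Drinfeld polynomial is invariant under the twist. Uniqueness of the affinization then forces compatibility with $E_0,F_0$ from the $U_q\fk{g}$-compatibility, and the remainder of the argument is routine bookkeeping for tensor-product twists.
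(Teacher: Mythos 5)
Your proposal is correct and follows essentially the same route as the paper's proof: the same twist $\upsilon$ with $E_i\mapsto F_i$, $F_i\mapsto E_i$, $K_i\mapsto K_i^{-1}$, the identification $t:V(z^{-1})\xto{\sim}(V(z))_\upsilon$ via $\tau_z\circ\upsilon=\upsilon\circ\tau_{z^{-1}}$, and the final combination with \eqref{Rop}. Your extra remarks justifying compatibility at the affine node (which the paper dismisses with ``Clearly'') are a reasonable elaboration of the same argument rather than a different approach.
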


\subsection{Yangians} Yangians $Y(\fk{g})$ are well-known rational counterparts of (a half of) $U_q\tl{\fk{g}}$.

The categories of representations of $Y(\fk{g})$ and representations of $U_q\tl{\fk{g}}$ for generic $q$, are equivalent. Moreover, the dimensions of the corresponding irreducible $Y(\fk{g})$ and $U_q\tl{\fk{g}}$-modules coincide.

The Yangians also possess the $R$-matrices which
lead to solutions of the rational QYBE.
Namely, let $V_1$, $V_2$, $V_3$ be three representations of $Y(\fk{g})$, then
\eq{\label{rqybe1}
R^{V_1,V_2}_{12}(u)R^{V_1,V_3}_{13}(u+v)R^{V_2,V_3}_{23}(v)=R^{V_2,V_3}_{23}(v)R^{V_1,V_3}_{13}(u+v)R^{V_1,V_2}_{12}(u)\,,
}
\eq{\label{rqybe}
\check{R}^{V_1,V_2}_{23}(u)\check{R}^{V_1,V_3}_{12}(u+v)\check{R}^{V_2,V_3}_{23}(v)=\check{R}^{V_2,V_3}_{12}(v)\check{R}^{V_1,V_3}_{23}(u+v)\check{R}^{V_1,V_2}_{12}(u)\,.
}

The Yangian $R$-matrices $R^{V,W}(u)$ and rational solutions of the QYBE can be obtained from the $U_q\tl{\fk{g}}$ $R$-matrices $R^{V,W}(z)$ for corresponding representations by setting $z=q_1^{2u}$ and taking the limit $q\to 1$ (up to a constant change of parameter).

\section{The computation of the \texorpdfstring{$R$}{2}-matrices by the \texorpdfstring{$q$}{2}-characters}\label{meth sec}

We state an algorithm that finds the $R$-matrix $\check{R}(z)=\check{R}^{V,V}(z)$ for first fundamental representations $V=\tilde L_1$ of all Lie algebra types. 

\subsection{Cases of multiplicity one} 
We start with the multiplicity-free case which covers all types except for E$_8$.
In all types except for E$_8$, the module $\tilde L_1=L_{\omega_1}$ is irreducible as a representation of $U_q\fk{g}$ and the direct sum decomposition of the tensor product $L_{\omega_1}\otimes L_{\omega_1}$ is multiplicity-free. 

We expect that the same algorithm is applicable to all multiplicity-free cases. However, to justify it one needs to prove analogs of Theorem \ref{poles thm} and the applicability of the algorithm of the computation of the $q$-characters. 


\begin{alg}
\leavevmode
\begin{enumerate}
\item Find the decomposition $L_{\omega_{1}}\otimes L_{\omega_{1}}\cong L_{\lambda_{1}}\oplus\cdots\oplus L_{\lambda_{n}}$ of $U_q\fk{g}$-modules. Here $\lambda_{1}=2\omega_1.$ 
\item For $k=1,\dots,n$, let $P_{\lambda_{k}}$ be the projector onto $L_{\lambda_{k}}$ along other summands.
\item Then $\check{R}(z)=f_1(z)P_{\lambda_{1}}+\cdots+f_n(z)P_{\lambda_{n}}$ for some rational functions $f_k(u)$. We set $f_1(z)=1$.\label{rational}
\item Each $f_k(z)$ is determined up to a scalar multiple by finding its zeros and poles using $q$-characters.\label{zeros and poles}
\item Since $\check{R}(1)=\id$ , we get a unique expression for $\check{R}(z)$.
\end{enumerate}
\end{alg}

The part \eqref{rational} is based on Lemma \ref{lemR} \eqref{try}.
The part \eqref{zeros and poles} is based on Theorem \ref{poles thm} and the following theorem. 

\begin{thm}\label{order of zeroes thm}
The functions $f_k$ have no double poles nor double zeroes.    
\end{thm}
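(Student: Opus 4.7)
The plan is to combine the explicit value of $\check R(0)$ from Lemma \ref{R0} with the inversion relation $f_k(z) f_k(z^{-1}) = 1$ obtained from Lemma \ref{lemR}(\ref{inversion relation}) and the location of zeros and poles from the $q$-character analysis via Theorem \ref{poles thm}.

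First I establish the boundary behavior. By Lemma \ref{R0}, $f_k(0) = \pm q^{(C(\la_k) - C(2\omega_1))/2}$ is finite and nonzero, and the inversion relation at $z = 0$ gives $f_k(\infty) = f_k(0)^{-1}$, also finite and nonzero. Therefore every zero lies in $\C^\times$ and pairs with a pole at the reciprocal point of the same order, so
\[
f_k(z) = c_k \prod_{i} \left(\frac{1 - q^{s_{k,i}} z}{1 - q^{-s_{k,i}} z}\right)^{m_{k,i}}
\]
with distinct positive integers $s_{k,i}$ and multiplicities $m_{k,i} \ge 1$ to be shown to equal $1$.

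Next I enumerate the list $\{s_{k,i}\}$. By Theorem \ref{poles thm}, every zero and pole of $\check R(z)$ occurs at a reducibility point of $V(z) \otimes V$, and the $q$-character analysis of $V$ identifies these finitely many points together with the unique $U_q \fk g$-summand $L_{\la_k}$ that appears in the kernel or cokernel. This fixes for each $k$ the finite list $\{s_{k,i}\}$ of strictly positive $q$-exponents at which $f_k$ must vanish, and in particular forces $m_{k,i} \ge 1$ for each $i$.

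The final step is to show each $m_{k,i} = 1$. Using $f_k(1) = 1$ in the product form gives $c_k = (-1)^{\sum_i m_{k,i}} \prod_i q^{-s_{k,i} m_{k,i}}$, and equating the exponent of $q$ with that of the Casimir formula from Lemma \ref{R0} yields the single constraint $\sum_i s_{k,i}\,m_{k,i} = \tfrac{1}{2}(C(2\omega_1) - C(\la_k))$. A case-by-case verification shows that the right-hand side equals precisely $\sum_i s_{k,i}$, namely the value attained when every $m_{k,i} = 1$; since each $s_{k,i}$ is strictly positive and each $m_{k,i} \ge 1$, the only solution is $m_{k,i} = 1$ for all $i$. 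The main obstacle is this final Casimir matching: in the simply-laced types and most classical cases, Proposition \ref{order of zeroes prp} already yields simplicity by a cleaner $q$-character argument, but for types $C_r$, $F_4$, $G_2$ the simpler argument fails and Lemma \ref{R0} is essential, since its explicit Casimir evaluation of $\check R(0)$ is exactly the constraint needed to pin down the multiplicities uniformly across all types.
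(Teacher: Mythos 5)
Your proposal is correct and follows essentially the same route as the paper: write $f_k$ in product form using the rationality, the nonzero finite values at $0$ and $\infty$, the positivity of the pole exponents from Theorem \ref{poles thm} and \cite{C00}, and the inversion relation, then match the exponent of $q$ in $f_k(0)$ coming from $f_k(1)=1$ against the Casimir value from Lemma \ref{R0} and verify case by case that $\sum_i s_{k,i}=\tfrac12|C(2\omega_1)-C(\la_k)|$, forcing all multiplicities to be $1$. The only nitpick is a wording slip: the positive exponents $s_{k,i}$ are where $f_k$ has poles (the zeroes sit at $q^{-s_{k,i}}$), but this does not affect the argument.
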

\begin{proof} 
Let $q^{a_1^{(k)}},\dots,q^{a_{l_k}^{(k)}}$ be poles of $f_k(z)$. Note that in all cases $a_i^{(k)}\in\Z_{>0}$.
In every case, after computing $a_i^{(k)}$, we check that
\bee{
\sum_{j=1}^{l_k} a_j^{(k)}=\frac12(C(\la_k)-C(2\omega_1)).
}
We also have $f_k(1)=1$ because of $\check{R}(1)=\id$.
Then the theorem follows from Lemma \ref{R0}. Here we use Theorem \ref{poles thm}(\cite{FM01}) to conclude that all poles have the form $z=q^k$ with $k>0$ (see \cite{C00}). The corresponding zeroes have the form $z=q^{-k}$ by property (\ref{inversion relation}) of Lemma \ref{lemR}.

\end{proof}
At least for types A, B, D, E$_6$, and E$_7$, Theorem \ref{order of zeroes thm} can be deduced without case by case checking of Casimir values from the following general proposition.

\begin{prp}\label{order of zeroes prp}
The rational functions $f_k(z)$ have numerators and denominators of degree at most $n-1$.     
\end{prp}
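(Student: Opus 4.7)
The plan is to apply Jimbo's equation, which characterizes $\check R(z)$ (up to a scalar) by its intertwining with the zeroth Chevalley generator $E_0$, together with the fact that $E_0$ acts affinely in $z$ on $V(z)\otimes V$.

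First, from the Drinfeld realization, $E_0$ has loop-degree $\pm 1$, so under the spectral shift $\tau_z$ its action on $V(z)$ is $z$ times its action on $V$. Consequently both operators $\Delta(E_0)|_{V(z)\otimes V}$ and $\Delta(E_0)|_{V\otimes V(z)}$ are affine in $z$: each has the form $M_0+zM_1$ with $M_0,M_1$ independent of $z$. The same holds for $\Delta(F_0)$ if needed.

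Second, I fix $U_q\fk g$-highest weight vectors $v_k\in L_{\lambda_k}$, one for each summand of $V\otimes V\cong\bigoplus_{k=1}^n L_{\lambda_k}$. Since the decomposition is multiplicity-free, $\check R(z)$ acts on $L_{\lambda_k}$ by the scalar $f_k(z)$. Applying the intertwining relation
\[
\check R(z)\,\Delta(E_0)\big|_{V(z)\otimes V}=\Delta(E_0)\big|_{V\otimes V(z)}\,\check R(z)
\]
to $v_k$, and pairing both sides with a fixed weight vector $w\in L_{\lambda_j}$ of appropriate weight via the $U_q\fk g$-invariant Shapovalov form, yields
\[
f_j(z)\,a_{jk}(z)=f_k(z)\,b_{jk}(z),\qquad a_{jk},\,b_{jk}\in\C[z],\ \ \deg a_{jk},\deg b_{jk}\le 1.
\]

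Third, the core step is to build a rooted tree on $\{\lambda_1,\dots,\lambda_n\}$ with root $\lambda_1=2\omega_1$ such that every non-root $\lambda_k$ has a parent $\lambda_{p(k)}$ with both $a_{p(k),k}$ and $b_{p(k),k}$ nonzero. For types A, B, D, E$_6$, E$_7$, the combinatorics of the weights of $V\otimes V$ together with the explicit action of $E_0$ on extremal weight vectors furnishes such a tree, necessarily of depth at most $n-1$. Iterating the recursion
\[
\frac{f_k(z)}{f_{p(k)}(z)}=\frac{b_{p(k),k}(z)}{a_{p(k),k}(z)}
\]
along the unique root-to-$\lambda_k$ path, starting from $f_1(z)=1$, expresses $f_k(z)$ as a product of at most $k-1$ Möbius transformations of $z$. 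Its numerator and denominator therefore each have degree at most $k-1\le n-1$.

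The main obstacle is the third, combinatorial step: producing the required tree of non-vanishing edge coefficients. In types C$_r$, F$_4$, G$_2$ the natural candidates for tree edges are obstructed by vanishing of $a_{jk}$ or $b_{jk}$ (geometrically, consecutive summands may be linked only by an iterated or branched action of $E_0$), so the uniform bound $n-1$ in those types would require a different argument. The remaining steps are standard once the tree is in hand.
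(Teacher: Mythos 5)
Your first two steps (affinity of $\Delta(E_0)$ in $z$, and the pairwise relations $f_j(z)a_{jk}(z)=f_k(z)b_{jk}(z)$ with $\deg a_{jk},\deg b_{jk}\le 1$) are sound, but the argument has a genuine gap at the third step, and you concede as much: the existence of a spanning tree rooted at $2\omega_1$ all of whose edges carry two non-vanishing degree-one coefficients is merely asserted for types A, B, D, E$_6$, E$_7$, and is not established at all for C$_r$, F$_4$, G$_2$. The proposition, however, is stated and proved in the paper for all types; the remark that the argument ``does not apply'' to C$_r$, F$_4$, G$_2$ refers only to deducing Theorem \ref{order of zeroes thm} from the degree bound (there the bound $n-1$ is simply not sharp, as the table of actual degrees shows), not to the validity of the bound itself. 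A proof that leaves those types uncovered, and whose key combinatorial input is unverified even in the covered types, does not establish the statement.

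The paper's route avoids this combinatorics entirely. Setting $f_1(z)=1$, the intertwining relation with $F_0$ is read as an inhomogeneous $(n-1)\times(n-1)$ linear system for the unknowns $f_2,\dots,f_n$, whose coefficient matrix and right-hand side have entries that are linear in $z$. Existence and uniqueness of the normalized intertwiner guarantee that some choice of $n-1$ independent scalar equations has a generically invertible coefficient matrix, and Cramer's rule then exhibits each $f_k$ as a ratio of two determinants of $(n-1)\times(n-1)$ matrices with affine entries, hence with numerator and denominator of degree at most $n-1$. This is uniform in type and needs no non-vanishing of any individual matrix coefficient. Your tree scheme could in principle give sharper bounds (depth of the node rather than $n-1$), but to make it a proof you would have to verify the non-vanishing of the chosen $a_{jk}$, $b_{jk}$ case by case, which is exactly what the proposition is designed to avoid.
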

\begin{proof}
The matrix coefficients  of operator $F_0\in U_q\tl{\fk{g}}$ are linear functions of $z$. We have $f_1(z)=1$. To find $f_k(z)$, $k>1$, we need to solve an $(n-1)\times (n-1)$ non-homogeneous system with linear coefficients and  linear right hand sides. The proposition follows.
\end{proof}

The actual degrees of  numerators and denominators of functions $f_k(z)$ are given in the following table.

\begin{center}\begin{tabular}{c@{\hspace{1cm}} c@{\hspace{1.5cm}} c c c c c}
Type & $n$ & \multicolumn{5}{c}{Degrees} \\
 A & 2 & 0 & 1 & & &\\ 
 B & 3 & 0 & 1 & 2 & & \\
 C & 3 & 0 & 1 & 1 & & \\
 D & 3 & 0 & 1 & 2 & & \\
 E$_6$ & 3 & 0 & 1 & 2 & & \\
 E$_7$ & 4 & 0 & 1 & 2 & 3 & \\
 F$_4$ & 5 & 0 & 1 & 1 & 2 & 2 \\
 G$_2$ & 4 & 0 & 1 & 1 & 2 & 
\end{tabular} \end{center}

In the cases we consider here, the $R$-matrices are known and $\check R(z)$ computed by the algorithm simply match the known answers.

For every case, we give the $E_0$ and $F_0$ actions. The $R$-matrix can be directly checked to commute with the action of $E_0$. In \cite{J86} and \cite{DGZ94} the functions $f_k(z)$ were obtained from the commutativity with the $E_0$.

\medskip

The rational case can be obtained similarly. Alternatively, one can set $z=q^{2u}$ and take the limit of $q\to 1$.

\subsection{Cases with non-trivial multiplicities}  
In the case when the $U_q\fk{g}$-decomposition has multiplicity:
$$
V\otimes V\cong M_1\otimes L_{\lambda_{1}}\oplus\cdots\oplus M_n\otimes L_{\lambda_{n}}, \qquad m_k=\dim M_k,
$$
the functions $f_k(z)$ become $m_k\times m_k$ matrices after one chooses bases in the spaces of singular vectors.
The entries of $f_k(z)$ are rational functions. Then the computation with the $q$-characters produces zeros and poles of the determinants of these matrices and their rank when determinant is zero. In addition, we have
\eq{\label{propeties of f}
f_k(z)=Pf_k(z)P\ ,\quad f_k(1)=\id\ ,\quad f_k(z)f_k(z^{-1})=\id \ ,
}
where $P$ is the flip operator (acting on singular vectors). We also know $f_k(0)$ and $f_k(\infty)$. Finally, since the $R$-matrix is self-adjoint, and our basis is orthogonal, we know that the ratio of $ij$ and $ji$ entries of $f_k(z)$ with $i\neq j$ is the ratio of squares of the Shapovalov norms of the vectors corresponding to columns $j$ and $i$.

Finally, we use the following conjecture.
\begin{con}\label{conj:simple poles}
    Suppose $V(a)\otimes V$ has a single non-trivial submodule. Then the normalized $R$-matrix $\check{R}^{V,V}(z)$ has at most simple pole at $z=a$.
\end{con}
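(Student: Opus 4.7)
The plan is to argue by contradiction: assume $\check{R}(z)$ has a pole of order $N\ge 2$ at $z=a$, expand in a Laurent series, and show the first few coefficients produce more structure inside $V(a)\otimes V$ than a length-$2$ module can support. Write $\check{R}(z)=\sum_{k\ge 0}(z-a)^{k-N}A_k$ with $A_0\ne 0$ and substitute into the intertwining relation $\check{R}(z)\,\rho^{in}_z(x)=\rho^{out}_z(x)\,\check{R}(z)$, where $\rho^{in}_z,\rho^{out}_z$ are the $U_q\tl{\fk{g}}$-actions on $V(z)\otimes V$ and $V\otimes V(z)$ (both polynomial in $z$). Matching coefficients of $(z-a)^{-N+m}$ yields, for each $m\ge 0$,
\begin{equation*}
A_m\,\rho^{in}_a(x)-\rho^{out}_a(x)\,A_m=\sum_{j=1}^{m}\bigl(\rho^{out,(j)}_a(x)\,A_{m-j}-A_{m-j}\,\rho^{in,(j)}_a(x)\bigr),
\end{equation*}
where $\rho^{(j)}_a$ denotes the $j$-th Taylor coefficient of the action at $z=a$.

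The $m=0$ equation says $A_0$ is a $U_q\tl{\fk{g}}$-intertwiner $V(a)\otimes V\to V\otimes V(a)$. The uniqueness of the non-trivial submodule $W\subset V(a)\otimes V$, combined with the matching length-$2$ structure of $V\otimes V(a)$ (same Jordan--H\"older factors, with socle and head swapped by the general duality for tensor products of fundamentals), gives $\dim\hm(V(a)\otimes V,V\otimes V(a))=1$; hence $A_0$ factors as $V(a)\otimes V\twoheadrightarrow (V(a)\otimes V)/W\xrightarrow{\sim}W'\hookrightarrow V\otimes V(a)$, and in particular $\kr A_0=W$, $\im A_0=W'$, where $W'$ is the unique non-trivial submodule of $V\otimes V(a)$. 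The equations for $m\ge 1$ are $\ex^1$-style lifting conditions, and I would use them inductively to build a strictly decreasing chain $V(a)\otimes V\supsetneq W=U_1\supsetneq U_2\supsetneq\cdots\supsetneq U_N\supsetneq 0$ of $U_q\tl{\fk{g}}$-submodules, with $U_k$ extracted from the joint kernel of $A_0,\ldots,A_{k-1}$ after suitable correction for the derivative-action terms in the recursion. Such a chain has length $N+1$, so $N\ge 2$ would contradict length $2$, forcing $N\le 1$.

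The hard part will be promoting the iterated joint kernels to honest $U_q\tl{\fk{g}}$-submodules rather than mere linear subspaces: the recursion only presents them as kernels of maps that intertwine modulo lower-order data, and the upgrade requires careful control of how the derivative actions $\rho^{in,(j)}_a$ interact with the filtration, essentially an $\ex$-theoretic computation in the category of finite-dimensional $U_q\tl{\fk{g}}$-modules. Useful auxiliary tools are the inversion relation $\check{R}(z)\check{R}(z^{-1})=\id$ of Lemma \ref{lemR}(\ref{inversion relation}), which links the pole at $z=a$ to a matching zero at $z=a^{-1}$ and constrains the $A_k$, and self-adjointness in Lemma \ref{lemR}(\ref{R is self adjoint}), which dualizes kernel statements into cokernel statements and should be enough to pin down the $A_k$ up to the ambiguity already seen in the leading term. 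As a sanity check for the applications in this paper, in every explicit case considered the conjecture can be verified directly from the computed $f_k(z)$ together with the degree bound of Proposition \ref{order of zeroes prp}, so even if the general Ext-theoretic argument sketched above is only partial, the conjecture is independently confirmed in all instances it is actually invoked.
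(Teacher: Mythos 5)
The statement you are addressing is stated in the paper as Conjecture \ref{conj:simple poles}; the paper does not prove it, but uses it as a working hypothesis in the E$_8$ computation (the resulting $R$-matrix is then confirmed a posteriori by checking commutation with $E_0$). So there is no proof in the paper to compare against — and your text does not supply one either: it is a strategy whose decisive step you explicitly leave open.

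Your $m=0$ step is sound: the leading Laurent coefficient $A_0$ is an intertwiner, it annihilates the one-dimensional top weight space because the normalized matrix entry there is identically $1$, and since the hypothesis forces $V(a)\otimes V$ to be uniserial of length two, $\kr A_0=W$ and $\im A_0$ is simple. The gap is the induction for $m\ge 1$. The recursion only makes $A_m$ an intertwiner modulo terms built from $A_0,\dots,A_{m-1}$ and the derivative actions $\rho^{(j)}_a$, so the iterated joint kernels are not submodules; more fundamentally, the contradiction you aim for cannot take the form of a strictly decreasing chain of $N+1$ submodules of $V(a)\otimes V$ itself — that module has length two regardless of $N$, and nothing in the Laurent recursion manufactures new submodules of it. What actually detects the pole order is the pair of $\C[[z-a]]$-lattices $\Lambda$ and $\check{R}(z)\Lambda$ (equivalently, the elementary divisors of $\check{R}(z)$ over $\C[[z-a]]$), and a pole of order $N\ge 2$ is perfectly consistent with length two at the level of linear algebra; excluding it requires genuine representation-theoretic input (a rigidity or Ext-vanishing statement for the deformation), which is exactly the part you postpone. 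The paper's own A$_2$ adjoint example (Section \ref{mul cases A2}) shows that a double pole does occur once the length exceeds two, so any proof must use the length-two hypothesis in an essential way at precisely that step. Finally, your closing ``sanity check'' is partly circular: for E$_8$ the matrices $f_{\omega_1}(z)$ and $f_{\omega_0}(z)$ were computed by assuming the conjecture, and the authors note that Proposition \ref{order of zeroes prp} gives only a non-sharp bound in the non-trivial multiplicity case; the independent confirmation there comes from the $E_0$-commutation check, not from a degree bound.
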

In general we expect that the order of the pole at $z=a$ is at most one less than the number of irreducible subfactors. 

Note that in the trivial multiplicity case, we have Theorem \ref{order of zeroes thm}. Such an argument computes the determinants of $f_k(z)$.
We also have a general Proposition \ref{order of zeroes prp} which can be extended to non-trivial multiplicity case, though a bound it provides is not sharp.

With Conjecture \ref{conj:simple poles}, the properties  we discussed fix $f_k(z)$ up to a sign. We use the commutation relation with $E_0$ to fix the sign and check the final answer. 

For the case of $E_8$, the first fundamental representation (249-dimensional), splits as a representation of $U_q\fk{g}$, into a direct sum of irreducible first fundamental representation (248-dimensional) of $U_q\fk{g}$ and the trivial one-dimensional representation. Due to this, the direct sum decomposition of the second tensor power of $\tl{L}_1(a)$ has multiplicities, so we have a $2\times 2$ matrix $f_{\omega_0}(z)$ and a $3\times 3$ matrix $f_{\omega_1}(z)$. See Section \ref{E8 sec} for details.

\comment{
As a $U_q(E_8)$-module, $\tl{L}_1(a)$ decomposes as $L_{\omega_1}\oplus L_{\omega_0}$ and we have: 
$$\big(\tl{L}_1(a)\big)^{\otimes 2}\cong\big(\underbracket[0.1ex]{L_{\omega_1}}_{248}\oplus \underbracket[0.1ex]{L_{\omega_0}}_{1}\big)^{\otimes 2}\cong \underbracket[0.1ex]{L_{2\omega_1}}_{27000}\oplus \underbracket[0.1ex]{L_{\omega_2}}_{30380}\oplus \underbracket[0.1ex]{L_{\omega_7}}_{3875}\oplus\,3 \underbracket[0.1ex]{L_{\omega_1}}_{248}\oplus\,2 \underbracket[0.1ex]{L_{\omega_0}}_{1},$$
see Section \ref{E8 sec}.

Our general approach allows us to compute the $R$-matrix up to constants $\beta$ in the $3\times 3$ matrix, and $\eta$ in the $2\times 2$ matrix. Note that $L_{\omega_1}^{\otimes 2}$ has trivial multiplicities so Lemma \ref{R0} applies.
We fix $\beta$ and $\eta$ by a computer computation, using the commutativity of $\check{R}(z)$ with the action of $E_0$.
}

\medskip

Another way to obtain $R$-matrices with non-trivial multiplicities and for the other representations is provided by the fusion process which makes use of properties   
$$(\Delta\otimes\id)(\fk{R})=\fk{R}_{13}\fk{R}_{23}\,,\quad(\id\otimes\Delta)(\fk{R})=\fk{R}_{13}\fk{R}_{12}\,,$$
see Proposition \ref{R prop}.

We provide  two such examples to get extra examples of matrices corresponding to non-trivial multiplicities. 

First, in the case of G$_2$, we have
$$
\tl{L}_2(a)\subset \tl{L}_1(aq)\otimes \tl{L}_1(aq^{-1})\ .
$$
Therefore the intertwiner
$$
\tl{L}_1(zq)\otimes \tl{L}_1(zq^{-1})\otimes \tl{L}_1(q)\otimes \tl{L}_1(q^{-1}) \to \tl{L}_1(q)\otimes \tl{L}_1(q^{-1})\otimes \tl{L}_1(zq)\otimes \tl{L}_1(zq^{-1})
$$
given by 
\eq{\label{G2fusion}
\check{R}_{23}(zq^{2})\,\check{R}_{34}(z)\,\check{R}_{12}(z)\,\check{R}_{23}(zq^{-2})\ ,
}
where $\check{R}(z)=\check{R}^{\tl{L}_1,\tl{L}_1}(z)$, has a $225\times 225$ block. That block is the $R$-matrix  $\check{R}^{\tl{L}_2,\tl{L}_2}(z)$. This matrix can be checked to commute with $E_0$. Similar to the case of E$_8$, we have one $3\times 3$ matrix and one $2\times 2$ matrix, see Section \ref{mul cases G2}. As in the case of E$_8$ these two matrices can be found using $q$-characters, the knowledge of $\check{R}(0)$, $\check{R}(\infty)$ and the properties in \eqref{propeties of f}, up to a sign. 

\medskip

Second, in the case of A$_2$, we have 
$$\tl{L}_{\mr{1}_0\mr{2}_3}(a)\subset \tl{L}_1(a)\otimes \tl{L}_2(aq^3)\ .$$

Therefore the intertwiner 
$$\tl{L}_1(z)\otimes \tl{L}_2(zq^3)\otimes\tl{L}_1(1)\otimes\tl{L}_2(q^3)\to\tl{L}_1(1)\otimes\tl{L}_2(q^3)\otimes \tl{L}_1(z)\otimes \tl{L}_2(zq^3)$$
given by 
\eq{\label{A2fusion}
\check{R}_{23}^{12}(zq^{-3})\,\check{R}_{34}^{22}(z)\,\check{R}_{12}^{11}(z)\,\check{R}_{23}^{21}(zq^{3})\ ,
}
where $\check{R}^{ij}(z)$ is the $R$-matrix $\check{R}^{\tl{L}_i,\tl{L}_j}(z)$, has a $64\times 64$ block. That block is the $R$-matrix $\check{R}^{\tl{L}_{\mr{1}_0\mr{2}_3},\tl{L}_{\mr{1}_0\mr{2}_3}}(z)$. This matrix can be checked to commute with $E_0$.
In this case we get a $2\times 2$ matrix, see Section \ref{mul cases A2}. In this case the information obtained from $q$-characters seems to be insufficient as some submodules are indecomposable and we have a double pole.

\medskip

We note that the three $2\times 2$ matrices and two $3\times 3$ matrices we produce here together with the matrices appearing in the twisted cases, see \cite{DM25}, look alike.
We plan to discuss this phenomenon in the future.

\section{The classical cases}\label{cla sec}
 The  matrices $\check{R}(z)$ in classical types have been computed in \cite{J86}. The rational versions are given in \cite{KS82}. This section has no new $R$-matrices and serves as an illustration for our methods.

From now on, $\check{R}(z)$ denotes the intertwiner $\check{R}^{\tl{L}_1,\tl{L}_1}(z):\tl{L}_1(az)\otimes \tl{L}_1(a)\to\tl{L}_1(a)\otimes\tl{L}_1(az)$. When it is necessary to emphasize the dependence on $q$ we write $\check{R}(z\,;q)$ in place of $\check{R}(z)$.

For a space $L$, we denote $\cl{S}^2(L), \Lambda^2(L)\subset L\otimes L$ the symmetric and skew-symmetric squares of $L$.

\subsection{Type A\texorpdfstring{$_r\ (r\ge1)$}{2}}

The Dynkin diagram is: 
\bigskip
\begin{center}
\dynkin [extended, edge length=1.25cm, root radius=0.075cm, label macro/.code={\drlap{#1}}, labels={0, 1}, ordering=Kac] A[1]{1} \qquad $(r=1)$,
\end{center}
\medskip
\begin{center}
\dynkin [extended, edge length=1.25cm, root radius=0.075cm, label macro/.code={\drlap{#1}}, labels={0, 1, 2, r-1, r}, ordering=Kac] A[1]{} \qquad $(r>1)$.
\end{center}
The $(r+1)$-dimensional $U_q($A$_r^{(1)})$-module $\tl{L}_1(a)$ restricted to $U_q($A$_r)$ is isomorphic to $L_{\omega_1}$. As $U_q($A$_r)$-modules we have 
\eq{\label{tensorA} 
\underbracket[0.1ex]{L_{\omega_1}}_{r+1}\otimes \underbracket[0.1ex]{L_{\omega_1}}_{r+1}\cong \underbracket[0.1ex]{L_{2\omega_1}}_{\binom{r+2}{2}}\oplus \underbracket[0.1ex]{L_{\omega_2}}_{\binom{r+1}{2}}\ .
}
Here and in similar formulas, using under-brackets we show the dimensions of the modules.

In the $q\to 1$ limit, $L_{2\omega_1}\mapsto\cl{S}^2(L_{\omega_1})$ and $L_{\omega_2}\mapsto \Lambda^2(L_{\omega_1})$. For $r=1$, $L_{\omega_2}$ has to be replaced with $L_{\omega_0}$.

The $q$-character of  $\tl L_1=\tl{L}_{\mr{1}_0}$ has $r+1$ terms and there are no weight zero terms:
$$\chi_q(\mr{1}_0)=\mr{1}_0+\ul{\mr{1}_2^{-1}\mr{2}_1}+\mr{2}_3^{-1}\mr{3}_2+\mr{3}_4^{-1}\mr{4}_3+\cdots+(\mr{r}-\mr{1})_r^{-1}\mr{r}_{r-1}+\mr{r}_{r+1}^{-1}\,.$$
We underline monomials which may produce dominant monomials in the product $\chi_q(\mr{1}_0)\chi_q(\mr{1}_a)$.

Using the $q$-characters we compute the zeros and poles of $\check{R}(z)$ and the corresponding kernels and cokernels. Here we loosely say $z$ is a zero of an $R$-matrix if the $R$-matrix is a well defined but a degenerate operator (not totally zero operator). We repeatedly use Theorem \ref{q char arg}  to show that the participating $q$-characters have only one dominant monomial, see the discussion of \eqref{qchar arg for G2}. We can tell apart zeroes from poles since $z=q^k$ with $k<0$ corresponds to the cyclic tensor products by \cite{C00}, and therefore to zeroes of the $R$-matrix.
Here and below we do not give details of such standard computations with the $q$-characters and summarize the results in subsequent lemmas.  In the lemmas we show only poles of $\check{R}(z)$ and isomorphisms are isomorphisms of $U_q({\fk g})$-modules. The zeroes are obtained by changing $q\to q^{-1}$ and 
Quotient modules $\leftrightarrow$ Submodules.
\begin{lem}
The poles of the $R$-matrix $\check{R}(z)$, the corresponding submodules and quotient modules are given by
\begin{center}\begin{tabular}{c@{\hspace{1cm}} c c}
Poles & Submodules & Quotient modules  \\
$q^{2}$ & $\tl{L}_{\mr{1}_a\mr{1}_{aq^{-2}}}\cong L_{2\omega_1}$ & \hspace{10pt} $\tl{L}_{\mr{2}_{aq^{-1}}}\cong L_{\omega_2}\ (L_{\omega_0}\text{\ for\ }r=1)$ \\
\end{tabular}.\end{center}
\qed\end{lem}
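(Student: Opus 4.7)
The plan is to apply the $q$-character method illustrated by the $G_2$ example in the introduction. The character $\chi_q(\mr{1}_0)$ for type $A_r$ has a single $1$-dominant non-leading monomial, the underlined $\mr{1}_2^{-1}\mr{2}_1$; every other non-leading monomial carries a factor $Y_{j,\cdot}^{-1}$ with $j\ge 2$ and no matching positive power. Hence, in the product $\chi_q(\mr{1}_{az})\chi_q(\mr{1}_a)$, an extra dominant monomial can arise only when either $\mr{1}_{az}\cdot \mr{1}_{aq^2}^{-1}\mr{2}_{aq}$ is dominant, forcing $z=q^2$ and producing the extra dominant monomial $\mr{2}_{aq}$, or symmetrically when $z=q^{-2}$. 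By Theorem~\ref{poles thm}, the $R$-matrix $\check R(z)$ is therefore regular and invertible away from $z=q^{\pm 2}$.

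I now focus on the pole $z=q^2$, and after relabeling $a\mapsto aq^{-2}$ write the tensor product as $\tl L_1(a)\otimes \tl L_1(aq^{-2})$, whose $\ell$-weights produce the dominant monomials $\mr{1}_a \mr{1}_{aq^{-2}}$ and $\mr{2}_{aq^{-1}}$. I then apply Theorem~\ref{q char arg} to $V=\tl L_{\mr{1}_a \mr{1}_{aq^{-2}}}$ with highest $\ell$-weight $m=\mr{1}_a \mr{1}_{aq^{-2}}$, $i=1$, $b=aq^{-1}$, and $m_-=m\,A_{1,aq^{-1}}^{-1}=\mr{2}_{aq^{-1}}$. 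Conditions \eqref{q char arg 0} and \eqref{q char arg 1} are immediate: the exponents of $Y_{1,a}$ and $Y_{1,aq^{-2}}$ in $m$ are both $1$, and $mA_{i,c}$ has weight strictly exceeding the highest weight $2\omega_1$ of $V$. Conditions \eqref{q char arg 2} and \eqref{q char arg 3} follow because $m_-$ has weight $\omega_2$, which occurs with multiplicity one in $L_{2\omega_1}$, and for $(j,c)\ne(1,aq^{-1})$ the weight $\omega_2+\alpha_j$ either gives only the allowed case $c=aq^{-1}$ back to $m$ (for $j=1$, since the $2\omega_1$ weight space is one-dimensional and spanned by $m$) or falls outside the weight support of $L_{2\omega_1}$ (for $j\ne 1$). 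This yields $\mr{2}_{aq^{-1}}\notin\chi_q(V)$ and the decomposition $\chi_q(\mr{1}_a)\chi_q(\mr{1}_{aq^{-2}})=\chi_q(\mr{1}_a \mr{1}_{aq^{-2}})+\chi_q(\mr{2}_{aq^{-1}})$.

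The two summands are identified as $U_q\fk{g}$-modules by comparison with \eqref{tensorA}: $\tl L_{\mr{1}_a \mr{1}_{aq^{-2}}}$ has highest weight $2\omega_1$ and hence equals $L_{2\omega_1}$, while $\tl L_{\mr{2}_{aq^{-1}}}$ has highest weight $\omega_2$ and equals $L_{\omega_2}$; when $r=1$, the factor $\mr{2}_{aq^{-1}}$ is vacuous and the summand reduces to the trivial module $L_{\omega_0}$. By \cite{C00} or \cite{Ka02}, for $z=q^k$ with $k>0$ the submodule of the reducible tensor product is generated by the product of highest weight vectors, and so coincides with the highest-weight summand $\tl L_{\mr{1}_a \mr{1}_{aq^{-2}}}\cong L_{2\omega_1}$, leaving $\tl L_{\mr{2}_{aq^{-1}}}\cong L_{\omega_2}$ as the quotient. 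By Theorem~\ref{poles thm} this reducibility at $z=q^2$ corresponds to the stated pole of $\check R(z)$. The main technical step is the verification of the hypotheses of Theorem~\ref{q char arg}, but the simple weight structure of $L_{2\omega_1}$ in type $A_r$ (no zero-weight vectors, all weights of multiplicity one) makes this elementary.
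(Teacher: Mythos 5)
Your proof is correct and follows essentially the route the paper intends for this lemma (the ``standard computation with $q$-characters'' modeled on the G$_2$ example in the introduction): locate the unique extra dominant monomial in $\chi_q(\mr{1}_{az})\chi_q(\mr{1}_a)$, apply Theorem~\ref{q char arg} to exclude $\mr{2}_{aq^{-1}}$ from $\chi_q(\tl{L}_{\mr{1}_a\mr{1}_{aq^{-2}}})$, identify the two constituents against \eqref{tensorA}, and invoke \cite{C00} together with Theorem~\ref{poles thm} to sort submodule from quotient and zero from pole. One terminological slip worth fixing: $\mr{1}_2^{-1}\mr{2}_1$ is not ``$1$-dominant'' (it is $j$-dominant precisely for $j\neq 1$); what you mean, correctly, is that it is the unique non-leading monomial whose only negative factor is a power of $Y_{1,\cdot}$, so that only it can combine with the leading monomial $\mr{1}_{az}$ of the other factor to give a dominant product.
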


We choose a basis $\{v_i:1\le i\le r+1\}$ for $L_{\omega_1}$ in the standard way, so that $v_1$ is a non-zero highest weight vector and $F_iv_i=v_{i+1}$. In the chosen basis, $v_1\otimes v_1$ is a singular vector of weight $2\omega_1$, and $q\,v_1\otimes v_2-v_2\otimes v_1$ is a singular vector of weight $\omega_2$. We generate respectively the modules $L_{2\omega_1}$ and $L_{\omega_2}$ using these singular vectors. 

For $\lambda=2\omega_1,\omega_2$, let $P_\lambda^q$ be the projector onto the $U_q($A$_r)$-module $L_\lambda$ in the decomposition \eqref{tensorA}, and let $E_{ij}$ be matrix units corresponding to the chosen basis, that is, $E_{ij}(v_k)=\D_{jk}v_i$. 

\begin{thm}
In terms of projectors, we have
\eq{\label{R proj A}
\check{R}(z)=P_{2\omega_1}^q-q^{-2}\frac{1-q^{2}z}{1-q^{-2}z}P_{\omega_2}^q\ .
}
In terms of matrix units, we have
\eq{\label{RqA}
\check{R}(z)=\sum_{i=1}^{r+1} E_{ii}\otimes E_{ii}\,+\,\frac{z(q-q^{-1})}{q-q^{-1}z}\,\sum_{i<j}E_{ii}\otimes E_{jj}\,+\,\frac{q-q^{-1}}{q-q^{-1}z}\,\sum_{i>j}E_{ii}\otimes E_{jj}\,+\,\frac{1-z}{q-q^{-1}z}\,\sum_{i\ne j}E_{ij}\otimes E_{ji}\ .
}
\qed
\end{thm}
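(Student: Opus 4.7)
The plan is to treat the projector formula and the matrix unit formula as two separate tasks; the first is an immediate application of the algorithm laid out in Section \ref{meth sec}, while the second is essentially a bookkeeping exercise in the chosen basis.

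For the projector formula, I would run the five-step algorithm. The decomposition \eqref{tensorA} is multiplicity-free with two summands, so by Lemma \ref{lemR}\eqref{try} there exist rational scalars $f_1(z), f_2(z)$ with $\check{R}(z) = f_1(z)\,P_{2\omega_1}^q + f_2(z)\,P_{\omega_2}^q$. Evaluation on the highest weight vector $v_1 \otimes v_1$ normalizes $f_1(z) = 1$. By the preceding lemma (on poles of $\check{R}(z)$), the only pole of $f_2$ is simple at $z = q^2$, and by Lemma \ref{lemR}\eqref{inversion relation} the only zero is simple at $z = q^{-2}$; Theorem \ref{order of zeroes thm} guarantees that these are indeed simple. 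Hence $f_2(z) = c\,(1 - q^{2}z)/(1 - q^{-2}z)$, and $\check{R}(1) = \id$ forces $c = -q^{-2}$, giving \eqref{R proj A}.

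For the matrix-units formula \eqref{RqA}, the strategy is to explicitly compute $P_{2\omega_1}^q$ and $P_{\omega_2}^q$ in the basis $\{v_i \otimes v_j\}_{1 \le i,j \le r+1}$ and then substitute into \eqref{R proj A}. The diagonal vectors $v_i \otimes v_i$ all lie in $L_{2\omega_1}$ (they are images of $v_1 \otimes v_1$ under lowering operators), so $P_{2\omega_1}^q$ fixes them and $P_{\omega_2}^q$ annihilates them; this produces the $\sum_i E_{ii}\otimes E_{ii}$ term. For each pair $i < j$, the two-dimensional weight space with basis $\{v_i\otimes v_j, v_j\otimes v_i\}$ decomposes as a direct sum of one line in $L_{2\omega_1}$ (spanned by a $q$-symmetric combination $v_i\otimes v_j + q^{-1}\,v_j\otimes v_i$) and one line in $L_{\omega_2}$ (spanned by the $q$-antisymmetric $q\,v_i\otimes v_j - v_j\otimes v_i$, as given for the singular case $i=1,j=2$ and propagated by lowering operators). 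Inverting the change of basis expresses $P_{2\omega_1}^q$ and $P_{\omega_2}^q$ restricted to this plane as explicit $2\times 2$ matrices whose entries are scalar multiples of $1, q, q^{-1}, [2]^{-1}$. Plugging these into the projector formula and simplifying the coefficient $1 - q^{-2}(1-q^{2}z)/(1-q^{-2}z)$ on each four-dimensional block yields precisely the coefficients $\tfrac{z(q-q^{-1})}{q-q^{-1}z}$, $\tfrac{q-q^{-1}}{q-q^{-1}z}$, $\tfrac{1-z}{q-q^{-1}z}$ appearing in \eqref{RqA}.

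The only place where care is required is keeping track of the $q$-asymmetry between the two orderings $i < j$ and $i > j$, since the projectors themselves are not symmetric in $v_i \otimes v_j$ versus $v_j \otimes v_i$; the off-diagonal entries of $P_{\omega_2}^q$ restricted to each weight plane carry a factor of $q^{\pm 1}$ that must be combined with the scalar $-q^{-2}(1-q^2z)/(1-q^{-2}z)$ to yield the stated $z$-dependent and $z$-independent weights. Once this is done, a direct check that the formula reduces to the identity at $z = 1$ (both off-diagonal families collapse to zero and each diagonal coefficient becomes $1$) provides a useful sanity verification.
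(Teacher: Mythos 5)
Your proposal is correct and follows essentially the same route as the paper: the five-step algorithm of Section \ref{meth sec} (multiplicity-free decomposition, rationality from Lemma \ref{lemR}\eqref{try}, poles/zeros from the $q$-character lemma together with Theorem \ref{order of zeroes thm} and the inversion relation, normalization at $z=1$) for \eqref{R proj A}, and explicit computation of the projectors on each two-dimensional weight plane for \eqref{RqA}. One small slip worth fixing when you carry out the computation: with the symmetric coproduct \eqref{coproduct} the line of $L_{2\omega_1}$ in the plane spanned by $v_i\otimes v_j,\ v_j\otimes v_i$ ($i<j$) is $v_i\otimes v_j + q\,v_j\otimes v_i$, not $v_i\otimes v_j + q^{-1}\,v_j\otimes v_i$ (it must be orthogonal to $q\,v_i\otimes v_j - v_j\otimes v_i$); this does not affect the validity of the argument.
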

One can directly check that the $R$-matrix commutes with the action of $E_0$ and $F_0$. Namely,
\eq{\label{R and E0 commutation relation}
\check{R}(a/b)\,\Delta E_0(a,b)=\Delta E_0(b,a)\,\check{R}(a/b)\quad\text{and}\quad \check{R}(a/b)\,\Delta F_0(a,b)=\Delta F_0(b,a)\,\check{R}(a/b)\ ,
}
where 
$\Delta E_0(a,b)=E_0(a)\otimes K_0^{1/2}+K_0^{-1/2}\otimes E_0(b)$, $\Delta F_0(a,b)=F_0(a)\otimes K_0^{1/2}+K_0^{-1/2}\otimes F_0(b)$,
$$K_0=q^{-1}E_{11}+\sum_{i=2}^rE_{ii}+qE_{r+1,r+1}\ ,\quad E_0(a)=a E_{r+1,1}\ ,$$
and $F_0(a)=a^{-1}E_{1,r+1}$ is the transpose of $a^{-2}E_0(a)$.

Let $
\displaystyle P_\lambda=\lim_{q\to 1}P_{\lambda}^q\,\,$ be the $U($A$_r)$ projector, let $I$ be the identity operator, and let $P$ be the flip operator.
\begin{cor}
In the rational case, the corresponding rational $R$-matrix is given by 
\eq{\label{RuA}
\check{R}(u)=P_{2\omega_1}+\frac{1+u}{1-u}P_{\omega_2}=\frac{1}{1-u}(I-uP)\ .
}
\end{cor}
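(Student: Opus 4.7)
The plan is to obtain the rational $R$-matrix from the trigonometric one by the substitution $z=q^{2u}$ followed by the limit $q\to 1$, as indicated at the end of Section~2 on Yangians. First I would apply this substitution to the projector form \eqref{R proj A}, namely to
$$\check{R}(q^{2u};q)=P_{2\omega_1}^q-q^{-2}\,\frac{1-q^{2(1+u)}}{1-q^{2(u-1)}}\,P_{\omega_2}^q.$$
The projectors $P_\lambda^q$ tend to the classical projectors $P_\lambda$ by their very definition, while for the scalar coefficient I expand $1-q^{2x}=-2x\log q+O((\log q)^2)$ in numerator and denominator; the $\log q$ cancels and the ratio converges to $-(1+u)/(u-1)$. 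Combined with $q^{-2}\to 1$, this produces
$$\check{R}(u)=P_{2\omega_1}+\frac{1+u}{1-u}\,P_{\omega_2}.$$

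For the second equality I would use the fact that in the $q\to 1$ limit the decomposition \eqref{tensorA} becomes the symmetric-plus-antisymmetric decomposition $L_{\omega_1}^{\otimes 2}=\mathcal{S}^2(L_{\omega_1})\oplus \Lambda^2(L_{\omega_1})$, as stated just after \eqref{tensorA}. Hence the flip operator satisfies $P=P_{2\omega_1}-P_{\omega_2}$ while $I=P_{2\omega_1}+P_{\omega_2}$, and a one-line computation
$$I-uP=(1-u)P_{2\omega_1}+(1+u)P_{\omega_2}$$
gives the claimed identity $\check{R}(u)=\frac{1}{1-u}(I-uP)$.

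There is no real obstacle: the only subtle point is to handle the $0/0$ indeterminacy cleanly at $q=1$, and the expansion above takes care of it. One could alternatively obtain the matrix-unit form directly by substituting $z=q^{2u}$ in \eqref{RqA}: the prefactors $z(q-q^{-1})/(q-q^{-1}z)$ and $(q-q^{-1})/(q-q^{-1}z)$ both tend to $u/(1-u)$ and the coefficient $(1-z)/(q-q^{-1}z)$ tends to $-u/(1-u)$ by the same L'Hôpital-style expansion, yielding
$$\check{R}(u)=\sum_i E_{ii}\otimes E_{ii}+\frac{u}{1-u}\sum_{i\neq j}E_{ii}\otimes E_{jj}-\frac{u}{1-u}\sum_{i\neq j}E_{ij}\otimes E_{ji},$$
which equals $\frac{1}{1-u}(I-uP)$ since $I=\sum_{i,j}E_{ii}\otimes E_{jj}$ and $P=\sum_{i,j}E_{ij}\otimes E_{ji}$. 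This double check confirms the corollary.
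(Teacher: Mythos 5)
Your main argument is exactly the paper's proof: substitute $z=q^{2u}$ into \eqref{R proj A} and take $q\to 1$, and your careful handling of the $0/0$ limit together with the identities $I=P_{2\omega_1}+P_{\omega_2}$, $P=P_{2\omega_1}-P_{\omega_2}$ is correct. One slip in your supplementary matrix-unit check: the prefactors $z(q-q^{-1})/(q-q^{-1}z)$ and $(q-q^{-1})/(q-q^{-1}z)$ tend to $1/(1-u)$, not $u/(1-u)$ (with $q=e^{\epsilon}$ the denominator is $\sim(2-2u)\epsilon$ and the numerators are $\sim 2\epsilon$), so the correct limit is
$$\sum_i E_{ii}\otimes E_{ii}+\frac{1}{1-u}\sum_{i\neq j}E_{ii}\otimes E_{jj}-\frac{u}{1-u}\sum_{i\neq j}E_{ij}\otimes E_{ji}\ ,$$
which does equal $\frac{1}{1-u}(I-uP)$; as written, your version with coefficient $u/(1-u)$ on the $E_{ii}\otimes E_{jj}$ terms would not (it fails already at $u=0$). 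This does not affect the validity of your primary derivation via projectors.
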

\begin{proof}
We substitute $z=q^{2u}$ in \eqref{R proj A} and \eqref{RqA} and take the limit $q\to 1$.
\end{proof}

\subsection{Type B\texorpdfstring{$_r\ (r\ge 2)$}{2}}

The Dynkin diagrams are: 
\bigskip
\begin{center}
\dynkin [extended, edge length=1.25cm, root radius=0.075cm, label macro/.code={\drlap{#1}}, labels={0, 1, 2}, ordering=Kac] B[1]{2} \qquad $(r=2)$,
\end{center}
\medskip
\begin{center}
\dynkin [extended, edge length=1.25cm, root radius=0.075cm, label macro/.code={\drlap{#1}}, labels={0, 1, 2, 3, r-2, r-1, r}, ordering=Kac] B[1]{} \qquad $(r>2)$.
\end{center}

The $(2r+1)$-dimensional $U_q($B$_r^{(1)})$-module $\tl{L}_1(a)$ restricted to $U_q($B$_r)$ is isomorphic to $L_{\omega_1}$. For $r>2$, as  $U_q($B$_r)$-modules we have
\eq{\label{tensorB}
\underbracket[0.1ex]{L_{\omega_1}}_{2r+1}\otimes \underbracket[0.1ex]{L_{\omega_1}}_{2r+1}\cong \underbracket[0.1ex]{L_{2\omega_1}}_{r(2r+3)}\oplus \underbracket[0.1ex]{L_{\omega_2}}_{\binom{2r+1}{2}}\oplus \underbracket[0.1ex]{L_{\omega_0}}_{1}\ .
}
In the $q\to 1$ limit, $L_{2\omega_1}\oplus L_{\omega_0}\mapsto \cl{S}^2(L_{\omega_1})$ and $L_{\omega_2}\mapsto \Lambda^2(L_{\omega_1})$. For $r=2$, $L_{\omega_2}$ has to be replaced with $L_{2\omega_2}$.

For $r=2$, the $q$-character of $\tl L_1=\tl{L}_{{\mr{1}_0}}$ has $5$ terms and there is 1 weight zero term (shown in box): 
$$\chi_q(\mr{1}_0)=\mr{1}_{0} + \ul{\mr{1}_{4}^{-1}\mr{2}_{1}\mr{2}_{3}} + \boxed{\mr{2}_{5}^{-1}\mr{2}_{1}} + \mr{1}_{2}\mr{2}_{3}^{-1}\mr{2}_{5}^{-1} + \ul{\mr{1}_{6}^{-1}}\ .$$
Using the $q$-characters, we compute the zeros and poles of $\check{R}(z)$ and the corresponding kernels and cokernels.
\begin{lem}
The poles of the $R$-matrix $\check{R}(z)$, the corresponding submodules and quotient modules are given by
\begin{center}\begin{tabular}{c@{\hspace{1cm}} c c}
Poles &  Submodules & Quotient modules  \\

$q^{4}$ & $\tl{L}_{\mr{1}_a\mr{1}_{aq^{-4}}}\cong L_{2\omega_1}$ & $\hspace{25pt} \tl{L}_{\mr{2}_{aq^{-1}}\mr{2}_{aq^{-3}}}\cong L_{2\omega_2}\oplus L_{\omega_0}$ 
\vspace{2pt}\\

$q^{6}$ & $\hspace{33pt} \tl{L}_{\mr{1}_a\mr{1}_{aq^{-6}}}\cong L_{2\omega_1}\oplus L_{2\omega_2}$ & $\hspace{23pt}\tl{L}_{\scriptscriptstyle{1}}\cong L_{\omega_0}$ \\
\end{tabular}\ .\end{center}
\qed
\end{lem}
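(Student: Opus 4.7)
The plan is to use Theorem \ref{poles thm} together with the $q$-character $\chi_q(\mr{1}_0)$ shown above, following exactly the strategy outlined in the G$_2$ example from the introduction. By Theorem \ref{poles thm}, a pole of $\check{R}(z)$ at $z=q^k$ is equivalent to the reducibility of $\tl L_1(q^k)\otimes \tl L_1$, and in the Grothendieck ring this is detected by the presence of a dominant monomial other than $\mr{1}_k\mr{1}_0$ in the product $\chi_q(\mr{1}_k)\chi_q(\mr{1}_0)$. Since the only monomials of $\chi_q(\mr{1}_0)$ which are $2$-dominant and whose $\mr{1}^{-1}$ factors can be cancelled by multiplying with $\mr{1}_k$ or $\mr{1}_0$ are the two underlined ones $\mr{1}_4^{-1}\mr{2}_1\mr{2}_3$ and $\mr{1}_6^{-1}$, the extra dominant monomials can occur only at $k=\pm4$ and $k=\pm6$. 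A direct inspection yields $\mr{2}_1\mr{2}_3$ at $k=4$, the unit $1$ at $k=6$, and their shifted analogues $\mr{2}_{-3}\mr{2}_{-1}$ and $1$ for $k=-4,-6$.

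Next, I would deal with the positive values $k=4$ and $k=6$, which correspond to the poles listed in the lemma. By \cite{C00}, for $k>0$ the tensor product $\tl L_1(q^k)\otimes \tl L_1$ is cyclic on the product of highest $\ell$-weight vectors, so the unique nontrivial submodule must contain $\mr{1}_k\mr{1}_0$, forcing it to be isomorphic to $\tl L_{\mr{1}_k\mr{1}_0}$, while the quotient is the irreducible module corresponding to the second dominant monomial. Thus at $z=q^4$ the submodule is $\tl L_{\mr{1}_a\mr{1}_{aq^{-4}}}$ and the quotient is $\tl L_{\mr{2}_{aq^{-1}}\mr{2}_{aq^{-3}}}$, while at $z=q^6$ the submodule is $\tl L_{\mr{1}_a\mr{1}_{aq^{-6}}}$ and the quotient is the trivial module $\tl L_{1}$. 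The $k=-4,-6$ cases yield the corresponding zeroes and are consistent via property (\ref{inversion relation}) of Lemma \ref{lemR}.

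It remains to identify these $U_q\tl{\fk g}$-modules as $U_q\fk g$-modules. For this I would compute $\chi_q(\mr{1}_4\mr{1}_0)$, $\chi_q(\mr{2}_1\mr{2}_3)$ and $\chi_q(\mr{1}_6\mr{1}_0)$ by the Frenkel--Mukhin algorithm \cite{FM01}, which is justified once one verifies that each is special, i.e., has a unique dominant monomial. The product $\chi_q(\mr{1}_k)\chi_q(\mr{1}_0)$ carries two dominant monomials, and Theorem \ref{q char arg} is used to eliminate one of them from each summand: the conditions (\ref{q char arg 0})--(\ref{q char arg 3}) can be checked combinatorially from the explicit list of seven monomials of $\chi_q(\mr{1}_0)$. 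Dropping spectral parameters and reading off weights then identifies $\tl L_{\mr{1}_4\mr{1}_0}\cong L_{2\omega_1}$ and $\tl L_{\mr{2}_1\mr{2}_3}\cong L_{2\omega_2}\oplus L_{\omega_0}$ for the $z=q^4$ pole, and $\tl L_{\mr{1}_6\mr{1}_0}\cong L_{2\omega_1}\oplus L_{2\omega_2}$ with trivial quotient for the $z=q^6$ pole. As a final consistency check, the dimensions add up to $14+11=25$ and $24+1=25$, matching $(\dim L_{\omega_1})^2=25$ from the decomposition following \eqref{tensorB}.

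The main obstacle is the step of invoking Theorem \ref{q char arg} uniformly in each of the cases $\mr{1}_4\mr{1}_0$, $\mr{2}_1\mr{2}_3$, $\mr{1}_6\mr{1}_0$: one must have enough of $\chi_q$ in hand to test that no $\ell$-weight obtained by acting with a simple $\ell$-root already lies in the $q$-character. This is tractable but somewhat delicate since one is simultaneously trying to compute the $q$-character and certify that the FM algorithm is applicable; in practice it is most efficient to partially run the algorithm, apply Theorem \ref{q char arg} to rule out the spurious dominant monomial at the first step, and then let the algorithm terminate.
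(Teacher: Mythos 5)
Your overall strategy coincides with the paper's: locate the candidate reducibility points from the $2$-dominant monomials of $\chi_q(\mr{1}_0)$ whose $\mr{1}^{-1}$-factor can be cancelled (here $\mr{1}_4^{-1}\mr{2}_1\mr{2}_3$ and $\mr{1}_6^{-1}$), separate poles from zeroes by the sign of the exponent, and identify the composition factors by computing the relevant $q$-characters with Theorem \ref{q char arg} and the algorithm of \cite{FM01}. The poles $q^4$, $q^6$, the listed submodules and quotients, and the dimension count $14+11=24+1=25$ are all correct. (A small slip: $\chi_q(\mr{1}_0)$ for B$_2$ has five monomials, not seven; you seem to have carried the count over from the G$_2$ example.)

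One step, however, is stated backwards, and as written it would prove the opposite of the lemma. You assert that for $k>0$ the product $\tl{L}_1(q^k)\otimes\tl{L}_1$ is \emph{cyclic} on the product of highest $\ell$-weight vectors, and then conclude that the unique nontrivial submodule \emph{contains} that vector. These two claims are incompatible: a proper submodule of a cyclic module never contains the generator, so literal cyclicity would make $\tl{L}_{\mr{1}_a\mr{1}_{aq^{-k}}}\cong L_{2\omega_1}$ the irreducible \emph{quotient} and the $11$-dimensional piece the submodule, contradicting the table. What \cite{C00}, \cite{Ka02} actually supply (and what the paper uses) is the dual statement: for $k>0$ every nonzero submodule of $\tl{L}_1(q^k)\otimes\tl{L}_1$ contains the product of highest $\ell$-weight vectors, so the submodule generated by that vector is the unique minimal one, hence irreducible and isomorphic to $\tl{L}_{\mr{1}_k\mr{1}_0}$. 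Cyclicity is the $k<0$ situation, where the maximal proper submodule avoiding the highest $\ell$-weight vector becomes the kernel of $\check{R}$ and produces the zeroes. With this correction the argument closes as intended.
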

For $r>2$, the $q$-character of $\tl L_1=\tl{L}_{\mr{1}_0}$ has $2r+1$ terms and there is 1 weight zero term (shown in box):
$$\chi_q(\mr{1}_0)=\mr{1}_0+\ul{\mr{1}_4^{-1}\mr{2}_2}+\cdots+(\mr{r}-\mr{2})_{2r-2}^{-1}(\mr{r}-\mr{1})_{2r-4}$$
$$+\,(\mr{r}-\mr{1})_{2r}^{-1}\mr{r}_{2r-3}\mr{r}_{2r-1}+\boxed{\mr{r}_{2r+1}^{-1}\mr{r}_{2r-3}}+(\mr{r}-\mr{1})_{2r-2}\mr{r}_{2r-1}^{-1}\mr{r}_{2r+1}^{-1}$$
$$+\,(\mr{r}-\mr{2})_{2r}(\mr{r}-\mr{1})_{2r+2}^{-1}+\cdots+\mr{1}_{4r-6}\mr{2}_{4r-4}^{-1} + \ul{\mr{1}_{4r-2}^{-1}}\ .$$
Using the $q$-characters, we compute the zeros and poles of $\check{R}(z)$ and the corresponding kernels and cokernels.
\begin{lem}
The poles of the $R$-matrix $\check{R}(z)$, the corresponding submodules and quotient modules are given by
\begin{center}\begin{tabular}{c@{\hspace{1cm}} c c}
Poles & Submodules & Quotient modules  \\

$q^{4}$ & $\tl{L}_{\mr{1}_a\mr{1}_{aq^{-4}}}\cong L_{2\omega_1}$ & $\hspace{20pt} \tl{L}_{\mr{2}_{aq^{-2}}}\cong L_{\omega_2}\oplus L_{\omega_0}$ 
\vspace{2pt}\\

$q^{4r-2}$ & $\hspace{18pt} \tl{L}_{\mr{1}_a\mr{1}_{aq^{-4r+2}}}\cong L_{2\omega_1}\oplus L_{\omega_2}$ & $\hspace{4pt} \tl{L}_{\scriptscriptstyle{1}}\cong L_{\omega_0}$ \\
\end{tabular}\ .\end{center}
\qed
\end{lem}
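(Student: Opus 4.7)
The plan is to follow the template used for type G$_2$ in the introduction. By Theorem \ref{poles thm}, $\check{R}(z)$ has a pole at $z=q^a$ iff $\tl{L}_1(q^a)\otimes\tl{L}_1$ is reducible; by \cite{C00} together with Lemma \ref{lemR}(\ref{inversion relation}), reducibility at $a>0$ produces poles while the mirror values $a<0$ produce zeroes. It therefore suffices to locate, for each positive $a$, the extra dominant monomials beyond $\mr{1}_a\mr{1}_0$ in $\chi_q(\mr{1}_a)\chi_q(\mr{1}_0)$, verify that they lift to separate irreducible summands, and identify the resulting Drinfeld modules as $U_q\fk{g}$-modules.

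Because $\chi_q(\mr{1}_a)$ is $\chi_q(\mr{1}_0)$ with all indices shifted by $a$, and the only term of $\chi_q(\mr{1}_a)$ with empty negative part is $\mr{1}_a$, the simplest way to form an extra dominant monomial is to pair $\mr{1}_a$ with a term of $\chi_q(\mr{1}_0)$ whose entire negative part is a single power of $Y_{1,\cdot}^{-1}$. Scanning the explicit list of $\chi_q(\mr{1}_0)$ for type B$_r$ with $r>2$, the only such terms are the two underlined monomials $\mr{1}_4^{-1}\mr{2}_2$ and $\mr{1}_{4r-2}^{-1}$, producing the extra dominant monomials $\mr{2}_{2}$ at $a=4$ and the trivial monomial at $a=4r-2$. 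A short pairwise check over non-top terms (whose negative parts involve $Y_{i,\cdot}^{-1}$ with $i\ge 2$) rules out further dominant monomials, since for $r>2$ no term of $\chi_q(\mr{1}_0)$ contains a positive power of the relevant $Y_{i,c}$ at the required position. For each such $a$, I would apply Theorem \ref{q char arg} with $m=\mr{1}_a\mr{1}_0$ and $i=1$ to conclude
\[
\chi_q(\mr{1}_a)\chi_q(\mr{1}_0)=\chi_q(\mr{1}_a\mr{1}_0)+\chi_q(m_-),
\]
with $m_-=\mr{2}_{2}$ for $a=4$ and $m_-=1$ for $a=4r-2$.

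The $U_q\fk{g}$-decomposition of each Drinfeld factor is then determined by the Frenkel--Mukhin algorithm applied to the special modules in question: the character of $\tl{L}_{\mr{2}_{2}}$ computes to $L_{\omega_2}\oplus L_{\omega_0}$, while the trivial Drinfeld module restricts to $L_{\omega_0}$. The remaining summand is forced by subtraction from \eqref{tensorB}, giving $L_{2\omega_1}$ at $a=4$ and $L_{2\omega_1}\oplus L_{\omega_2}$ at $a=4r-2$, matching the table. Since by \cite{C00} the submodule at $a>0$ contains the product of highest $\ell$-weight vectors, it is precisely $\tl{L}_{\mr{1}_a\mr{1}_0}$, and the quotient is $\tl{L}_{m_-}$. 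The main technical step is the combinatorial verification of hypothesis (\ref{q char arg 2}) of Theorem \ref{q char arg}: for the chosen pair $(i,b)$, one must check that $m_-A_{j,c}$ is not a monomial of $\chi_q(\mr{1}_a)\chi_q(\mr{1}_0)$ for any other $(j,c)$. Because the monomials of $\chi_q(\mr{1}_0)$ for type B$_r$ are few, each involving at most two distinct $Y$-variables with rigidly prescribed positions, this reduces to a finite case analysis indexed by the node $j\in\mr{I}$ and the position $c$, tedious but routine.
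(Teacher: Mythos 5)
Your proposal is correct and follows precisely the route the paper intends for this lemma (which it states without a printed proof, deferring to the standard computation illustrated for G$_2$ in the introduction): the two underlined monomials $\ul{\mr{1}_4^{-1}\mr{2}_2}$ and $\ul{\mr{1}_{4r-2}^{-1}}$ of $\chi_q(\mr{1}_0)$ are the only ones whose negative part is a pure power of $Y_{1,\cdot}^{-1}$, giving the extra dominant monomials $\mr{2}_2$ at $a=4$ and $1$ at $a=4r-2$; Theorem \ref{q char arg} plus the Frenkel--Mukhin algorithm then yields the decomposition $\chi_q(\mr{1}_a)\chi_q(\mr{1}_0)=\chi_q(\mr{1}_a\mr{1}_0)+\chi_q(m_-)$ with $\tl L_{\mr{2}_2}\cong L_{\omega_2}\oplus L_{\omega_0}$, and the cyclicity results of \cite{C00} distinguish poles from zeroes exactly as you describe. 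The dimension counts against \eqref{tensorB} confirm the table, so there is nothing to add.
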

We choose a basis $\{v_i:1\le i\le 2r+1\}$ for $L_{\omega_1}$ in the standard way so that $v_1$ is a non-zero highest weight vector, $F_iv_i=v_{i+1}$,  $F_iv_{\ol{i+1}}=v_{\ol{i}}$, $i=1,\dots, r-1$, $\ol{i}=2r+2-i$, and for $i=r$, $F_r.v_r=\sqrt{[2]}v_{r+1}$, $F_r.v_{\ol{r+1}}=\sqrt{[2]}v_{\ol{r}}$. 
In the chosen basis, $v_1\otimes v_1$ is a singular vector of weight $2\omega_1$, and $q^2\,v_1\otimes v_2-v_2\otimes v_1$ is a singular vector of weight $\omega_2$. 
We generate respectively the modules $L_{2\omega_1}$ and $L_{\omega_2}$ using these singular vectors. 

Let $\ve_i^q=(-1)^{r+1-i}q^{2r-2i+1}$, $\ve_{\ol{i}}^q=\ve_i^{q^{-1}}$, $1\le i\le r$, $\ve_{r+1}^q=1$. A singular vector $v_0\in L_{\omega_1}^{\otimes2}$ of weight $\omega_0$ is given by 
$$v_0=\sum_{i=1}^{2r+1}\ve_i^qv_i\otimes v_{\ol{i}}\ .$$ 

For $\lambda=2\omega_1,\omega_2\, (2\omega_2\text{ when }r=2),\omega_0$, let $P_\lambda^q$ be the projector onto the $U_q($B$_r)$-module $L_\lambda$ in the decomposition \eqref{tensorB}, and let $E_{ij}$ be matrix units corresponding to the chosen basis, that is, $E_{ij}v_k=\D_{jk}v_i$. 

\begin{thm}
In terms of projectors, we have
\eq{\label{R proj B}
\check{R}(z)=P_{2\omega_1}^q-q^{-4}\frac{1-q^{4}z}{1-q^{-4}z}P_{\omega_2}^q+q^{-4r-2}\frac{(1-q^{4}z)(1-q^{4r-2}z)}{(1-q^{-4}z)(1-q^{-4r+2}z)}P_{\omega_0}^q\ .
}
Here, in the case of $r=2$, $P_{\omega_2}^q$ is replaced by $P_{2\omega_2}^q$.

In terms of matrix units, we have
\eq{\label{RqB}
\check{R}(z)=\big(\check{R}(z\,;q^2)\big)_{\fk{sl}_{2r+1}}-\frac{(q^2-q^{-2})(1-z)}{(q^2-q^{-2}z)(q^{2r-1}-q^{-2r+1}z)}Q(z)\ ,
}
where $\big(\check{R}(z)\big)_{\fk{sl}_{2r+1}}$ is the A$_{2r}$ (or $\fk{sl}_{2r+1}$) trigonometric $R$-matrix in \eqref{RqA} and $Q(z)$ is given by
\bee{Q(z)=z\sum_{i+j<2r+2}\frac{\ve_i^q\ve_j^q}{q^{2r-1}}\,E_{ij}\otimes E_{\ol{i}\,\ol{j}}+\sum_{i+j>2r+2}\frac{\ve_i^q\ve_j^q}{q^{-2r+1}}\,E_{ij}\otimes E_{\ol{i}\,\ol{j}} +\,\frac{q^{2r-2}+q^{-2r+2}z}{q+q^{-1}}\,\sum_{\substack{i+j=2r+2\\ i\ne r+1}}E_{ij}\otimes E_{\ol{i}\,\ol{j}}\, \\
+\,\frac{q^{2r}+q^{-2r}z}{q+q^{-1}}\,E_{r+1,r+1}\otimes E_{r+1,r+1}\ .}
\qed
\end{thm}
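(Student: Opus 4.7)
The projector form follows the general algorithm of Section \ref{meth sec}. Writing
\[
\check{R}(z)=P_{2\omega_1}^q+f_2(z)\,P_{\omega_2}^q+f_3(z)\,P_{\omega_0}^q,
\]
(with $f_1\equiv 1$ by the normalization $\check{R}(1)=\id$), the preceding $q$-character lemma pinpoints all poles: $f_2$ has a simple pole at $z=q^4$, and $f_3$ has simple poles at $z=q^4$ and $z=q^{4r-2}$. Simplicity follows from Theorem \ref{order of zeroes thm}, and the reciprocal points are simple zeros by Lemma \ref{lemR}(\ref{inversion relation}). Imposing $f_k(1)=1$ then fixes the two scalar prefactors and yields \eqref{R proj B}; the case $r=2$ requires only the replacement $L_{\omega_2}\to L_{2\omega_2}$ noted in the statement.

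For the matrix-units form, the plan is to compare $\check R(z)$ to the A$_{2r}$ intertwiner \eqref{RqA} evaluated with $q$ replaced by $q^2$. On the complement of the invariant line $L_{\omega_0}$, the decomposition $L_{2\omega_1}\oplus L_{\omega_2}$ coincides with the A$_{2r}$ decomposition of the square of the $(2r+1)$-dimensional defining representation, and the rational prefactors $1-q^{\pm 4}z$ in \eqref{R proj B} match $1-(q^2)^{\pm 2}z$ in \eqref{RqA}. Hence $\check R(z)-\bigl(\check R(z;q^2)\bigr)_{\fk{sl}_{2r+1}}$ annihilates this complement and must be a rational multiple of $P_{\omega_0}^q$, which is the rank-one operator proportional to $v_0\otimes v_0^*$ with $v_0=\sum_i\ve_i^q\, v_i\otimes v_{\ol{i}}$. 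Expanding this rank-one tensor and combining $f_3(z)$ with the A$_{2r}$ contribution on the antidiagonal matrix units via partial fractions produces the overall scalar $-\tfrac{(q^2-q^{-2})(1-z)}{(q^2-q^{-2}z)(q^{2r-1}-q^{-2r+1}z)}$ and the operator $Q(z)$.

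The main obstacle is the case analysis inside $Q(z)$. The asymmetry between $i+j<2r+2$ and $i+j>2r+2$ arises because the spectral parameter $z$ acts only on the first tensor factor, so the A$_{2r}$ contribution already present on $E_{ij}\otimes E_{\ol{i}\,\ol{j}}$ depends on the ordering of the weights; the distinguished coefficients on the central antidiagonal $i+j=2r+2$, in particular the isolated $(r+1,r+1)$ term, come from the non-trivial $q$-normalization of $v_{r+1}$ (reflected in $F_r v_r=\sqrt{[2]}\,v_{r+1}$) which rescales the corresponding summands of $v_0\otimes v_0^*$. Once these three cases are separated the remaining algebra is mechanical and reproduces \eqref{RqB}. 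As a final independent check one can verify the commutation relation \eqref{R and E0 commutation relation} with $E_0$ directly, recovering the formula of \cite{J86}.
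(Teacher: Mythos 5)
Your derivation of the projector formula \eqref{R proj B} is correct and is the paper's own route: the poles of $f_2$ and $f_3$ come from the $q$-character lemma, simplicity from Theorem \ref{order of zeroes thm}, the reciprocal zeros from property (\ref{inversion relation}) of Lemma \ref{lemR}, and the constants from $f_k(1)=1$.

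The matrix-unit half, however, rests on a false claim: the difference $D(z)=\check{R}(z)-\big(\check{R}(z;q^2)\big)_{\fk{sl}_{2r+1}}$ is \emph{not} a rational multiple of the rank-one projector $P_{\omega_0}^q$. What is true is that $D(z)$ is supported on the whole $(2r+1)$-dimensional zero-weight subspace $W_0=\spn\{v_i\otimes v_{\ol{i}}\}$ (every term of $Q(z)$ has the form $E_{ij}\otimes E_{\ol{i}\,\ol{j}}$), and on $W_0$ it has rank larger than one. Concretely, writing $Q(z)=\sum_{i,j}c_{ij}(z)\,E_{ij}\otimes E_{\ol{i}\,\ol{j}}$, one finds $c_{11}c_{\ol{1}\,\ol{1}}=z\,(\ve_1^q\ve_{\ol{1}}^q)^2=z$ while $c_{1\ol{1}}c_{\ol{1}1}=\big((q^{2r-2}+q^{-2r+2}z)/(q+q^{-1})\big)^2$, so a $2\times2$ minor of $Q(z)$ is nonzero and $Q(z)$ cannot be proportional to $\sum\ve_i^q\ve_j^q E_{ij}\otimes E_{\ol{i}\,\ol{j}}$. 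Equivalently, the invariant vector $v_0=\sum\ve_i^qv_i\otimes v_{\ol{i}}$ does not lie in the $+1$-eigenspace of $\big(\check{R}(z;q^2)\big)_{\fk{sl}_{2r+1}}$ (that would require $\ve_{\ol{i}}^q/\ve_i^q=q^2$ for $i\le r$, whereas $\ve_{\ol{i}}^q/\ve_i^q=q^{-(4r-4i+2)}$), so the A-type matrix neither acts as a scalar on $L_{\omega_0}$ nor preserves the B-type decomposition of $W_0$; no partial-fraction recombination of a rank-one correction can then produce $Q(z)$. Your argument does (essentially) show that $D(z)$ vanishes off $W_0$, but that step too needs the verification, which you only assert, that on each two-dimensional nonzero-weight space the splitting into $L_{2\omega_1}\oplus L_{\omega_2}$ coincides with the $q^2$-(anti)symmetric splitting. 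To finish one must either compute all three projectors restricted to $W_0$ and diagonalize there, or do what the paper does: take the closed formula \eqref{RqB} (from \cite{J86}) and verify the intertwining relation \eqref{R and E0 commutation relation} with $E_0$ and $F_0$ — the step you relegate to an optional check is in fact the load-bearing one.
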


One can directly check that the $R$-matrix commutes with the action of $E_0$ and $F_0$, where 
$$K_0=q^{-2}\big(E_{11}+E_{22}\big)+\sum_{i=3}^{2r-1}E_{ii}+q^2\big(E_{2r,2r}+E_{2r+1,2r+1}\big)\ ,\quad E_0(a)=a\big(E_{2r,1}+E_{2r+1,2}\big)\ ,$$
and $F_0(a)$ is the transpose of $a^{-2}E_0(a)$.

Let $\displaystyle P_\lambda=\lim_{q\to1}P_\lambda^q$ be the $U($B$_r)$ projector, let $I$ be the identity operator, let $P$ be the flip operator, and let $Q$ be given by
$$Q=\sum_{i,j=1}^{2r+1}(-1)^{i+j}E_{ij}\otimes E_{\ol{i}\,\ol{j}}=(2r+1)\,P_{\omega_0}\ .$$

\begin{cor}
In the rational case, the corresponding $R$-matrix is given by
\eq{\label{RuB}
\check{R}(u)=P_{2\omega_1}+\frac{1+u}{1-u}P_{\omega_2}+\frac{(1+u)(2r-1+2u)}{(1-u)(2r-1-2u)}P_{\omega_0}=\frac{1}{1-u}\bigg(I-uP+\frac{2u}{2r-1-2u}Q\bigg)\ .
}
In the case of $r=2$, $P_{\omega_2}$ is replaced by $P_{2\omega_2}$.
\end{cor}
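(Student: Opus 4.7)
The approach mirrors the type $\mr{A}$ corollary: substitute $z = q_1^{2u} = q^{4u}$ (using $d_1 = 2$ for type $\mr{B}$) in both the projector formula \eqref{R proj B} and the matrix-units formula \eqref{RqB}, then pass to the limit $q \to 1$. The projectors $P_\lambda^q$ are constructed from singular vectors whose coefficients are Laurent polynomials in $q^{\pm 1}$ with no pole at $q=1$, hence $P_\lambda^q \to P_\lambda$ in the limit, where $P_\lambda$ is the classical $U(\mr{so}_{2r+1})$ projector. Thus only the scalar coefficients require analysis.

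For the projector form, using the first-order Taylor expansion $1 - q^a \sim -a(q-1)$ as $q \to 1$, one has $\frac{1-q^{c+4u}}{1-q^{-c+4u}} \to -\frac{c+4u}{c-4u}$. Hence the coefficient of $P_{\omega_2}^q$ limits to
\[
-q^{-4}\cdot\frac{1-q^{4+4u}}{1-q^{-4+4u}} \to \frac{1+u}{1-u},
\]
and the coefficient of $P_{\omega_0}^q$ limits to $\frac{(1+u)(2r-1+2u)}{(1-u)(2r-1-2u)}$, yielding the first equality in \eqref{RuB}. For the matrix-units form, note that $z = q^{4u} = (q^2)^{2u}$ is exactly the substitution used in the type $\mr{A}$ corollary, applied with deformation parameter $q^2$, so $\bigl(\check{R}(z;q^2)\bigr)_{\fk{sl}_{2r+1}} \to \frac{1}{1-u}(I - uP)$ by \eqref{RuA}. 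The coefficient of $Q(z)$ in \eqref{RqB} requires a second-order Taylor computation, since both $(q^2 - q^{-2})(1-z)$ and each factor $(q^2 - q^{-2}z)$, $(q^{2r-1} - q^{-2r+1}z)$ vanish to order $q-1$; the leading coefficients combine to give the limit $\frac{-2u}{(1-u)(2r-1-2u)}$. Finally, inspection of $Q(z)$ term by term shows all four pieces have well-defined limits, and using $\ve_i^q \to (-1)^{r+1-i}$ one obtains $\lim_{q\to 1} Q(z) = \sum_{i,j}(-1)^{i+j}E_{ij}\otimes E_{\bar{i}\,\bar{j}} = (2r+1)P_{\omega_0} = Q$. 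Assembling these pieces yields the second equality in \eqref{RuB}.

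The main (minor) obstacle is the second-order cancellation in the scalar coefficient of $Q(z)$, where the naive limit is $0/0$ and one must track leading Taylor coefficients carefully. The case $r=2$ requires no separate treatment beyond replacing $P_{\omega_2}$ by $P_{2\omega_2}$, since at $r=2$ the decomposition \eqref{tensorB} uses $L_{2\omega_2}$ (which plays the same role as $L_{\omega_2}$ in the multiplicity structure and tensor Shapovalov analysis), and none of the limit computations above depends on $r \geq 3$.
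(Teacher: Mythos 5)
Your proof is correct and follows exactly the paper's (one-line) argument: substitute $z=q^{4u}$ into \eqref{R proj B} and \eqref{RqB} and take the limit $q\to 1$, with the first- and second-order Taylor cancellations you describe being precisely what makes the scalar coefficients converge. (One small point of phrasing: the limit $\frac{-2u}{(1-u)(2r-1-2u)}$ you state is that of the bare fraction $\frac{(q^2-q^{-2})(1-z)}{(q^2-q^{-2}z)(q^{2r-1}-q^{-2r+1}z)}$; combined with the overall minus sign in \eqref{RqB} it yields the required $+\frac{2u}{(1-u)(2r-1-2u)}\,Q$.)
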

\begin{proof}
We substitute $z=q^{4u}$ in \eqref{R proj B} and \eqref{RqB} and take the limit $q\to 1$.
\end{proof}

\subsection{Type C\texorpdfstring{$_r\ (r\ge 2)$}{2}}

The Dynkin diagram is:
\medskip
\begin{center}
\dynkin [extended, edge length=1.25cm, root radius=0.075cm, label macro/.code={\drlap{#1}}, labels={0, 1, 2, r-2, r-1, r}, ordering=Kac] C[1]{}.
\end{center}

\medskip
 
The $2r$-dimensional $U_q\big($C$_r^{(1)}\big)$-module $\tl{L}_{1}(a)$ when restricted to $U_q($C$_r)$ is isomorphic to $L_{\omega_1}$. As $U_q($C$_r)$-modules we have
\eq{\label{tensorC}
\underbracket[0.1ex]{L_{\omega_1}}_{2r}\otimes \underbracket[0.1ex]{L_{\omega_1}}_{2r}\cong \underbracket[0.1ex]{L_{2\omega_1}}_{r(2r+1)}\oplus \underbracket[0.1ex]{L_{\omega_2}}_{(r-1)(2r+1)}\oplus \underbracket[0.1ex]{L_{\omega_0}}_{1}\ .
}
In the $q\to 1$ limit, $L_{2\omega_1}\mapsto \cl{S}^2(L_{\omega_1})$ and $L_{\omega_2}\oplus L_{\omega_0}\mapsto \Lambda^2(L_{\omega_1})$.

The $q$-character of $\tl L_1=\tl{L}_{\mr{1}_0}$ has $2r$ terms and there are no weight zero terms:
$$\chi_q(\mr{1}_0)=\mr{1}_0+\ul{\mr{1}_2^{-1}\mr{2}_1}+\cdots+(\mr{r}-\mr{1})_r^{-1}\mr{r}_{r-1}+(\mr{r}-\mr{1})_{r+2}\mr{r}_{r+3}^{-1}+\cdots+\mr{1}_{2r}\mr{2}_{2r+1}^{-1}+\ul{\mr{1}_{2r+2}^{-1}}\ .$$
Using the $q$-characters we compute the zeros and poles of $\check{R}(z)$ and the corresponding kernels and cokernels.
\begin{lem}
The poles of the $R$-matrix $\check{R}(z)$, the corresponding submodules and quotient modules are given by
\begin{center}\begin{tabular}{c@{\hspace{1cm}} c c}
Poles & Submodules & Quotient modules  \\

$q^{2}$ & $\hspace{27pt} \tl{L}_{\mr{1}_a\mr{1}_{aq^{-2}}}\cong L_{2\omega_1}\oplus L_{\omega_0}$ & $\hspace{10pt} \tl{L}_{\mr{2}_{aq^{-1}}}\cong L_{\omega_2}$ 
\vspace{2pt}\\

$q^{2r+2}$ & $\hspace{19pt} \tl{L}_{\mr{1}_a\mr{1}_{aq^{-2r-2}}}\cong L_{2\omega_1}\oplus L_{\omega_2}$ & $\hspace{23pt} \tl{L}_{\scriptscriptstyle{1}}\cong L_{\omega_0}$
\end{tabular}\ .\end{center}
\qed
\end{lem}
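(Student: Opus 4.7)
The plan is to apply, in type C$_r$, the $q$-character argument spelled out for G$_2$ around equation \eqref{qchar arg for G2}. By Theorem \ref{poles thm}, poles of $\check{R}(z)$ occur exactly at those $z=q^a$ for which $\tl L_1(q^a)\otimes\tl L_1(1)$ is reducible, and by Frenkel--Reshetikhin theory reducibility is detected by the appearance of a dominant monomial in $\chi_q(\mr{1}_a)\chi_q(\mr{1}_0)$ other than $\mr{1}_a\mr{1}_0$.

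First, I would scan the explicit expression of $\chi_q(\mr{1}_0)$ given above: the only factors that $\mr{1}_a$ can complete into a dominant monomial are the two underlined terms $\mr{1}_2^{-1}\mr{2}_1$ and $\mr{1}_{2r+2}^{-1}$, forcing $a=2$ (producing $\mr{2}_1$) or $a=2r+2$ (producing the trivial monomial $\mr{1}$). By the symmetric role of the two factors, $\mr{1}_0$ multiplied against the analogous terms of $\chi_q(\mr{1}_a)$ additionally singles out $a=-2$ and $a=-2r-2$; by \cite{C00} these negative values correspond to cyclic tensor products and therefore to zeros of $\check{R}(z)$ at $z=q^{-2}$ and $z=q^{-2r-2}$, while the poles must sit at $z=q^2$ and $z=q^{2r+2}$.

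Second, for each critical $a$ I would split the product in the Grothendieck ring as
\begin{align*}
\chi_q(\mr{1}_2)\,\chi_q(\mr{1}_0) &= \chi_q(\mr{1}_2\mr{1}_0)+\chi_q(\mr{2}_1),\\
\chi_q(\mr{1}_{2r+2})\,\chi_q(\mr{1}_0) &= \chi_q(\mr{1}_{2r+2}\mr{1}_0)+\chi_q(\mr{1}).
\end{align*}
To justify the use of the algorithm of \cite{FM01}, I would invoke Theorem \ref{q char arg} to rule out the spurious dominant monomial from each of $\chi_q(\mr{1}_2\mr{1}_0)$ and $\chi_q(\mr{1}_{2r+2}\mr{1}_0)$, thus showing that both are special. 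The hypotheses of Theorem \ref{q char arg} are purely combinatorial conditions on $\ell$-weights adjacent to the would-be-eliminated monomial, and they are readily checked from the explicit linear-chain form of $\chi_q(\mr{1}_0)$, exactly as in the G$_2$ illustration.

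Third, once specialness is in hand I would run the algorithm of \cite{FM01} to produce each $q$-character, then group monomials by classical weight and match dimensions to identify the $U_q($C$_r)$-decompositions claimed in the statement: $\chi_q(\mr{2}_1)$ has $(r-1)(2r+1)$ terms and matches $L_{\omega_2}$; $\chi_q(\mr{1}_2\mr{1}_0)$ has $r(2r+1)+1$ terms matching $L_{2\omega_1}\oplus L_{\omega_0}$; $\chi_q(\mr{1}_{2r+2}\mr{1}_0)$ has $(2r-1)(2r+1)$ terms matching $L_{2\omega_1}\oplus L_{\omega_2}$; and $\chi_q(\mr{1})=1$ matches $L_{\omega_0}$. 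The submodule/quotient labelling then follows from the standard fact that, for positive $a$, the tensor product of highest $\ell$-weight vectors generates the submodule (cf.\ \cite{C00,Ka02}). The main obstacle is the verification of condition (\ref{q char arg 2}) of Theorem \ref{q char arg} at the spurious dominant monomial, but because $\chi_q(\mr{1}_0)$ is a chain of only $2r$ explicit monomials with a single occurrence of the variable $\mr{2}$, this reduces to a short combinatorial case check.
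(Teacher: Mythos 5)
Your proposal is correct and follows exactly the method the paper itself uses (and deliberately leaves implicit) for these lemmas: locate the critical values of $a$ from the two underlined monomials of $\chi_q(\mr{1}_0)$, split the product of $q$-characters in the Grothendieck ring, verify specialness via Theorem \ref{q char arg} so the algorithm of \cite{FM01} applies, match dimensions to identify the $U_q(\mathrm{C}_r)$-decompositions, and use \cite{C00} to assign submodules versus quotients according to the sign of the exponent. Nothing further is needed.
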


We choose a basis $\{v_i:1\le i\le 2r\}$ for $L_{\omega_1}$ in the standard way so that $v_1$ is a non-zero highest weight vector,
$F_iv_i=v_{i+1}$ and $F_iv_{\ol{i+1}}=v_{\ol{i}}$,  where $\ol{i}=2r+1-i$, and $i=1,\dots,r$.
In the chosen basis, $v_1\otimes v_1$ is a singular vector of weight $2\omega_1$, and $q\,v_1\otimes v_2-v_2\otimes v_1$ is a singular vector of weight $\omega_2$. We generate respectively the modules $L_{2\omega_1}$ and $L_{\omega_2}$ using these singular vectors. 

Let 
$\ve_i^q=(-q)^{r+1-i}$, $\ve_{\ol{i}}^q=-\ve_i^{q^{-1}}$, $1\le i\le r$. A singular vector $v_0\in L_{\omega_1}^{\otimes2}$ of weight $\omega_0$ is given by 
$$v_0=\sum_{i=1}^{2r}\ve_i^qv_i\otimes v_{\ol{i}}\ .$$

For $\lambda=2\omega_1, \omega_2, \omega_0$, let $P_\lambda^q$ be the projector onto the $U_q($C$_r)$-module $L_{\lambda}$ in the decomposition \eqref{tensorC}, and let $E_{ij}$ be matrix units corresponding to the chosen basis, that is, $E_{ij}v_k=\D_{jk}v_i$.  

\begin{thm}
In terms of projectors, we have
\eq{\label{R proj C}
\check{R}(z)=P_{2\omega_1}^q-q^{-2}\frac{1-q^{2}z}{1-q^{-2}z}P_{\omega_2}^q-q^{-2r-2}\frac{1-q^{2r+2}z}{1-q^{-2r-2}z}P_{\omega_0}^q\ .
}
In terms of matrix units, we have
\eq{\label{RqC}
\check{R}(z)=\big(\check{R}(z)\big)_{\fk{sl}_{2r}}+\frac{(q-q^{-1})(1-z)}{(q-q^{-1}z)(q^{r+1}-q^{-r-1}z)}Q(z)\ ,
}
where $\big(\check{R}(z)\big)_{\fk{sl}_{2r}}$ is the A$_{2r-1}$ (or $\fk{sl}_{2r}$) trigonometric $R$-matrix in \eqref{RqA} and $Q(z)$ is given by
$$Q(z)=z\sum_{i+j<2r+1}\frac{\ve_i^q\ve_j^q}{q^{r+1}}\,E_{ij}\otimes E_{\ol{i}\,\ol{j}}
+\sum_{i+j>2r+1}\frac{\ve_i^q\ve_j^q}{q^{-r-1}}\,E_{ij}\otimes E_{\ol{i}\,\ol{j}}\,-\,\frac{q^{r+\frac{1}{2}}+q^{-r-\frac{1}{2}}z}{q^{\frac{1}{2}}+q^{-\frac{1}{2}}}\,\sum_{i+j=2r+1}E_{ij}\otimes E_{\ol{i}\,\ol{j}}\ .$$
\qed
\end{thm}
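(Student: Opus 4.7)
The proof has two largely independent parts.

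\textbf{Projector form \eqref{R proj C}.} Since the decomposition \eqref{tensorC} is multiplicity-free, Lemma \ref{lemR}\eqref{try} gives $\check{R}(z) = P^q_{2\omega_1} + f_2(z)\,P^q_{\omega_2} + f_3(z)\,P^q_{\omega_0}$ for scalar rational functions $f_2, f_3$, with $f_k(1) = 1$ coming from $\check{R}(1) = \id$. The preceding lemma, obtained by applying the algorithm of \cite{FM01} to compute $\chi_q(\mr{1}_0)$, using Theorem \ref{q char arg} to show that each relevant product $\chi_q(\mr{1}_a)\chi_q(\mr{1}_0)$ splits into a sum of exactly two $q$-characters (as illustrated for G$_2$ in \eqref{qchar arg for G2}), and reading off the poles from Theorem \ref{poles thm}, places the poles of $f_2$ at $z = q^2$ and of $f_3$ at $z = q^{2r+2}$, with zeros at the reciprocals by Lemma \ref{lemR}\eqref{inversion relation}. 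Simplicity of all poles and zeros follows from Theorem \ref{order of zeroes thm}, whose Casimir-sum hypothesis is routinely verified for $\lambda_k \in \{\omega_2, \omega_0\}$ in type C$_r$. Imposing $f_k(1) = 1$ then pins the multiplicative constants, yielding \eqref{R proj C}.

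\textbf{Matrix units form \eqref{RqC}.} One substitutes the chosen orthonormal basis into \eqref{R proj C} and expands. Because $\check{R}(z)$ preserves weight, its action on a non-weight-zero pair $v_i \otimes v_j$ with $j \neq \ol{i}$ lands in the two-dimensional subspace $\mathbb{C}\,v_i \otimes v_j + \mathbb{C}\,v_j \otimes v_i$; the resulting coefficients match those of the A$_{2r-1}$ formula \eqref{RqA}, contributing the term $\big(\check{R}(z)\big)_{\fk{sl}_{2r}}$. The weight-zero block, spanned by $\{v_i \otimes v_{\ol{i}}\}_{i=1}^{2r}$, requires more care: the A$_{2r-1}$ formula does not correctly reproduce the C$_r$ singlet projector $P^q_{\omega_0}$ onto $\mathbb{C}\,v_0$ with $v_0 = \sum_i \ve_i^q\, v_i \otimes v_{\ol{i}}$. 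Combining the explicit matrix-unit form of $P^q_{\omega_0}$ (using the Shapovalov norm $(v_0, v_0) = [r]\,(q^{r+1} + q^{-r-1})$, a direct calculation from $\ve_i^q = (-q)^{r+1-i}$) with the scalar $f_3(z) - f_2(z)$ and with the weight-zero residue of the A$_{2r-1}$ formula organizes the correction into the operator $Q(z)$ with its three-case expression and the stated scalar prefactor.

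\textbf{Principal difficulty.} The delicate step is the weight-zero block. The $z$-dependence appearing inside $Q(z)$ itself --- not just in the overall scalar $(q-q^{-1})(1-z)/[(q-q^{-1}z)(q^{r+1}-q^{-r-1}z)]$ --- reflects that $\big(\check{R}(z)\big)_{\fk{sl}_{2r}}$ already contributes a nontrivial weight-zero action, so the residual discrepancy between the A$_{2r-1}$ and C$_r$ behaviours on that block must absorb a $z$-linear factor. The three coefficient cases in $Q(z)$, indexed by the sign of $i + j - (2r+1)$, correspond to the three types of weight-zero maps $v_j \otimes v_{\ol{j}} \mapsto v_i \otimes v_{\ol{i}}$ relative to the antidiagonal $j = \ol{i}$. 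A final check that the resulting operator satisfies a commutativity of the form \eqref{R and E0 commutation relation} with the $E_0$ action specified in the paragraph following \eqref{RqC} provides an independent verification.
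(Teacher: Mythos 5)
Your proposal is correct and follows essentially the same route as the paper: the projector form comes from multiplicity-freeness, the pole/zero data extracted from the $q$-characters, simplicity of poles via Theorem \ref{order of zeroes thm} (the right tool here, since Proposition \ref{order of zeroes prp} does not cover type C$_r$), and the normalization $f_k(1)=1$; the matrix-unit form is then a direct expansion in the chosen basis, with the weight-zero block carrying the correction $Q(z)$ and the $E_0$-commutation serving as the final check. The Casimir verification $\tfrac12\big(C(2\omega_1)-C(\omega_2)\big)=2$ and $\tfrac12\big(C(2\omega_1)-C(\omega_0)\big)=2r+2$ indeed matches the poles $q^2$ and $q^{2r+2}$, so the argument closes as you describe.
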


One can directly check that the $R$-matrix commutes with the action of $E_0$ and $F_0$, where 
$$K_0=q^{-2}E_{11}+\sum_{i=2}^{2r-1}E_{ii}+q^2E_{2r,2r}\ ,\quad E_0(a)=aE_{2r,1}\ ,$$
and $F_0(a)$ is the transpose of $a^{-2}E_0(a)$.

Let $\displaystyle P_\lambda=\lim_{q\to1}P_\lambda^q$ be the $U($C$_r)$ projector, let $I$ be the identity operator, let $P$ be the flip operator, and let $Q$ be given by 
\bee{
Q=\sum_{i,j=1}^{2r}(-1)^{i+j}E_{ij}\otimes E_{\ol{i}\,\ol{j}}=2r\,P_{\omega_0}\ .
}

\begin{cor}
In the rational case, the corresponding $R$-matrix is given by
\eq{\label{RuC}
\check{R}(u)=P_{2\omega_1}+\frac{1+u}{1-u}P_{\omega_2}+\frac{r+1+u}{r+1-u}P_{\omega_0}=\frac{1}{1-u}\bigg(I-uP-\frac{u}{r+1-u}Q\bigg)\ .
}
\end{cor}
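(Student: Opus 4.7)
The plan is to deduce \eqref{RuC} from the trigonometric formulas \eqref{R proj C} and \eqref{RqC} by the substitution $z=q^{2u}$ followed by the limit $q\to 1$. Since the two trigonometric expressions have already been shown to coincide, their limits will coincide as well; it therefore suffices to compute each limit separately and identify it with the corresponding side of \eqref{RuC}.

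For the projector form \eqref{R proj C}, each non-trivial scalar coefficient has the shape $-q^{-k}(1-q^{k}z)/(1-q^{-k}z)$ with $k\in\{2,\,2r+2\}$. Setting $q=e^{h}$ and $z=e^{2uh}$ and expanding to first order in $h$, the numerator and denominator both vanish to order $h$ with leading terms $-(k+2u)h$ and $(-k+2u)h$ respectively, so the ratio tends to $(k+2u)/(k-2u)$ after absorbing the prefactor $-q^{-k}\to -1$. For $k=2$ this produces $(1+u)/(1-u)$ and for $k=2r+2$ it produces $(r+1+u)/(r+1-u)$. Combined with $P_{\lambda}^{q}\to P_{\lambda}$ (a consequence of the multiplicity-free decomposition \eqref{tensorC} and the flat deformation of $U_q(\mathrm{C}_r)$ from $U(\mathrm{C}_r)$), this yields the first equality in \eqref{RuC}.

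For the matrix-unit form \eqref{RqC}, the principal term $(\check R(z))_{\mathfrak{sl}_{2r}}$ limits to $(1-u)^{-1}(I-uP)$ by the type A corollary. The main calculation concentrates in the scalar factor multiplying $Q(z)$: expanding $q-q^{-1}\sim 2h$, $1-z\sim -2uh$, $q-q^{-1}z\sim (2-2u)h$, and $q^{r+1}-q^{-r-1}z\sim (2r+2-2u)h$, the quotient $(q-q^{-1})(1-z)/((q-q^{-1}z)(q^{r+1}-q^{-r-1}z))$ tends to $-u/((1-u)(r+1-u))$. One also has to verify that $Q(z)$ itself limits to $Q$: a case split on whether $i,j$ lie in $\{1,\dots,r\}$ or $\{r+1,\dots,2r\}$ shows $\varepsilon_i^q \varepsilon_j^q \to (-1)^{i+j}$, while the middle coefficient $-(q^{r+1/2}+q^{-r-1/2}z)/(q^{1/2}+q^{-1/2})$ tends to $-1$, matching $(-1)^{i+j}$ when $i+j=2r+1$. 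Assembling these pieces yields the second expression for $\check R(u)$ in \eqref{RuC}.

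The only mild obstacle is the sign bookkeeping for $\varepsilon_i^q \varepsilon_j^q$, because the definition of $\varepsilon_i^q$ splits at $i=r$ via the involution $i\mapsto\bar i$; apart from that, the argument is a direct limiting procedure applied to identities already established at generic $q$.
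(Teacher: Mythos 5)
Your proposal is correct and is exactly the paper's approach: the paper's proof consists of the single sentence "We substitute $z=q^{2u}$ in \eqref{R proj C} and \eqref{RqC} and take the limit $q\to 1$," and you have simply carried out that substitution and limit in detail (all of your expansions, including the sign bookkeeping $\ve_i^q\ve_j^q\to(-1)^{i+j}$ and the limit of the middle coefficient of $Q(z)$, check out).
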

\begin{proof}
We substitute $z=q^{2u}$ in \eqref{R proj C} and \eqref{RqC} and take the limit $q\to 1$. 
\end{proof}

\subsection{Type D\texorpdfstring{$_r\ (r\ge 4)$}{2}}

The Dynkin diagram is:

\begin{center}
\dynkin [extended, edge length=1.25cm, root radius=0.075cm, label macro/.code={\drlap{#1}}, labels={0, 1, 2, 3, r-3, r-2, r-1, r}, ordering=Kac] D[1]{}.
\end{center}

The $2r$-dimensional $U_q\big($D$_r^{(1)}\big)$-module $\tl{L}_1(a)$ when restricted to $U_q($D$_r)$ is isomorphic to $L_{\omega_1}$. As $U_q($D$_r)$-modules we have
\eq{\label{tensorD}
\underbracket[0.1ex]{L_{\omega_1}}_{2r}\otimes \underbracket[0.1ex]{L_{\omega_1}}_{2r}\cong \underbracket[0.1ex]{L_{2\omega_1}}_{(r+1)(2r-1)}\oplus \underbracket[0.1ex]{L_{\omega_2}}_{r(2r-1)}\oplus \underbracket[0.1ex]{L_{\omega_0}}_{1}\ .
}
In the $q\to 1$ limit, $L_{2\omega_1}\oplus L_{\omega_0}\mapsto \cl{S}^2(L_{\omega_1})$ and $L_{\omega_2}\mapsto \Lambda^2(L_{\omega_1})$.

The $q$-character of $\tl L_1=\tl{L}_{\mr{1}_0}$ has $2r$ terms and there are no weight zero terms:
$$\chi_q(\mr{1}_0)=\mr{1}_0+\ul{\mr{1}_2^{-1}\mr{2}_1}+\cdots+(\mr{r}-\mr{3})_{r-2}^{-1}(\mr{r}-\mr{2})_{r-3}+(\mr{r}-\mr{2})_{r-1}^{-1}(\mr{r}-\mr{1})_{r-2}\mr{r}_{r-2}+(\mr{r}-\mr{1})_r^{-1}\mr{r}_{r-2}$$
$$+\,(\mr{r}-\mr{1})_{r-2}\mr{r}_{r}^{-1}+(\mr{r}-\mr{2})_{r-1}(\mr{r}-\mr{1})_{r}^{-1}\mr{r}_{r}^{-1}+(\mr{r}-\mr{3})_{r}(\mr{r}-\mr{2})_{r+1}^{-1}+\cdots+\mr{1}_{2r-4}\mr{2}_{2r-3}^{-1}+\ul{\mr{1}_{2r-2}^{-1}}\ .$$
 Using the $q$-characters we compute the zeros and poles of $\check{R}(z)$ and the corresponding kernels and cokernels. 
\begin{lem}
The poles of the $R$-matrix $\check{R}(z)$, the corresponding submodules and quotient modules are given by
\begin{center}\begin{tabular}{c@{\hspace{1cm}} c c}
Poles & Submodules & Quotient modules  \\

$q^{2}$ & $\tl{L}_{\mr{1}_a\mr{1}_{aq^{-2}}}\cong L_{2\omega_1}$ & $\hspace{20pt} \tl{L}_{\mr{2}_{aq^{-1}}}\cong L_{\omega_2}\oplus L_{\omega_0}$ 
\vspace{2pt}\\

$q^{2r-2}$ & $\hspace{18pt} \tl{L}_{\mr{1}_a\mr{1}_{aq^{-2r+2}}}\cong L_{2\omega_1}\oplus L_{\omega_2}$ & $\hspace{5pt}\tl{L}_{\scriptscriptstyle{1}}\cong L_{\omega_0}$ \\
\end{tabular}\ .\end{center}
\qed
\end{lem}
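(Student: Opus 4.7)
The plan is to mirror the strategy used for types A, B, and C earlier in this section. First, I would examine the product $\chi_q(\mr{1}_a)\chi_q(\mr{1}_0)$ and find all values of $a$ for which it has a dominant monomial other than $\mr{1}_a\mr{1}_0$. Inspecting the $q$-character of $\tl L_1$ in type D$_r$, only the underlined monomials $\mr{1}_2^{-1}\mr{2}_1$ and $\mr{1}_{2r-2}^{-1}$ can combine with the highest monomial of the other factor to yield a dominant product. A short combinatorial check shows that this happens precisely when $a=q^{\pm 2}$ or $a=q^{\pm(2r-2)}$, with extra dominant monomials $\mr{2}_1$ and $\mr{1}_0$ (up to shift) respectively. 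By Theorem \ref{poles thm}, these are the only candidates for poles and zeros of $\check{R}(z)$.

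Second, I would apply Theorem \ref{q char arg} to each of the extra dominant monomials to show that it does not belong to $\chi_q(\mr{1}_a\mr{1}_0)$. For $(a,m)=(q^2,\mr{2}_1)$ the relevant $m_-$ is $\mr{1}_2\mr{1}_0$, and the hypotheses reduce to verifying combinatorially that no $A$-root multiple of $\mr{2}_1$ or of $\mr{1}_2\mr{1}_0$ (other than $A_{i,b}$ for the distinguished index) occurs in the product $q$-character; the same is done for $(a,m)=(q^{2r-2},1)$. Once these are established, the product splits as $\chi_q(\mr{1}_a\mr{1}_0)+\chi_q(m_+)$, with each summand special, so the algorithm of \cite{FM01} applies.

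Third, I would run the algorithm of \cite{FM01} to compute the three needed $q$-characters and match the classical decomposition by summing over weights. The character of $\tl L_{\mr{1}_a\mr{1}_{aq^{-2}}}$ has $(r+1)(2r-1)$ terms, matching $L_{2\omega_1}$; the character of $\tl L_{\mr{2}_{aq^{-1}}}$ has $r(2r-1)+1$ terms, matching $L_{\omega_2}\oplus L_{\omega_0}$ (note the extra dimension compared to the fundamental $L_{\omega_2}$ arises from the weight-zero monomial appearing in $\chi_q(\mr{2}_{aq^{-1}})$ in type D$_r$). At $a=q^{2r-2}$, $\tl L_1\cong L_{\omega_0}$ is the trivial module, and the remaining $q$-character of dimension $(r+1)(2r-1)+r(2r-1)$ matches $L_{2\omega_1}\oplus L_{\omega_2}$.

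Finally, to decide which summand is the submodule and which is the quotient, I would invoke the criterion of \cite{C00}, \cite{Ka02}: a submodule not containing the product of highest weight vectors occurs only when the spectral parameter ratio is $q^k$ with $k<0$. Hence for the poles at $z=q^2$ and $z=q^{2r-2}$, the submodule is the cyclic one generated by the product of highest weight vectors (which has highest $\ell$-weight $\mr{1}_a\mr{1}_{a q^{-2}}$ or $\mr{1}_a\mr{1}_{a q^{-2r+2}}$), while the other summand forms the quotient. The main obstacle is purely bookkeeping: the combinatorial verification of the hypotheses of Theorem \ref{q char arg} for type D is straightforward but must be executed carefully with the explicit $\chi_q(\mr{1}_0)$ displayed above.
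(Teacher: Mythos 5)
Your proposal follows exactly the method the paper itself uses for these lemmas (the paper gives no details here, deferring to the standard $q$-character computation illustrated for G$_2$ in the introduction): locate the extra dominant monomials of $\chi_q(\mr{1}_a)\chi_q(\mr{1}_0)$ via the underlined terms, exclude them from $\chi_q(\mr{1}_a\mr{1}_0)$ with Theorem \ref{q char arg}, match dimensions against the decomposition \eqref{tensorD}, and use the cyclicity criterion of \cite{C00}, \cite{Ka02} together with Theorem \ref{poles thm} and the inversion relation to separate submodules from quotients and poles from zeroes. The only quibble is notational: in Theorem \ref{q char arg} the monomial to be excluded is $m_-=mA_{i,b}^{-1}$, so for the pole at $q^{2}$ one should take $m=\mr{1}_2\mr{1}_0$ and $m_-=\mr{2}_1$ (with $b=q$), i.e., the roles are the reverse of your labeling.
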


We choose a basis $\{v_i:1\le i\le 2r\}$ for $L_{\omega_1}$ in the standard way so that $v_1$ is a non-zero highest weight vector, $F_iv_i=v_{i+1}$, $F_iv_{\ol{i+1}}=v_{\ol{i}}$, where $\ol{i}=2r+1-i$ and $i=1,\dots,r-1$, and $F_rv_{r-1}=v_{r+1}$, $F_rv_{\ol{r+1}}=v_{\ol{r-1}}$. In the chosen basis, $v_1\otimes v_1$ is a singular vector of weight $2\omega_1$, and $q\,v_1\otimes v_2-v_2\otimes v_1$ is a singular vector of weight $\omega_2$. We generate respectively the modules $L_{2\omega_1}$ and $L_{\omega_2}$ using these singular vectors. 

Let 
$\ve_i^q=(-q)^{r-i}$, $\ve_{\ol{i}}^q=\ve_i^{q^{-1}}$, $1\le i\le r$. A singular vector $v_0\in L_{\omega_1}^{\otimes2}$ of weight $\omega_0$ is given by 
$$v_0=\sum_{i=1}^{2r}\ve_i^qv_i\otimes v_{\ol{i}}\ .$$

For $\lambda=2\omega_1,\omega_2,\omega_0$, let $P_\lambda^q$ be the projector onto the $U_q($D$_r)$-module $L_{\lambda}$ in the decomposition \eqref{tensorD}, and let $E_{ij}$ be matrix units corresponding to the chosen basis, that is, $E_{ij}v_k=\D_{jk}v_i$.

\begin{thm}
In terms of projectors, we have
\eq{\label{R proj D}
\check{R}(z)=P_{2\omega_1}^q-q^{-2}\frac{1-q^{2}z}{1-q^{-2}z}P_{\omega_2}^q+q^{-2r}\frac{(1-q^{2}z)(1-q^{2r-2}z)}{(1-q^{-2}z)(1-q^{-2r+2}z)}P_{\omega_0}^q\ .
}
In terms of matrix units, we have
\eq{\label{RqD}
\check{R}(z)=\big(\check{R}(z)\big)_{\fk{sl}_{2r}}-\frac{(q-q^{-1})(1-z)}{(q-q^{-1}z)(q^{r-1}-q^{-r+1}z)}Q(z)\ ,
}
where $\big(\check{R}(z)\big)_{\fk{sl}_{2r}}$ is the A$_{2r-1}$ or $\fk{sl}_{2r}$ trigonometric $R$-matrix in \eqref{RqA} and $Q(z)$ is given by
$$Q(z)=z\sum_{i+j<2r+1}\frac{\ve_i^q\ve_j^q}{q^{r-1}}\,E_{ij}\otimes E_{\ol{i}\,\ol{j}}+\sum_{i+j>2r+1}\frac{\ve_i^q\ve_j^q}{q^{-r+1}}\,E_{ij}\otimes E_{\ol{i}\,\ol{j}}\,+\,\frac{q^{r-\frac{3}{2}}+q^{-r+\frac{3}{2}}z}{q^{\frac{1}{2}}+q^{-\frac{1}{2}}}\,\sum_{i+j=2r+1}E_{ij}\otimes E_{\ol{i}\,\ol{j}}\ .$$
\qed
\end{thm}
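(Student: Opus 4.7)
The plan is to follow the five-step algorithm of Section~\ref{meth sec} for the projector formula, and then for the matrix-unit formula to show by direct computation that $\check R(z)-\big(\check R(z)\big)_{\fk{sl}_{2r}}$ and $Q(z)$ are both supported on the zero-weight subspace $U=\bigoplus_{i=1}^{2r}\C\,v_i\otimes v_{\ol i}$ of $V\otimes V$ and agree there up to the claimed scalar coefficient.

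For the projector formula I set $f_{2\omega_1}(z)=1$ as prescribed. The preceding lemma together with Theorem~\ref{poles thm} gives the zeros and poles of the remaining $f_\lambda$: $f_{\omega_2}$ has a pole at $z=q^2$ and zero at $z=q^{-2}$, while $f_{\omega_0}$ has poles at $z=q^2$ and $z=q^{2r-2}$ and zeros at $z=q^{-2}$ and $z=q^{-2r+2}$. Simplicity is from Theorem~\ref{order of zeroes thm}, and Proposition~\ref{order of zeroes prp} bounds the degrees so that no further poles or zeros occur. Hence each $f_\lambda$ has the stated fractional-linear shape up to an overall constant, which the normalization $\check R(1)=\id$ pins down as $-q^{-2}$ for $f_{\omega_2}$ and $q^{-2r}$ for $f_{\omega_0}$, via the evaluation $(1-q^2)/(1-q^{-2})=-q^2$.

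For the matrix-unit formula, support of $Q(z)$ on $U$ is immediate from $(E_{ij}\otimes E_{\ol i\,\ol j})(v_a\otimes v_b)=\delta_{ja}\delta_{\ol j,b}\,v_i\otimes v_{\ol i}$. For the difference, every nonzero-weight subspace of $V\otimes V$ is a 2-dimensional block $\{v_i\otimes v_j,\,v_j\otimes v_i\}$ with $j\notin\{i,\ol i\}$; because the singular vectors $v_1\otimes v_1$ and $qv_1\otimes v_2-v_2\otimes v_1$ generating $L^D_{2\omega_1}$ and $L^D_{\omega_2}$ coincide with the standard $q$-symmetric and $q$-antisymmetric $\fk{sl}_{2r}$ singular vectors, a short calculation (illustrated on $v_1\otimes v_2,\,v_2\otimes v_1$ using the identity $(1+q^2)(1-q^{-2}z)=(q+q^{-1})(q-q^{-1}z)$) shows that both $\check R(z)$ and $\big(\check R(z)\big)_{\fk{sl}_{2r}}$ act on each such block as the same $2\times 2$ matrix with eigenvalues $1$ and $-q^{-2}(1-q^2z)/(1-q^{-2}z)=f_{\omega_2}(z)$. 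Hence the difference vanishes off $U$, as claimed.

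On $U$ itself I match the resulting $2r\times 2r$ matrices. The rank-one piece $P^q_{\omega_0}|_U=\|v_0\|^{-2}\,v_0\otimes v_0^*$ with $\|v_0\|^2=[r](q^{r-1}+q^{-r+1})$ is immediate; the weight-zero parts $P^q_{2\omega_1}|_U$ and $P^q_{\omega_2}|_U$ are obtained by resolving the remaining $(r-1)+r$ dimensions of $U$ using D-singular vectors built from $v_1\otimes v_1$ and $qv_1\otimes v_2-v_2\otimes v_1$ via the D-type Chevalley generators (the $F_r$-action is where the D- and A-type decompositions of $U$ visibly diverge). Substituting into the projector formula, subtracting \eqref{RqA}, and using the identity $(q-q^{-1}z)(q^{r-1}-q^{-r+1}z)=q^r(1-q^{-2}z)(1-q^{-2r+2}z)$ to reconcile the two normalizations reduces the claim to a region-by-region match for the three regions $i+j<2r+1$, $i+j=2r+1$, $i+j>2r+1$ in the definition of $Q(z)$. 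The main obstacle is this combinatorial bookkeeping: in the boundary region $i+j=2r+1$ one must track how the off-diagonal coupling $E_{i\,\ol i}\otimes E_{\ol i\,i}$ interacts with the diagonal entries of $\big(\check R(z)\big)_{\fk{sl}_{2r}}$ to reproduce the prescribed coefficient $(q^{r-3/2}+q^{-r+3/2}z)/(q^{1/2}+q^{-1/2})$, while the other two regions require recovering the $z$-dependent prefactors $z\ve_i^q\ve_j^q/q^{r-1}$ and $\ve_i^q\ve_j^q/q^{-r+1}$ from the explicit weights $\ve_i^q=(-q)^{r-i}$ and $\ve_{\ol i}^q=(-q^{-1})^{r-i}$.
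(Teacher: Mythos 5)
Your projector-formula argument coincides with the paper's: the poles and their distribution over $f_{\omega_2}$, $f_{\omega_0}$ come from the $q$-character lemma together with Theorem \ref{poles thm}, simplicity from Theorem \ref{order of zeroes thm} (or Proposition \ref{order of zeroes prp}, which does cover type D), and the constants $-q^{-2}$ and $q^{-2r}$ from $\check{R}(1)=\id$ via $(1-q^2)/(1-q^{-2})=-q^2$; this is exactly the algorithm of Section \ref{meth sec}. For the matrix-unit formula \eqref{RqD} the paper takes the shorter route of matching the known answer of \cite{J86}, \cite{KS82} and verifying commutation with $E_0$, $F_0$, whereas you propose to derive \eqref{RqD} from \eqref{R proj D} by a direct block decomposition. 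Your route is more self-contained and does explain structurally why the correction to $\big(\check{R}(z)\big)_{\fk{sl}_{2r}}$ lives only on the weight-zero subspace $U=\oplus_i\,\C\,v_i\otimes v_{\ol{i}}$, which the paper leaves implicit; the identities you quote ($\|v_0\|^2=[r](q^{r-1}+q^{-r+1})$, $(q-q^{-1}z)(q^{r-1}-q^{-r+1}z)=q^{r}(1-q^{-2}z)(1-q^{-2r+2}z)$) are correct.

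The one thin spot is the off-$U$ vanishing. Equality of the two operators on a $2$-dimensional weight block $\{v_i\otimes v_j,\,v_j\otimes v_i\}$, $j\neq i,\ol{i}$, does not follow from their having the same eigenvalues $1$ and $f_{\omega_2}(z)$; you need that the lines $(L^{\mathrm D}_{2\omega_1})\cap$ block and $(L^{\mathrm D}_{\omega_2})\cap$ block coincide with the $q$-symmetrizer and $q$-antisymmetrizer lines of the A$_{2r-1}$ matrix in every block, not just the one containing $v_1\otimes v_2$. Coincidence of the highest weight vectors does not propagate automatically, because the type-D lowering operators differ from the type-A ones precisely at the fork ($F_r$ sends $v_{r-1}\mapsto v_{r+1}$), so the blocks of weight $\epsilon_a\pm\epsilon_r$ must be checked separately. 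This is a finite and true verification, but as written it is asserted rather than proved; the same remark applies to the final ``region-by-region match'' on $U$, which is where all the actual content of \eqref{RqD} sits and which you describe but do not carry out.
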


One can directly check that the $R$-matrix commutes with the action of $E_0$ and $F_0$, where 
$$K_0=q^{-1}\big(E_{11}+E_{22}\big)+\sum_{i=3}^{2r-2}E_{ii}+q\big(E_{2r-1,2r-1}+E_{2r,2r}\big)\ ,\quad E_0(a)=a \big(E_{2r-1,1}+E_{2r,2}\big)\ ,$$
and $F_0(a)$ is the transpose of $a^{-2}E_0(a)$.

Let $\displaystyle P_\lambda=\lim_{q\to1}P_\lambda^q$ be the $U($D$_r)$ projector, let $I$ be the identity operator, let $P$ be the flip operator, and let $Q$ be given by
\bee{
Q=\sum_{i,j=1}^{2r}(-1)^{i+j} E_{ij}\otimes E_{\ol{i}\,\ol{j}}=2r\,P_{\omega_0}\ .
}

\begin{cor}
In the rational case, the corresponding $R$-matrix is given by
\eq{\label{RuD}
\check{R}(u)=P_{2\omega_1}+\frac{1+u}{1-u}P_{\omega_2}+\frac{(1+u)(r-1+u)}{(1-u)(r-1-u)}P_{\omega_0}=\frac{1}{1-u}\bigg(I-uP+\frac{u}{r-1-u}Q\bigg)\ .
}
\end{cor}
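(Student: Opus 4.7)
The plan is to prove the corollary exactly as the A, B, and C analogues were proved: set $z = q^{2u}$ in the trigonometric formulas \eqref{R proj D} and \eqref{RqD} and take $q \to 1$. Because $P_\lambda^q \to P_\lambda$ by the very definition of the classical-limit projectors, proving the first equality reduces to computing scalar limits. For the coefficient of $P_{\omega_2}^q$ I would write $-q^{-2}\frac{1-q^{2+2u}}{1-q^{-2+2u}}$ and apply the expansion $1-q^a \sim -a(q-1)$ as $q \to 1$; the $(q-1)$ factors cancel and the ratio limits to $\frac{1+u}{1-u}$. The analogous computation for the coefficient of $P_{\omega_0}^q$ yields $\frac{(1+u)(r-1+u)}{(1-u)(r-1-u)}$, matching the stated formula.

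For the matrix-unit form, I would split \eqref{RqD} into its two pieces. The $\check R(z)_{\mathfrak{sl}_{2r}}$ summand specializes by the type A corollary to $\frac{1}{1-u}(I-uP)$. For the correction term, the scalar prefactor $-\frac{(q-q^{-1})(1-z)}{(q-q^{-1}z)(q^{r-1}-q^{-r+1}z)}$ has both numerator and denominator of order $(q-1)^2$ after substituting $z=q^{2u}$, and a direct leading-order expansion gives limit $\frac{u}{(1-u)(r-1-u)}$. The operator $Q(z)$ collapses at $q=1$, $z=1$ to $\sum_{i,j=1}^{2r}(-1)^{i+j}E_{ij}\otimes E_{\bar i \bar j}$, because $\varepsilon_i^q \to (-1)^{r-i}$ and all the $q^{\pm(r-1)}$ weights tend to $1$, and the diagonal coefficient $\frac{q^{r-3/2}+q^{-r+3/2}z}{q^{1/2}+q^{-1/2}}$ tends to $1$ as well. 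Combining yields $\frac{1}{1-u}(I-uP) + \frac{u}{(1-u)(r-1-u)}Q$, which is the right-hand side.

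The equivalence of the two stated forms then follows from the identity $Q = 2r\, P_{\omega_0}$ recalled immediately before the corollary. The only point requiring any bookkeeping is making sure the signs and normalizations in $Q(z)$ assemble correctly to $(-1)^{i+j}E_{ij}\otimes E_{\bar i\bar j}$ across the three regimes $i+j \lessgtr 2r+1$ and $i+j=2r+1$; once that is checked, the argument is purely asymptotic analysis and presents no real obstacle.
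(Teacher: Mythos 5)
Your proposal is correct and is exactly the paper's proof: the paper's entire argument for this corollary is the single sentence ``substitute $z=q^{2u}$ in \eqref{R proj D} and \eqref{RqD} and take the limit $q\to 1$,'' and your expansion of the scalar limits, of the prefactor of $Q(z)$, and of $Q(z)\to Q=2r\,P_{\omega_0}$ is just that computation written out. The sign bookkeeping you flag is the only delicate point (since $\ol{i}=2r+1-i$ reverses parity, the limiting coefficients are $\ve_i\ve_j$ rather than literally $(-1)^{i+j}$ on the mixed blocks), but it does not affect the identification $\lim_{q\to1}Q(z)=2r\,P_{\omega_0}$ that the corollary actually uses.
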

\begin{proof}
We substitute $z=q^{2u}$ in \eqref{R proj D} and \eqref{RqD} and take the limit $q\to 1$.
\end{proof}

\section{The exceptional cases}\label{ex sec}
In this section we present the formulas for $\check R(z)$ for exceptional types. We give formulas in terms of projectors and in terms of matrix units. In terms of projectors, the formulas in all cases except for E$_8$ are not new. Formulas \eqref{R proj E6}, \eqref{R proj E7} can be found in \cite{M90}, \cite{BGZD94}, formula \eqref{R proj F4} in \cite{M91}, \cite{BGZD94}, formula \eqref{R proj G} in \cite{K90}. The corresponding rational formulas \eqref{RuE6}, \eqref{RuE7} can be found in \cite{M90}, formula \eqref{RuF4} in \cite{M91}, formula \eqref{RuG} in \cite{O86}.

To describe the $R$-matrix in terms of matrix units for exceptional types (we omit E$_8$ here) we will use the following universal formula. In fact the same formula could be used for classical types but we choose not to do that. 

We choose an orthonormal basis (with respect to properly normalized Shapovalov form) $v_i$ for $L_{\omega_1}$ labeled by numbers $i=1,\dots, d$, $d=\dim(L_{\omega_1})$, described in Section \ref{app}. Such a basis is easy to describe since all weight spaces are one-dimensional.  The only exception is the case of F$_4$ where we have a two dimensional zero weight space, which also can be handled, see \cite{DGZ94}. (Again, we do not give $R$-matrix in matrix unit form for E$_8$, though we do give such a basis for that case, see Section \ref{E8 app}.) 

Let $J=\{1,\dots,d\}$.
Given a highest weight $\lambda$ of a submodule in $L_{\omega_1}^{\otimes 2}$ we give a basis $w_s$ of $L_\lambda$, $s=1,\dots, \tilde d_\la$, $ \tl d_\lambda=\dim(L_{\lambda})$,  
of the form $w_s=\sum_{(i,j)\in I_s^\la} \sigma_{ij}^{q,s} v_i\otimes v_j$, where $I_s^\la\subset J\times J$. We list $I_s^\la$ and  $\sigma_{ij}^{q,s}$ in Section \ref{app}.
Importantly, the basis $w_s$ we choose is orthogonal and $w_s$ all have the same length with respect to the tensor product of Shapovalov forms in $L_{\omega_1}$. In addition, the different submodules in $L_{\omega_1}^{\otimes 2}$ are automatically orthogonal to each other, as $E_i^T=F_i$ for $i\in\mr I$, cf. Lemma \ref{lin alg lemma}.


Then many formulas for $R$-matrices in terms of matrix units have the following general form depending only on at most four coefficients $a_{\pm}(z)$ and $a_0^{(1)}(z)$, $a_0^{(2)}(z)$:
\eq{\label{fp}
G_\lambda(a_-,a_+,a_0^{(1)},a_0^{(2)}\,;\sigma)=\sum_{s=1}^{\tl{d}}\bigg(a_-(z)\sum_{<}\sigma_{ik}^{q,s}\sigma_{jl}^{q,s} E_{ij}\otimes E_{kl} +a_+(z)\sum_{>}\sigma_{ik}^{q,s}\sigma_{jl}^{q,s} E_{ij}\otimes E_{kl} \\ 
+\,a_0^{(1)}(z)\sum_{=,1}\sigma_{ik}^{q,s}\sigma_{jl}^{q,s} E_{ij}\otimes E_{kl} +\,a_0^{(2)}(z)\sum_{=,2}\sigma_{ik}^{q,s}\sigma_{jl}^{q,s} E_{ij}\otimes E_{kl}\bigg)\ ,
}
where for a given $s$ the sum is over pairs $(i,k),(j,l)\in I_s^\la$ such that $|(i,k)|+|(j,l)|$ is either smaller (in the $<$ sum), greater (in the $>$ sum) or equal (in the $=,1$ and $=,2$ sum) than $|I_s^\la|+1$. Here $|(i,k)|\in\{1,\dots,|I_s^\la|\}$ is the position of the pair $(i,k)$ in the list $I_s^\la$, and $|I_s^\la|$ is the cardinality of $I_s^\la$. 

The $=,1$ and $=,2$ sums in \eqref{fp}, corresponding to $|(i,k)|+|(j,l)|=|I_s^\la|+1$, are taken as follows. In the case of F$_4$ and G$_2$, when $\la=\om_0$, the $=,1$ sum is taken over those $(i,k)$ and $(j,l)$ for which both $v_i$, $v_k$ have weight $0$. The $=,2$ sum is taken over those $(i,k)$ and $(j,l)$ for which none of $v_i$, $v_k$ has weight $0$. In the case of F$_4$ and G$_2$, when $\la=\om_1$, the $=,1$ sum is taken over those $(i,k)$ and $(j,l)$ for which one of $v_i$, $v_k$ have weight $0$ or $v_i\otimes v_k$ has weight $0$. The $=,2$ sum is taken over those $(i,k)$ and $(j,l)$ for which none of $v_i$, $v_k$, $v_i\otimes v_k$ has weight $0$. 

In the case of F$_4$, when $\la=\om_4$, $a_0^{(1)}(z)=a_0^{(2)}(z)$ and the $=,1$ and $=,2$ sums combine to the sum over all $(i,k)$, $(j,l)$ such that $|(i,k)|+|(j,l)|=|I_s^\la|+1$. In addition, for $25\le s\le 28$, the $=,1$ and $=,2$ sums are absent. We write this as $G_{\om_4}(a_-,a_+,a_0\,;\sigma)$.

In the case of E$_6$ and E$_7$, there are no weight zero vectors in $L_{\omega_1}$ and the $=,1$ sum is declared empty.  Then we write $G_\la(a_-,a_+,a_0^{(1)},a_0^{(2)}\,;\sigma)$ as $G_\la(a_-,a_+,a_0\,;\sigma)$. In addition, in the case of E$_7$, when $\la=\omega_6$, the $=,2$ sum is absent for $64\le s\le 70$. We still write this as $G_{\om_6}(a_-,a_+,a_0\,;\sigma)$.

As always, $E_{ij}$ is the matrix unit corresponding to the chosen basis - a matrix of size $d\times d$ with $i,j$ entry $1$ and all other entries zero.

\subsection{Type E\texorpdfstring{$_6$}{2}}

The Dynkin diagram is:

\bigskip 

\begin{center}
\dynkin [extended, edge length=1.25cm, root radius=0.075cm, label macro/.code={\drlap{#1}}, labels={0, 1, 2, 3, 4, 5, 6}, ordering=Kac] E[1]{6}\ .
\end{center}

\medskip

The 27-dimensional $U_q\big($E$_6^{(1)}\big)$-module $\tl{L}_{1}(a)$ restricted to $U_q($E$_6)$ is isomorphic to $L_{\omega_1}$.\\ As $U_q($E$_6)$-modules we have
\eq{\label{tensorE6}
\underbracket[0.1ex]{L_{\omega_1}}_{27}\otimes \underbracket[0.1ex]{L_{\omega_1}}_{27}\cong \underbracket[0.1ex]{L_{2\omega_1}}_{351}\oplus \underbracket[0.1ex]{L_{\omega_2}}_{351}\oplus \underbracket[0.1ex]{L_{\omega_5}}_{27}\ .
}
In the $q\to 1$ limit, $L_{2\omega_1}\oplus L_{\omega_5}\mapsto \cl{S}^2(L_{\omega_1})$ and $L_{\omega_2}\mapsto \Lambda^2(L_{\omega_1})$.

The $q$-character of $\tl L_1=\tl{L}_{\mr{1}_0}$ has $27$ terms and there are no weight zero terms:
\eq{\label{qchar E6}
& \chi_q(\mr{1}_0)=\mr{1}_{0} + \ul{\mr{1}_{2}^{-1}\mr{2}_{1}} + \mr{2}_{3}^{-1}\mr{3}_{2} + \mr{3}_{4}^{-1}\mr{4}_{3}\mr{6}_{3} + \mr{4}_{3}\mr{6}_{5}^{-1} + \mr{4}_{5}^{-1}\mr{5}_{4}\mr{6}_{3} + \mr{3}_{4}\mr{4}_{5}^{-1}\mr{5}_{4}\mr{6}_{5}^{-1} + \mr{5}_{6}^{-1}\mr{6}_{3} + \mr{3}_{4}\mr{5}_{6}^{-1}\mr{6}_{5}^{-1} \\ 
& + \mr{2}_{5}\mr{3}_{6}^{-1}\mr{5}_{4} + \mr{2}_{5}\mr{3}_{6}^{-1}\mr{4}_{5}\mr{5}_{6}^{-1} + \mr{1}_{6}\mr{2}_{7}^{-1}\mr{5}_{4} + \mr{2}_{5}\mr{4}_{7}^{-1} + \mr{1}_{6}\mr{2}_{7}^{-1}\mr{4}_{5}\mr{5}_{6}^{-1} + \ul{\mr{1}_{8}^{-1}\mr{5}_{4}} + \mr{1}_{8}^{-1}\mr{4}_{5}\mr{5}_{6}^{-1} + \mr{1}_{6}\mr{2}_{7}^{-1}\mr{3}_{6}\mr{4}_{7}^{-1} \\ 
& + \mr{1}_{8}^{-1}\mr{3}_{6}\mr{4}_{7}^{-1} + \mr{1}_{6}\mr{3}_{8}^{-1}\mr{6}_{7} + \mr{1}_{6}\mr{6}_{9}^{-1} + \mr{1}_{8}^{-1}\mr{2}_{7}\mr{3}_{8}^{-1}\mr{6}_{7} + \mr{1}_{8}^{-1}\mr{2}_{7}\mr{6}_{9}^{-1} + \mr{2}_{9}^{-1}\mr{6}_{7} + \mr{2}_{9}^{-1}\mr{3}_{8}\mr{6}_{9}^{-1} + \mr{3}_{10}^{-1}\mr{4}_{9} + \mr{4}_{11}^{-1}\mr{5}_{10} + \mr{5}_{12}^{-1}\ .
}
Using the $q$-characters we compute the zeros and poles of $\check{R}(z)$ and the corresponding kernels and the cokernels.
\begin{lem}
The poles of the $R$-matrix $\check{R}(z)$, the corresponding submodules and quotient modules are given by
\begin{center}\begin{tabular}{c@{\hspace{1cm}} c c}
Poles & Submodules & Quotient modules  \\

$q^{2}$ & $\tl{L}_{\mr{1}_a\mr{1}_{aq^{-2}}}\cong L_{2\omega_1}$ & $\hspace{33pt} \tl{L}_{\mr{2}_{aq^{-1}}}\cong L_{\omega_2}\oplus L_{\omega_5}$ \\

$q^{8}$ & $\hspace{29pt} \tl{L}_{\mr{1}_a\mr{1}_{aq^{-8}}}\cong L_{2\omega_1}\oplus L_{\omega_2}$ & $\hspace{5pt} \tl{L}_{\mr{5}_{aq^{-4}}}\cong L_{\omega_5}$\\
\end{tabular} \ .\end{center}
\qed
\end{lem}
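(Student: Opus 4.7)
The plan is to follow the $q$-character method illustrated for G$_2$ in the introduction. First, I would scan the $q$-character (\ref{qchar E6}) for monomials of the form $\mr{1}_c^{-1}n$ with $n$ dominant; the only such monomials are the two underlined ones, $\mr{1}_2^{-1}\mr{2}_1$ and $\mr{1}_8^{-1}\mr{5}_4$. Hence $\chi_q(\mr{1}_0)\chi_q(\mr{1}_a)$ carries a dominant monomial beyond $\mr{1}_0\mr{1}_a$ precisely when $a\in\{\pm 2,\pm 8\}$: multiplying $\mr{1}_a$ with an underlined monomial in $\chi_q(\mr{1}_0)$ yields $\mr{2}_1$ at $a=2$ and $\mr{5}_4$ at $a=8$, while the symmetric argument pairing $\mr{1}_0$ with the shifted underlined monomials of $\chi_q(\mr{1}_a)$ handles $a\in\{-2,-8\}$. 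By Theorem \ref{poles thm} the poles and zeros of $\check R(z)$ are therefore confined to $z\in\{q^{\pm 2},q^{\pm 8}\}$.

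Next I would split these two products into irreducible $q$-character summands using Theorem \ref{q char arg}. For $a=2$, taking $V=\tl L_{\mr{1}_0\mr{1}_2}$, $i=1$, $b=q$ and $m=\mr{1}_0\mr{1}_2$, we get $mA_{1,q}^{-1}=\mr{2}_1$, and the four combinatorial hypotheses reduce to a finite inspection of the monomials of $\chi_q(\mr{1}_0)\chi_q(\mr{1}_2)$ obtained from (\ref{qchar E6}); this yields $\mr{2}_1\notin\chi_q(\mr{1}_0\mr{1}_2)$ and hence $\chi_q(\mr{1}_0)\chi_q(\mr{1}_2)=\chi_q(\mr{1}_0\mr{1}_2)+\chi_q(\mr{2}_1)$. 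For $a=8$ the same strategy applies, although since $\mr{5}_4$ lies several $A^{-1}$-steps below $\mr{1}_0\mr{1}_8$, one has to iterate Theorem \ref{q char arg} along a short chain of intermediate $i$-dominant monomials of multiplicity one to conclude $\chi_q(\mr{1}_0)\chi_q(\mr{1}_8)=\chi_q(\mr{1}_0\mr{1}_8)+\chi_q(\mr{5}_4)$.

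Third, each top-level summand now has a unique dominant monomial, so the recursive algorithm of \cite{FM01} produces $\chi_q(\mr{1}_0\mr{1}_2)$ and $\chi_q(\mr{1}_0\mr{1}_8)$ explicitly; the remaining $q$-characters $\chi_q(\mr{2}_1)$ and $\chi_q(\mr{5}_4)$ follow by subtraction from the two known products. Grouping monomials by $U_q(\mr{E}_6)$-weight then matches the claimed decompositions $\tl L_{\mr{1}_0\mr{1}_2}\cong L_{2\omega_1}$ (dimension $351$), $\tl L_{\mr{2}_1}\cong L_{\omega_2}\oplus L_{\omega_5}$ (dimension $351+27=378$), $\tl L_{\mr{1}_0\mr{1}_8}\cong L_{2\omega_1}\oplus L_{\omega_2}$ (dimension $702$), and $\tl L_{\mr{5}_4}\cong L_{\omega_5}$ (dimension $27$), with both splits summing to $729=27^2$ as required by (\ref{tensorE6}). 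Theorem \ref{poles thm} then identifies the poles of $\check R(z)$ at $z=q^2,q^8$ together with the submodules and quotients claimed in the lemma, and Lemma \ref{lemR}(\ref{inversion relation}) converts the $a=-2,-8$ cases into zeros with the roles of submodule and quotient exchanged.

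The main obstacle is the chained application of Theorem \ref{q char arg} in the $a=8$ case, since a single step does not reach $\mr{5}_4$ directly from $\mr{1}_0\mr{1}_8$; the rest of the proof is a mechanical run of the \cite{FM01} algorithm and a weight-count against (\ref{tensorE6}).
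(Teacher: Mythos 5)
Your proposal is correct and follows essentially the same route as the paper: read off the candidate poles from the two underlined monomials $\mr{1}_2^{-1}\mr{2}_1$ and $\mr{1}_8^{-1}\mr{5}_4$ in \eqref{qchar E6} (the only terms whose sole negative factor is a single $\mr{1}_c^{-1}$), split $\chi_q(\mr{1}_a)\chi_q(\mr{1}_0)$ at $a=\pm2,\pm8$ via Theorem \ref{q char arg} and the algorithm of \cite{FM01}, match the summands against \eqref{tensorE6} by weights and dimensions ($351+378=702+27=729$), and use Theorem \ref{poles thm} together with the cyclicity result of \cite{C00} and Lemma \ref{lemR}(\ref{inversion relation}) to sort submodules from quotients. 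This is exactly the "standard computation with $q$-characters" the paper declines to spell out, and your flag about possibly needing to iterate Theorem \ref{q char arg} to reach $\mr{5}_4$ in the $a=8$ case is a fair description of the only nontrivial combinatorial check.
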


We choose a basis $\{v_i:1\le i\le 27\}$ for $L_{\omega_1}$ so that $v_1$ is a non-zero highest weight vector, see a diagram of $L_{\omega_1}$ in Section \ref{E6 app}. The vectors $v_i$ are ordered as their $\ell$-weights appear in the $q$-character \eqref{qchar E6}.

The $U_q($E$_6)$-submodule $L_{\omega_5}\se L_{\omega_1}^{\otimes 2}$ has a basis $\{u_s\}_{s=1}^{27}$ of the form 
$$u_s=\sum_{(i,j)\in I_s^{\omega_5}}\ve^{q,s}_{ij}v_i\otimes v_j,\qquad $$
where the sets $I_s^{\omega_5}$ are  given in Section \ref{E6 app} and have cardinality 10, and $\ve=\{\ve_{ij}^{q,s}\}_{s=1}^{27}$ are given by 
$$\ve^{q,s}_{ij}=(-q)^{5-|(i,j)|}\  \text{ for }i<j \text{ (or equivalently for }|(i,j)|\leq 5),\quad \ve^{q,s}_{ij}=\ve^{q^{-1},s}_{ji}\ \text{ for }i>j, \quad 1\le s\le 27\ .$$ 
We always have $i\ne j$ in this case. The vector $\ve$ will replace $\sigma$ in the expression of $G_{\omega_5}$ in \eqref{fp}, see \eqref{E6Tz}.

For $\lambda=2\omega_1,\omega_2,\omega_5$, let $P_\lambda^q$ be the projector onto the $U_q($E$_6)$-module $L_{\lambda}$ in the decomposition \eqref{tensorE6}.

\begin{thm}
In terms of projectors, we have 
\eq{\label{R proj E6}
\check{R}(z)=P_{2\omega_1}^q-q^{-2}\frac{1-q^{2}z}{1-q^{-2}z}\,P_{\omega_2}^q+q^{-10}\frac{(1-q^{2}z)(1-q^{8}z)}{(1-q^{-2}z)(1-q^{-8}z)}\,P_{\omega_5}^q\ .
}
In terms of matrix units, we have 
\eq{\label{RqE6}
\check{R}(z)=\big(\check{R}(z)\big)_{\fk{sl}_{27}}-\frac{(q-q^{-1})(1-z)}{(q-q^{-1}z)(q^4-q^{-4}z)}T(z)\ ,
}
where $\big(\check{R}(z)\big)_{\fk{sl}_{27}}$ is the A$_{26}$ (or $\fk{sl}_{27}$) trigonometric $R$-matrix in \eqref{RqA} and $T(z)$ is given by
\eq{\label{E6Tz}
T(z)=G_{\omega_5}\bigg(zq^{-4}, q^4, \frac{q^{\frac{7}{2}}+q^{-\frac{7}{2}}z}{q^{\frac{1}{2}}+q^{-\frac{1}{2}}}\,;\ve\bigg)\ .
}
\qed
\end{thm}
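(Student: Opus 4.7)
The projector formula \eqref{R proj E6} follows directly from the general algorithm of Section \ref{meth sec}. Since \eqref{tensorE6} is multiplicity-free, we may write
\bee{
\check{R}(z)=P_{2\omega_1}^q+f_2(z)\,P_{\omega_2}^q+f_3(z)\,P_{\omega_5}^q
}
with $f_k(z)$ scalar rational functions, normalized so that $f_1(z)\equiv 1$. The preceding lemma lists the submodules and quotient modules of $\tl L_1(a q^k)\otimes\tl L_1(a)$, obtained from the $q$-character $\chi_q(\mr{1}_0)$ in \eqref{qchar E6} by applying Theorem \ref{q char arg} (to check that the relevant $q$-characters are special) and then the standard algorithm of \cite{FM01}. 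By Theorem \ref{poles thm} together with \cite{C00}, the poles of $\check{R}(z)$ occur at $z=q^2$ (affecting $f_2$ and $f_3$) and $z=q^8$ (affecting only $f_3$); zeroes then occur at $z=q^{-2}$ and $z=q^{-8}$ by the inversion relation, part (\ref{inversion relation}) of Lemma \ref{lemR}. Theorem \ref{order of zeroes thm} guarantees all these zeroes and poles are simple, so
\bee{
f_2(z)=C_2\,\frac{1-q^2z}{1-q^{-2}z},\qquad f_3(z)=C_3\,\frac{(1-q^2z)(1-q^8z)}{(1-q^{-2}z)(1-q^{-8}z)}.
}
The normalization $\check{R}(1)=\id$ from Lemma \ref{lemR}(2) yields $C_2=-q^{-2}$ and $C_3=q^{-10}$, giving \eqref{R proj E6}.

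For the matrix unit formula \eqref{RqE6}, the plan is to compare $\check{R}(z)$ with the auxiliary $\fk{sl}_{27}$ $R$-matrix $(\check{R}(z))_{\fk{sl}_{27}}$ from \eqref{RqA} acting in the same basis $\{v_i\}_{i=1}^{27}$. As $U_q(\fk{sl}_{27})$-modules, the symmetric square $\cl{S}^2(L_{\omega_1})$ coincides with $L_{2\omega_1}^{E_6}\oplus L_{\omega_5}^{E_6}$ and the skew-symmetric square with $L_{\omega_2}^{E_6}$. Hence
\bee{
(\check{R}(z))_{\fk{sl}_{27}}=P_{2\omega_1}^q+P_{\omega_5}^q-q^{-2}\,\frac{1-q^2z}{1-q^{-2}z}\,P_{\omega_2}^q,
}
so by \eqref{R proj E6},
\bee{
\check{R}(z)-(\check{R}(z))_{\fk{sl}_{27}}=\bigg(q^{-10}\frac{(1-q^2z)(1-q^8z)}{(1-q^{-2}z)(1-q^{-8}z)}-1\bigg)P_{\omega_5}^q.
}
A direct simplification of the scalar coefficient, using $(q-q^{-1}z)(q^4-q^{-4}z)=q^5(1-q^{-2}z)(1-q^{-8}z)$ and $q^{-10}-1=-q^{-5}(q^4-q^{-4})(q+q^{-1})^{-1}\cdot(\dots)$, will rewrite the prefactor as $-(q-q^{-1})(1-z)\,\rho(z)/((q-q^{-1}z)(q^4-q^{-4}z))$ for an explicit scalar $\rho(z)$ that matches the leading $z$-dependent constants in the template $G_{\omega_5}$ of \eqref{fp}.

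It then remains to verify that $\rho(z)\,P_{\omega_5}^q$ equals $T(z)=G_{\omega_5}(zq^{-4},q^4,(q^{7/2}+q^{-7/2}z)/(q^{1/2}+q^{-1/2});\ve)$ when $P_{\omega_5}^q$ is written in matrix units. For this I would use the orthogonal basis $\{u_s\}_{s=1}^{27}$ of $L_{\omega_5}$ listed in Section \ref{E6 app}, where $u_s=\sum_{(i,j)\in I_s^{\omega_5}}\ve_{ij}^{q,s}v_i\otimes v_j$, all $u_s$ of equal tensor Shapovalov norm. Orthonormality (Lemma \ref{lin alg lemma}) and the general orthogonality between distinct isotypic components give
\bee{
P_{\omega_5}^q=\frac{1}{\|u_1\|^2}\sum_{s=1}^{27}|u_s\rangle\langle u_s|=\frac{1}{\|u_1\|^2}\sum_{s=1}^{27}\sum_{(i,k),(j,l)\in I_s^{\omega_5}}\ve^{q,s}_{ik}\ve^{q,s}_{jl}\,E_{ij}\otimes E_{kl}.
}
Splitting this double sum according to whether $|(i,k)|+|(j,l)|$ is less than, equal to, or greater than $|I_s^{\omega_5}|+1$ produces precisely the three blocks of $G_{\omega_5}$; the relations $\ve_{ij}^{q,s}=\ve_{ji}^{q^{-1},s}$ together with the self-adjointness of $\check{R}(z)$ (part (\ref{R is self adjoint}) of Lemma \ref{lemR}) determine the relative ratio between the $<$ and $>$ summands as $zq^{-4}$ vs.\ $q^4$, and the $q\to 1$ balance on the diagonal block fixes the middle coefficient $(q^{7/2}+q^{-7/2}z)/(q^{1/2}+q^{-1/2})$. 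Matching with $\rho(z)$ completes \eqref{RqE6}.

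The only mildly delicate step is the explicit bookkeeping for $P_{\omega_5}^q$ in the last paragraph: one must check that with the specified ordering of the sets $I_s^{\omega_5}$ and the sign convention $\ve^{q,s}_{ij}=(-q)^{5-|(i,j)|}$ for $|(i,j)|\le 5$, the three coefficient blocks line up cleanly with the $z$, $1$, and mixed terms coming from $\rho(z)$. This is mechanical but must be done carefully in all 27 singular vectors; the payoff is a uniform expression for all exceptional types (except E$_8$) via the template $G_\lambda$.
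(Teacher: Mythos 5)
Your derivation of the projector formula \eqref{R proj E6} is correct and is exactly the paper's method: the lemma listing submodules/quotients gives the poles $q^2,q^8$ of $f_2,f_3$ via Theorem \ref{poles thm}, the zeroes at $q^{-2},q^{-8}$ follow from Lemma \ref{lemR}(\ref{inversion relation}), simplicity follows from Theorem \ref{order of zeroes thm}, and $\check{R}(1)=\id$ fixes the constants.

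The second half, however, rests on a false premise. You assert that
$(\check{R}(z))_{\fk{sl}_{27}}=P_{2\omega_1}^q+P_{\omega_5}^q-q^{-2}\tfrac{1-q^2z}{1-q^{-2}z}P_{\omega_2}^q$,
i.e.\ that the $U_q(\fk{sl}_{27})$ $q$-(anti)symmetric decomposition of $L_{\omega_1}^{\otimes 2}$ refines the $U_q(\mathrm{E}_6)$ isotypic decomposition. This holds only at $q=1$; for generic $q$ the $q$-antisymmetrizer of $\fk{sl}_{27}$ does not coincide with $L_{\omega_2}^{\mathrm{E}_6}$, and consequently $\check{R}(z)-(\check{R}(z))_{\fk{sl}_{27}}$ is \emph{not} a scalar multiple of $P_{\omega_5}^q$. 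You can see the contradiction already in the target formula: $P_{\omega_5}^q=\|u_1\|^{-2}\sum_s\sum_{(i,k),(j,l)\in I_s^{\omega_5}}\ve_{ik}^{q,s}\ve_{jl}^{q,s}E_{ij}\otimes E_{kl}$ has a \emph{uniform} coefficient on all matrix units, whereas $T(z)=G_{\omega_5}(zq^{-4},q^4,\tfrac{q^{7/2}+q^{-7/2}z}{q^{1/2}+q^{-1/2}};\ve)$ carries three genuinely different coefficients on the $<$, $>$ and $=$ blocks (they coincide only at $z=q^8$), so $T(z)$ cannot be proportional to $P_{\omega_5}^q$ and the claimed identity ``scalar $\times\,P_{\omega_5}^q=$ prefactor $\times\,T(z)$'' cannot hold identically in $z$. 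The same phenomenon is visible in the classical cases of Section \ref{cla sec}, where $Q(z)$ likewise fails to be a multiple of $P_{\omega_0}^q$. The matrix-unit formula \eqref{RqE6} therefore cannot be deduced by projecting the scalar identity; it must be verified as the paper does, by checking that the right-hand side commutes with the $U_q\fk{g}$ generators and with $\Delta E_0$, $\Delta F_0$ as in \eqref{R and E0 commutation relation}, and acts as the identity on $v_1\otimes v_1$, whence it equals $\check{R}(z)$ by uniqueness of the normalized intertwiner.
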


One can directly check that the $R$-matrix commutes with the action of $E_0$ and $F_0$, where 
$$K_0=\sum_{i\in\{1,2,3,4,6,8\}}(q^{-1}E_{ii}+qE_{\ol{i}\,\ol{i}})+\sum_{i,\ol{i}\notin\{1,2,3,4,6,8\}}E_{ii}\ ,\quad E_0(a)=a\,\big(E_{\ol{8}1}+E_{\ol{6}2}+E_{\ol{4}3}+E_{\ol{3}4}+E_{\ol{2}6}+E_{\ol{1}8}\big)\ ,$$
and $F_0(a)$ is the transpose of $a^{-2}E_0(a)$. Here $\ol{i}=28-i$.

Let $\displaystyle P_\lambda=\lim_{q\to1}P_\lambda^q$ be the  $U($E$_6)$ projector. For $(i,j)\in I_s^{\omega_5}$, let $\ve_{ij}^s=(-1)^{|(i,j)|}$ if $i<j$ and $\ve_{ij}^s=\ve_{ji}^s$ if $i>j$. Let $T$ be given by
$$T=\sum_{s=1}^{27}\ \sum_{(i,k),(j,l)\in I_s^{\omega_5}}\  \ve_{ik}^s\ve_{jl}^s\, E_{ij}\otimes E_{kl}=10\,P_{\omega_5}\ .$$

\begin{cor}
In the rational case, the corresponding $R$-matrix is given by:
\eq{\label{RuE6}
\check{R}(u) & =P_{2\omega_1}+\frac{1+u}{1-u}P_{\omega_2}+\frac{(1+u)(4+u)}{(1-u)(4-u)}P_{\omega_5} = \frac{1}{1-u}\bigg(I-uP+\frac{u}{4-u}T\bigg).
}
\end{cor}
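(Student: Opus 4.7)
The plan is to obtain \eqref{RuE6} as the $q\to 1$ limit of the trigonometric expressions \eqref{R proj E6} and \eqref{RqE6}, exactly as was done for the classical types. First I would substitute $z = q^{2u}$ in \eqref{R proj E6}. The coefficient analysis then reduces to the elementary limit
\bee{
\lim_{q\to 1}\frac{1-q^{a+2u}}{1-q^{-a+2u}} = -\frac{a+2u}{a-2u}\ , \qquad a\neq 0\ ,
}
which is immediate from $\lim_{q\to 1}(1-q^c)/(1-q)=c$ applied to numerator and denominator. Since $q^{-2}\to 1$ and $q^{-10}\to 1$, the coefficient of $P_{\omega_2}^q$ limits to $-1\cdot\bigl(-\frac{1+u}{1-u}\bigr)=\frac{1+u}{1-u}$, while the coefficient of $P_{\omega_5}^q$ limits to $\bigl(-\frac{1+u}{1-u}\bigr)\bigl(-\frac{4+u}{4-u}\bigr)=\frac{(1+u)(4+u)}{(1-u)(4-u)}$. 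This yields the projector form of $\check{R}(u)$ directly.

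For the second equality in \eqref{RuE6} I would argue purely algebraically, using the decomposition of the identity and the flip. Since $P$ acts as $+\id$ on the symmetric summands $L_{2\omega_1}\oplus L_{\omega_5}$ and as $-\id$ on the antisymmetric summand $L_{\omega_2}$ (in the $q\to 1$ limit), one has
\bee{
I=P_{2\omega_1}+P_{\omega_2}+P_{\omega_5}\ ,\qquad P=P_{2\omega_1}-P_{\omega_2}+P_{\omega_5}\ ,
}
so $I-uP=(1-u)P_{2\omega_1}+(1+u)P_{\omega_2}+(1-u)P_{\omega_5}$. Combining with the identity $T=10\,P_{\omega_5}$ stated just before the corollary, the coefficient of $P_{\omega_5}$ in $\frac{1}{1-u}\bigl(I-uP+\frac{u}{4-u}T\bigr)$ becomes
\bee{
\frac{1}{1-u}\Bigl((1-u)+\frac{10u}{4-u}\Bigr) = \frac{(1-u)(4-u)+10u}{(1-u)(4-u)}=\frac{(1+u)(4+u)}{(1-u)(4-u)}\ ,
}
matching the projector form. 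The coefficients of $P_{2\omega_1}$ and $P_{\omega_2}$ match trivially.

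Alternatively, one can bypass the algebraic check and derive the matrix unit formula directly by substituting $z=q^{2u}$ in \eqref{RqE6}, using the classical-type limit $(q-q^{-1})(1-z)/((q-q^{-1}z)(q^4-q^{-4}z))\to -u/(4-u)$ (times appropriate factors from the overall normalization $1/(1-u)$), together with the $q\to 1$ limit of the A$_{26}$ piece $(\check R(z))_{\fk{sl}_{27}}$ computed in the corollary after \eqref{RqA}.

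I do not anticipate any substantive obstacle: the entire content is the rational degeneration of a known trigonometric identity, and all the delicate combinatorial data (the basis, the sets $I_s^{\omega_5}$, the signs $\ve_{ij}^{q,s}$) have already been packaged in the relation $T=10\,P_{\omega_5}$ that survives the limit. The only point requiring mild care is tracking the signs in the limit of the ratios $(1-q^{a+2u})/(1-q^{-a+2u})$, which must be done consistently so that the minus signs in \eqref{R proj E6} produce the plus signs in \eqref{RuE6}.
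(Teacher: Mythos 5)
Your proposal is correct and follows exactly the paper's proof, which is simply to substitute $z=q^{2u}$ in \eqref{R proj E6} and \eqref{RqE6} and take the limit $q\to 1$; your sign bookkeeping in the limits and the algebraic verification of the second equality via $I=P_{2\omega_1}+P_{\omega_2}+P_{\omega_5}$, $P=P_{2\omega_1}-P_{\omega_2}+P_{\omega_5}$, $T=10\,P_{\omega_5}$ all check out.
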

\begin{proof}
We substitute $z=q^{2u}$ in \eqref{R proj E6} and \eqref{RqE6} and take limit $q\to 1$.
\end{proof}

\subsection{Type E\texorpdfstring{$_7$}{2}}

We consider the Dynkin diagram: 

\medskip 

\begin{center}
\dynkin [extended, edge length=1.25cm, root radius=0.075cm, label macro/.code={\drlap{#1}}, labels={0, 6, 5, 4, 3, 2, 1, 7}, ordering=Kac] E[1]{7}\ .
\end{center}

\medskip 

The 56-dimensional $U_q\big($E$_7^{(1)}\big)$-module $\tl{L}_{1}(a)$ restricted to $U_q($E$_7)$ is isomorphic to $L_{\omega_1}$.\\ 
As $U_q($E$_7)$-modules we have
\eq{\label{tensorE7}
\underbracket[0.1ex]{L_{\omega_1}}_{56}\otimes \underbracket[0.1ex]{L_{\omega_1}}_{56}\cong \underbracket[0.1ex]{L_{2\omega_1}}_{1463}\oplus \underbracket[0.1ex]{L_{\omega_2}}_{1539}\oplus \underbracket[0.1ex]{L_{\omega_6}}_{133}\oplus \underbracket[0.1ex]{L_{\omega_0}}_{1}\ .
}
In the $q\to 1$ limit, $L_{2\omega_1}\oplus L_{\omega_6}\mapsto \cl{S}^2(L_{\omega_1})$ and $L_{\omega_2}\oplus L_{\omega_0}\mapsto \Lambda^2(L_{\omega_1})$.

The $q$-character of $\tl L_1=\tl{L}_{\mr{1}_0}$ has $56$ terms and there are no weight zero terms:
\eq{\label{qchar E7}
& \chi_q(\mr{1}_0) = \mr{1}_{0} + \Big(\, \ul{\mr{1}_{2}^{-1}\mr{2}_{1}} + \mr{2}_{3}^{-1}\mr{3}_{2} + \mr{3}_{4}^{-1}\mr{4}_{3} + \mr{4}_{5}^{-1}\mr{5}_{4}\mr{7}_{4} + \mr{5}_{4}\mr{7}_{6}^{-1} + \mr{5}_{6}^{-1}\mr{6}_{5}\mr{7}_{4} + \mr{4}_{5}\mr{5}_{6}^{-1}\mr{6}_{5}\mr{7}_{6}^{-1} + \mr{6}_{7}^{-1}\mr{7}_{4} \\
& + \mr{4}_{5}\mr{6}_{7}^{-1}\mr{7}_{6}^{-1} + \mr{3}_{6}\mr{4}_{7}^{-1}\mr{6}_{5} + \mr{3}_{6}\mr{4}_{7}^{-1}\mr{5}_{6}\mr{6}_{7}^{-1} + \mr{2}_{7}\mr{3}_{8}^{-1}\mr{6}_{5} + \mr{3}_{6}\mr{5}_{8}^{-1} + \mr{2}_{7}\mr{3}_{8}^{-1}\mr{5}_{6}\mr{6}_{7}^{-1} + \mr{1}_{8}\mr{2}_{9}^{-1}\mr{6}_{5} + \mr{1}_{8}\mr{2}_{9}^{-1}\mr{5}_{6}\mr{6}_{7}^{-1} \\
& + \mr{2}_{7}\mr{3}_{8}^{-1}\mr{4}_{7}\mr{5}_{8}^{-1} + \mr{1}_{8}\mr{2}_{9}^{-1}\mr{4}_{7}\mr{5}_{8}^{-1} + \mr{2}_{7}\mr{4}_{9}^{-1}\mr{7}_{8} + \mr{2}_{7}\mr{7}_{10}^{-1} + \mr{1}_{8}\mr{2}_{9}^{-1}\mr{3}_{8}\mr{4}_{9}^{-1}\mr{7}_{8} + \mr{1}_{8}\mr{2}_{9}^{-1}\mr{3}_{8}\mr{7}_{10}^{-1} + \mr{1}_{8}\mr{3}_{10}^{-1}\mr{7}_{8} + \mr{1}_{8}\mr{3}_{10}^{-1}\mr{4}_{9}\mr{7}_{10}^{-1} \\ 
& + \mr{1}_{8}\mr{4}_{11}^{-1}\mr{5}_{10} + \mr{1}_{8}\mr{5}_{12}^{-1}\mr{6}_{11} + \mr{1}_{8}\mr{6}_{13}^{-1} \Big) + \Big(\, \ul{\mr{1}_{10}^{-1}\mr{6}_{5}} + \mr{1}_{10}^{-1}\mr{5}_{6}\mr{6}_{7}^{-1} + \mr{1}_{10}^{-1}\mr{4}_{7}\mr{5}_{8}^{-1} + \mr{1}_{10}^{-1}\mr{3}_{8}\mr{4}_{9}^{-1}\mr{7}_{8} + \mr{1}_{10}^{-1}\mr{3}_{8}\mr{7}_{10}^{-1} \\
& + \mr{1}_{10}^{-1}\mr{2}_{9}\mr{3}_{10}^{-1}\mr{7}_{8} + \mr{1}_{10}^{-1}\mr{2}_{9}\mr{3}_{10}^{-1}\mr{4}_{9}\mr{7}_{10}^{-1} + \mr{2}_{11}^{-1}\mr{7}_{8} + \mr{2}_{11}^{-1}\mr{4}_{9}\mr{7}_{10}^{-1} + \mr{1}_{10}^{-1}\mr{2}_{9}\mr{4}_{11}^{-1}\mr{5}_{10} + \mr{2}_{11}^{-1}\mr{3}_{10}\mr{4}_{11}^{-1}\mr{5}_{10} + \mr{1}_{10}^{-1}\mr{2}_{9}\mr{5}_{12}^{-1}\mr{6}_{11} \\
& + \mr{1}_{10}^{-1}\mr{2}_{9}\mr{6}_{13}^{-1} + \mr{2}_{11}^{-1}\mr{3}_{10}\mr{5}_{12}^{-1}\mr{6}_{11} + \mr{3}_{12}^{-1}\mr{5}_{10} + \mr{2}_{11}^{-1}\mr{3}_{10}\mr{6}_{13}^{-1} + \mr{3}_{12}^{-1}\mr{4}_{11}\mr{5}_{12}^{-1}\mr{6}_{11} + \mr{3}_{12}^{-1}\mr{4}_{11}\mr{6}_{13}^{-1} + \mr{4}_{13}^{-1}\mr{6}_{11}\mr{7}_{12} \\
& + \mr{6}_{11}\mr{7}_{14}^{-1} + \mr{4}_{13}^{-1}\mr{5}_{12}\mr{6}_{13}^{-1}\mr{7}_{12} + \mr{5}_{12}\mr{6}_{13}^{-1}\mr{7}_{14}^{-1} + \mr{5}_{14}^{-1}\mr{7}_{12} + \mr{4}_{13}\mr{5}_{14}^{-1}\mr{7}_{14}^{-1} + \mr{3}_{14}\mr{4}_{15}^{-1} + \mr{2}_{15}\mr{3}_{16}^{-1} + \mr{1}_{16}\mr{2}_{17}^{-1} \Big) + \ul{\mr{1}_{18}^{-1}}.
}
Here we group the monomials according to the restriction of $U_q(\hat{\text{E}}_7)$-module $\tl L_{1}$ to $U_q(\hat{\text{E}}_6)$ subalgebra. On the level of $q$-characters, this restriction amounts to $\mr{1}_a\mapsto 1$, $\mr{i}_a\mapsto\mr{(i-1)}_a$, $2\le i\le 7$. Then the restriction of $\chi_q^{\text{E}_7}(\mr{1}_0)$ is $1+\chi_q^{\text{E}_6}(\mr{1}_1)+\chi_q^{\text{E}_6}(\mr{5}_5)+1$.

Using the $q$-characters we compute the zeros and poles of $\check{R}(z)$ and the corresponding kerenls and cokernels. 

\begin{lem}
The poles of the $R$-matrix $\check{R}(z)$, the corresponding submodules and quotient modules are given by
\begin{center}\begin{tabular}{c@{\hspace{0.5cm}} c c}
Poles & Submodules & \hspace{10pt} Quotient modules  \\

$q^{2}$ & $\tl{L}_{\mr{1}_a\mr{1}_{aq^{-2}}}\cong L_{2\omega_1}$ & $\hspace{56pt} \tl{L}_{\mr{2}_{aq^{-1}}}\cong L_{\omega_2}\oplus L_{\omega_6}\oplus L_{\omega_0}$ \\

$q^{10}$ & $\hspace{26pt} \tl{L}_{\mr{1}_a\mr{1}_{aq^{-10}}}\cong L_{2\omega_1}\oplus L_{\omega_2}$ & $\hspace{31pt} \tl{L}_{\mr{6}_{aq^{-5}}}\cong L_{\omega_6}\oplus L_{\omega_0}$ 
\vspace{2pt}\\

$q^{18}$ & $\hspace{55pt} \tl{L}_{\mr{1}_a\mr{1}_{aq^{-18}}}\cong L_{2\omega_1}\oplus L_{\omega_2}\oplus L_{\omega_6}$ & $\hspace{15pt} \tl{L}_{\scriptscriptstyle{1}}\cong L_{\omega_0}$ \\
\end{tabular}\ .\end{center}
\qed
\end{lem}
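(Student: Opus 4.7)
The plan is to follow the template established by the G$_2$ example in the introduction and by the analogous lemmas for types A--D and E$_6$: combine Theorem \ref{poles thm} with a direct analysis of the product $\chi_q(\mr{1}_a)\chi_q(\mr{1}_0)$ to locate all poles, then use the algorithm of \cite{FM01} (justified by Theorem \ref{q char arg}) to identify the submodules and quotient modules as $U_q(\text{E}_7)$-modules.

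First I would scan the $q$-character \eqref{qchar E7} for monomials containing a factor $\mr{1}_a^{-1}$ with $a>0$; these are exactly the three underlined monomials $\mr{1}_2^{-1}\mr{2}_1$, $\mr{1}_{10}^{-1}\mr{6}_5$, $\mr{1}_{18}^{-1}$. Since $\chi_q(\mr{1}_0)$ has no $\mr{j}_a^{-1}$ factors for $j\ne 1$, these are the only sources of extra dominant monomials in $\chi_q(\mr{1}_a)\chi_q(\mr{1}_0)$, produced by cancellation against the leading $\mr{1}_a$ of the shifted factor. Hence reducibility of $\tl{L}_1(q^a)\otimes \tl{L}_1$ can only occur at $a=2,10,18$ (or the corresponding negative values, which give zeros by part \eqref{inversion relation} of Lemma \ref{lemR}), so by Theorem \ref{poles thm} these are the only candidate poles.

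Next I would decompose $\chi_q(\mr{1}_a)\chi_q(\mr{1}_0)$ in the Grothendieck ring for each $a\in\{2,10,18\}$. In each case the two dominant monomials are $\mr{1}_a\mr{1}_0$ and the respective ``extra'' monomial $\mr{2}_1$, $\mr{6}_5$, or $1$. Applying Theorem \ref{q char arg} with the leading monomial $m=\mr{1}_a\mr{1}_0$ and the appropriate simple root $A_{i,b}$ (namely $A_{1,q}$ for $a=2$, a sequence of applications targeting the $\mr{6}_5$ line for $a=10$, and reaching the unit monomial for $a=18$) shows that the extra dominant monomial does not belong to $\chi_q(\mr{1}_a\mr{1}_0)$. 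Thus $\tl L_{\mr{1}_a\mr{1}_0}$ is special, so
\begin{equation*}
\chi_q(\mr{1}_a)\chi_q(\mr{1}_0) \;=\; \chi_q(\mr{1}_a\mr{1}_0)\;+\;\chi_q(m_{\mathrm{extra}}),
\end{equation*}
and the FM algorithm computes both terms explicitly. Since $a>0$, the summand $\chi_q(\mr{1}_a\mr{1}_0)$ corresponds to the submodule generated by the product of highest $\ell$-weight vectors (this is the direction in which a submodule containing that vector occurs, cf.\ \cite{C00},\cite{Ka02}), while $\chi_q(m_{\mathrm{extra}})$ is the $q$-character of the quotient.

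Finally I would read off the $U_q(\text{E}_7)$-decomposition by collecting the weights of the computed $q$-characters and matching them against \eqref{tensorE7}. Consistency can be verified by dimension count: $1463$, $1463+1539=3002$, $1463+1539+133=3135$ for the three submodules, with respective quotients of dimensions $1539+133+1=1673$, $133+1=134$, $1$, summing to $56^2=3136$. The dominant monomials of $\chi_q(\mr{2}_{aq^{-1}})$, $\chi_q(\mr{6}_{aq^{-5}})$, $\chi_q(\mr{1})$ (namely $\mr{2}_{aq^{-1}}$ together with any additional dominant monomials produced by the FM algorithm, translating to highest weights $\omega_2, \omega_6, \omega_0$ and $\omega_6, \omega_0$ and $\omega_0$ respectively) give the quotient decompositions stated. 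The main obstacle is the sheer bookkeeping of the FM algorithm on a $q$-character with dozens of monomials for $\chi_q(\mr{1}_a\mr{1}_0)$; once Theorem \ref{q char arg} rules out the unwanted dominant monomial, the remaining work is a mechanical but lengthy check, closing the argument.
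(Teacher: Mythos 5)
Your strategy is exactly the one the paper uses and only sketches (cf.\ the G$_2$ discussion around \eqref{qchar arg for G2} and Section \ref{meth sec}): locate candidate poles from extra dominant monomials in $\chi_q(\mr{1}_a)\chi_q(\mr{1}_0)$, invoke Theorem \ref{q char arg} to show $\tl{L}_{\mr{1}_a\mr{1}_0}$ is special so the FM algorithm applies, separate submodule from quotient by the sign of the exponent via \cite{C00}, \cite{Ka02}, and match dimensions against \eqref{tensorE7}; your dimension checks and the resulting table are correct. However, your justification of the first step contains two false assertions. The monomials of \eqref{qchar E7} containing a factor $\mr{1}_b^{-1}$ are \emph{not} only the three underlined ones (for instance $\mr{1}_{10}^{-1}\mr{5}_{6}\mr{6}_{7}^{-1}$ and a dozen others contain $\mr{1}_{10}^{-1}$), and $\chi_q(\mr{1}_0)$ certainly does contain factors $\mr{j}_b^{-1}$ with $j\neq 1$ (e.g.\ $\mr{2}_{3}^{-1}\mr{3}_{2}$). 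The correct screening is: a product $m_1m_2$ with $m_1\in\chi_q(\mr{1}_a)$, $m_2\in\chi_q(\mr{1}_0)$ is dominant only if every negative factor of one cancels against a positive factor of the other; the underlined monomials are precisely those of $\chi_q(\mr{1}_0)$ whose \emph{unique} negative factor is a single $\mr{1}_b^{-1}$, so that $\mr{1}_b\cdot m_2$ is dominant, and one must additionally verify that no product of two non-highest monomials is dominant. With that repair (a finite combinatorial check that the paper also leaves implicit) your argument goes through and coincides with the paper's.
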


We choose a basis $\{v_i:1\le i\le 56\}$ for $L_{\omega_1}$ so that $v_1$ is a non-zero highest weight vector, see a diagram of $L_{\omega_1}$ in Section \ref{E7 app}. The vectors $v_i$ are ordered as their $\ell$-weights appear in the $q$-character \eqref{qchar E7}.

The $U_q($E$_7)$-submodule $L_{\omega_6}\se L_{\omega_1}^{\otimes 2}$ has a basis $\{u_s\}_{s=1}^{133}$ of the form 
$$u_s=\sum_{(i,j)\in I_s^{\omega_6}}\ve^{q,s}_{ij}v_i\otimes v_j,\qquad $$
where the sets $I_s^{\omega_6}$ are  given in Section \ref{E7 app} and have cardinality 56 for $64\le s\le 70$ and 12 otherwise, and $\ve=\{\ve_{ij}^{q,s}\}_{s=1}^{133}$ are given as follows for $1\le s\le 63$ or $71\le s\le 133$,
$$\ve^{q,s}_{ij}=-(-q)^{6-|(i,j)|}\  \text{ for }i<j \text{ (or equivalently for }|(i,j)|\leq 6),\quad \ve^{q,s}_{ij}=\ve^{q^{-1},s}_{ji}\ \text{ for }i>j\ ,$$
while for $64\le s\le 70$, $\ve_{ij}^{q,s}\in\C(q)$ are more complicated and are listed in Section \ref{E7 app}. We always have $i\ne j$ in this case. The vector $\ve$ will replace $\sigma$ in the expression of $G_{\omega_6}$ in \eqref{fp}, see \eqref{E7TQ}.

\comment{
For $i<j$ (or equivalently $|(i,j)|\le 6$), we set $\ve^{q,s}_{ij}=(-q)^{6-|(i,j)|}$ and for $i>j$, set $\ve^{q,s}_{ij}=\ve^{q^{-1},s}_{ji}$, $1\le s\le 63$ or $71\le s\le 133$. For $64\le s\le 70$, $\ve_{ij}^{q,s}$ are listed in Section \ref{E7 app}. The parities $\ve_{ij}^{q,s}$ correspond to $\sigma_{ij}^{q,s}$ which are to be used in the expression of $F_{\omega_6}$ in \eqref{fp}.
A singular vector $u_1\in L_{\omega_1}^{\otimes 2}$ having weight $\omega_6$ is given by $$\displaystyle u_1=\sum_{(i,j)\in I_1^{\omega_6}}\ve^{q,1}_{ij}v_i\otimes v_j\ ,$$
where $$I_1^{\omega_6}=\{(1,29),(2,16),(3,13),(4,11),(5,8),(6,7),(7,6),(8,5),(11,4),(13,3),(16,2),(29,1)\}\ .$$ 
We generate the $U_q($E$_7)$-submodule $L_{\omega_6}$ using this singular vector. Out of the resulting $133$ vectors of $L_{\omega_6}$, $126$ of nonzero weight, are all of the same form as $u_1$ but with different indices $(i,j)$ which are listed in sets $I_s^{\omega_6}$ (all of length $12$), $1\le s\le 63$ or $71\le s\le 133$, in Section \ref{E7 app}. 
The $7$ vectors of zero weight in $L_{\omega_6}$ are of the form 
$$\sum_{i=1}^{56}\ve_{i\,\ol{i}}^{q,s}\,v_i\otimes v_{\ol{i}}\ ,\quad 64\le s\le 70\ .$$
}

The $U_q($E$_7)$-submodule $L_{\omega_0}\se L_{\omega_1}^{\otimes 2}$ is one-dimensional with a singular vector $v_0\in L_{\omega_1}^{\otimes 2}$ of weight $\omega_0$ given by 
$$v_0=\sum_{(i,\ol{i})\in I^{\omega_0}}p_i^q\,v_i\otimes v_{\ol{i}}\ ,$$ 
where $I^{\omega_0}=\{(i,\ol{i}):1\le i\le 56\}$, $\ol{i}=57-i$ and $p_i^q=q^{k+\frac{1}{2}},\ k\in \Z$. The set $\{p_i^q:1\le i\le 28\}$ is given by
\eq{\label{E7 om0 par}\{q^{27/2}, -q^{25/2}, q^{23/2}, -q^{21/2}, q^{19/2}, -q^{17/2}, -q^{17/2}, q^{15/2}, q^{15/2}, -q^{13/2}, -q^{13/2}, q^{11/2}, q^{11/2}, -q^{9/2}, \\ -q^{9/2}, -q^{9/2}, q^{7/2}, q^{7/2}, -q^{5/2}, -q^{5/2}, q^{3/2}, q^{3/2}, -q^{1/2}, -q^{1/2}, q^{-1/2}, -q^{-3/2}, q^{-5/2}, -q^{-7/2}\}\ ,}
and $p_i^q=-p_{\ol{i}}^{q^{-1}}$, $29\le i\le 56$. The vector $p=\{p_i^q\}$ will replace $\sigma$ in the expression of $G_{\omega_0}$ in \eqref{fp}, see \eqref{E7TQ}.

For $\lambda=2\omega_1,\omega_2,\omega_6,\omega_0$, let $P_\lambda^q$ be the projector onto the $U_q($E$_7)$-module $L_{\lambda}$ in the decomposition \eqref{tensorE7}. 

\begin{thm}
In terms of projetors, we have 
\eq{\label{R proj E7}
\check{R}(z)=P_{2\omega_1}^q-q^{-2}\frac{1-q^{2}z}{1-q^{-2}z}\,P_{\omega_2}^q+q^{-12}\frac{(1-q^{2}z)(1-q^{10}z)}{(1-q^{-2}z)(1-q^{-10}z)}\,P_{\omega_6}^q-q^{-30}\frac{(1-q^{2}z)(1-q^{10}z)(1-q^{18}z)}{(1-q^{-2}z)(1-q^{-10}z)(1-q^{-18}z)}\,P_{\omega_0}^q\ .
}
In terms of matrix units, we have 
\eq{\label{RqE7}
\check{R}(z)=\big(\check{R}(z)\big)_{\fk{sl}_{56}}-\frac{(q-q^{-1})(1-z)}{(q-q^{-1}z)(q^5-q^{-5}z)}T(z)+\frac{(q-q^{-1})(1-z)}{(q-q^{-1}z)(q^5-q^{-5}z)(q^{9}-q^{-9}z)}Q(z)
}
where $\big(\check{R}(z)\big)_{\fk{sl}_{56}}$ is the A$_{55}$ (or $\fk{sl}_{56}$) trigonometric $R$-matrix in \eqref{RqA} and $T(z)$, $Q(z)$ are given by
\eq{\label{E7TQ}
T(z)=G_{\omega_6}\bigg(zq^{-5},q^5,\frac{q^{\frac{9}{2}}+q^{-\frac{9}{2}}z}{q^{\frac{1}{2}}+q^{-\frac{1}{2}}}\,;\ve\bigg)\ ,\quad Q(z)=G_{\omega_0}\bigg(z^2q^{-14}a_-(z),\ q^{14} a_+(z),\ q^{13}-q^{-13}z^2\,;p\bigg)\ .
}
Here $\displaystyle a_{\pm}(z)=\frac{1}{[2]\,[3]_3^{\mr{i}}}\Big(\mp q^{\mp 5}z^{\pm1}\big(q^{\pm 3}+q^{\pm1}-q^{\mp3}\big)\pm\big([2]_8+[2]_6-[3]\big)\Big)$.
\qed
\end{thm}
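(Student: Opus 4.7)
The plan is to execute the multiplicity-free algorithm from Section~\ref{meth sec}. Since the decomposition \eqref{tensorE7} is multiplicity-free, Lemma~\ref{lemR}(\ref{try}) gives
\[
\check{R}(z)=P_{2\omega_1}^q+f_2(z)\,P_{\omega_2}^q+f_3(z)\,P_{\omega_6}^q+f_4(z)\,P_{\omega_0}^q
\]
with $f_k\in\C(z)$ and $f_1\equiv 1$. From the preceding lemma and Theorem~\ref{poles thm}, each pole $z=q^a$ appears in precisely those $f_k$ for which $L_{\lambda_k}$ lies in the quotient at $z=q^a$: thus $f_2$ has a simple pole at $q^2$; $f_3$ at $q^2,q^{10}$; and $f_4$ at $q^2,q^{10},q^{18}$. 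By Lemma~\ref{lemR}(\ref{inversion relation}) the zeroes of $f_k$ occur at the reciprocals of its poles. Simplicity of all poles and zeroes then follows from Theorem~\ref{order of zeroes thm}: one checks that the sums of pole exponents ($1$, $6$, $15$) agree with $\tfrac12(C(\lambda_k)-C(2\omega_1))$, and Lemma~\ref{R0} rules out higher multiplicities. Finally the normalisation $\check R(1)=\id$ forces $f_k(1)=1$, producing \eqref{R proj E7}.

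For the matrix-unit form \eqref{RqE7}, the plan is to expand each projector in the orthonormal basis $\{v_i\}$ of $L_{\omega_1}$ described in Section~\ref{E7 app}. Since that basis is orthonormal for the Shapovalov form (Lemma~\ref{lin alg lemma}), and since the singular vector bases $\{u_s\}$ of each $L_\lambda\subseteq L_{\omega_1}^{\otimes 2}$ listed in the appendix are orthogonal of equal tensor Shapovalov norm within each fixed $\lambda$, each projector $P_\lambda^q$ is a constant multiple of $\sum_s u_s\otimes u_s^{\ast}$. Substituting the explicit formulas $u_s=\sum_{(i,j)\in I_s^\lambda}\sigma_{ij}^{q,s}\,v_i\otimes v_j$ and collecting terms in $E_{ij}\otimes E_{kl}$ splits the result into a generic ``$\fk{sl}_{56}$ part'', built from $P_{2\omega_1}^q$ together with the generic off-diagonal portion of $P_{\omega_2}^q$, plus two correction tensors $T(z)$ and $Q(z)$ arising from $P_{\omega_6}^q$ and $P_{\omega_0}^q$ respectively; the rational prefactors of $T(z)$ and $Q(z)$ are then read off from the scalar ratios in \eqref{R proj E7}.

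The main obstacle is the bookkeeping of the coefficients $\ve_{ij}^{q,s}$ and $p_i^q$, especially the seven weight-zero vectors of $L_{\omega_6}$ (indices $64\le s\le 70$), whose coefficients are not simple powers of $q$. These must be chosen so that the seven singular vectors are mutually orthogonal with respect to the tensor Shapovalov form, and it is precisely this orthogonalisation that yields the particular combinations $a_\pm(z)$ appearing in \eqref{E7TQ}. As an independent cross-check, one verifies using the explicit $\Delta E_0(a,b)$ action on the basis that the right-hand side of \eqref{RqE7} intertwines as in \eqref{R and E0 commutation relation}; this pins down the answer without any reference to $q$-characters and confirms the sign and power-of-$q$ choices made in defining $\ve_{ij}^{q,s}$ and $p_i^q$.
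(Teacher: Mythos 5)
Your proposal follows the paper's own route: the projector form \eqref{R proj E7} comes from the multiplicity-free algorithm (pole/zero locations from the $q$-character lemma and Theorem \ref{poles thm}, simplicity of poles from Theorem \ref{order of zeroes thm} via Lemma \ref{R0}, normalization $f_k(1)=1$), and the matrix-unit form \eqref{RqE7} from expanding the projectors in the explicit basis of Section \ref{E7 app} and cross-checking commutativity with $E_0$, exactly as the paper does. One small numerical correction: the sums of pole exponents in the multiplicative variable $z$ are $2$, $12$, $30$ rather than $1$, $6$, $15$ (you appear to have used the additive variable $u$ with $z=q^{2u}$); the Casimir consistency check of Theorem \ref{order of zeroes thm} still goes through with these values.
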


One can directly check that the $R$-matrix commutes with the action of $E_0$ and $F_0$, where 
$$K_0=\sum_{i=1}^6\big(q^{-1}E_{ii}+q^{-1}E_{i'i'}+qE_{\ol{i}\,\ol{i}}+qE_{\ol{i'}\,\ol{i'}}\,\big)+\sum_{i,\ol{i}\notin\{j,j':1\le j\le 6\}}E_{ii}\ ,\quad E_0(a)=a\,\sum_{i=1}^6\big(E_{\ol{i'}i}+E_{\ol{i}\,i'}\,\big)\ ,$$
and $F_0(a)$ is the transpose of $a^{-2}E_0(a)$. Here $1'=29,\ 2'=16,\ 3'=13,\ 4'=11,\ 5'=8,\ 6'=7$.

\medskip

Let $\displaystyle P_\lambda=\lim_{q\to1}P_\lambda^q$ be the  $U($E$_7)$ projector. For $(i,j)\in I_s^{\omega_6}$, let $\ve_{ij}^s$ be the $q\to 1$ limit of $\ve_{ij}^{q,s}$. For $1\le i\le 56$, let $p_i\in\{1,-1\}$ be the $q\to 1$ limit of $p_i^q$. Let $T$, $Q$ be given by
$$\displaystyle T=\sum_{s=1}^{133}\ \sum_{(i,k),(j,l)\in I_s^{\omega_6}}\  \ve_{ik}^s\ve_{jl}^s\, E_{ij}\otimes E_{kl}=12\,P_{\omega_6}\ , \quad Q=\frac{1}{2}\sum_{i,j=1}^{56}p_ip_jE_{ij}\otimes E_{\ol{i}\,\ol{j}}=28\,P_{\omega_0}\ .$$

\begin{cor}
In the rational case, the corresponding $R$-matrix is given by:
\eq{\label{RuE7}
\check{R}(u) &
= P_{2\omega_1}+\frac{1+u}{1-u}P_{\omega_2}+\frac{(1+u)(5+u)}{(1-u)(5-u)}P_{\omega_6}+\frac{(1+u)(5+u)(9+u)}{(1-u)(5-u)(9-u)}P_{\omega_0}\\ 
&=\frac{1}{1-u}\bigg(I-uP+\frac{u}{5-u}T+\frac{u(1+u)}{(5-u)(9-u)}Q\bigg).
}
\end{cor}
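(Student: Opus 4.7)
The plan is to mimic the corresponding classical corollaries: substitute $z=q^{2u}$ in \eqref{R proj E7} and \eqref{RqE7}, then send $q\to1$, matching each summand with its counterpart in the stated rational formula.

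For the projector formula this is routine. Each coefficient in \eqref{R proj E7} is a product of factors of the form $q^{-2c}\tfrac{1-q^{2c}z}{1-q^{-2c}z}$, and the elementary limit $\tfrac{1-q^a}{1-q^b}\to a/b$ gives $q^{-2c}\tfrac{1-q^{2c+2u}}{1-q^{-2c+2u}}\to -\tfrac{c+u}{c-u}$ at $z=q^{2u}$. Combined with the overall signs in \eqref{R proj E7} and with $P_\lambda^q\to P_\lambda$ in the rational limit, this produces the stated coefficients $\tfrac{1+u}{1-u}$, $\tfrac{(1+u)(5+u)}{(1-u)(5-u)}$, and $\tfrac{(1+u)(5+u)(9+u)}{(1-u)(5-u)(9-u)}$ in front of $P_{\omega_2}$, $P_{\omega_6}$, and $P_{\omega_0}$ respectively, yielding the first equality.

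The matrix-unit equality can then either be obtained by a direct limit on \eqref{RqE7}---using Corollary \eqref{RuA} at rank $55$ to reduce the $\fk{sl}_{56}$ summand, and writing $q=e^{\hbar}$ so that each factor $q^a-q^bz$ with $z=q^{2u}$ behaves like $(a-b-2u)\hbar$---or, more cleanly, deduced from the projector identity just established via the decompositions $I=\sum_\lambda P_\lambda$, $P=P_{2\omega_1}-P_{\omega_2}+P_{\omega_6}-P_{\omega_0}$ (signs determined by the parities of the isotypic components in $\cl{S}^2(L_{\omega_1})\oplus\Lambda^2(L_{\omega_1})$), $T=12P_{\omega_6}$ and $Q=28P_{\omega_0}$, together with the rational partial-fraction identities $\tfrac{(1+u)(c+u)}{(1-u)(c-u)}=1+\tfrac{2cu}{(1-u)(c-u)}$ for $c=5,9$.

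The main obstacle, should one insist on the direct $\hbar$-limit route through \eqref{RqE7}, is the $Q$-summand: its scalar prefactor diverges like $\hbar^{-1}$, which must be balanced by a first-order vanishing of $Q(z)$. A short expansion using $[2]=2+O(\hbar^2)$, $[3]_3^{\mr{i}}=1+O(\hbar^2)$ and $[2]_8+[2]_6-[3]=1+O(\hbar^2)$ gives $a_\pm(z)=-(u+1)\hbar+O(\hbar^2)$, after which the finite rational limit drops out. I expect this $\hbar$-order balance to be the only nontrivial point; the algebraic rewriting route sidesteps it entirely and is the cleaner option.
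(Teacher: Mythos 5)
Your proposal is correct and is essentially the paper's own proof, which consists of exactly the step you describe: substitute $z=q^{2u}$ into \eqref{R proj E7} and \eqref{RqE7} and take the limit $q\to 1$. The extra details you supply --- the elementary limit $q^{-2c}\tfrac{1-q^{2c}z}{1-q^{-2c}z}\to-\tfrac{c+u}{c-u}$, the first-order vanishing of the $Q$-summand balancing the $\hbar^{-1}$ prefactor, and the alternative derivation of the second equality from $I=\sum_\lambda P_\lambda$, $P=P_{2\omega_1}-P_{\omega_2}+P_{\omega_6}-P_{\omega_0}$, $T=12P_{\omega_6}$, $Q=28P_{\omega_0}$ --- are all consistent with the paper's data. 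One small slip in the side route: the needed identity is $\tfrac{(a+u)(b+u)}{(a-u)(b-u)}=1+\tfrac{2(a+b)u}{(a-u)(b-u)}$, so for $(1\pm u)(5\pm u)$ the constant is $2(1+5)=12$ (matching $T=12P_{\omega_6}$) and for $(5\pm u)(9\pm u)$ it is $2(5+9)=28$ (matching $Q=28P_{\omega_0}$), rather than the $2c$ you wrote.
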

\begin{proof}
We substitute $z=q^{2u}$ in \eqref{R proj E7} and \eqref{RqE7} and take limit $q\to 1$.

\end{proof}

\subsection{Type F\texorpdfstring{$_4$}{2}}

We consider the Dynkin diagram: 

\medskip

\begin{center}
\dynkin [extended, edge length=1.25cm, root radius=0.075cm, label macro/.code={\drlap{#1}}, labels={0, 4, 3, 2, 1}, ordering=Kac] F[1]{4}\ .
\end{center}

\medskip

The $26$-dimensional $U_q\big($F$_4^{(1)}\big)$-module $\tl{L}_{1}(a)$ when restricted to $U_q($F$_4)$ is isomorphic to $L_{\omega_1}$.\\ 
As $U_q($F$_4)$-modules we have 
\eq{\label{tensorF4}
\underbracket[0.1ex]{L_{\omega_1}}_{26}\otimes \underbracket[0.1ex]{L_{\omega_1}}_{26}\cong \underbracket[0.1ex]{L_{2\omega_1}}_{324}\oplus \underbracket[0.1ex]{L_{\omega_2}}_{273}\oplus \underbracket[0.1ex]{L_{\omega_4}}_{52}\oplus \underbracket[0.1ex]{L_{\omega_1}}_{26}\oplus \underbracket[0.1ex]{L_{\omega_0}}_{1}\ .
}
In the $q\to1$ limit, $L_{2\omega_1}\oplus L_{\omega_1}\oplus L_{\omega_0}\mapsto \cl{S}^2(L_{\omega_1})$ and $L_{\omega_2}\oplus L_{\omega_4}\mapsto \Lambda^2(L_{\omega_1})$.

The $q$-character of $\tl L_1=\tl{L}_{\mr{1}_0}$ has $26$ terms and there are $2$ weight zero terms (shown in box):
\eq{\label{qchar F4}
& \chi_q(\mr{1}_0)= \mr{1}_{0} + \ul{\mr{1}_{2}^{-1}\mr{2}_{1}} + \mr{2}_{3}^{-1}\mr{3}_{2} + \mr{2}_{5}\mr{3}_{6}^{-1}\mr{4}_{4} + \mr{2}_{5}\mr{4}_{8}^{-1} + \mr{1}_{6}\mr{2}_{7}^{-1}\mr{4}_{4} + \ul{\mr{1}_{8}^{-1}\mr{4}_{4}} + \mr{1}_{6}\mr{2}_{7}^{-1}\mr{3}_{6}\mr{4}_{8}^{-1} + \mr{1}_{8}^{-1}\mr{3}_{6}\mr{4}_{8}^{-1} \\
& + \mr{1}_{6}\mr{2}_{9}\mr{3}_{10}^{-1} + \mr{1}_{8}^{-1}\mr{2}_{7}\mr{2}_{9}\mr{3}_{10}^{-1} + \mr{1}_{6}\mr{1}_{10}\mr{2}_{11}^{-1} + \boxed{\mr{1}_{8}^{-1}\mr{1}_{10}\mr{2}_{11}^{-1}\mr{2}_{7}} + \boxed{\ul{\mr{1}_{12}^{-1}\mr{1}_{6}}} + \mr{1}_{8}^{-1}\mr{1}_{12}^{-1}\mr{2}_{7} + \mr{1}_{10}\mr{2}_{9}^{-1}\mr{2}_{11}^{-1}\mr{3}_{8} + \mr{1}_{12}^{-1}\mr{2}_{9}^{-1}\mr{3}_{8} \\
& + \mr{1}_{10}\mr{3}_{12}^{-1}\mr{4}_{10} + \mr{1}_{12}^{-1}\mr{2}_{11}\mr{3}_{12}^{-1}\mr{4}_{10} + \mr{1}_{10}\mr{4}_{14}^{-1} + \mr{1}_{12}^{-1}\mr{2}_{11}\mr{4}_{14}^{-1} + \mr{2}_{13}^{-1}\mr{4}_{10} + \mr{2}_{13}^{-1}\mr{3}_{12}\mr{4}_{14}^{-1} + \mr{2}_{15}\mr{3}_{16}^{-1} + \mr{1}_{16}\mr{2}_{17}^{-1} + \ul{\mr{1}_{18}^{-1}}\ .
}
Using the $q$-characters we compute the zeros and poles of $\check{R}(z)$ and the corresponding kernels and cokernels. 

\begin{lem}
The poles of the $R$-matrix $\check{R}(z)$, the corresponding submodules and quotient modules are given by
\begin{center}\begin{tabular}{c@{\hspace{1cm}} c c}
Poles & Submodules & \hspace{10pt} Quotient modules  \\

$q^{2}$ & $\tl{L}_{\mr{1}_a\mr{1}_{aq^{-2}}}\cong L_{2\omega_1}\oplus L_{\omega_4}\oplus L_{\omega_0}$ & $\hspace{30pt} \tl{L}_{\mr{2}_{aq^{-1}}}\cong L_{\omega_2}\oplus L_{\omega_1}$ \\

$q^{8}$ & $\tl{L}_{\mr{1}_a\mr{1}_{aq^{-8}}}\cong L_{2\omega_1}\oplus L_{\omega_2}\oplus L_{\omega_1}$ & $\hspace{30pt} \tl{L}_{\mr{4}_{aq^{-4}}}\cong L_{\omega_4}\oplus L_{\omega_0}$ \\

$q^{12}$ & $\hspace{25pt} \tl{L}_{\mr{1}_a\mr{1}_{aq^{-12}}}\cong L_{2\omega_1}\oplus L_{\omega_2}\oplus L_{\omega_4}\oplus L_{\omega_0}$ & $\hspace{7pt} \tl{L}_{\mr{1}_{aq^{-6}}}\cong L_{\omega_1}$ 
\vspace{2pt}\\

$q^{18}$ & $\hspace{25pt} \tl{L}_{\mr{1}_a\mr{1}_{aq^{-18}}}\cong L_{2\omega_1}\oplus L_{\omega_2}\oplus L_{\omega_4}\oplus L_{\omega_1}$ & $\hspace{20pt} \tl{L}_{\scriptscriptstyle{1}}\cong L_{\omega_0}$\\
\end{tabular} \ .\end{center}
\qed
\end{lem}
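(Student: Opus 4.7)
The plan is to apply to F$_4$ the $q$-character technique used for the preceding exceptional types, exactly as illustrated for G$_2$ around \eqref{qchar arg for G2}. I would first identify candidate pole locations by scanning the $q$-character \eqref{qchar F4} of $\tl L_1$ for underlined (secondary-dominant-producing) monomials, then use Theorem \ref{q char arg} to separate the two irreducible factors of each reducible product $\tl L_1(q^{-a}) \otimes \tl L_1$, and finally read off weight multiplicities via the Frenkel--Mukhin algorithm and match against \eqref{tensorF4}.

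Concretely, the four underlined monomials in \eqref{qchar F4} are $\mr 1_2^{-1}\mr 2_1$, $\mr 1_8^{-1}\mr 4_4$, $\mr 1_{12}^{-1}\mr 1_6$, and $\mr 1_{18}^{-1}$. For each $a \in \{2, 8, 12, 18\}$, the product $\chi_q(\mr 1_{-a})\chi_q(\mr 1_0)$ contains exactly one secondary dominant monomial beyond $\mr 1_{-a}\mr 1_0$, namely $\mr 2_{-1}$, $\mr 4_{-4}$, $\mr 1_{-6}$, and the trivial monomial $1$ respectively, each obtained by cancelling an $\mr 1^{-1}$ from a shifted copy of the underlined monomial against $\mr 1_0$ from the other factor. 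By Theorem \ref{poles thm} these are the only candidate poles of $\check R(z)$. Applying Theorem \ref{q char arg} exactly as in the G$_2$ example eliminates the secondary monomial from $\chi_q(\tl L_{\mr 1_{-a}\mr 1_0})$, producing the Grothendieck-ring splitting
\begin{equation*}
\chi_q(\mr 1_{-a})\chi_q(\mr 1_0) \;=\; \chi_q(\mr 1_{-a}\mr 1_0) + \chi_q(m_{-a}),
\end{equation*}
where $m_{-a}$ denotes the secondary monomial. Both summands are now special, hence computable by Frenkel--Mukhin.

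The weight-space dimensions of the two $q$-characters, matched against \eqref{tensorF4}, give the two columns of the table: the $U_q($F$_4)$-content of $\tl L_{\mr 1_{-a}\mr 1_0}$ is the submodule column, and that of $\tl L_{m_{-a}}$---which works out to $L_{\omega_2} \oplus L_{\omega_1}$, $L_{\omega_4} \oplus L_{\omega_0}$, $L_{\omega_1}$, and $L_{\omega_0}$ for $a = 2, 8, 12, 18$ respectively---is the quotient column. The identification of $\tl L_{\mr 1_{-a}\mr 1_0}$ as a submodule (rather than quotient) at the pole $z = q^a$ of $\check R(z)$ follows from the cyclicity direction of \cite{C00} combined with the source/target conventions of $\check R$ fixed in Section \ref{prel sec}.

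The main obstacle is the mechanical verification of the four hypotheses of Theorem \ref{q char arg} when $a \in \{12, 18\}$: for $a = 12$ the monomial $\mr 1_{12}^{-1}\mr 1_6$ carries zero weight, so the exponent condition (1) requires tracking $\mr 1$-multiplicities carefully, and for $a = 18$ the secondary monomial is trivial, so the $A_{j,c}^{\pm 1}$-neighbor absence conditions must be checked against a larger collection of monomials. Nonetheless, since $\chi_q(\mr 1_0)$ has only $26$ explicit terms, these are finite combinatorial checks that can be tabulated once and reused across all four values of $a$.
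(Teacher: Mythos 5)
Your proposal is correct and follows essentially the same route as the paper: read the candidate values $a\in\{2,8,12,18\}$ off the underlined monomials of $\chi_q(\mr{1}_0)$ in \eqref{qchar F4}, use Theorem \ref{q char arg} to show each $\chi_q(\mr{1}_{-a}\mr{1}_0)$ is special so that the Frenkel--Mukhin algorithm applies, split $\chi_q(\mr{1}_{-a})\chi_q(\mr{1}_0)$ into the two special summands, match weight multiplicities against \eqref{tensorF4}, and invoke Theorem \ref{poles thm} together with the cyclicity results of \cite{C00}, \cite{Ka02} to sort submodules from quotients and zeros from poles. The secondary dominant monomials $\mr{2}_{-1}$, $\mr{4}_{-4}$, $\mr{1}_{-6}$, $1$ and the resulting $U_q(\mathrm{F}_4)$-decompositions you list agree with the table, so no gap remains beyond the finite combinatorial checks you already flag.
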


We choose a basis $\{v_i:1\le i\le 26\}$ for $L_{\omega_1}$ so that $v_1$ is a non-zero highest weight vector, see a diagram of $L_{\omega_1}$ in Section \ref{F4 app}. The vectors $v_i$ are ordered as their $\ell$-weights appear in the $q$-character \eqref{qchar F4}.

The $U_q($F$_4)$-submodule $L_{\omega_4}\se L_{\omega_1}^{\otimes 2}$ has a basis $\{u_s\}_{s=1}^{52}$ of the form 
$$u_s=\sum_{(i,j)\in I_s^{\omega_4}}\ve^{q,s}_{ij}v_i\otimes v_j,\qquad $$
where the sets $I_s^{\omega_4}$ are  given in Section \ref{F4 app} and have cardinality 28 for $25\le s\le 28$, $12$ for $13\le s\le 24$ or $29\le s\le 40$ and 6 otherwise, and $\ve=\{\ve_{ij}^{q,s}\}_{s=1}^{52}$ are given as follows for $1\le s\le 12$ or $41\le s\le 52$,
$$\ve^{q,s}_{ij}=-(-q)^{4-|(i,j)|}\  \text{ for }i<j \text{ (or equivalently for }|(i,j)|\leq 3),\quad \ve^{q,s}_{ij}=\ve^{q^{-1},s}_{ji}\ \text{ for }i>j\ ,$$
while for $13\le s\le 40$, $\ve_{ij}^{q,s}\in\C(q)$ are more complicated and are listed in Section \ref{F4 app}. We have $i\ne j$ here except in the case of zero weight vectors. The vector $\ve$ will replace $\sigma$ in the expression of $G_{\omega_4}$ in \eqref{fp}, see \eqref{F4TSz}.

The $U_q($F$_4)$-submodule $L_{\omega_1}\se L_{\omega_1}^{\otimes 2}$ has a basis $\{w_s\}_{s=1}^{26}$ of the form 
$$w_s=\sum_{(i,j)\in I_s^{\omega_1}}\mu^{q,s}_{ij}v_i\otimes v_j,\qquad $$
where the sets $I_s^{\omega_1}$ are  given in Section \ref{F4 app} and have cardinality 28 for $13\le s\le 14$ and 12 otherwise, and $\mu=\{\mu_{ij}^{q,s}\}_{s=1}^{26}$ are given in Section \ref{F4 app}. The vector $\mu$ will replace $\sigma$ in the expression of $G_{\omega_1}$ in \eqref{fp}, see \eqref{F4TSz}.

\comment{
We list in the Section \ref{F4 app}, the sets $I_s^{\omega_1}$  $(1\le s\le 26)$, and the parities $\mu_{ij}^{q,s}\in\C(q)$, corresponding to $\sigma_{ij}^{q,s}$ which are to be used in the expression of $F_{\omega_1}$ in \eqref{fp}. 
A singular vector $w_1\in L_{\omega_1}^{\otimes 2}$ having weight $\omega_1$ is given by 
$$\displaystyle w_1=\sum_{(i,j)\in I_1^{\omega_1}}\mu^{q,1}_{ij}\, v_i\otimes v_j\ ,$$ 
where \bee{
I_1^{\omega_1}=\{(1,13),(1,14),(2,12),(3,10),(4,8),(5,6),(6,5),(8,4),(10,3),(12,2),(14,1),(13,1)\}\ ,\\ \mu^{q,1}_{ij}\in \frac{\sqrt{[3]}}{\sqrt{[2]}}\bigg\{0,q^6\frac{\sqrt{[2]}}{\sqrt{[3]}}, -q^{9/2}, q^{7/2}, -q^{3/2}, q^{-1/2}, q^{1/2}, -q^{-3/2}, q^{-7/2}, -q^{-9/2}, q^{-6}\frac{\sqrt{[2]}}{\sqrt{[3]}}, 0\bigg\}\ .
} 
}

The $U_q($F$_4)$-submodule $L_{\omega_0}\se L_{\omega_1}^{\otimes 2}$ is one-dimensional with a singular vector $v_0\in L_{\omega_1}^{\otimes 2}$ of weight $\omega_0$ given by 
$$v_0=\sum_{(i,j)\in I^{\omega_0}} p_{ij}^q\,v_i\otimes v_{j}\ .$$ where 
$I^{\omega_0}=\{(1,\ol{1}),\dots,(12,\ol{12}),(13,\ol{13}),(13,13), (14,14), (\ol{13},13),(\ol{12},12),\dots,(\ol{1},1)\}\ ,\ \ol{i}=27-i$, $1\le i\le 26$, and the set $\{p_{i\,\ol{i}}^q: 1\le i\le 13\}$ is given by 
\bee{\big\{q^{11}, -q^{10}, q^9, -q^7, q^5, q^6, -q^5, -q^4, q^3, q^2, -q, -q, 0\big\}\ ,}
and we have $p_{i\,\ol{i}}^q=p_{\ol{i}\,i}^{q^{-1}}$, for $14\le i\le 26$, $p_{13,13}^{q}=p_{14,14}^{q}=1$. The vector $p=\{p_{ij}^q\}$ will replace $\sigma$ in the expression of $G_{\omega_0}$ in \eqref{fp}, see \eqref{F4TSz}.

For $\lambda=2\omega_1,\omega_2,\omega_4,\omega_1,\omega_0$, let $P_\lambda^q$ be the projector onto the $U_q($F$_4)$-module $L_{\lambda}$ in the decomposition \eqref{tensorF4}.

\begin{thm}
In terms of projectors, we have 
\eq{\label{R proj F4}
\check{R}(z) = P_{2\omega_1}^q-q^{-2}\frac{1-q^{2}z}{1-q^{-2}z}\,P_{\omega_2}^q-q^{-8}\frac{1-q^{8}z}{1-q^{-8}z}P_{\omega_4}^q + q^{-14}\frac{(1-q^{2}z)(1-q^{12}z)}{(1-q^{-2}z)(1-q^{-12}z)}\,P_{\omega_1}^q \\
+ q^{-26}\frac{(1-q^{8}z)(1-q^{18}z)}{(1-q^{-8}z)(1-q^{-18}z)}\,P_{\omega_0}^q\ .
}
In terms of matrix units, we have
\eq{\label{RqF4}
\check{R}(z)=\ & \big(\check{R}(z)\big)_{\fk{sl}_{26}} + \frac{(q-q^{-1})(1-z)}{(q-q^{-1}z)(q^4-q^{-4}z)}T(z)-\frac{(q-q^{-1})(1-z)}{(q-q^{-1}z)(q^6-q^{-6}z)}S(z) \\
& -\frac{(q-q^{-1})(1-z)}{(q-q^{-1}z)(q^4-q^{-4}z)(q^9-q^{-9}z)}Q(z)-\frac{(q-q^{-1})(1-z)}{(q-q^{-1}z)}(E_{13,13}\otimes E_{13,13}-E_{14,14}\otimes E_{14,14})\ ,
}
where $\big(\check{R}(z)\big)_{\fk{sl}_{26}}$ is the A$_{25}$ (or $\fk{sl}_{26}$) trigonometric $R$-matrix in \eqref{RqA} and $T(z)$, $S(z)$, $Q(z)$ are given by
\eq{\label{F4TSz}
& T(z)=G_{\omega_4}\bigg(zq^{-4},q^4,\frac{q^{\frac{7}{2}}+q^{-\frac{7}{2}}z}{q^{\frac{1}{2}}+q^{-\frac{1}{2}}}\,;\ve\bigg)\ ,\\ 
& S(z)=G_{\omega_1}\bigg(zq^{-6},q^6,\frac{q^{\frac{11}{2}}+q^{-\frac{11}{2}}z}{q^{\frac{1}{2}}+q^{-\frac{1}{2}}},\frac{q^{\frac{13}{2}}+q^{\frac{9}{2}}+q^{\frac{7}{2}}+(q^{-\frac{7}{2}}+q^{-\frac{9}{2}}+q^{-\frac{13}{2}})z}{(q^2+1+q^{-2})\,(q^{\frac{1}{2}}+q^{-\frac{1}{2}})}\,;\mu\bigg)\ ,\\
& Q(z) = G_{\omega_0}\bigg(zq^{-12}a_-(z),q^{12}\,a_+(z), a_0(z) , (q^5-q^{-5}z)(q^6+q^{-6}z)\,;p\bigg)\ .
}

Here $\displaystyle a_{\pm}(z)=\frac{1}{[3]_3^\mr{i}}\big(q^9-q^{-9}z-q^{\mp3}[2]_4^\mr{i}\,(1+z)\big)$ and $\displaystyle a_0(z)=\frac{[3]^{\mr{i}}}{[2]_{\frac{1}{2}}}\big(q^{\frac{25}{2}}-z\,[2]_{\frac{7}{2}}^{\mr{i}}\,[3]_{\frac{1}{2}}-q^{-\frac{25}{2}}z^2\big).$
\qed
\end{thm}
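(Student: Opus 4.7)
The plan is to apply the multiplicity-free algorithm of Section \ref{meth sec} to the decomposition \eqref{tensorF4}, and then to convert the resulting projector formula into the matrix unit form using the bases recorded in Section \ref{F4 app}.

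First I would read off $\chi_q(\mr{1}_0)$ displayed in \eqref{qchar F4}. The algorithm of \cite{FM01} is applicable since $\tl L_1$ is special; the mixed products $\chi_q(\mr{1}_0)\chi_q(\mr{1}_a)$ for $a\in\{\pm 2,\pm 8,\pm 12,\pm 18\}$ each contain a second dominant monomial corresponding to one of the underlined terms, and Theorem \ref{q char arg} shows that this extra dominant monomial is not in $\chi_q$ of the subquotient containing the tensor product of highest weight vectors, exactly as in the G$_2$ illustration around \eqref{qchar arg for G2}. Matching the resulting subquotient structure with the $U_q(\text{F}_4)$-decomposition \eqref{tensorF4} yields the table of submodules and quotient modules stated just above, and hence the location of every zero and pole of each scalar $f_k(z)$: by Theorem \ref{poles thm} all poles occur at $z=q^k$ with $k>0$, and the corresponding zeros at $z=q^{-k}$ via Lemma \ref{lemR}(\ref{inversion relation}).

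Next, by Lemma \ref{lemR}(\ref{try}) each $f_k(z)$ is rational, and Proposition \ref{order of zeroes prp} does not apply for type F$_4$, so I would invoke Theorem \ref{order of zeroes thm}: the verification reduces to the Casimir sum rule $\sum_j a_j^{(k)} = (C(\lambda_k) - C(2\omega_1))/2$ for the four cases $\lambda_k\in\{\omega_2,\omega_4,\omega_1,\omega_0\}$, a direct computation. Consequently each $f_k(z)$ is determined up to a multiplicative constant, which is pinned by $f_k(1)=1$ coming from $\check{R}(1)=\id$. Assembling the four factors yields \eqref{R proj F4}.

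For the matrix unit form \eqref{RqF4}, I would expand each projector using the orthogonal singular vectors catalogued in Section \ref{F4 app}. The basis $\{v_i\}_{1\le i\le 26}$ satisfies $E_i^T=F_i$, so by Lemma \ref{lin alg lemma} the Shapovalov form is diagonal in this basis; the singular vectors $u_s$, $w_s$, $v_0$ generate bases of $L_{\omega_4}$, $L_{\omega_1}$, $L_{\omega_0}$ that are orthogonal of constant squared norm within each summand. Therefore every $P_\lambda^q$ decomposes as a sum of rank-one operators $|e\rangle\langle e|/\|e\|^2$, which after passage to matrix units $E_{ij}$ and partition by the sign of $|(i,k)|+|(j,l)|-|I_s^\lambda|-1$ packages into $T(z)$, $S(z)$, $Q(z)$ via the template \eqref{fp}, while $P_{2\omega_1}^q\oplus P_{\omega_2}^q$ reassembles into $(\check{R}(z))_{\fk{sl}_{26}}$ exactly as in type A, cf.\ \eqref{RqA}. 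The rational coefficients $a_-(z),a_+(z),a_0(z)$ are then forced by consistency with \eqref{R proj F4}.

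The main obstacle is the bookkeeping on the two-dimensional zero-weight subspace of $L_{\omega_1}$, on which $P_{\omega_1}^q$, $P_{\omega_0}^q$ and the central pieces of $P_{\omega_4}^q$ all act nontrivially and mix. This is what forces the split of $G_{\omega_1}$ into distinct $a_0^{(1)}$ and $a_0^{(2)}$ contributions, the explicit rational coefficients $a_\pm(z)$, $a_0(z)$ in the $\omega_0$ block, and the two isolated diagonal corrections $E_{13,13}\otimes E_{13,13}-E_{14,14}\otimes E_{14,14}$. The cleanest end-of-proof cross-check is commutativity with $\Delta E_0(a,b)$ as in \eqref{R and E0 commutation relation}: it suffices to test this on $v_1\otimes v_j$ for a spanning set of indices $j$, and any sign or scalar error in $a_\pm(z)$ or $a_0(z)$ is caught there.
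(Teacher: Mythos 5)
Your proposal matches the paper's own method: the theorem is established exactly by the multiplicity-free algorithm of Section \ref{meth sec} --- $q$-characters locate the poles and zeroes, Theorem \ref{order of zeroes thm} (rather than Proposition \ref{order of zeroes prp}, which as you note fails for F$_4$) forces them to be simple via the Casimir values of Lemma \ref{R0}, the normalization $f_k(1)=1$ fixes the constants, and the matrix-unit form is obtained by expanding the projectors in the bases of Section \ref{F4 app} and cross-checked against the $E_0$-commutation \eqref{R and E0 commutation relation}. The only loose phrase is that $P_{2\omega_1}^q$ and $P_{\omega_2}^q$ do not literally ``reassemble'' into $\big(\check{R}(z)\big)_{\fk{sl}_{26}}$ --- the $\fk{sl}_{26}$ projectors are different operators from the F$_4$ ones --- rather, \eqref{RqF4} is the algebraic identity obtained by writing all five F$_4$ projectors in matrix units and collecting the corrections $T$, $S$, $Q$ and the diagonal term relative to the $\fk{sl}_{26}$ $R$-matrix.
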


One can directly check that the $R$-matrix commutes with the action of $E_0$ and $F_0$, where 
$$K_0=\sum_{i\in\{1,2,3,4,6,7\}}(q^{-2}E_{ii}+q^2E_{\ol{i}\,\ol{i}})+\sum_{i,\ol{i}\notin\{1,2,3,4,6,7\}}E_{ii}\ ,\quad E_0(a)=a\,\big(E_{\ol{7}1}+E_{\ol{6}2}+E_{\ol{4}3}+E_{\ol{3}4}+E_{\ol{2}6}+E_{\ol{1}7}\big)\ ,$$
and $F_0(a)$ is the transpose of $a^{-2}E_0(a)$.

Let $\displaystyle P_\lambda=\lim_{q\to1}P_\lambda^q$ be the  $U($F$_4)$ projector. For $(i,j)\in I_s^{\omega_4}$, let $\ve_{ij}^s$ be the $q\to1$ limit of $\ve_{ij}^{q,s}$. For $(i,j)\in I_s^{\omega_1}$, let $\mu_{ij}^s$ be the $q\to 1$ limit of $\mu_{ij}^{q,s}$. For $(i,j)\in I^{\omega_0}$, let $p_{ij}\in\{1,-1\}$ be the $q\to 1$ limit of $p_{ij}^q$. Let $T,S,Q$ be given by
$$\displaystyle T=\sum_{s=1}^{52}\sum_{(i,k),(j,l)\in I_s^{\omega_4}}\  \ve_{ik}^s\ve_{jl}^s\, E_{ij}\otimes E_{kl}=6\,P_{\omega_4}\ ,\quad
S=\sum_{s=1}^{26}\sum_{(i,k),(j,l)\in I_s^{\omega_1}}\mu_{ik}^s\,\mu_{jl}^s \,E_{ij}\otimes E_{kl}=14\,P_{\omega_1}\ ,$$
$$Q=\sum_{(i,k),(j,l)\in I^{\omega_0}}p_{ik} p_{jl}\,E_{ij}\otimes E_{kl}=26\,P_{\omega_0}\ .$$

\begin{cor}
In the rational case, the corresponding $R$-matrix is given by
\eq{\label{RuF4}
\check{R}(u) & =P_{2\omega_1}+\frac{1+u}{1-u}P_{\omega_2}+\frac{4+u}{4-u}P_{\omega_4}+\frac{(1+u)(6+u)}{(1-u)(6-u)}P_{\omega_1}+\frac{(4+u)(9+u)}{(4-u)(9-u)}P_{\omega_0} \\ 
& = \frac{1}{1-u}\bigg(I-uP-\frac{u}{4-u}T+\frac{u}{6-u}S+\frac{u(1-u)}{(4-u)(9-u)}Q\bigg).
}
\end{cor}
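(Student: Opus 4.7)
The plan is to mirror the proofs of the analogous corollaries earlier in this section: substitute $z = q^{2u}$ in both the projector form \eqref{R proj F4} and the matrix-unit form \eqref{RqF4} of $\check R(z\,;q)$, and take the limit $q \to 1$. Under this limit each $U_q(\mathrm F_4)$-projector $P_\lambda^q$ tends to its classical counterpart $P_\lambda$ by definition, so the task reduces to handling the scalar prefactors and identifying the limits of the matrices $T(z), S(z), Q(z)$.

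For the projector form, the key elementary computation is $\lim_{q\to 1}(1 - q^{2k}z)/(1 - q^{-2k}z)\bigr|_{z=q^{2u}} = -(k+u)/(k-u)$ for each $k\in\Z_{>0}$, while the overall $q^{-2j}$ prefactors tend to $1$. Feeding this into the five summands of \eqref{R proj F4} and tracking signs produces exactly the stated expression $P_{2\omega_1} + \tfrac{1+u}{1-u}P_{\omega_2} + \tfrac{4+u}{4-u}P_{\omega_4} + \tfrac{(1+u)(6+u)}{(1-u)(6-u)}P_{\omega_1} + \tfrac{(4+u)(9+u)}{(4-u)(9-u)}P_{\omega_0}$, which is the first line of \eqref{RuF4}.

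For the matrix-unit form, the $\fk{sl}_{26}$ piece $\bigl(\check R(z)\bigr)_{\fk{sl}_{26}}$ tends to $(I - uP)/(1-u)$ by the computation already carried out in \eqref{RuA}. The rational prefactors in \eqref{RqF4} in front of $T(z), S(z), Q(z)$ reduce, up to signs, to $u/[(1-u)(4-u)]$, $u/[(1-u)(6-u)]$, and $u(1-u)/[(1-u)(4-u)(9-u)]$ respectively, while the stray diagonal term $E_{13,13}\otimes E_{13,13} - E_{14,14}\otimes E_{14,14}$ is killed by its prefactor $(q-q^{-1})(1-z)/(q-q^{-1}z)$. The matrices $T(z), S(z), Q(z)$ themselves, built from \eqref{fp} and \eqref{F4TSz}, have well-defined $q \to 1$ limits equal to the matrices $T, S, Q$ defined in the text just preceding the corollary, since all the scalars $\ve_{ij}^{q,s}, \mu_{ij}^{q,s}, p_{ij}^q$ and the functions $a_\pm(z), a_0(z)$ are rational in $q$ and regular at $q=1$.

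The only non-routine step is verifying internal consistency between the two forms of $\check R(u)$. This reduces to the identifications $T = 6P_{\omega_4}$, $S = 14 P_{\omega_1}$, $Q = 26 P_{\omega_0}$ already recorded just before the corollary, together with the standard decomposition of $I - uP$ among symmetric/antisymmetric $U(\mathrm F_4)$-projectors in $L_{\omega_1}^{\otimes 2}$. These are finite-dimensional linear-algebra identities directly verifiable from the explicit basis data of Section \ref{F4 app}, and present no obstacle beyond the routine computation already completed for the analogous type-$\mathrm E_7$ corollary.
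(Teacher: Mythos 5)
Your approach is exactly the paper's: the entire published proof is ``substitute $z=q^{2u}$ in \eqref{R proj F4} and \eqref{RqF4} and take the limit $q\to1$,'' and your elaboration of the projector form (via $\lim_{q\to1}(1-q^{2k}z)/(1-q^{-2k}z)\big|_{z=q^{2u}}=-(k+u)/(k-u)$) is correct and complete for the first line of \eqref{RuF4}.

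One step in your treatment of the matrix-unit form is misstated, though it does not affect the method or the result. For the $Q$-term the prefactor $-\frac{(q-q^{-1})(1-z)}{(q-q^{-1}z)(q^4-q^{-4}z)(q^9-q^{-9}z)}$ has a numerator of order $(q-q^{-1})^2$ against a denominator of order $(q-q^{-1})^3$, so it \emph{diverges} like $(q-q^{-1})^{-1}$ rather than tending to $u(1-u)/[(1-u)(4-u)(9-u)]$; correspondingly $Q(z)$ does \emph{not} tend to $Q$ but to $0$, since all four coefficient functions $zq^{-12}a_-(z)$, $q^{12}a_+(z)$, $a_0(z)$, $(q^5-q^{-5}z)(q^6+q^{-6}z)$ in \eqref{F4TSz} vanish to first order in $q-q^{-1}$ at $z=q^{2u}$, $q\to1$. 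Only the product of the two has a finite limit, and extracting it requires the first-order expansion of these coefficients (not just their values at $q=1$), so the phrase ``regular at $q=1$'' understates what must be checked there. The statements about $T(z)\to T$ and $S(z)\to S$ and about the vanishing of the $E_{13,13}\otimes E_{13,13}-E_{14,14}\otimes E_{14,14}$ correction are fine as written.
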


\begin{proof}
We substitute $z=q^{2u}$ in \eqref{R proj F4} and \eqref{RqF4} and take limit $q\to 1$.
\end{proof}

\subsection{Type G\texorpdfstring{$_2$}{2}}

We consider the Dynkin diagram: 

\medskip 

\begin{center}
\dynkin [extended, edge length=1.25cm, root radius=0.075cm, label macro/.code={\drlap{#1}}, labels={0, 2, 1}, ordering=Kac] G[1]{2}\ .
\end{center} 

\medskip

The $7$-dimensional $U_q\big($G$_2^{(1)}\big)$-module $\tl{L}_1(a)$ when restricted to $U_q($G$_2)$ is isomorphic to $L_{\omega_1}$.\\ As $U_q($G$_2)$-modules we have
\eq{\label{tensorG}
\underbracket[0.1ex]{L_{\omega_1}}_{7}\otimes \underbracket[0.1ex]{L_{\omega_1}}_{7}\cong \underbracket[0.1ex]{L_{2\omega_1}}_{27}\oplus \underbracket[0.1ex]{L_{\omega_2}}_{14}\oplus \underbracket[0.1ex]{L_{\omega_1}}_{7}\oplus \underbracket[0.1ex]{L_{\omega_0}}_{1}\ .}
In the $q\to1$ limit, $L_{2\omega_1}\oplus L_{\omega_0}\mapsto \cl{S}^2(L_{\omega_1})$ and $L_{\omega_2}\oplus L_{\omega_1}\mapsto \Lambda^2(L_{\omega_1})$.

The $q$-character of $\tl L_1=\tl{L}_{\mr{1}_0}$ has $7$ terms and there is $1$ weight zero term (shown in box):
\eq{\label{qchar G2}
\chi_q(\mr{1}_0)=\mr{1}_{0} + \ul{\mr{1}_{2}^{-1}\mr{2}_{1}} + \mr{1}_{4}\mr{1}_{6}\mr{2}_{7}^{-1} + \boxed{\ul{\mr{1}_{8}^{-1}\mr{1}_{4}}} + \mr{1}_{6}^{-1}\mr{1}_{8}^{-1}\mr{2}_{5} + \mr{1}_{10}\mr{2}_{11}^{-1} + \ul{\mr{1}_{12}^{-1}}\ .
}
Using the $q$-characters we compute the zeros and poles of $\check{R}(z)$ and the corresponding kernels and cokernels. 

\begin{lem}
The poles of the $R$-matrix $\check{R}(z)$, the corresponding submodules and quotient modules are given by
\begin{center}\begin{tabular}{c@{\hspace{1cm}} c c}
Poles & Submodules & \hspace{20pt} Quotient modules  \\

$q^{2}$ & $\tl{L}_{\mr{1}_a\mr{1}_{aq^{-2}}}\cong L_{2\omega_1}\oplus L_{\omega_1}$ & $\hspace{40pt} \tl{L}_{\mr{2}_{aq^{-1}}}\cong L_{\omega_2}\oplus L_{\omega_0}$ \\

$q^{8}$ & $\hspace{28pt} \tl{L}_{\mr{1}_a\mr{1}_{aq^{-8}}}\cong L_{2\omega_1}\oplus L_{\omega_2}\oplus L_{\omega_0}$ & $\hspace{16pt} \tl{L}_{\mr{1}_{aq^{-4}}}\cong L_{\omega_1}$ 
\vspace{2pt}\\

$q^{12}$ & $\hspace{26pt} \tl{L}_{\mr{1}_a\mr{1}_{aq^{-12}}}\cong L_{2\omega_1}\oplus L_{\omega_2}\oplus L_{\omega_1}$ & $\hspace{28pt} \tl{L}_{\scriptscriptstyle{1}}\cong L_{\omega_0}$ \\
\end{tabular}\ .\end{center}
\qed
\end{lem}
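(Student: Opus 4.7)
My plan is to extend the calculation sketched for $a=-8$ in the introduction to cover all candidate values. The three inputs will be Theorem \ref{poles thm}, which locates reducibility points of $V(a)\otimes V$; Theorem \ref{q char arg}, which certifies that each composition factor is special by eliminating spurious dominant monomials; and the algorithm of \cite{FM01}, which then produces the full $q$-characters of these factors.

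First, I would enumerate the candidates. Inspecting $\chi_q(\mr{1}_0)$ in \eqref{qchar G2}, the only monomials with negative $\mr{1}$- or $\mr{2}$-powers that can yield a dominant product with a single factor $\mr{1}_a$ from $\chi_q(\mr{1}_a)$ are the three underlined ones. Note that $\mr{2}^{-1}$ is never cancelled in such a product, since in $\chi_q(\mr{1}_0)$ every occurrence of $\mr{2}$ in the numerator comes paired with an extra negative factor. Thus an extra dominant monomial in $\chi_q(\mr{1}_{q^k})\chi_q(\mr{1}_0)$ arises exactly when $k\in\{\pm 2,\pm 8,\pm 12\}$. By Lemma \ref{lemR}(\ref{inversion relation}) combined with \cite{C00}, negative $k$ yield zeroes of $\check{R}(z)$ while positive $k$ yield poles, so the poles are precisely $\{q^2,q^8,q^{12}\}$.

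Second, for each $k\in\{2,8,12\}$ I would apply Theorem \ref{q char arg} to the non-trivial dominant monomial (respectively $\mr{2}_1$, $\mr{1}_4$, and $1$), checking its four hypotheses combinatorially from the explicit list of monomials in $\chi_q(\mr{1}_{q^k})\chi_q(\mr{1}_0)$, exactly in the style of the argument preceding \eqref{qchar arg for G2}. This yields
\begin{equation*}
\chi_q(\mr{1}_{q^k})\chi_q(\mr{1}_0)=\chi_q(\mr{1}_{q^k}\mr{1}_0)+\chi_q(m_k),\qquad m_2=\mr{2}_1,\ m_8=\mr{1}_4,\ m_{12}=1.
\end{equation*}
With each summand now known to be special, I would run the Frenkel--Mukhin algorithm on each side and regroup the resulting monomials by classical weight to read off the $U_q\fk{g}$-decompositions claimed in the table. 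The dimension counts $34=27+7$, $42=27+14+1$, $48=27+14+7$, together with the dimensions of $\tl{L}_{m_k}$ summing to $49=7\cdot 7$, provide independent cross-checks.

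Finally, to distinguish the submodule from the quotient I would invoke the theorem of \cite{C00}, \cite{Ka02}: in $\tl{L}_{\mr{1}_a}\otimes\tl{L}_{\mr{1}_{aq^{-k}}}$ the cyclic vector generates the irreducible module $\tl{L}_{\mr{1}_a\mr{1}_{aq^{-k}}}$ as a submodule, with the complementary composition factor $\tl{L}_{m_k(a)}$ as the quotient. The main obstacle is verifying condition (\ref{q char arg 2}) of Theorem \ref{q char arg}, since it requires ruling out the appearance of all monomials $m_-A_{j,c}$ in $\chi_q(V)$ for $(j,c)\neq(i,b)$; this forces an explicit inspection of the product character case by case and is the reason one cannot bypass the enumerations in \eqref{qchar G2}.
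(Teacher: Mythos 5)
Your proposal is correct and follows essentially the same route the paper takes (and works out explicitly for the $a=-8$ case in the introduction): locate the extra dominant monomials of $\chi_q(\mr{1}_k)\chi_q(\mr{1}_0)$ at $k=\pm2,\pm8,\pm12$ via the underlined terms, separate zeroes from poles using \cite{C00} and Lemma \ref{lemR}(\ref{inversion relation}), certify specialness with Theorem \ref{q char arg}, run the algorithm of \cite{FM01} to get the decompositions, and identify submodule versus quotient from the cyclicity/co-cyclicity of the ordered tensor product. The only cosmetic quibbles are the notation $\chi_q(\mr{1}_{q^k})$ for $\chi_q(\mr{1}_k)$ and the phrase ``cyclic vector generates \dots as a submodule'' (the product of highest $\ell$-weight vectors generates a proper submodule precisely because the product is \emph{not} cyclic in that ordering), neither of which affects the argument.
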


We choose a basis $\{v_i:1\le i\le 7\}$ for $L_{\omega_1}$ so that $v_1$ is a non-zero highest weight vector, see a diagram of $L_{\omega_1}$ in Section \ref{G2 app}. The vectors $v_i$ are ordered as their $\ell$-weights appear in the $q$-character \eqref{qchar G2}.

The $U_q($G$_2)$-submodule $L_{\omega_1}\se L_{\omega_1}^{\otimes 2}$ has a basis $\{w_s\}_{s=1}^{7}$ of the form 
$$w_s=\sum_{(i,j)\in I_s^{\omega_1}}\mu^{q,s}_{ij}v_i\otimes v_j,\qquad $$
where the sets $I_s^{\omega_1}$ are  given in Section \ref{G2 app} and have cardinality 7 for $s=4$ and cardinality 4 otherwise, and $\mu=\{\mu_{ij}^{q,s}\}_{s=1}^{7}$ are given in Section \ref{G2 app}. The vector $\mu$ will replace $\sigma$ in the expression of $G_{\omega_1}$ in \eqref{fp}, see \eqref{G2Sz}.

\comment{
We list in Section \ref{G2 app}, the sets $I_s^{\omega_1}\ (1\le s\le 7)\ $, and the parities $\mu_{ij}^{q,s} \in \C(q)$, corresponding to $\sigma_{ij}^{q,s}$ which are to be used in the expression of $F_{\omega_1}$ in \eqref{fp}. A singular vector $w_1\in L_{\omega_1}^{\otimes 2}$ having weight $\omega_1$ is given by 
$$w_1=\sum_{(i,j)\in I_1^{\omega_1}}\mu_{ij}^{q,1} v_i\otimes v_j\ ,\qquad I_1^{\omega_1}=\{(1,4), (2,3), (3,2), (4,1)\}\ ,\quad\mu_{ij}^{q,1}\in\{q^3,-q^{\frac{3}{2}}\sqrt{[2]}, q^{-\frac{3}{2}}\sqrt{[2]},-q^{-3}\}\ .$$
}

The $U_q($G$_2)$-submodule $L_{\omega_0}\se L_{\omega_1}^{\otimes 2}$ is one-dimensional with a singular vector $v_0\in L_{\omega_1}^{\otimes 2}$ of weight $\omega_0$ given by 
$$v_0=\sum_{(i,\ol{i})\in I^{\omega_0}}p_i^q v_i\otimes v_{\ol{i}}\ ,$$
where $I^{\omega_0}=\{(i,\ol{i}):1\le i\le 7\}$, $\ol{i}=8-i$ and the parities $\{p_i^q:1\le i\le 7\}$ are given by
$$\{q^5, -q^4, q, -1, q^{-1}, -q^{-4},q^{-5}\}\ .$$
The vector $p=\{p_i^q\}$ will replace $\sigma$ in the expression of $G_{\omega_0}$ in \eqref{fp}, see \eqref{G2Sz}.

For $\lambda=2\omega_1,\omega_2,\omega_1,\omega_0$, let $P_\lambda^q$ be the projector onto the $U_q($G$_2)$-module $L_{\lambda}$ in the decomposition \eqref{tensorG}.

\begin{thm}
In terms of projectors, we have 
\eq{\label{R proj G}
\check{R}(q,z)=P_{2\omega_1}^q-q^{-2}\frac{1-q^{2}z}{1-q^{-2}z}\,P_{\omega_2}^q-q^{-8}\frac{1-q^{8}z}{1-q^{-8}z}\,P_{\omega_1}^q+q^{-14}\frac{(1-q^{2}z)(1-q^{12}z)}{(1-q^{-2}z)(1-q^{-12}z)}\,P_{\omega_0}^q\ .}
In terms of matrix units, we have 
\eq{\label{RqG}
\check{R}(z)=\big(\check{R}(z)\big)_{\fk{sl}_{7}}+\frac{(q-q^{-1})(1-z)}{(q-q^{-1}z)(q^4-q^{-4}z)}S(z)-\frac{(q-q^{-1})(q^2+q^{-2})(1-z)}{(q-q^{-1}z)(q^6-q^{-6}z)}Q(z)\ ,
}
where $\big(\check{R}(z)\big)_{\fk{sl}_{7}}$ is the A$_6$ (or $\fk{sl}_{7}$) trigonometric $R$-matrix in \eqref{RqA} and $S(z)$, $Q(z)$ are given by
\eq{\label{G2Sz}
& S(z)=G_{\omega_1}\bigg(zq^{-4},q^4,\frac{q^{\frac{7}{2}}+q^{-\frac{7}{2}}z}{q^{\frac{1}{2}}+q^{-\frac{1}{2}}},\frac{q^3+q^{-3}z}{q+q^{-1}}\,;\mu\bigg)\ ,\\ & Q(z)=G_{\omega_0}\bigg(zq^{-6},q^6, \frac{q^7-q^5+q^4+(q^{-4}-q^{-5}+q^{-7})z}{q^2+q^{-2}}, \frac{q^4+q^{-4}z}{q^2+q^{-2}}\,;p\bigg)\ .
}
\qed
\end{thm}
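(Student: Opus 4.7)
The proof of \eqref{R proj G} is a direct application of the algorithm described in Section \ref{meth sec}. The plan is as follows. By Lemma \ref{lemR}(\ref{try}) we may write $\check{R}(z)=P^q_{2\omega_1}+f_{\omega_2}(z)P^q_{\omega_2}+f_{\omega_1}(z)P^q_{\omega_1}+f_{\omega_0}(z)P^q_{\omega_0}$ with $f_{\lambda}(z)$ rational (normalizing $f_{2\omega_1}=1$ using the well-known irreducibility of $\tl{L}_1(a)\otimes \tl{L}_1(a)$ at generic $z$). The preceding lemma, obtained from the analysis of $\chi_q(\mr{1}_0)\chi_q(\mr{1}_a)$ via Theorem \ref{q char arg}, lists the poles of $\check{R}(z)$ together with the submodules that become kernels at the reciprocal points (the ``cyclic case'' at $z=q^{-k}$); this identifies the location of every pole of every $f_\lambda$. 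For instance, the row at $q^{8}$ indicates that at $z=q^{-8}$ the kernel of $\check{R}$ contains $L_{\omega_1}$, while the quotient there is $L_{\omega_2}\oplus L_{\omega_0}$, so $f_{\omega_1}$ vanishes at $z=q^{-8}$ and hence, by Lemma \ref{lemR}(\ref{inversion relation}), has a pole at $z=q^{8}$; similarly for the other rows.

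Next I invoke Theorem \ref{order of zeroes thm} to conclude that all these poles and zeros are simple. Knowing the full set of poles, and that $f_\lambda(z)f_\lambda(z^{-1})=1$, determines $f_\lambda(z)$ up to a multiplicative constant. That constant is fixed by $\check{R}(1)=\id$, i.e.\ $f_\lambda(1)=1$. For example, for $f_{\omega_0}$ the pole set is $\{q^2,q^{12}\}$ (zeros at $q^{-2}, q^{-12}$), so $f_{\omega_0}(z)=C\,(1-q^2z)(1-q^{12}z)/((1-q^{-2}z)(1-q^{-12}z))$ and $f_{\omega_0}(1)=Cq^{14}=1$ yields $C=q^{-14}$. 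The same check for $f_{\omega_2}$ (single pole at $q^2$) and $f_{\omega_1}$ (single pole at $q^8$) reproduces the constants $-q^{-2}$ and $-q^{-8}$ in \eqref{R proj G}.

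For \eqref{RqG} the plan is to expand the projectors $P^q_\lambda$ in matrix units using the explicit orthonormal singular vector bases $\{w_s\}$ for $L_{\omega_1}\subset L_{\omega_1}^{\otimes 2}$ and the singular vector $v_0$ for $L_{\omega_0}$ given in Section \ref{G2 app}. Because the chosen basis is orthonormal with respect to the (tensor) Shapovalov form (Lemma \ref{lin alg lemma}) and the different isotypic components are automatically orthogonal (since $E_i^T=F_i$), the projectors $P^q_{\omega_1}$ and $P^q_{\omega_0}$ admit the universal ``bilinear in singular vectors'' expansion captured by the operator $G_\lambda$ in \eqref{fp}. The first two terms $P^q_{2\omega_1}-q^{-2}\tfrac{1-q^2z}{1-q^{-2}z}P^q_{\omega_2}$ of \eqref{R proj G} match exactly the decomposition of $(\check R(z))_{\fk{sl}_7}$ from \eqref{RqA} into its symmetric and skew-symmetric type A pieces, so it remains only to rewrite the combinations $f_{\omega_1}(z)P^q_{\omega_1}-(\text{its $\fk{sl}_7$ skew part})$ and $f_{\omega_0}(z)P^q_{\omega_0}-(\text{its $\fk{sl}_7$ symmetric part})$ as rational multiples of $S(z)$ and $Q(z)$ respectively. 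Collecting common denominators and simplifying gives the prefactors $(q-q^{-1})(1-z)/((q-q^{-1}z)(q^4-q^{-4}z))$ for $S(z)$ and $-(q-q^{-1})(q^2+q^{-2})(1-z)/((q-q^{-1}z)(q^6-q^{-6}z))$ for $Q(z)$, and the $a_-,a_+,a_0^{(1)},a_0^{(2)}$ entries of $G_{\omega_1}$ and $G_{\omega_0}$ recorded in \eqref{G2Sz}.

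The main obstacle is purely bookkeeping: one must verify that the universal ansatz $G_\lambda(a_-,a_+,a_0^{(1)},a_0^{(2)};\sigma)$, in which the dependence of $a_{\pm}, a_0^{(i)}$ on the position indices is very rigid, is actually flexible enough to express the residues at $z=q^2,q^8,q^{12}$ with the singular vector data $\mu, p$ of Section \ref{G2 app}. The delicate point is the treatment of the single zero-weight vector $v_4$ (the boxed monomial in \eqref{qchar G2}), which forces the split of the ``diagonal'' sum in \eqref{fp} into the $=,1$ and $=,2$ pieces and is the reason for the four-coefficient version of $G_{\omega_1}$. As a cross-check, the resulting formula \eqref{RqG} can be tested against the commutation relations $\check R(a/b)\Delta E_0(a,b)=\Delta E_0(b,a)\check R(a/b)$ and the analogous one for $F_0$, as done in the classical-type sections; these together with the explicit $\check R(0)$ from Lemma \ref{R0} fix all remaining normalization ambiguities.
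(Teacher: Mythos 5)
Your proposal follows essentially the same route as the paper: poles and kernels from the $q$-character analysis, simplicity of poles via Theorem \ref{order of zeroes thm} (correctly chosen over Proposition \ref{order of zeroes prp}, which fails for G$_2$), normalization by $f_\lambda(1)=1$, and then the matrix-unit form by expanding the projectors in the orthogonal singular-vector bases of Section \ref{G2 app} with a final $E_0$-commutation check. The only slip is cosmetic: at $z=q^{-8}$ the quotient is $L_{2\omega_1}\oplus L_{\omega_2}\oplus L_{\omega_0}$ (not $L_{\omega_2}\oplus L_{\omega_0}$), but this does not affect the conclusion that $f_{\omega_1}$ vanishes there.
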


One can directly check that the $R$-matrix commutes with the action of $E_0$ and $F_0$, where 
$$K_0=\sum_{i=1}^2\big(q^{-3}E_{ii}+q^3E_{\ol{i}\,\ol{i}}\big)+\sum_{i=3}^5 E_{ii}\ ,\quad E_0(a)=a\big(E_{\ol{1}1}+E_{\ol{2}2}\,\big)\ ,$$
and $F_0(a)$ is the transpose of $a^{-2}E_0(a)$.

Let $\displaystyle P_\lambda=\lim_{q\to1}P_\lambda^q$ be the  $U($G$_2)$ projectors. For $(i,j)\in I_s^{\omega_1}$, let $\mu_{ij}^s$ be the $q\to 1$ limit of $\mu_{ij}^{q,s}$, and let $S$, $Q$ be given by
\bee{S=\sum_{s=1}^7\sum_{(i,k),(j,l)\in I_s^{\omega_1}}\mu_{ik}^s\,\mu_{jl}^s\,E_{ij}\otimes E_{kl}=6\,P_{\omega_1}\ ,\quad Q=\sum_{i,j=1}^7(-1)^{i+j}\,E_{ij}\otimes E_{\ol{i}\,\ol{j}}=7\,P_{\omega_0}\ . 
} 

\begin{cor}
In the rational case, the corresponding $R$-matrix is given by \eq{\label{RuG}
\check{R}(u) & = P_{2\omega_1}+\frac{1+u}{1-u}P_{\omega_2}+\frac{4+u}{4-u}P_{\omega_1}+\frac{(1+u)(6+u)}{(1-u)(6-u)}P_{\omega_0} = \frac{1}{1-u}\bigg(I-uP-\frac{u}{4-u}S+\frac{2u}{6-u}Q\bigg)\ .
}
\end{cor}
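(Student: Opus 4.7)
The plan is the same as in the preceding rational cases: substitute $z = q^{2u}$ in both \eqref{R proj G} and \eqref{RqG} and pass to the $q \to 1$ limit. Writing $q = e^h$, each ratio $\frac{1-q^k z}{1-q^{-k}z}$ with $z = q^{2u}$ expands as $\frac{-(k+2u)h + O(h^2)}{(k-2u)h + O(h^2)}$, so tends to $-\frac{k+2u}{k-2u}$. Combined with the prefactors $-q^{-2}$, $-q^{-8}$, $q^{-14}$ appearing in \eqref{R proj G}, these contributions produce exactly the rational coefficients $\frac{1+u}{1-u}$, $\frac{4+u}{4-u}$, $\frac{(1+u)(6+u)}{(1-u)(6-u)}$ of $P_{\omega_2}$, $P_{\omega_1}$, $P_{\omega_0}$. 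This establishes the first equality in \eqref{RuG}.

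For the matrix-unit expression, I would first invoke the A$_6$ case, under which $\big(\check{R}(z)\big)_{\fk{sl}_7}$ at $z=q^{2u}$ limits to $\frac{1}{1-u}(I - uP)$. The two remaining scalar prefactors limit elementarily:
$$\frac{(q-q^{-1})(1-z)}{(q-q^{-1}z)(q^4-q^{-4}z)} \longrightarrow -\frac{u}{(1-u)(4-u)}, \qquad \frac{(q-q^{-1})(q^2+q^{-2})(1-z)}{(q-q^{-1}z)(q^6-q^{-6}z)} \longrightarrow \frac{2u}{(1-u)(6-u)}.$$
For $S(z)$ and $Q(z)$ defined in \eqref{G2Sz}, I observe that each of the four scalar arguments of $G_{\omega_1}$ and $G_{\omega_0}$ tends to $1$ as $q \to 1$ and $z = q^{2u} \to 1$, so that $S(z) \to S$ and $Q(z) \to Q$ in the notation of the corollary, i.e.\ to $6\,P_{\omega_1}$ and $7\,P_{\omega_0}$ respectively. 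Collecting terms produces the right-hand side of \eqref{RuG}.

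To pass between the two presentations of $\check{R}(u)$ it suffices to use $I = P_{2\omega_1} + P_{\omega_2} + P_{\omega_1} + P_{\omega_0}$, the flip decomposition $P = P_{2\omega_1} - P_{\omega_2} - P_{\omega_1} + P_{\omega_0}$ read off from $\cl{S}^2(L_{\omega_1}) = L_{2\omega_1}\oplus L_{\omega_0}$ and $\Lambda^2(L_{\omega_1}) = L_{\omega_2}\oplus L_{\omega_1}$ in \eqref{tensorG}, together with $S = 6\,P_{\omega_1}$ and $Q = 7\,P_{\omega_0}$; matching coefficients on each isotypic component is then a short algebraic identity verified by direct inspection.

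There is no real obstacle here. The only points requiring attention are the book-keeping of signs (each factor $1-q^{k+2u}$ contributes $-(k+2u)h$ in the expansion, so the minus signs accompanying the prefactors $\pm q^{-k}$ in \eqref{R proj G} are exactly compensated and yield the positive coefficients in \eqref{RuG}) and the verification that the four scalars entering $G_{\omega_1}$ and $G_{\omega_0}$ each really tend to $1$, which is a one-line check per argument.
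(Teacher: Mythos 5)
Your proof is correct and is exactly the paper's (one-line) argument --- substitute $z=q^{2u}$ in \eqref{R proj G} and \eqref{RqG} and let $q\to1$ --- carried out in detail, including the verification that all scalar arguments of $G_{\omega_1}$ and $G_{\omega_0}$ tend to $1$ so that $S(z)\to 6P_{\omega_1}$ and $Q(z)\to 7P_{\omega_0}$. One small bookkeeping slip: the displayed limit of the second scalar prefactor should be $-\tfrac{2u}{(1-u)(6-u)}$ (both prefactors contain the factor $1-z\sim -2uh$, so both limits carry the same sign); this is harmless because that prefactor enters \eqref{RqG} with an explicit minus sign, so the collected coefficient of $Q$ is indeed $+\tfrac{2u}{(1-u)(6-u)}$ as in \eqref{RuG}.
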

\begin{proof}
We substitute $z=q^{2u}$ in \eqref{R proj G} and \eqref{RqG} and take limit $q\to 1$.
\end{proof}

\subsection{Type E\texorpdfstring{$_8$}{2}}
\label{E8 sec}

We consider the Dynkin diagram: 

\medskip

\begin{center}
\dynkin [extended, edge length=1.25cm, root radius=0.075cm, label macro/.code={\drlap{#1}}, labels={0, 1, 2, 3, 4, 5, 6, 7, 8}, ordering=Kac] E[1]{8}\ .
\end{center}

\medskip

The $249$-dimensional $U_q\big($E$_8^{(1)}\big)$-module $\tl{L}_1(a)$, when restricted to $U_q($E$_8)$, is isomorphic to $L_{\omega_1}\oplus L_{\omega_0}$.\\ As $U_q($E$_8)$-modules we have 
\eq{\label{tensorE8}
\big(\tl{L}_1(a)\big)^{\otimes 2}\cong\big(\underbracket[0.1ex]{L_{\omega_1}}_{248}\oplus \underbracket[0.1ex]{L_{\omega_0}}_{1}\big)^{\otimes 2}\cong \underbracket[0.1ex]{L_{2\omega_1}}_{27000}\oplus \underbracket[0.1ex]{L_{\omega_2}}_{30380}\oplus \underbracket[0.1ex]{L_{\omega_7}}_{3875}\oplus\,3 \underbracket[0.1ex]{L_{\omega_1}}_{248}\oplus\,2 \underbracket[0.1ex]{L_{\omega_0}}_{1}\ .
}
In the $q\to 1$ limit, $L_{2\omega_1}\oplus L_{\omega_7}\oplus L_{\omega_0}\mapsto \cl{S}^2(L_{\omega_1})$ and $L_{\omega_2}\oplus L_{\omega_1}\mapsto \Lambda^2(L_{\omega_1})$.

The $q$-character of $\tl L_1=\tl{L}_{\mr{1}_0}$ has $249$ terms with $9$ weight zero terms, and is given in Section \ref{E8 app}. Using the $q$-characters we compute the zeros and poles of the $R$-matrix.

\begin{lem}
The poles of the $R$-matrix $\check{R}(z)$, the corresponding submodules and quotient modules are given by
\begin{center}\begin{tabular}{c@{\hspace{1cm}} c c}
Poles & Submodules & Quotient modules  \\

$q^{2}$ & $\hspace{-50pt}\tl{L}_{\mr{1}_a\mr{1}_{aq^{-2}}}\cong L_{2\omega_1}\oplus L_{\omega_1}\oplus L_{\omega_0}$ & $\hspace{40pt}\tl{L}_{\mr{2}_{aq^{-1}}}\cong L_{\omega_2}\oplus L_{\omega_7}\oplus 2L_{\omega_1}\oplus L_{\omega_0}$ \\

$q^{12}$ & $\hspace{-19pt}\tl{L}_{\mr{1}_a\mr{1}_{aq^{-12}}}\cong L_{2\omega_1}\oplus L_{\omega_2}\oplus 2L_{\omega_1}\oplus L_{\omega_0}$ & $\hspace{8pt}\tl{L}_{\mr{7}_{aq^{-6}}}\cong L_{\omega_7}\oplus L_{\omega_1}\oplus L_{\omega_0}$ \\

$q^{20}$ & $\hspace{9pt}\tl{L}_{\mr{1}_a\mr{1}_{aq^{-20}}}\cong L_{2\omega_1}\oplus L_{\omega_2}\oplus L_{\omega_7}\oplus 2L_{\omega_1}\oplus L_{\omega_0}$ & $\hspace{-22pt}\tl{L}_{\mr{1}_{aq^{-10}}}\cong L_{\omega_1}\oplus L_{\omega_0}$ \\

$q^{30}$ & $\hspace{10pt}\tl{L}_{\mr{1}_a\mr{1}_{aq^{-30}}}\cong L_{2\omega_1}\oplus L_{\omega_2}\oplus L_{\omega_7}\oplus 3L_{\omega_1}\oplus L_{\omega_0}$ & $\hspace{-32pt}\tl{L}_{\scriptscriptstyle{1}}\cong L_{\omega_0}$\\
\end{tabular}\ .\end{center}
\qed
\end{lem}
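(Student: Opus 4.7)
The plan mirrors the $q$-character argument used for the other exceptional types, now applied to E$_8$. Starting from the $q$-character $\chi_q(\mr{1}_0)$ of $\tl{L}_1$ listed in Section \ref{E8 app}, which has $249$ terms, I would first identify the ``underlined'' $\mr{1}$-dominant monomials, that is, those of the form $\mr{1}_k^{-1}\,n_+$ with $k>0$ and $n_+$ a product of variables $\mr{i}_j$ having no inverse factors. Inspection singles out exactly four such underlined monomials:
\[
\mr{1}_2^{-1}\mr{2}_1,\qquad \mr{1}_{12}^{-1}\mr{7}_6,\qquad \mr{1}_{20}^{-1}\mr{1}_{10},\qquad \mr{1}_{30}^{-1}.
\]
They correspond to the four claimed poles $q^2,q^{12},q^{20},q^{30}$ and to the four dominant-monomial leads of the quotients.

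For each $k\in\{2,12,20,30\}$ the product $\chi_q(\mr{1}_{-k})\chi_q(\mr{1}_0)$ has exactly two dominant monomials: $\mr{1}_{-k}\mr{1}_0$ and one $m_k\in\{\mr{2}_{-1},\mr{7}_{-6},\mr{1}_{-10},1\}$ (for $k=2,12,20,30$ respectively), in analogy with equation \eqref{qchar arg for G2}. I would apply Theorem \ref{q char arg} to the larger of the two dominant monomials to show it is absent from $\chi_q(\mr{1}_{-k}\mr{1}_0)$, checking the four combinatorial hypotheses directly against the explicit list of monomials of $\chi_q(\mr{1}_0)$. Once they are verified, $\chi_q(\mr{1}_{-k}\mr{1}_0)$ is special and the Frenkel--Mukhin algorithm of \cite{FM01} computes it unambiguously, yielding
\[
\chi_q(\mr{1}_{-k})\chi_q(\mr{1}_0)=\chi_q(\mr{1}_{-k}\mr{1}_0)+\chi_q(m_k).
\]
By \cite{C00,Ka02}, the factor $\tl{L}_{\mr{1}_{-k}\mr{1}_0}$ is the submodule of $\tl{L}_1(q^{-k})\otimes\tl{L}_1$ containing the product of highest weight vectors, and it becomes the quotient of $\tl{L}_1(q^{k})\otimes\tl{L}_1$ under the duality exchanging $a$ and $a^{-1}$.

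To extract the $U_q($E$_8)$-decompositions in the table, one restricts each computed $q$-character to the finite-type algebra by forgetting spectral indices and collecting weights. Because $\tl{L}_1$ restricts to $L_{\omega_1}\oplus L_{\omega_0}$, the full tensor product already carries the multiplicities $3L_{\omega_1}\oplus 2L_{\omega_0}$ shown in \eqref{tensorE8}, and one has to distribute these copies between submodule and quotient at each pole. This is accomplished by matching weight multiplicities of the restricted $q$-character against the characters of the five summands $L_{2\omega_1},L_{\omega_2},L_{\omega_7},L_{\omega_1},L_{\omega_0}$, pinned down by the dimensional constraint $\dim\tl{L}_{\mr{1}_{-k}\mr{1}_0}+\dim\tl{L}_{m_k}=62001$.

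The main obstacle is the computational scale. The $q$-character of $\tl{L}_{\mr{1}_{-30}\mr{1}_0}$ has $62000$ monomials, only one fewer than the full product, so the Frenkel--Mukhin algorithm must be executed by computer. Additionally, the extra summand $L_{\omega_0}\subset\tl{L}_1$ produces nine weight-zero monomials in $\chi_q(\mr{1}_0)$, so the hypotheses of Theorem \ref{q char arg}---especially the multiplicity-one conditions on $m$ and on $m_-$---must be verified with care in the weight-zero stratum, where multiplicities already occur at the level of the non-affine algebra.
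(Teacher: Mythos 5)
Your proposal reconstructs exactly the method the paper uses (the paper gives no separate written proof of this lemma): identify the four underlined monomials $\mr{1}_2^{-1}\mr{2}_1$, $\mr{1}_{12}^{-1}\mr{7}_6$, $\mr{1}_{20}^{-1}\mr{1}_{10}$, $\mr{1}_{30}^{-1}$ in $\chi_q(\mr{1}_0)$, use Theorem \ref{q char arg} and the algorithm of \cite{FM01} to split $\chi_q(\mr{1}_{-k})\chi_q(\mr{1}_0)=\chi_q(\mr{1}_{-k}\mr{1}_0)+\chi_q(m_k)$, invoke \cite{C00}, \cite{Ka02} to sort submodules from quotients, and restrict to $U_q($E$_8)$ for the decompositions. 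The only slip is a wording one: Theorem \ref{q char arg} is applied to show that the \emph{lower} dominant monomial $m_k$, not the highest one $\mr{1}_{-k}\mr{1}_0$, is absent from $\chi_q(\mr{1}_{-k}\mr{1}_0)$, exactly as in \eqref{qchar arg for G2}.
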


We choose a basis $\{v_i:1\le i\le 248\}\cup\{v_{249}\}$ for $L_{\omega_1}\oplus L_{\omega_0}$, see Section \ref{E8 app}. In the chosen basis, the vectors $v_{121},\dots, v_{128},$ and $v_{249}$ are of weight zero.

A singular vector in $L_{\omega_1}^{\otimes 2}$ of weight $2\omega_1$, respectively $\omega_2$, is given by $v_1\otimes v_1$, respectively $q\,v_1\otimes v_2 - v_2\otimes v_1$. A singular vector in $L_{\omega_1}^{\otimes 2}$ of weight $\omega_7$ is given by 
\bee{
\sum_{(i,j)\in I^{\omega_7}}(-q)^{7-\min(i,j)}\,v_i\otimes v_j\ ,\quad I^{\omega_7}=\begin{matrix}\big\{(1,58), (2,30), (3,17), (4,14), (5,12), (6,9), (7,8), \\
(8,7), (9,6), (12,5), (14,4), (17,3), (30,2), (58,1)\big\}\end{matrix}\ .
}

For the last two summands in \eqref{tensorE8}, there is a natural choice of the three singular vectors $u_1\in L_{\omega_1}^{\otimes 2}$, $u_2\in L_{\omega_1}\otimes L_{\omega_0}$,  $u_3\in L_{\omega_0}\otimes L_{\omega_1}$ of weight $\omega_1$ and the two singular vectors $w_1\in L_{\omega_1}^{\otimes 2}$, $w_2\in L_{\omega_0}^{\otimes 2}$ of weight $\omega_0$. We choose $u_2$, $u_3$ to be $v_1\otimes v_{249}$ and $v_{249}\otimes v_1$ respectively, and $w_2$ to be $v_{249}\otimes v_{249}$. The singular vectors $u_1$ and $w_1$ are chosen such that the coordinate of $v_1\otimes v_{125}$ in $u_1$ is $q^{15}$, and that of $v_1\otimes v_{248}$ in $w_1$ is $q^{29}$. The vectors $u_1$, $u_2$, $u_3$, $w_1$, $w_2$ are all orthogonal to each other and their Shapovalov norms are given by 
\bee{
(u_1,u_1)=\frac{[2]_{16}\,[3]_3^{\mr{i}}\,[5]\,[15]}{[3]}\ ,\quad (w_1,w_1)=[2]_6\,[2]_{10}\,[2]_{12}\,[31]\ ,\quad (u_2,u_2)=(u_3,u_3)=(w_2,w_2)=1\ .
}

For $\la=2\omega_1,\omega_2,\omega_7,\omega_1,\omega_0$, let $P_\la^q$ be the projector onto the $U_q($E$_8)$-module $L_\la$ in the decomposition \eqref{tensorE8}.

\begin{thm}
\label{thm:R E8}
In terms of projectors, we have 
\eq{\label{RqE8}
\check{R}(z)=P_{2\omega_1}^q-q^{-2}\frac{1-q^{2}z}{1-q^{-2}z}\,P_{\omega_2}^q+q^{-14}\frac{(1-q^{2}z)(1-q^{12}z)}{(1-q^{-2}z)(1-q^{-12}z)}\,P_{\omega_7}^q+\frac{q^{-17}\,f_{\omega_1}(z)}{(1-q^{-2}z)(1-q^{-12}z)(1-q^{-20}z)}\otimes P_{\omega_1}^q \\
+\frac{q^{-32}\,f_{\omega_0}(z)}{(1-q^{-2}z)(1-q^{-12}z)(1-q^{-20}z)(1-q^{-30}z)}\otimes P_{\omega_0}^q\ ,
}
where the matrices $f_{\omega_1}(z)$ and $f_{\omega_0}(z)$ are given by
$$f_{\omega_1}(z)=\begin{bmatrix}-q^{-15}-q^{-6}\A_{q^{-1}}\,z+q^{6}\A_{q}\,z^2+q^{15}\,z^3 & \B_q\,z(1-z) & \B_q\,z(1-z)
\vspace{0.25cm}\\
\G_q\,z(1-z) & a_q\,z(q^{15}+q^{-15}z) & (1-z)(q^{15}-b_q\,z+q^{-15}z^2)
\vspace{0.25cm}\\
\G_q\,z(1-z) & (1-z)(q^{15}-b_q\,z+q^{-15}z^2) & a_q\,z(q^{15}+q^{-15}z)\end{bmatrix}\ ,$$
$$f_{\omega_0}(z)=\begin{bmatrix}q^{-30}-q^{-15}\,\zeta_q\,z+\xi_q\,z^2-q^{15}\,\zeta_q\,z^3+q^{30}z^4 & \eta_q\,z(1-z^2) \vspace{0.25cm}\\ \rho_q\,z(1-z^2) & q^{30}-q^{15}\,\zeta_q\,z+\xi_q\,z^2-q^{-15}\,\zeta_q\,z^3+q^{-30}z^4 \end{bmatrix}\ .$$

Here the constants $\A_q, \B_q, \G_q, a_q, b_q, \zeta_q, \xi_q, \eta_q, \rho_q\in \C(q)$ are given by
$$ \A_q=\frac{[2]_{19}^{\mr{i}}-[2]_{17}-[2]_{13}^{\mr{i}}-2q^{15}+q^{11}+q^9-q^{-1}}{[2]_{8}+[2]_{6}-[3]}\ ,\quad \B_q=\frac{[2]_{2}^{\mr{i}}\,[2]_3^{\mr{i}}}{[2]_{8}+[2]_{6}-[3]}\ ,\quad \G_q=\frac{[2]_2^{\mr{i}}\,[2]_{5}^{\mr{i}}\,[2]_{16}\,[3]_{3}^{\mr{i}}\,[15]}{[2]_{8}+[2]_{6}-[3]}\ ,$$
$$a_q=\frac{[2]_{2}^{\mr{i}}\,[2]_{3}^{\mr{i}}\,[2]_{5}^{\mr{i}}}{[2]_{8}+[2]_{6}-[3]}\ ,\quad b_q=\frac{[2]\,\big([2]_{12}-[7]^{\mr{i}}\big)}{[2]_{8}+[2]_{6}-[3]}\ ,\quad \xi_q=[2]_{32}-[2]_{30}+[2]_{18}+[2]_{10}+2\ , $$
$$ \zeta_q=\frac{[2]\,[2]_{16}\,[3]_{3}^{\mr{i}}}{[2]_{8}+[2]_{6}-[3]}\ ,\quad \eta_q=\frac{[2]_{2}^{\mr{i}}\,[2]_{3}^{\mr{i}}\,[2]_{5}^{\mr{i}}}{[2]_{8}+[2]_{6}-[3]}\ ,\quad \rho_q=\frac{[2]_{2}^{\mr{i}}\,[2]_{3}^{\mr{i}}\,[2]_{5}^{\mr{i}}\,[2]_{6}\,[2]_{10}\,[2]_{12}\,[31]}{[2]_{8}+[2]_{6}-[3]}\ .
$$
\end{thm}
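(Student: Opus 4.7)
The plan is to split the computation into the three scalar coefficients of $P_{2\omega_1}^q$, $P_{\omega_2}^q$, $P_{\omega_7}^q$, and the two matrix-valued coefficients $f_{\omega_1}(z)$, $f_{\omega_0}(z)$ acting on the multiplicity spaces. For the scalar factors I would follow the multiplicity-free algorithm of Section \ref{meth sec} verbatim: read the zeros and poles directly from the $q$-character table above, invoke Theorem \ref{order of zeroes thm} (applied blockwise to the relevant summands) to conclude that they are all simple, and fix the overall normalization by $\check R(1) = \id$. This already yields the first three summands of \eqref{RqE8} in the desired form.

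For the $3\times 3$ block $f_{\omega_1}(z)$ and the $2\times 2$ block $f_{\omega_0}(z)$, my plan is to work in the orthogonal basis of singular vectors $\{u_1,u_2,u_3\}$ and $\{w_1,w_2\}$ specified in the statement. The pole/zero table, together with Conjecture \ref{conj:simple poles}, forces every pole to be simple and bounds the entries of $f_{\omega_1}(z)$ and $f_{\omega_0}(z)$ to be rational of degree at most $3$ and $4$ respectively; the reducibility data at each $z=q^k$ prescribes the rank drop at that point, and hence the vanishing locus of the appropriate minors of the two matrices.

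Next I would impose the full system of constraints collected in \eqref{propeties of f}: the normalization $f_k(1) = \id$, the inversion relation $f_k(z)f_k(z^{-1}) = \id$, and invariance $f_k(z) = P f_k(z) P$ under the flip from Lemma \ref{R and flip commute}. To these I would add the self-adjointness of $\check R(z)$ in the tensor Shapovalov form, which rigidifies the off-diagonal entry ratios by the explicit norms $(u_1,u_1)$, $(w_1,w_1)$, and $(u_2,u_2)=(u_3,u_3)=(w_2,w_2)=1$ recorded above, and the boundary values at $z=0$ and $z=\infty$ obtained from Lemma \ref{R0} on the scalar summands and propagated to the matrix blocks via the inversion relation. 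The combination of these constraints will determine each of $f_{\omega_1}(z)$ and $f_{\omega_0}(z)$ uniquely up to a single overall sign in each block.

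The main obstacle will be fixing the two remaining signs. Guessing them is easy, but a closed-form verification on the $62001$-dimensional tensor square seems out of reach by hand. My plan is therefore to check commutativity of the proposed $\check R(z)$ with $\Delta(E_0)$ by computer, using the explicit basis of $L_{\omega_1}$ described in Section \ref{E8 app}; since this commutation is linear in each of the two sign parameters, verifying it on any pair of matrix coefficients that mix the multiplicity blocks with the scalar ones is enough to pin both signs down and confirm the stated formulas for $f_{\omega_1}(z)$ and $f_{\omega_0}(z)$.
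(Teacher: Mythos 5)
Your proposal follows essentially the same route as the paper's proof: the three scalar coefficients are read off from the $q$-character pole data, the blocks $f_{\omega_1}(z)$ and $f_{\omega_0}(z)$ are pinned down (up to one sign each) by combining Lemma \ref{R0} at $z=0,\infty$, Conjecture \ref{conj:simple poles}, the constraints \eqref{propeties of f}, the flip symmetry of Lemma \ref{R and flip commute}, the rank drops at the poles, and the Shapovalov-norm ratios, and the residual signs are fixed by a computer check of the $\Delta(E_0)$ commutation on suitable matrix coefficients. This matches the paper's argument in all essential respects.
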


We note that $[2]_8+[2]_6-[3]=\kappa_{60}(q)$ is the symmetric form of 60-th cyclotomic polynomial.

\begin{proof}
The rational functions corresponding to the first three summands in \eqref{tensorE8} are determined completely using $q$-characters. Let $g_1(z)$ and $g_2(z)$ be the $3\times 3$ and $2\times 2$ matrices corresponding to the last two summands respectively.

The $3\times 3$ matrix $g_1(z)$ is determined (up to a sign) as follows. Using Lemma \ref{R0}, we get 
\eq{\label{g10}
g_1(0)=\begin{bmatrix} -q^{-32} & 0 & 0\\ 0 & 0 & q^{-2} \\ 0 & q^{-2} & 0
\end{bmatrix}\ ,\quad g_1(\infty)=\begin{bmatrix} -q^{32} & 0 & 0\\ 0 & 0 & q^{2} \\ 0 & q^{2} & 0
\end{bmatrix}\ .
}
From $q$-characters we know the poles of $g_1(z)$. From Conjecture \ref{conj:simple poles}, we presume that the poles are simple. Combining this and \eqref{g10} with $g_1(1)$ being zero on off-diagonal entries and that $g_1(z)$ commutes with the flip operator acting on singular vectors, see Lemma \ref{R and flip commute}, we get $$g_1(z)=\frac{q^{-17}\,f_{\omega_1}(z)}{(1-q^{-2}z)(1-q^{-12}z)(1-q^{-20}z)}\ ,$$
where 
\bee{
f_{\omega_1}(z)=\begin{bmatrix}
-q^{-15}+\A_1 z+\A_2 z^2+q^{15}z^3 & \B z(1-z) & \B z(1-z) \\ 
\G z(1-z) & z(a_1+a_2z) & (1-z)(q^{15}+b z+q^{-15}z^2) \\
\G z(1-z) & (1-z)(q^{15}+b z+q^{-15} z^2) & z(a_1+a_2z)
\end{bmatrix}\ .
}
Since $g_1(1)$ is $1$ on the diagonal entries, we have 
\eq{\label{g1 at 1}
a_1+a_2=[2]^{\mr{i}}\,[2]_6^{\mr{i}}\,[2]_{10}^{\mr{i}}\ .
}
From $g_1(z)g_1(z^{-1})=\id$, we get
\eq{\label{a1 a2}
a_1=q^{30}a_2
}
and 
\eq{\label{A1 A2}
\A_1-a_2+b=q^{-15}\ ,\quad \A_2-a_1-b=-q^{15}\ .
}
The rank of $g_1(q^{-2})$ is $1$. This gives
\eq{\label{g1 at q2}
q\,a_1+q^{-1}\,a_2=[2]^{\mr{i}}\,\big(b+[2]_{17}\big)\ ,
}
and
\eq{\label{g1 at q2 bg}
\big([2]^{\mr{i}}\big)^2\,\B\G=(q\,a_1+q^{-1}\,a_2)\,\big(q\,\A_1+q^{-1}\,\A_2+[2]_{12}^{\mr{i}}\big)\ .
}
Now, using \eqref{g1 at 1} and \eqref{a1 a2} we get $a_1$ and $a_2$. Then \eqref{g1 at q2} gives $b$. Then $\A_1$ and $\A_2$ are obtained using \eqref{A1 A2}. Finally, the product $\B\G$ is obtained using \eqref{g1 at q2 bg}. From the choice of singular vectors $u_1\in L_{\omega
_1}^{\otimes2}$, $u_2\in L_{\omega_1}\otimes L_{\omega_0}$, we have 
\eq{\frac{\G}{\B}=\frac{(u_1,u_1)}{(u_2,u_2)}=\frac{[2]_{16}\,[3]_3^{\mr{i}}\,[5]\,[15]}{[3]}\ .}
Therefore, the matrix $f_{\omega_1}(z)$ is determined up to the sign of $\B$ (or $\G$).

\medskip

The $2\times 2$ matrix $g_2(z)$ is determined (up to a sign) as follows. Using Lemma \ref{R0}, we get
\eq{\label{g20}
g_2(0)=\begin{bmatrix}
q^{-62} & 0\\ 0 & q^{-2}
\end{bmatrix}\ ,\quad g_2(\infty)=\begin{bmatrix}
q^{62} & 0\\ 0 & q^2
\end{bmatrix}\ .
}
From $q$-characters we know the poles of $g_2(z)$. From Conjecture \ref{conj:simple poles} we presume that the poles are simple. Combining this and \eqref{g20} with $g_2(1)$ begin zero on off-diagonal entries we get $$g_2(z)=\frac{q^{-32}\,f_{\omega_0}(z)}{(1-q^{-2}z)(1-q^{-12}z)(1-q^{-20}z)(1-q^{-30}z)}\ ,$$
where 
\bee{
f_{\omega_0}(z)=\begin{bmatrix}
q^{-30}+\zeta_1 z+\xi_1 z^2+\zeta_2 z^3+q^{30}z^4 & z(1-z)(\eta_1+\eta_2 z) \\ z(1-z)(\rho_1+\rho_2 z) & q^{30} +\zeta_3 z+\xi_2 z^2 + \zeta_4 z^3 +q^{-30} z^4
\end{bmatrix}\ .
}
Using $g_2(z)g_2(z^{-1})=\id$, we get
\bee{
\zeta_1=\zeta_4\ ,\quad \zeta_2=\zeta_3\ ,\quad \xi_1=\xi_2\ ,\quad \eta_1=\eta_2\ ,\quad \rho_1=\rho_2\ ,
}
so that
\bee{
f_{\omega_0}(z)=\begin{bmatrix}
q^{-30}+\zeta_1 z+\xi z^2+\zeta_2 z^3+q^{30}z^4 & \eta\, z(1-z^2) \\ \rho\, z(1-z^2) & q^{30} +\zeta_2 z+\xi z^2 + \zeta_1 z^3 +q^{-30} z^4
\end{bmatrix}\ .
}
Since $g_2(1)$ is $1$ on the diagonal entries we have
\eq{\label{g2 at 1}
\zeta_1+\xi+\zeta_2+[2]_{30}=[2]^{\mr{i}}\,[2]^{\mr{i}}_6\,[2]^{\mr{i}}_{10}\,[2]^{\mr{i}}_{15}\ .
}
From $g_2(z)g_2(z^{-1})=\id$, now we get
\eq{\label{g2 inv a}
q^{30}\zeta_1+q^{-30}\zeta_2=-[2]_3\,[2]_5\,[2]_{16}\,[3]_3^{\mr{i}}\ ,
}
\eq{\label{g2 inv b}
q^{-30}\zeta_1+q^{30}\zeta_2+\xi(\zeta_1+\zeta_2)=-[2]_3\,[2]_5\,[2]_{16}\,[3]_3^{\mr{i}}\,\big([2]_{32}+[2]_{18}+[2]_{10}+1\big)\ ,
}
\eq{\label{g2 inv c}
\eta\rho=\zeta_1\zeta_2+\xi\,[2]_{30}-\big([2]_{50}+[2]_{42}+2[2]_{32}+[2]_{28}+[2]_{22}+2[2]_{18}+[2]_{14}+2[2]_{10}+[2]_8+4\big)\ .
}
Now, using \eqref{g2 at 1}, \eqref{g2 inv a} and \eqref{g2 inv b} we get two solutions for each of $\zeta_1$, $\zeta_2$ and $\xi$, out of which one is rejected because the $q\to 1$ limit of $g_2(z)$ does not exist in that case. After that we have a unique solution for $\zeta_1$, $\zeta_2$, $\xi$. Finally the product $\eta\,\rho$ is found using \eqref{g2 inv c}.
From the choice of singular vectors $w_1\in L_{\omega
_1}^{\otimes2}$, $w_2\in L_{\omega
_0}^{\otimes2}$, we have 
\eq{\frac{\rho}{\eta}=\frac{(w_1,w_1)}{(w_2,w_2)}=[2]_6\,[2]_{10}\,[2]_{12}\,[31]\ .}
Therefore, the matrix $f_{\omega_0}(z)$ is determined up to the sign of $\rho$ (or $\eta$).

To fix the signs of $\B$ in $f_{\omega_1}(z)$ and $\eta$ in $f_{\omega_0}(z)$, we use the $E_0$ action. Namely, to determine the sign of $\B$ we apply both sides of the commutation relation in \eqref{R and E0 commutation relation} to $v_1\otimes v_1$ and compare the coefficients of $v_1\otimes v_{249}$ on the two sides. To determine the sign of $\eta$ we apply both sides of \eqref{R and E0 commutation relation} to $v_1\otimes v_{249}$ and compare coefficients of $v_{249}\otimes v_{249}$ on the two sides.
\end{proof}

One can directly check that the $R$-matrix commutes with the action of $E_0$ and $F_0$, where 
$$K_0=q^{-2}E_{11}+q^2E_{\ol{1}\,\ol{1}}+\sum_{i=2}^{57}\big(q^{-1}E_{ii}+q E_{\ol{i}\,\ol{i}}\big) + \sum_{i=58}^{\ol{58}}E_{ii}+E_{249,249}\ ,$$
\bee{
E_0(a)= a\,\bigg( & \sum_{i=1}^4 \frac{(-1)^{i-1}}{\sqrt{[i][i+1]}} \big(E_{120+i,1} + E_{\ol{1},120+i}\,\big) + \frac{\sqrt{[2][3]}}{\sqrt{[5]([2]_8+[2]_6-[3])}} \big( E_{125,1} + E_{\ol{1},125} \big)  \\
& + \frac{[2]^{\mr{i}}\sqrt{[2][3][5]}}{\sqrt{[2]_8+[2]_6-[3]}} \big( E_{249,1} + E_{\ol{1},249} \big) + \sum_{i=2}^{57}E_{\ol{59-i},i}\bigg)
\ ,}
and $F_0(a)$ is the transpose of $a^{-2}E_0(a)$.
Here $\ol{i}=249-i$.

Let $\displaystyle P_\lambda=\lim_{q\to1}P_\lambda^q$ be the  $U($E$_8)$ projectors.

\begin{cor}
In the rational case, the corresponding $R$-matrix is given by
\eq{\label{RuE8}
\check{R}(u)=P_{2\omega_1}+\frac{1+u}{1-u}P_{\omega_2}+\frac{(1+u)(6+u)}{(1-u)(6-u)}P_{\omega_7}+\frac{f_{\omega_1}(u)}{(1-u)(6-u)(10-u)}\otimes P_{\omega_1} \\
+\frac{f_{\omega_0}(u)}{(1-u)(6-u)(10-u)(15-u)}\otimes P_{\omega_0}\ ,
}
where the matrices $f_{\omega_1}(u)$ and $f_{\omega_0}(u)$ are given by
$$f_{\omega_1}(u)=\begin{bmatrix} 60+44\,u+15\,u^2+u^3 & -6\,u & -6\,u
\vspace{0.25cm}\\
-300\,u & 60 & -u(4-u)(11-u)
\vspace{0.25cm}\\
-300\,u & -u(4-u)(11-u) & 60 \end{bmatrix}\ ,$$
$$f_{\omega_0}(u)=\begin{bmatrix} 900+660\,u+269\,u^2+30\,u^3+u^4 & -60\,u \vspace{0.25cm}\\ -14880\,u & 900-660\,u+269\,u^2-30\,u^3+u^4 \end{bmatrix}\ .$$
\end{cor}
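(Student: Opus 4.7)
The plan is to substitute $z = q^{2u}$ in the trigonometric formula \eqref{RqE8} and pass to the classical limit $q \to 1$, exactly as in all the preceding rational corollaries. Set $\epsilon = \log q$, so $1 - q^a = -a\,\epsilon + O(\epsilon^2)$; we also use the standard fact (already invoked implicitly for every earlier corollary) that $P_\lambda^q \to P_\lambda$ as the $U_q(\mathrm{E}_8)$-decomposition \eqref{tensorE8} specializes to that of $U(\mathrm{E}_8)$.

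First I would handle the three scalar projector coefficients of $P_{2\omega_1}^q$, $P_{\omega_2}^q$, $P_{\omega_7}^q$. Each is of the shape already treated for $\mathrm{E}_7$: a prefactor $-q^{-a}\prod(1-q^{a_i}z)/(1-q^{-a_i}z)$ at $z = q^{2u}$ reduces to $\prod (a_i+2u)/(a_i-2u)$ after the signs from the two $\epsilon$-leading factors cancel, giving $\frac{1+u}{1-u}P_{\omega_2}$ and $\frac{(1+u)(6+u)}{(1-u)(6-u)} P_{\omega_7}$.

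The main work is the two matrix blocks. Their denominators expand as
\[
(1-q^{-2+2u})(1-q^{-12+2u})(1-q^{-20+2u}) = 8(1-u)(6-u)(10-u)\,\epsilon^3 + O(\epsilon^4),
\]
\[
\prod_{a\in\{2,12,20,30\}} (1-q^{-a+2u}) = 16(1-u)(6-u)(10-u)(15-u)\,\epsilon^4 + O(\epsilon^5),
\]
so the Corollary reduces to showing that $q^{-17}f_{\omega_1}(q^{2u})$ vanishes to order $\epsilon^3$ with leading coefficient $\tfrac{1}{8}f_{\omega_1}(u)$, and that $q^{-32}f_{\omega_0}(q^{2u})$ vanishes to order $\epsilon^4$ with leading coefficient $\tfrac{1}{16}f_{\omega_0}(u)$.

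To verify this, I would tabulate the leading orders in $\epsilon$ of the constants $\A_q, \B_q, \G_q, a_q, b_q, \zeta_q, \xi_q, \eta_q, \rho_q$, using $[n] \to n$ together with $[n]_k^{\mr{i}} \to 1$ for odd $n$ and $[n]_k^{\mr{i}} = \frac{q^{kn}-q^{-kn}}{q^k+q^{-k}} = kn\,\epsilon + O(\epsilon^3)$ for even $n$. The common denominator $[2]_8+[2]_6-[3]$ (which equals $\kappa_{60}(q)$) tends to $1$, which simplifies the bookkeeping. The diagonal entries of $f_{\omega_1}$ and $f_{\omega_0}$ are polynomials that vanish manifestly at $q = z = 1$ by an algebraic cancellation visible on inspection of the displayed formulas; Taylor-expanding in $\epsilon$ produces precisely the cubic and quartic polynomials in $u$ displayed in \eqref{RuE8}. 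The off-diagonal entries already carry an explicit factor $(1-z)$ or $(1-z^2)$ that contributes one or two powers of $\epsilon$, and the additional $\epsilon$-vanishings of the even-$n$ symmetric $q$-integers appearing in $\B_q, \G_q, \eta_q, \rho_q$ (together with the large ratios $\G_q/\B_q$ and $\rho_q/\eta_q$ given by the Shapovalov norms $[2]_{16}[3]_3^{\mr{i}}[5][15]/[3]$ and $[2]_6[2]_{10}[2]_{12}[31]$, which tend to $300$ and $14880$ respectively) supply the remaining $\epsilon$ factors needed to reach the required order and recover the claimed off-diagonal entries. The only real obstacle is the patient bookkeeping of the $\epsilon$-expansions of the numerator polynomials; no new conceptual ingredient beyond what was used in the previous rational corollaries is required.
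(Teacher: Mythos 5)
Your proposal follows exactly the paper's own (one-line) proof of this corollary: substitute $z=q^{2u}$ into \eqref{RqE8} and let $q\to1$, and the $\epsilon$-expansion bookkeeping you outline (denominators contributing $8(1-u)(6-u)(10-u)\epsilon^3$ and $16(1-u)(6-u)(10-u)(15-u)\epsilon^4$, off-diagonal entries picking up powers of $\epsilon$ from $(1-z)$ and from the even-index $[2]_k^{\mr{i}}$ factors) is correct and is all that is needed. One numerical slip in your parenthetical: the Shapovalov ratios $\G_q/\B_q$ and $\rho_q/\eta_q$ tend to $50$ and $248$, not to $300$ and $14880$ --- those latter numbers are the magnitudes of the resulting off-diagonal entries themselves, $-300u=50\cdot(-6u)$ and $-14880u=248\cdot(-60u)$.
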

\begin{proof}
We substitute $z=q^{2u}$ in \eqref{RqE8} and take limit $q\to 1$.
\end{proof}

\section{Other representations}
\label{mul cases}

\subsection{G\texorpdfstring{$_2$}{2} second fundamental representation}
\label{mul cases G2}

In this subsection, we write the $R$-matrix for the second fundamental module of G$_2$, obtained using fusion in \eqref{G2fusion}, in terms of projectors related to the tensor square decomposition.

As $U_q($G$_2)$-modules, we have 
\eq{\label{tensorG2}
\big(\tl{L}_2(a)\big)^{\otimes 2}\cong\big(\underbracket[0.1ex]{L_{\omega_2}}_{14}\oplus \underbracket[0.1ex]{L_{\omega_0}}_{1}\big)^{\otimes 2}\cong \underbracket[0.1ex]{L_{2\omega_2}}_{77}\oplus \underbracket[0.1ex]{L_{3\omega_1}}_{77}\oplus \underbracket[0.1ex]{L_{2\omega_1}}_{27}\oplus\,3\underbracket[0.1ex]{L_{\omega_2}}_{14}\oplus\,2\underbracket[0.1ex]{L_{\omega_0}}_{1}\ .
}

The $q$-character of $\tl L_2=\tl{L}_{\mr{2}_0}$ has $15$ terms with $3$ weight zero terms (shown in box): 
$$\chi_q(\mr{2}_0)=\mathrm{2}_{0} + \ul{\mathrm{1}_{1}\mathrm{1}_{3}\mathrm{1}_{5}\mathrm{2}_{6}^{-1}} + \mathrm{1}_{7}^{-1}\mathrm{1}_{1}\mathrm{1}_{3} + \mathrm{1}_{5}^{-1}\mathrm{1}_{7}^{-1}\mathrm{1}_{1}\mathrm{2}_{4} + \mathrm{1}_{3}^{-1}\mathrm{1}_{5}^{-1}\mathrm{1}_{7}^{-1}\mathrm{2}_{2}\mathrm{2}_{4} + \ul{\mathrm{1}_{1}\mathrm{1}_{9}\mathrm{2}_{10}^{-1}} + \boxed{\mathrm{1}_{11}^{-1}\mathrm{1}_{1}} + \boxed{\mathrm{1}_{3}^{-1}\mathrm{1}_{9}\mathrm{2}_{10}^{-1}\mathrm{2}_{2}}$$ 
$$+ \boxed{\ul{\mathrm{2}_{8}^{-1}\mathrm{2}_{4}}} + \mathrm{1}_{3}^{-1}\mathrm{1}_{11}^{-1}\mathrm{2}_{2} + \mathrm{1}_{5}\mathrm{1}_{7}\mathrm{1}_{9}\mathrm{2}_{8}^{-1}\mathrm{2}_{10}^{-1} + \mathrm{1}_{11}^{-1}\mathrm{1}_{5}\mathrm{1}_{7}\mathrm{2}_{8}^{-1} + \mathrm{1}_{9}^{-1}\mathrm{1}_{11}^{-1}\mathrm{1}_{5} + \mathrm{1}_{7}^{-1}\mathrm{1}_{9}^{-1}\mathrm{1}_{11}^{-1}\mathrm{2}_{6} + \ul{\mathrm{2}_{12}^{-1}}.$$
Using the $q$-characters we can find the zeros and poles of the $R$-matrix $\check{R}^{\tl{L}_2,\tl{L}_2}(z)$.
\begin{lem}
The poles of the $R$-matrix $\check{R}^{\tl{L}_2,\tl{L}_2}(z)$, the corresponding submodules and quotient modules are given by
\begin{center}\begin{tabular}{c@{\hspace{1cm}} c c}
Poles & Submodules & Quotient modules  \\

$q^{6}$ & $\hspace{-50pt} \tl{L}_{\mr{2}_a\mr{2}_{aq^{-6}}}\cong L_{2\omega_1}\oplus L_{\omega_2}\oplus L_{\omega_0}$ & $\hspace{7pt} \tl{L}_{\mr{1}_{aq^{-1}}\mr{1}_{aq^{-3}}\mr{1}_{aq^{-5}}}\cong L_{3\omega_1}\oplus L_{2\omega_1}\oplus 2L_{\omega_2}\oplus L_{\omega_0}$ \\

$q^{8}$ & $\hspace{20pt} \tl{L}_{\mr{2}_a\mr{2}_{aq^{-8}}}\cong L_{2\omega_2}\oplus L_{3\omega_1}\oplus L_{2\omega_1}\oplus 2L_{\omega_2}\oplus L_{\omega_0}$ & $\hspace{-25pt} \tl{L}_{\mr{2}_{aq^{-4}}}\cong L_{\omega_2}\oplus L_{\omega_0}$ \\

$q^{10}$ & $\hspace{-15pt} \tl{L}_{\mr{2}_a\mr{2}_{aq^{-10}}}\cong L_{2\omega_2}\oplus L_{3\omega_1}\oplus 2L_{\omega_2}\oplus L_{\omega_0}$ & $\hspace{-6pt} \tl{L}_{\mr{1}_{aq^{-1}}\mr{1}_{aq^{-9}}}\cong L_{2\omega_1}\oplus L_{\omega_2}\oplus L_{\omega_0}$ 
\vspace{2pt}\\

$q^{12}$ & $\hspace{18pt} \tl{L}_{\mr{2}_a\mr{2}_{aq^{-12}}}\cong L_{2\omega_2}\oplus L_{3\omega_1}\oplus L_{2\omega_1}\oplus 3L_{\omega_2}\oplus L_{\omega_0}$ & $\hspace{-35pt} \tl{L}_{\scriptscriptstyle{1}}\cong L_{\omega_0}$ \\
\end{tabular} \ .\end{center}
\qed
\end{lem}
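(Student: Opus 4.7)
My plan is to imitate the $q$-character analysis used for the G$_2$ first fundamental representation after equation \eqref{qchar arg for G2}. By Theorem \ref{poles thm}, the poles of $\check R^{\tl{L}_2,\tl{L}_2}(z)$ at $z=q^a$ with $a>0$ correspond to values of $a$ for which $\chi_q(\mr{2}_a)\chi_q(\mr{2}_0)$ has a dominant monomial other than $\mr{2}_a\mr{2}_0$. Scanning the fifteen monomials of $\chi_q(\mr{2}_0)$, the four underlined ones $\mr{1}_1\mr{1}_3\mr{1}_5\mr{2}_6^{-1}$, $\mr{2}_4\mr{2}_8^{-1}$, $\mr{1}_1\mr{1}_9\mr{2}_{10}^{-1}$, and $\mr{2}_{12}^{-1}$ are exactly those whose product with the highest-weight monomial $\mr{2}_a$ is dominant, forcing $a\in\{6,8,10,12\}$ and producing extra dominant monomials of $U_q\fk{g}$-weights $3\omega_1,\omega_2,2\omega_1,\omega_0$, respectively. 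One also checks that no dominant monomial arises as a cross-term between non-dominant factors of $\chi_q(\mr{2}_a)$ and $\chi_q(\mr{2}_0)$: the negative-factor indices in $\chi_q(\mr{2}_a)$ all lie above $a+5$, while the positive-factor indices in $\chi_q(\mr{2}_0)$ do not exceed $11$, which severely restricts possible cancellation.

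For each of these four values of $a$, I would apply Theorem \ref{q char arg} to the extra dominant monomial $m_+$ to deduce the character identity
\[
\chi_q(\mr{2}_a)\chi_q(\mr{2}_0)=\chi_q(\mr{2}_a\mr{2}_0)+\chi_q(m_+).
\]
This reduces to a combinatorial check of conditions (\ref{q char arg 0})--(\ref{q char arg 3}) of that theorem: identifying the appropriate $A_{i,b}^{-1}$-neighbor of $m_+$ inside the product and verifying that no other $A$-shift of $m_+$ or $m_+A_{i,b}^{-1}$ lies in $\chi_q(\mr{2}_a)\chi_q(\mr{2}_0)$. Once this is in place, both $\chi_q(\mr{2}_a\mr{2}_0)$ and $\chi_q(m_+)$ are special, so the algorithm of \cite{FM01} computes each of them explicitly; reading off the weight multiplicities gives the $U_q\fk{g}$-decompositions listed in the table. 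By \cite{C00}, \cite{Ka02}, for $a>0$ the submodule of $\tl{L}_2(q^a)\otimes \tl{L}_2$ generated by the product of highest-$\ell$-weight vectors is $\tl{L}_{\mr{2}_a\mr{2}_0}$, and the corresponding quotient is $\tl{L}_{m_+}$, matching the statement. The zeros at $z=q^{-a}$ follow from the inversion relation in Lemma \ref{lemR}(\ref{inversion relation}).

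The main obstacle I foresee is the exhaustive bookkeeping required in the second step: for each of the four values of $a$, the $A$-neighborhood conditions of Theorem \ref{q char arg} must be checked against the full $225$-term product $q$-character, and the mechanical computation of $\chi_q(m_+)$ via the Frenkel--Mukhin algorithm is a substantial combinatorial task, especially at $a=6$ where $m_+=\mr{1}_1\mr{1}_3\mr{1}_5$ and the resulting character has $133$ terms. The underlying strategy, however, is exactly the one developed earlier in the paper and introduces no new conceptual difficulty.
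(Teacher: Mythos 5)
This is the paper's own (unwritten) argument: the lemma carries no proof precisely because it is the standard $q$-character computation the paper demonstrates for $\tl{L}_1$ of G$_2$ in the introduction, and your steps --- reading off the candidate poles $a\in\{6,8,10,12\}$ from the four underlined monomials of $\chi_q(\mr{2}_0)$, invoking Theorem \ref{q char arg} to make both summands of the product special so the algorithm of \cite{FM01} applies, and sorting submodule from quotient via \cite{C00} together with the inversion relation --- are exactly those steps. Two cautions: the exclusion of cross-term dominant monomials should be justified by right-negativity of the non-highest monomials rather than your quoted index bounds, which are off ($\chi_q(\mr{2}_a)$ already has a negative factor at index $a+3$, so for $a=6$ the bounds alone do not rule out cancellations against the positive factors of $\chi_q(\mr{2}_0)$, whose indices reach $9$); and if you carry the computation through at $a=6$ you will find the submodule $\tl{L}_{\mr{2}_a\mr{2}_{aq^{-6}}}\cong L_{2\omega_2}\oplus L_{\omega_2}\oplus L_{\omega_0}$ of dimension $92=225-133$ (it must contain $L_{2\omega_2}$ since its highest weight is $2\omega_2$), so the corresponding entry of the printed table appears to contain a typo.
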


For $\lambda=2\omega_2$, $3\omega_1$, $2\omega_1$, $\omega_2$, $\omega_0$, let $P_{\lambda}^q$ be the $U_q($G$_2)$ projector onto $L_{\lambda}$ in the decomposition \eqref{tensorG2}. 

\begin{thm}
In terms of projectors, we have
\eq{\label{RqG2fun2}
\check{R}^{\tl{L}_2,\tl{L}_2}(z)=\, & P_{2\omega_2}^q-q^{-6}\frac{1-q^{6}z}{1-q^{-6}z}\,P_{3\omega_1}^q+q^{-16}\frac{(1-q^{6}z)(1-q^{10}z)}{(1-q^{-6}z)(1-q^{-10}z)}\,P_{2\omega_1}^q\\ & +\frac{q^{-12}\,f_{\omega_2}(z)}{(1-q^{-6}z)(1-q^{-8}z)(1-q^{-10}z)}\otimes P_{\omega_2}^q +\frac{q^{-18}\,f_{\omega_0}(z)}{(1-q^{-6}z)(1-q^{-8}z)(1-q^{-10}z)(1-q^{-12}z)}\otimes P_{\omega_0}^q\,\,,
}
where the matrices $f_{\omega_1}(z)$ and $f_{\omega_0}(z)$ are given by
$$f_{\omega_2}(z)=\begin{bmatrix}-q^{-6}-q^{-4}\,\A_{q^{-1}}\,z+q^{4}\,\A_{q}\,z^2+q^{6}\,z^3 & \B_q\, z(1-z) & \B_q\, z(1-z) 
\vspace{0.25cm}\\ 
\G_q\, z(1-z) & a_q\,z(q^{6}+q^{-6}z) & (1-z)(q^{6}-b_q\,z+q^{-6}\,z^2)
\vspace{0.25cm}\\ 
\G_q\, z(1-z) & (1-z)(q^{6}-b_q\,z+q^{-6}\,z^2) & a_q\, z(q^{6}+q^{-6}z) \end{bmatrix}\ ,$$ 
$$f_{\omega_0}(z)=\begin{bmatrix}q^{-12}-q^{-6}\zeta_q\,z+\xi_q\,z^2-q^{6}\zeta_q\,z^3+q^{12}\,z^4 & \eta_q\,z(1-z^2)
\vspace{0.25cm}\\ 
\rho_q\,z(1-z^2) & q^{12}-q^{6}\zeta_q\,z+\xi_q\,z^2-q^{-6}\zeta_q\,z^3+q^{-12}\,z^4\end{bmatrix}\ .$$
Here the constants $\A_q$, $\B_q$, $\G_q$, $a_q$, $b_q$, $\zeta_q$, $\xi_q$, $\eta_q$, $\rho_q$ $\in$ $\C(q)$ are given by
$$\A_q=\frac{[3]\,\big([2]_{10}^\mr{i}-q^2[2]_6^\mr{i}-q^6\big)}{[3]_{2}^{\mr{i}}}\ ,\quad \B_q=\frac{[2]^{\mr{i}}\,[2]_{5}^{\mr{i}}}{[3]_{2}^{\mr{i}}}\ ,\quad \G_q=\frac{\big([2]\big)^2\,[2]_{9}\,[2]_{3}^{\mr{i}}\,[2]_{6}^{\mr{i}}}{[3]_{2}^{\mr{i}}}\ ,\quad a_q=\frac{[2]_{2}^{\mr{i}}\,[2]_{3}^{\mr{i}}\,[2]_{5}^{\mr{i}}}{[3]_{2}^{\mr{i}}}\ ,$$ 
$$b_q=\frac{[2]_{8}+[2]_{6}-[2]_{2}}{[3]_{2}^{\mr{i}}}\ ,\quad \zeta_q=\frac{[2]\,[2]_{9}}{[3]_{2}^{\mr{i}}}\,\,,\,\,\xi_q=[2]_{18}-[2]_{12}+[2]_4+[2]_2+2\ ,$$
$$\eta_q=\frac{[2]^{\mr{i}}\,\big([2]_{5}^{\mr{i}}\big)^2}{[3]_2^{\mr{i}}}\ ,\quad \rho_q=\frac{\big([2]\big)^2\,[2]_4\,[2]_{3}^{\mr{i}}\,[2]_{7}^{\mr{i}}\,\big([2]_{11}^{\mr{i}}-[2]_{9}^{\mr{i}}+[2]^{\mr{i}}\big)}{[3]_{2}^{\mr{i}}}\ .$$
\qed
\end{thm}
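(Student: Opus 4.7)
The plan is to adapt the strategy used in the proof of Theorem~\ref{thm:R E8} for the E$_8$ case, exploiting the striking parallel between the two decompositions: both involve three multiplicity-free summands, one $3\times 3$ block, and one $2\times 2$ block. First I would observe that the scalar rational functions multiplying $P_{2\omega_2}^q$, $P_{3\omega_1}^q$, and $P_{2\omega_1}^q$ are determined completely by the $q$-character analysis in the preceding lemma combined with Theorem~\ref{order of zeroes thm} and the normalization $\check{R}(1)=\id$, exactly as in the multiplicity-free algorithm.

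For the $3\times 3$ matrix $f_{\omega_2}(z)$, I would compute $f_{\omega_2}(0)$ and $f_{\omega_2}(\infty)$ from Lemma~\ref{R0}: the diagonal entries are fixed by Casimir eigenvalues, while the antidiagonal ones are controlled by the flip operator acting on the three singular vectors of weight $\omega_2$ (one from $L_{\omega_2}\otimes L_{\omega_2}$ and two from $L_{\omega_2}\otimes L_{\omega_0}$ and $L_{\omega_0}\otimes L_{\omega_2}$). The $q$-character lemma locates the poles at $z=q^6,q^8,q^{10}$; invoking Conjecture~\ref{conj:simple poles}, all are simple, which fixes the denominator $(1-q^{-6}z)(1-q^{-8}z)(1-q^{-10}z)$. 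Imposing $f_{\omega_2}(1)=\id$, the unitarity relation $f_{\omega_2}(z)f_{\omega_2}(z^{-1})=\id$ from \eqref{propeties of f}, commutativity with the flip operator (Lemma~\ref{R and flip commute}), and the rank drops at $z=q^{\pm 6}, q^{\pm 8}, q^{\pm 10}$ prescribed by the submodule/quotient structure, yields a system determining $\A_q,a_q,b_q$ linearly and then $\B_q \G_q$. Self-adjointness (Lemma~\ref{lemR}(\ref{R is self adjoint})) together with orthogonality of the chosen singular-vector basis forces $\G_q/\B_q$ to equal the ratio of squared Shapovalov norms, leaving one sign ambiguity.

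For the $2\times 2$ matrix $f_{\omega_0}(z)$ the same steps apply: Lemma~\ref{R0} supplies $f_{\omega_0}(0)$ and $f_{\omega_0}(\infty)$; simplicity of poles at $q^6,q^8,q^{10},q^{12}$ gives the denominator; $f_{\omega_0}(1)=\id$, unitarity, and flip symmetry reduce the problem to finitely many candidates for $\zeta_q,\xi_q,\eta_q,\rho_q$. Following the E$_8$ template, spurious branches are discarded by demanding that the $q\to 1$ limit exists. The Shapovalov-norm ratio pins down $\rho_q/\eta_q$, again up to a sign.

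The principal obstacle is fixing these two remaining signs. I would do this exactly as in the E$_8$ proof: apply $\check{R}(a/b)\,\Delta E_0(a,b)=\Delta E_0(b,a)\,\check{R}(a/b)$ (see \eqref{R and E0 commutation relation}) to a well-chosen vector in $\tl{L}_2\otimes \tl{L}_2$ and match one matrix coefficient per block. As an independent check, and a route that avoids Conjecture~\ref{conj:simple poles} altogether, one can alternatively derive $\check{R}^{\tl L_2,\tl L_2}(z)$ directly from the fusion formula \eqref{G2fusion} by restricting the composition to the $225$-dimensional block cut out by $\tl L_2\otimes \tl L_2 \subset \tl L_1(zq)\otimes \tl L_1(zq^{-1})\otimes \tl L_1(q)\otimes \tl L_1(q^{-1})$; specializing on our chosen singular vectors reads off the signs unconditionally and confirms the simple-pole hypothesis in this case.
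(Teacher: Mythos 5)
Your proposal is correct and matches the paper's own treatment: the paper derives this $R$-matrix by the fusion formula \eqref{G2fusion} restricted to the $225$-dimensional block, and separately notes that, "as in the case of E$_8$," the $3\times 3$ and $2\times 2$ blocks can be recovered up to a sign from the $q$-characters, $\check{R}(0)$, $\check{R}(\infty)$, and the properties \eqref{propeties of f}. You simply reverse the emphasis, leading with the E$_8$-style argument (which, as you note, rests on Conjecture \ref{conj:simple poles} and a final sign check against $E_0$) and keeping the fusion computation as the unconditional route — both are exactly the paper's two stated methods.
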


We note that $[3]_2^{\mr{i}}=\kappa_{24}(q)$ is the symmetric form of 24-th cyclotomic polynomial.

Let $P_\la=\lim_{q\to1}P_\la^q$ be the $U($G$_2)$ projectors.

\begin{cor}
In the rational case, the corresponding $R$-matrix is given by 
\eq{\label{RuG2fun2}
\check{R}^{\tl{L}_2,\tl{L}_2}(u)=P_{2\omega_2}+\frac{3+u}{3-u}P_{3\omega_1}^q+\frac{(3+u)(5+u)}{(3-u)(5-u)}P_{2\omega_1}+\frac{f_{\omega_2}(u)}{(3-u)(4-u)(5-u)}\otimes P_{\omega_2} \\
+\frac{f_{\omega_0}(u)}{(3-u)(4-u)(5-u)(6-u)}\otimes P_{\omega_0}\ ,
}
where the matrices $f_{\omega_2}(u)$, $f_{\omega_0}(u)$ are given by 
$$f_{\omega_2}(u)=\begin{bmatrix} 60-7\,u+6\,u^2+u^3 & -5\,u & -5\,u
\vspace{0.25cm}\\
-144\,u & 60 & -u(1+u)(7-u)
\vspace{0.25cm}\\
-144\,u & -u(1+u)(7-u) & 60 \end{bmatrix}\ ,$$
$$f_{\omega_0}(u)=\begin{bmatrix} 360-42\,u+29\,u^2+12\,u^3+u^4 & -150\,u \vspace{0.25cm}\\ -1008\,u & 360+42\,u+29\,u^2-12\,u^3+u^4 \end{bmatrix}\ .$$
\end{cor}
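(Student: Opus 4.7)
The plan is to follow the same template used for the earlier rational corollaries in Sections~\ref{cla sec} and \ref{ex sec}: substitute $z = q^{2u}$ in \eqref{RqG2fun2} and take the limit $q \to 1$, using the elementary expansion $1 - q^{2a} = -2a \ln q + O((\ln q)^2)$ to convert each trigonometric factor into its rational analogue.

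For the three scalar coefficients of the projectors $P_{2\omega_2}^q$, $P_{3\omega_1}^q$, $P_{2\omega_1}^q$, I would apply the universal identity
\[
\lim_{q\to 1} \bigg(-q^{-2k}\,\frac{1 - q^{2k}z}{1 - q^{-2k}z}\bigg)\bigg|_{z=q^{2u}} = \frac{k+u}{k-u},
\]
with $k=3$ and the product of $k=3,5$ respectively, reproducing the terms $(3+u)/(3-u)\,P_{3\omega_1}$ and $(3+u)(5+u)/((3-u)(5-u))\,P_{2\omega_1}$, and matching the general prefactor structure observed in \eqref{RuA}--\eqref{RuG}.

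For the matrix blocks, the denominators contribute
\[
\prod_{k}(1-q^{-2k}z)\Big|_{z=q^{2u}} \sim \Big(\prod_k -2(k-u)\,\ln q\Big),
\]
so to recover the finite limit one must expand the numerators $f_{\omega_2}(q^{2u})$ and $f_{\omega_0}(q^{2u})$ to the matching orders three and four in $\ln q$. The entries of these matrices involve the cyclotomic expressions $[n]$, $[n]^{\mr{i}}$, $[n]^{\mr{i}}_k$, whose limits are $n$, $1$ (for $n$ odd), $0$ (for $n$ even), per the conventions fixed in Section \ref{prel sec}. Substituting these limits into $\A_q, \B_q, \G_q, a_q, b_q$ and $\zeta_q, \xi_q, \eta_q, \rho_q$, expanding each matrix entry as a Taylor polynomial in $u$ around the appropriate vanishing locus, and dividing by the leading behaviour of the denominator then yields the entries of $f_{\omega_2}(u)$ and $f_{\omega_0}(u)$ as stated.

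The main obstacle is purely computational bookkeeping: several off-diagonal entries (for instance the $(1,2)$ and $(1,3)$ entries of $f_{\omega_2}(z)$, or the off-diagonal entries of $f_{\omega_0}(z)$) vanish to high order at $z=1$, so one must track the leading-order expansion carefully, verifying that the vanishing order of each entry exactly matches the pole order of the prefactor. In principle this tedious expansion can be bypassed by an independent verification: one checks that the proposed rational matrices $f_{\omega_2}(u)$, $f_{\omega_0}(u)$ satisfy the rational analogues of the characterizing properties in \eqref{propeties of f} (unitarity $f_\la(u)f_\la(-u)=\id$, invariance under the flip, correct specializations at $u=0$ and $u=\infty$, and the pole structure dictated by the Yangian counterpart of Theorem \ref{poles thm}), together with the ratios of off-diagonal entries fixed by the Shapovalov norms of the singular vectors $w_1$ and $w_2$ in the $G_2$ analogue of Theorem \ref{thm:R E8}; uniqueness from these constraints then confirms the formula without a direct term-by-term limit.
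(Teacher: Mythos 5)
Your proposal follows exactly the paper's own argument: the proof given there is precisely the substitution $z=q^{2u}$ in \eqref{RqG2fun2} followed by the limit $q\to 1$, and your elaboration of the bookkeeping (matching the vanishing order of each numerator entry to the pole order of the denominator, and using the stated $q\to1$ limits of $[n]$, $[n]^{\mr{i}}$, $[n]^{\mr{i}}_k$) is the correct way to carry that out. The alternative verification you sketch at the end is a reasonable sanity check but is not needed; the direct limit suffices and is what the paper does.
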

\begin{proof}
We substitute $z=q^{2u}$ in \eqref{RqG2fun2} and take limit $q\to 1$.
\end{proof}

\subsection{A\texorpdfstring{$_2$}{2} adjoint evaluation representation}
\label{mul cases A2}

In this subsection, we write the $R$-matrix for the evaluation adjoint representation of A$_2$, obtained using fusion in \eqref{A2fusion}, in terms of projectors related to the tensor square decomposition.

As $U_q($A$_2)$-modules, we have 
\eq{\label{tensorA2ad}
\big(\underbracket[0.1ex]{L_{\omega_1+\omega_2}}_{8}\big)^{\otimes 2}\cong \underbracket[0.1ex]{L_{2\omega_1+2\omega_2}}_{27}\oplus \underbracket[0.1ex]{L_{3\omega_1}}_{10}\oplus \underbracket[0.1ex]{L_{3\omega_2}}_{10}\oplus\, 2\,\underbracket[0.1ex]{L_{\omega_1+\omega_2}}_{8}\oplus\,\underbracket[0.1ex]{L_{\omega_0}}_{1}\ .
}

For $\lambda=2\omega_1+2\omega_2, 3\omega_1, 3\omega_2, \omega_1+\omega_2, \omega_0$, let $P_\la^q$ be the $U_q($A$_2)$-projector onto $L_\la$ in the decomposition \eqref{tensorA2ad}. 

\begin{thm}\label{thm:RqA2ad}
In terms of projectors, we have 
\eq{\label{RqA2ad}
\check{R}^{\text{ad},\text{ad}}(z)=P_{2\omega_1+2\omega_2}^q-q^{-2}\frac{1-q^{2}z}{1-q^{-2}z}\,\big(P_{3\omega_1}^q+P_{3\omega_2}^q\big)-\frac{q^{-5}\,f(z)}{(1-q^{-2}z)^2(1-q^{-6}z)}\otimes P_{\omega_1+\omega_2}^q \\ 
+\,q^{-8}\frac{(1-q^{2}z)(1-q^{6}z)}{(1-q^{-2}z)(1-q^{-6}z)}\,P_{\omega_0}^q\ ,
}
where the matrix $f(z)$ is given by $$f(z)=\begin{bmatrix}[3]\,\big([2]^{\mr{i}}\big)^2\,z(q^{-1}z-q) & (z-1)(qz-q^{-1})(q^{-1}z-q)
\vspace{0.25cm}\\ (z-1)\big(z^2-([2]_{6}+[2]_{2}-2)z+1\big) & -[3]\,\big([2]^{\mr{i}}\big)^2\,z(qz-q^{-1})\end{bmatrix}\ .$$ 
\qed
\end{thm}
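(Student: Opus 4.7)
The plan is to derive the formula directly from the fusion procedure \eqref{A2fusion}, which embeds $\check R^{\text{ad},\text{ad}}(z)$ as a $64\times 64$ block of an endomorphism of a $256$-dimensional tensor product of fundamental modules. Each of the four factor $R$-matrices $\check R^{ij}(z)$ appearing in \eqref{A2fusion} is already known: the diagonal ones from \eqref{RqA}, and the mixed ones $\check R^{\tl L_1,\tl L_2}(z)$, $\check R^{\tl L_2,\tl L_1}(z)$ from the standard $\mathfrak{sl}_3$ fundamental $R$-matrix formulas, so the composition can in principle be computed mechanically.

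First I would use the $q$-character of $\tl L_{\mr{1}_0\mr{2}_3}$, which is obtained from the Frenkel--Mukhin algorithm after verifying via Theorem \ref{q char arg} that it has a unique dominant monomial, to analyze the dominant monomials in $\chi_q(\mr{1}_0\mr{2}_3)\chi_q(\mr{1}_a\mr{2}_{aq^3})$. By Theorem \ref{poles thm} this locates candidate poles at $z=q^{\pm 2},q^{\pm 6}$ and, via the counts of dominant monomials in each summand, identifies the $U_q(\text{A}_2)$-content of the sub- and quotient modules. This pins down the scalar factors in front of $P_{2\omega_1+2\omega_2}^q$, $P_{3\omega_1}^q$, $P_{3\omega_2}^q$, and $P_{\omega_0}^q$ up to constants, which are then fixed by $\check R(1)=\id$, matching the formula.

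For the multiplicity-two summand $L_{\omega_1+\omega_2}$, I would choose an orthogonal basis of the two-dimensional space of singular vectors of weight $\omega_1+\omega_2$ in $L_{\omega_1+\omega_2}^{\otimes 2}$ and compute the action of the fusion expression \eqref{A2fusion} restricted to this block. The outputs are rational functions of $z$ whose denominator turns out to be $(1-q^{-2}z)^2(1-q^{-6}z)$; the double factor at $z=q^{-2}$ is consistent with the indecomposable subfactor flagged in the discussion preceding Theorem \ref{thm:RqA2ad}. The resulting matrix must then be checked against the consistency constraints in \eqref{propeties of f}, against the flip symmetry (an analog of Lemma \ref{R and flip commute} adapted to mixed fundamentals), and against the asymptotics $f(0)$, $f(\infty)$ computed from the $q\to\infty$ limits of the factors in \eqref{A2fusion}. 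Finally, commutativity with the Drinfeld coproduct action of $E_0$ provides a verification that fixes any remaining scalar freedom.

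The main obstacle will be the double pole at $z=q^{-2}$: since the decomposition is non-multiplicity-free and contains indecomposable subfactors, the clean arguments used for classical and most exceptional types (Theorem \ref{order of zeroes thm}, Lemma \ref{R0}, Conjecture \ref{conj:simple poles}) do not directly apply. One cannot extract $f(z)$ solely from $q$-characters and asymptotics as in the E$_8$ case; the fusion calculation must be carried out explicitly to produce the off-diagonal entries and the non-trivial polynomial numerators, and the \emph{a posteriori} consistency and $E_0$-invariance checks are essential to certify the answer.
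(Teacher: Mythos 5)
Your proposal follows essentially the same route as the paper: the theorem is obtained by explicitly computing the $64\times 64$ block of the fusion expression \eqref{A2fusion} from the known fundamental $R$-matrices and then certifying the answer by commutativity with $E_0$, exactly as you describe. The only caveat is your second paragraph's appeal to Theorem \ref{poles thm} to locate poles — the paper explicitly notes that this theorem does not apply here since $\tl L_{\mr{1}_0\mr{2}_3}$ is not fundamental — but this does not affect your argument, since you correctly identify that the explicit fusion computation (not the $q$-character analysis) is what actually determines $f(z)$, including the double pole at $z=q^{-2}$.
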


In this case the $q$-characters are not sufficient to write the $R$-matrix. First, we lack Theorem \ref{poles thm} identifying the zeroes and poles of the $R$-matrix with the submodules and quotient modules. Second, some submodules and quotient modules are indecomposable, and we have a double pole of the $R$-matrix.

The $q$-character of $\tl{L}_{\mr{1}_0\mr{2}_3}$ has $8$ terms out of which $2$ are of weight zero (shown in box):
$$\chi_q(\mr{1}_0\mr{2}_3)=\mathrm{1}_{0}\mathrm{2}_{3} + \mathrm{1}_{2}^{-1}\mathrm{2}_{1}\mathrm{2}_{3} + \mathrm{1}_{0}\mathrm{1}_{4}\mathrm{2}_{5}^{-1} + \boxed{\mathrm{1}_{6}^{-1}\mathrm{1}_{0} + \mathrm{1}_{2}^{-1}\mathrm{1}_{4}\mathrm{2}_{5}^{-1}\mathrm{2}_{1}} + \mathrm{1}_{2}^{-1}\mathrm{1}_{6}^{-1}\mathrm{2}_{1} + \mathrm{1}_{4}\mathrm{2}_{3}^{-1}\mathrm{2}_{5}^{-1} + \mathrm{1}_{6}^{-1}\mathrm{2}_{3}^{-1}\ .$$

Then we compute the decomposition
$$
\chi_q(\mr{1}_0\mr{2}_3)\chi_q(\mr{1}_2\mr{2}_5)=\chi_q(\mr{1}_0\mr{1}_2\mr{2}_3\mr{2}_5)+\chi_q(\mr{1}_0\mr{1}_2\mr{1}_4)+\chi_q(\mr{2}_1\mr{2}_3\mr{2}_5)+\chi_q(\mr{1}_4\mr{2}_1)+1\ ,
$$
and as $U_q($A$_2)$-modules this corresponds to
$$L_{\omega_1+\omega_2}^{\otimes2}\cong \big(L_{2\omega_1+2\omega_2}\oplus L_{\omega_1+\omega_2}\big) \oplus L_{3\omega_1}\oplus L_{3\omega_2}\oplus L_{\omega_1+\omega_2}\oplus L_{\omega_0}\ .$$

As we see from Theorem \ref{thm:RqA2ad}, the last four summands correspond to poles of the $R$-matrix at $z=q^2$ and  one $L_{\omega_1+\omega_2}$ is a double pole. This means there is a submodule which contains all modules except for this  
$L_{\omega_1+\omega_2}$. We do not expect this submodule to be a direct some of all four summands.

\medskip 

Let $P_\la=\lim_{q\to1}P_\la^q$ be the $U($A$_2)$-projectors. 
\begin{cor}
In the rational case, the corresponding $R$-matrix is given by 
\eq{\label{RuA2ad}
\check{R}^{\text{ad},\text{ad}}(u)=P_{2\omega_1+2\omega_2}+\frac{1+u}{1-u}\big(P_{3\omega_1}+P_{3\omega_2}\big)+\frac{f(u)}{(1-u)^2(3-u)}\otimes P_{\omega_1+\omega_2}+\frac{(1+u)(3+u)}{(1-u)(3-u)}P_{\omega_0}\ ,
}
where the matrix $f(u)$ is given by 
\bee{
f(u)=\begin{bmatrix}
3(1-u) & u(1-u^2) \\
u(10-u^2) & 3(1+u)
\end{bmatrix}\ .
}
\p{
We substitute $z=q^{2u}$ in \eqref{RqA2ad} and take limit $q\to 1$.
}
\end{cor}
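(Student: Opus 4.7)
The plan is to substitute $z = q^{2u}$ in \eqref{RqA2ad} and take the limit $q \to 1$ term by term. The projectors $P_\lambda^q$ limit to $P_\lambda$ by definition, so the analysis reduces to computing limits of the scalar or matrix coefficient functions. Setting $q = e^h$, the scalar factors $-q^{-2}(1-q^2 z)/(1-q^{-2}z)$ and $q^{-8}(1-q^2 z)(1-q^6 z)/[(1-q^{-2}z)(1-q^{-6}z)]$ at $z=q^{2u}$ produce $(1+u)/(1-u)$ and $(1+u)(3+u)/[(1-u)(3-u)]$ respectively, via the standard asymptotic $(1-q^{2a+2u})/(1-q^{-2b+2u}) \to (a+u)/(u-b)$, matching the first, second, and fourth summands of the claimed formula.

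The real work is the matrix coefficient $-q^{-5} f(z)/[(1-q^{-2}z)^2(1-q^{-6}z)]$. With $z = q^{2u}$, the denominator is asymptotic to $8(1-u)^2(3-u) h^3$, so each of the four entries of $-q^{-5}f(z)$ must be expanded through order $h^3$. For the $(1,1)$, $(1,2)$, and $(2,2)$ entries this is routine: using $[3] = 3 + 4h^2 + O(h^4)$, $([2]^{\mathrm{i}})^2 = (q-q^{-1})^2 = 4h^2 + O(h^4)$, and $q^{\pm 1} z - q^{\mp 1} \sim 2(u \pm 1)h$, the leading $h^3$ terms of $-q^{-5} f(z)$ are $24(1-u)h^3$, $8u(1-u^2)h^3$, and $24(1+u)h^3$, which divide by the denominator to produce the expected entries $3(1-u)$, $u(1-u^2)$, and $3(1+u)$ of the numerator matrix $f(u)$.

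The main obstacle is the $(2,1)$ entry $(z-1)(z^2 - ([2]_6+[2]_2-2)z+1)$, which does not split into linear factors. Here one uses the identity $[2]_k = q^k + q^{-k}$ to rewrite $c := [2]_6+[2]_2-2 = (q^6+q^{-6}) + (q^2+q^{-2}) - 2 = 2 + 40h^2 + O(h^4)$, a quantity close to $2$ rather than the generic value one might naively expect. Setting $\epsilon := z-1 = 2uh + O(h^2)$, one then computes
\[
z^2 - cz + 1 = (2-c)(1+\epsilon) + \epsilon^2 = -40 h^2 + 4u^2 h^2 + O(h^3) = 4(u^2 - 10) h^2 + O(h^3),
\]
so that $(z-1)(z^2 - cz + 1) = 8u(u^2-10) h^3 + O(h^4)$. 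With the prefactor $-q^{-5} \to -1$ this becomes $8u(10-u^2)h^3$, and division by $8(1-u)^2(3-u) h^3$ yields the $(2,1)$ entry $u(10-u^2)/[(1-u)^2(3-u)]$. Assembling the four entries produces the claimed matrix $f(u)$ and completes the proof.
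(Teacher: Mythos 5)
Your computation is correct and follows exactly the paper's (one-line) proof: substitute $z=q^{2u}$ into \eqref{RqA2ad} and let $q\to 1$, which you carry out explicitly, including the only delicate point — that $[2]_6+[2]_2-2 = 2+40h^2+O(h^4)$ degenerates the quadratic $z^2-cz+1$ to order $h^2$, producing the non-factoring entry $u(10-u^2)$.
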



\section{Appendices}\label{app}

\subsection{Type E\texorpdfstring{$_6$}{2}}
\label{E6 app}

A diagram of the first fundamental module $L_{\omega_1}$ is shown in Figure \ref{fig:E6 first fundamental}.
\begin{figure}
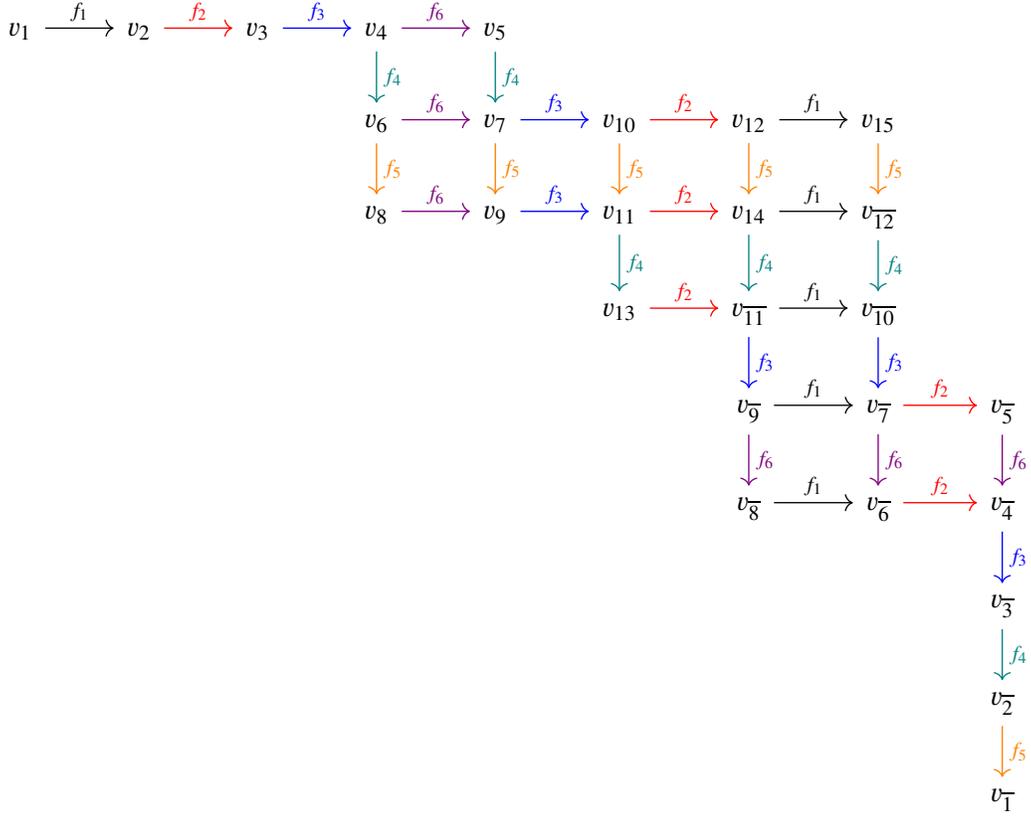

    \centering
\ddia{arrows={line width=0.5pt}}{v_1\ar[r,"f_1"] \& v_2\ar[r,red,"f_2"] \& v_3\ar[r,blue,"f_3"] \& v_4\ar[r,violet,"f_6"]\ar[d,teal,"f_4"] \& v_5\ar[d,teal,"f_4"] \& \& \& \& \\ \& \& \& v_6\ar[r,violet,"f_6"]\ar[d,orange,"f_5"] \& v_7\ar[d,orange,"f_5"]\ar[r,blue,"f_3"] \& v_{10}\ar[d,orange,"f_5"]\ar[r,red,"f_2"] \& v_{12}\ar[d,orange,"f_5"]\ar[r,"f_1"] \& v_{15}\ar[d,orange,"f_5"] \& \\ \& \& \& v_8\ar[r,violet,"f_6"] \& v_9\ar[r,blue,"f_3"] \& v_{11}\ar[r,red,"f_2"]\ar[d,teal,"f_4"] \& v_{14}\ar[r,"f_1"]\ar[d,teal,"f_4"] \& v_{\ol{12}}\ar[d,teal,"f_4"] \& \\ \& \& \& \& \& v_{13}\ar[r,red,"f_2"] \& v_{\ol{11}}\ar[r,"f_1"]\ar[d,blue,"f_3"] \& v_{\ol{10}}\ar[d,blue,"f_3"] \& \\ \& \& \& \& \& \& v_{\ol{9}}\ar[r,"f_1"]\ar[d,violet,"f_6"] \& v_{\ol{7}}\ar[d,violet,"f_6"]\ar[r,red,"f_2"] \& v_{\ol{5}}\ar[d,violet,"f_6"] \\ \& \& \& \& \& \& v_{\ol{8}}\ar[r,"f_1"] \& v_{\ol{6}}\ar[r,red,"f_2"] \& v_{\ol{4}}\ar[d,blue,"f_3"] \\ \& \& \& \& \& \& \& \& v_{\ol{3}}\ar[d,teal,"f_4"] \\ \& \& \& \& \& \& \& \& v_{\ol{2}}\ar[d,orange,"f_5"] \\ \& \& \& \& \& \& \& \& v_{\ol{1}}}
    \caption{First fundamental module for E$_6$.}
    \label{fig:E6 first fundamental}
\end{figure}
Here $v_j$ are ordered as their $\ell$-weights appear in the $q$-character of $\tl L_{\mr{1}_0}$ in \eqref{qchar E6}, and $\ol{i}=28-i$, $1\leq i\leq 12$.
The coefficients of all the arrows are one. The action of $f_i$'s is indicated in this diagram. The action of $e_i$'s is obtained by reversing all the arrows and keeping the same coefficient on each arrow.

The submodule $L_{\omega_5}$ forms a similar diagram as above but by switching $f_1\oto \textcolor{orange}{f_5}$, $\textcolor{red}{f_2}\oto \textcolor{teal}{f_4}$. We choose a basis $\{u_s\}_{s=1}^{27}$ for $L_{\omega_5}\se L_{\omega_1}^{\otimes2}$. The basis vectors are of the form
$$u_s=\sum_{(i,j)\in I_s^{\omega_5}}\ve_{ij}^{q,s}v_i\otimes v_j\ ,\quad 1\le s\le 27\ .$$

The sets $I_s^{\omega_5}$ and coordinates $\ve_{ij}^{q,s}$, $1\le s\le 27$, are used in the expression of $T(z)$ in $\eqref{E6Tz}$. The sets $I_s^{\omega_5}$ have cardinality 10 and the property $(i,j)\in I_s^{\omega_5}$ if and only if $(j,i)\in I_s^{\omega_5}$. The element $(j,i)$, $i<j$, is placed in $I_s^{\omega_5}$ such that the positions $|(i,j)|$ and $|(j,i)|$ of $(i,j)$ and $(j,i)$ respectively, satisfy $|(i,j)|+|(j,i)|=11$. We always have $i\ne j$ in this case. Therefore, we list only the 5 element subsets of $I_s^{\omega_5}$ for which $i<j$. See Figure \ref{fig:E6I}.

\begin{figure}
\bee{
& \scriptstyle\big\{(1,15),(2,12),(3,10),(4,7),(5,6)\big\},\,
\big\{(1,16),(2,14),(3,11),(4,9),(5,8)\big\},\,
\big\{(1,18),(2,17),(3,13),(6,9),(7,8)\big\},\, \\
& \scriptstyle \big\{(1,21),(2,19),(4,13),(6,11),(8,10)\big\},
\big\{(1,22),(2,20),(5,13),(7,11),(9,10)\big\},\,
\big\{(1,23),(3,19),(4,17),(6,14),(8,12)\big\},\, \\
& \scriptstyle \big\{(1,24),(3,20),(5,17),(7,14),(9,12)\big\},\,
\big\{(2,23),(3,21),(4,18),(6,16),(8,15)\big\},
\big\{(2,24),(3,22),(5,18),(7,16),(9,15)\big\},\, \\
& \scriptstyle \big\{(1,25),(4,20),(5,19),(10,14),(11,12)\big\},\,
\big\{(2,25),(4,22),(5,21),(10,16),(11,15)\big\},\,
\big\{(1,26),(6,20),(7,19),(10,17),(12,13)\big\}, \\
& \scriptstyle \big\{(3,25),(4,24),(5,23),(12,16),(14,15)\big\},\,
\big\{(2,26),(6,22),(7,21),(10,18),(13,15)\big\},\,
\big\{(1,27),(8,20),(9,19),(11,17),(13,14)\big\},\, \\ 
& \scriptstyle \big\{(2,27),(8,22),(9,21),(11,18),(13,16)\big\},
\big\{(3,26),(6,24),(7,23),(12,18),(15,17)\big\},\,
\big\{(3,27),(8,24),(9,23),(14,18),(17,16)\big\},\,\\
& \scriptstyle \big\{(4,26),(6,25),(10,23),(12,21),(15,19)\big\},\,
\big\{(5,26),(7,25),(10,24),(12,22),(15,20)\big\},
\big\{(4,27),(8,25),(11,23),(14,21),(16,19)\big\},\, \\
& \scriptstyle \big\{(5,27),(9,25),(11,24),(14,22),(16,20)\big\},\,
\big\{(6,27),(8,26),(13,23),(17,21),(19,18)\big\},\,
\big\{(7,27),(9,26),(13,24),(17,22),(18,20)\big\}, \\
& \scriptstyle\big\{(10,27),(11,26),(13,25),(19,22),(20,21)\big\},\,
\big\{(12,27),(14,26),(17,25),(19,24),(20,23)\big\},\,
\big\{(15,27),(16,26),(18,25),(21,24),(22,23)\big\}.
}
\caption{The index sets $I_s^{\omega_5}$ ($i<j$), $1\le s\le 27$, for E$_6$.}
\label{fig:E6I}
\end{figure}
The corresponding coordinates $\ve_{ij}^{q,s}$, $1\le s\le 27$, are given by $\ve_{ij}^{q,s}=(-q)^{5-|(i,j)|}$ if $i<j$, and $\ve_{ij}^{q,s}=\ve_{ji}^{q^{-1},s}$ if $i>j$. 

\subsection{Type E\texorpdfstring{$_7$}{2}}
\label{E7 app}

A diagram of the first fundamental module $L_{\omega_1}$ is shown in Figure \ref{fig:E7 first fundamental}.
\begin{figure}
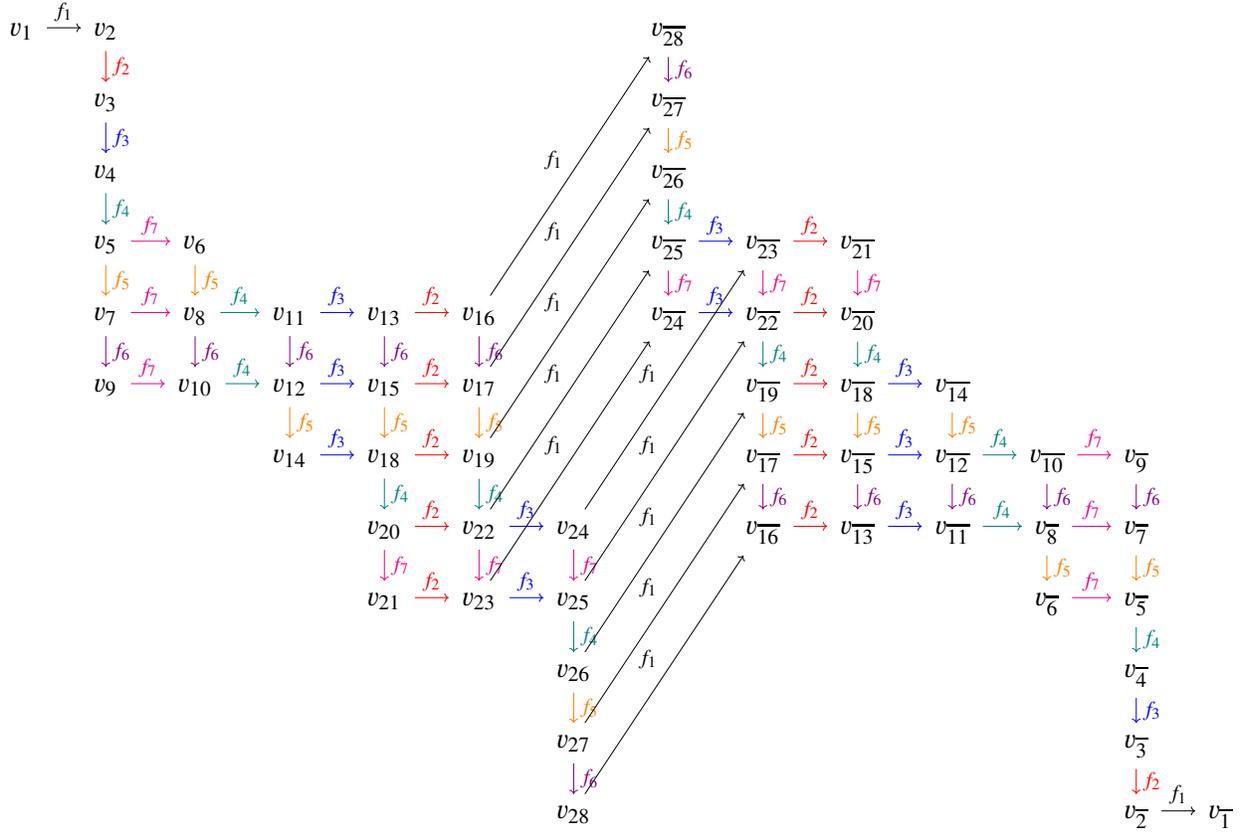

\ddia{sep=small,arrows={line width=0.25pt}}{
v_1\ar[r,"f_1"] \& v_2\ar[d,red,"f_2"] \& \& \& \& \& \& v_{\ol{28}}\ar[d,violet,"f_6"] \& \& \& \& \& \& \\
\& v_3\arrow[d,blue,"f_3"] \& \& \& \& \& \& v_{\ol{27}}\ar[d,orange,"f_5"] \& \& \& \& \& \& \\
\& v_4\arrow[d,teal,"f_4"] \& \& \& \& \& \& v_{\ol{26}}\ar[d,teal,"f_4"] \& \& \& \& \& \& \\ 
\& v_5\ar[r,magenta,"f_7"]\ar[d,orange,"f_5"] \& v_6\ar[d,orange,"f_5"] \& \& \& \& \& v_{\ol{25}}\ar[r,blue,"f_3"]\ar[d,magenta,"f_7"] \& v_{\ol{23}}\ar[r,red,"f_2"]\ar[d,magenta,"f_7"] \& v_{\ol{21}}\ar[d,magenta,"f_7"] \& \& \& \& \\
\& v_7\ar[d,violet,"f_6"]\ar[r,magenta,"f_7"] \& v_8\ar[r,teal,"f_4"]\ar[d,violet,"f_6"] \& v_{11}\ar[r,blue,"f_3"]\ar[d,violet,"f_6"] \& v_{13}\ar[r,red,"f_2"]\ar[d,violet,"f_6"] \& v_{16} \ar[rruuuu,"f_1"] \ar[d,violet,"f_6"] \& \& v_{\ol{24}}\ar[r,blue,"f_3"] \& v_{\ol{22}}\ar[r,red,"f_2"]\ar[d,teal,"f_4"] \& v_{\ol{20}}\ar[d,teal,"f_4"] \& \& \& \& \\
\& v_9 \ar[r,magenta,"f_7"] \& v_{10}\ar[r,teal,"f_4"] \& v_{12}\ar[r,blue,"f_3"]\ar[d,orange,"f_5"] \& v_{15}\ar[r,red,"f_2"]\ar[d,orange,"f_5"] \& v_{17}\ar[rruuuu,"f_1"]\ar[d,orange,"f_5"] \& \& \& v_{\ol{19}}\ar[r,red,"f_2"]\ar[d,orange,"f_5"] \& v_{\ol{18}}\ar[r,blue,"f_3"]\ar[d,orange,"f_5"] \& v_{\ol{14}}\ar[d,orange,"f_5"] \& \& \& \\
\& \& \& v_{14}\ar[r,blue,"f_3"] \& v_{18}\ar[r,red,"f_2"]\ar[d,teal,"f_4"] \& v_{19}\ar[rruuuu,"f_1"]\ar[d,teal,"f_4"] \& \& \& v_{\ol{17}}\ar[r,red,"f_2"]\ar[d,violet,"f_6"] \& v_{\ol{15}}\ar[r,blue,"f_3"]\ar[d,violet,"f_6"] \& v_{\ol{12}}\ar[r,teal,"f_4"]\ar[d,violet,"f_6"] \& v_{\ol{10}}\ar[r,magenta,"f_7"]\ar[d,violet,"f_6"] \& v_{\ol{9}}\ar[d,violet,"f_6"] \& \\ 
\& \& \& \& v_{20}\ar[r,red,"f_2"]\ar[d,magenta,"f_7"] \& v_{22} \ar[r,blue,"f_3"] \ar[d,magenta,"f_7"] \ar[rruuuu,"f_1"] \& v_{24}\ar[d,magenta,"f_7"]\ar[rruuuu,"f_1"] \&  \& v_{\ol{16}}\ar[r,red,"f_2"] \& v_{\ol{13}}\ar[r,blue,"f_3"] \& v_{\ol{11}}\ar[r,teal,"f_4"] \& v_{\ol{8}}\ar[r,magenta,"f_7"]\ar[d,orange,"f_5"] \& v_{\ol{7}}\ar[d,orange,"f_5"] \& \\
\& \& \& \& v_{21}\ar[r,red,"f_2"] \& v_{23}\ar[r,blue,"f_3"]\ar[rruuuu,"f_1"] \& v_{25} \ar[d,teal,"f_4"] \ar[rruuuu,"f_1"] \& \& \& \& \& v_{\ol{6}}\ar[r,magenta,"f_7"] \&  v_{\ol{5}}\ar[d,teal,"f_4"] \& \\
\& \& \& \& \& \& v_{26} \ar[rruuuu,"f_1"] \ar[d,orange,"f_5"] \& \& \& \& \& \& v_{\ol{4}}\ar[d,blue,"f_3"] \& \\
\& \& \& \& \& \& v_{27} \ar[rruuuu,"f_1"] \ar[d,violet,"f_6"] \& \& \& \& \& \& v_{\ol{3}}\ar[d,red,"f_2"] \& \\
\& \& \& \& \& \& v_{28} \ar[rruuuu,"f_1"] \& \& \& \& \& \& v_{\ol{2}}\ar[r,"f_1"] \& v_{\ol{1}}
}
    \caption{First fundamental module for E$_7$.}
    \label{fig:E7 first fundamental}
\end{figure}
Here $v_j$ are ordered as their $\ell$-weights appear in the $q$-character of $\tl L_{\mr{1}_0}$ in \eqref{qchar E7} and $\ol{i}=57-i$, $1\leq i\leq 28$. The coefficients of all the arrows are one. The action of $f_i$'s is indicated in this diagram. The action of $e_i$'s is obtained by reversing all the arrows and keeping the same coefficient on each arrow.

The subalgebra of $U_q(E_7)$ generated by $\{e_i,f_i,k_i^{\pm1}:2\le i\le 7\}$ is isomorphic to $U_q(E_6)$.
As a module over this $U_q(E_6)$ subalgebra, the vector representation shown in Figure \ref{fig:E7 first fundamental}, and the $133$-dimensional $U_q(E_7)$-adjoint representation $L_{\omega_6}\se L_{\omega_1}^{\otimes 2}$ (see Figure \ref{fig:E7 adjoint}) decompose respectively as 
\eq{\label{E7 om1 E6}
\underbracket[0.1ex]{L_{\omega_1}}_{56}\cong \underbracket[0.1ex]{L_{\omega_0}^{(6)}}_{1}\oplus \underbracket[0.1ex]{L_{\omega_1}^{(6)}}_{27} \oplus \underbracket[0.1ex]{L_{\omega_5}^{(6)}}_{27} \oplus \underbracket[0.1ex]{L_{\omega_0}^{(6)}}_{1}\ ,} 
\eq{\label{E7 om6 E6}
\underbracket[0.1ex]{L_{\omega_6}}_{133}\cong \underbracket[0.1ex]{L_{\omega_1}^{(6)}}_{27} \oplus \underbracket[0.1ex]{L_{\omega_6}^{(6)}}_{78} \oplus \underbracket[0.1ex]{L_{\omega_0}^{(6)}}_{1}\oplus \underbracket[0.1ex]{L_{\omega_5}^{(6)}}_{27}\ ,
}
where $L^{(6)}_{\lambda}$ are $U_q(E_6)$-irreducible modules of highest $U_q(E_6)$-weight $\lambda$. The summands in \eqref{E7 om1 E6} are spans of $\{v_1\},\ \{v_i:2\le i\le 28\},\ \{v_{\ol i}:2\le i\le 28\},\ \{v_{\ol 1}\}$ respectively.

We now describe a basis $\{u_s\}_{s=1}^{133}$ for $L_{\omega_6}\se L_{\omega_1}^{\otimes 2}$ which is ordered such that the summands in \eqref{E7 om6 E6} are respectively spans of $\{u_s:1\le s\le 27\},\ \{u_s:28\le s\le 106,\,s\ne64\},\ \{u_{64}\},\ \{u_s:107\le i\le 133\}$. The vectors $u_s$, $64\le s\le 70$, are of zero weight. Vectors $u_{65},\dots,u_{70}$ come from $L_{\omega_6}^{(6)}$ and $u_{64}$ generates $L_{\omega_0}^{(6)}$. A diagram of the $L_{\om_6}$ representation in our choice of basis around zero weight vectors is shown in Figure \ref{fig:E7 adjoint}.
\begin{figure}
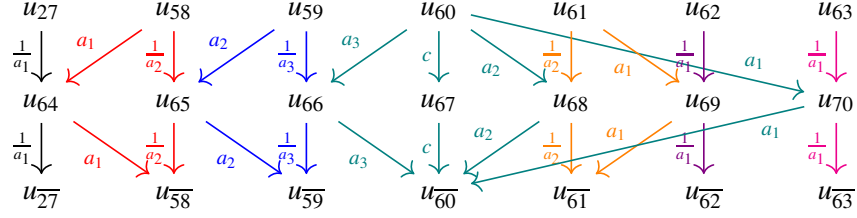

\dia{
u_{27}\ar[d,"{\frac{1}{a_1}}"'] \& u_{58} \ar[d,red,"{\frac{1}{a_2}}"'] \ar[ld,red,"a_1"'] \& u_{59} \ar[d,blue,"{\frac{1}{a_3}}"'] \ar[ld,blue,"a_2"'] \& u_{60} \ar[d,teal,"c"']\ar[rrrd,teal,"a_1",pos=0.8]\ar[rd,teal,"a_2"']\ar[ld,teal,"a_3"'] \& u_{61} \ar[d,orange,"{\frac{1}{a_2}}"'] \ar[rd,orange,"a_1"'] \& u_{62}\ar[d,violet,"{\frac{1}{a_1}}"'] \& u_{63} \ar[d,magenta,"{\frac{1}{a_1}}"'] \\ 
u_{64}\ar[d,"{\frac{1}{a_1}}"']\ar[rd,red,"a_1"'] \& u_{65}\ar[d,red,"{\frac{1}{a_2}}"']\ar[rd,blue,"a_2"'] \& u_{66}\ar[d,blue,"{\frac{1}{a_3}}"'] \ar[rd,teal,"a_3"'] \& u_{67} \ar[d,teal,"c"'] \& u_{68}\ar[d,orange,"{\frac{1}{a_2}}"']\ar[ld,teal,"a_2"'] \& u_{69}\ar[d,violet,"{\frac{1}{a_1}}"']\ar[ld,orange,"a_1"'] \& u_{70} \ar[d,magenta,"{\frac{1}{a_1}}"'] \ar[llld,teal,"a_1",pos=0.15] \\ 
u_{\ol{27}} \& u_{\ol{58}} \& u_{\ol{59}} \& u_{\ol{60}} \& u_{\ol{61}} \& u_{\ol{62}} \& u_{\ol{63}}
}
    \caption{The adjoint module for E$_7$ (around weight zero vectors $u_i,64\le i\le 70$).}
    \label{fig:E7 adjoint}
\end{figure}
Here $\ol{i}=134-i$, $a_j=\sqrt{\frac{[j]}{[j+1]}}$, $1\le j\le 3$, $c=\sqrt{\frac{[3]_3^{\mr{i}}}{[3]\,[4]}}$, and the colors of arrows correspond to simple roots as follows: 
\medskip
\dia{ \,\ar[r,"f_1"'] \& \,\,\,\,\,\,\,\,\ar[r,red,"f_2"'] \& \,\,\,\,\,\,\,\, \ar[r,blue,"f_3"'] \&  \,\,\,\,\,\,\,\, \ar[r,teal,"f_4"'] \&  \,\,\,\,\,\,\,\, \ar[r,orange,"f_5"'] \&  \,\,\,\,\,\,\,\, \ar[r,violet,"f_6"'] \&  \,\,\,\,\,\,\,\, \ar[r,magenta,"f_7"'] \& \, } 

We note that $[3]_3^{\mr{i}}=\kappa_{36}(q)$ is the symmetric form of 36-th cyclotomic polynomial.

To complete the  diagram in Figure \ref{fig:E7 adjoint}, one has to add 102 more vectors  and connect by arrows of color $i$ the pairs of vectors whose $\ell$-weight differ by an $i$-th affine root. All these arrows have coefficient one.
Then the total diagram describes the action of $f_i$, $i\in \mr{I}$.
For example, $f_2\,v_{58}=a_1\,v_{64}+\frac{1}{a_2}v_{65}$. The action of $e_i$'s is obtained by reversing all the arrows and keeping the same coefficient on each arrow.

\medskip

The basis vectors $u_s$ are of the form
\bee{
u_s=\sum_{(i,j)\in I_s^{\omega_6}}\ve_{ij}^{q,s}v_i\otimes v_j\ ,\quad 1\le s\le 133\ .
}
The sets $I_s^{\omega_6}$ and coordinates $\ve_{ij}^{q,s}$, $1\le s\le 133$, are used in the expression of $T(z)$ in $\eqref{E7TQ}$. The sets $I_s^{\omega_6}$ have cardinality $56$ for $64\le s\le 70$ and 12 otherwise, and they have the property $(i,j)\in I_s^{\omega_6}$ if and only if $(j,i)\in I_s^{\omega_6}$. The element $(j,i)$, $i<j$, is placed in $I_s^{\omega_6}$ symmetrically, that is such that $|(i,j)|+|(j,i)|=|I_s^{\omega_6}|+1$. We always have $i\ne j$ in this case. The corresponding coordinates $\ve_{ij}^{q,s}$ have the property $
\ve_{ji}^{q,s}=\ve_{ij}^{q^{-1},s},\ i<j,\ 1\le s\le 133$.
Therefore, we list $I_s^{\omega_6}$ and $\ve_{ij}^{q,s}$ here only for $i<j$.

For $1\le s\le 27$, the 12 element sets $I_s^{\omega_6}$ are related to the vectors in the first summand in \eqref{E7 om6 E6}, and the corresponding $6$ element subsets are written using the $5$-element $E_6$ lists $I_s^{\omega_5,(6)}$ in Figure \ref{fig:E6I} as 
\bee{
\big\{(1,s+28)\big\}\cup \big\{(i+1,j+1):(i,j)\in I_s^{\omega_5,(6)}\big\}\ ,\quad 1\le s\le 27.
}
Here the position of $(1,s+28)$, $1\le s\le 27$, is $1$, and the position of $(i+1,j+1)$ is one more than the position of $(i,j)$ in $I_s^{\om_5,(6)}$. For $28\le s\le 63$, the sets $I_s^{\omega_6}$ are related to the positive roots in the second summand in \eqref{E7 om6 E6}, which is the $78$-dimensional adjoint representation of $U_q($E$_6)$. These $36$ sets with $6$ elements are listed in Figure \ref{fig:E7I}.
\begin{figure}
\bee{
&\scriptstyle \big\{(2,36),(3,34),(4,32),(5,31),(7,30),(9,29)\big\},\, 
\big\{(2,37),(3,35),(4,33),(6,31),(8,30),(10,29)\big\},\, 
\big\{(2,39),(3,38),(5,33),(6,32),(11,30),(12,29)\big\},
}
\bee{
&\scriptstyle \big\{(2,43),(4,38),(5,35),(6,34),(13,30),(15,29)\big\},\, 
\big\{(2,42),(3,40),(7,33),(8,32),(11,31),(14,29)\big\},\, 
\big\{(3,43),(4,39),(5,37),(6,36),(16,30),(17,29)\big\},
}
\bee{
&\scriptstyle \big\{(2,45),(4,40),(7,35),(8,34),(13,31),(18,29)\big\},\, 
\big\{(2,44),(3,41),(9,33),(10,32),(12,31),(14,30)\big\},\,
\big\{(3,45),(4,42),(7,37),(8,36),(16,31),(19,29)\big\}, 
}
\bee{
&\scriptstyle\big\{(2,47),(5,40),(7,38),(11,34),(13,32),(20,29)\big\},\,
\big\{(2,46),(4,41),(9,35),(10,34),(15,31),(18,30)\big\},\,
\big\{(3,47),(5,42),(7,39),(11,36),(16,32),(22,29)\big\},
}
\bee{
&\scriptstyle \big\{(3,46),(4,44),(9,37),(10,36),(17,31),(19,30)\big\},\,
\big\{(2,49),(5,41),(9,38),(12,34),(15,32),(20,30)\big\},\,
\big\{(2,48),(6,40),(8,38),(11,35),(13,33),(21,29)\big\}, 
}
\bee{
&\scriptstyle \big\{(4,47),(5,45),(7,43),(13,36),(16,34),(24,29)\big\},\,
\big\{(3,49),(5,44),(9,39),(12,36),(17,32),(22,30)\big\},\,
\big\{(3,48),(6,42),(8,39),(11,37),(16,33),(23,29)\big\}, 
}
\bee{
&\scriptstyle \big\{(2,51),(7,41),(9,40),(14,34),(18,32),(20,31)\big\},\,
\big\{(2,50),(6,41),(10,38),(12,35),(15,33),(21,30)\big\},\,
\big\{(4,49),(5,46),(9,43),(15,36),(17,34),(24,30)\big\}, 
}
\bee{
&\scriptstyle \big\{(4,48),(6,45),(8,43),(13,37),(16,35),(25,29)\big\},\,
\{(3,51),(7,44),(9,42),(14,36),(19,32),(22,31)\big\},\,
\big\{(3,50),(6,44),(10,39),(12,37),(17,33),(23,30)\big\}, 
}
\bee{
&\scriptstyle\{(2,52),(8,41),(10,40),(14,35),(18,33),(21,31)\big\},\,
\big\{(4,51),(7,46),(9,45),(18,36),(19,34),(24,31)\big\},\,
\big\{(4,50),(6,46),(10,43),(15,37),(17,35),(25,30)\big\}, 
}
\bee{
&\scriptstyle \big\{(5,48),(6,47),(11,43),(13,39),(16,38),(26,29) \big\},\,
\big\{(3,52),(8,44),(10,42),(14,37),(19,33),(23,31)\big\},\,
\big\{(2,53),(11,41),(12,40),(14,38),(20,33),(21,32)\big\}, 
}
\bee{
&\scriptstyle \big\{(5,51),(7,49),(9,47),(20,36),(22,34),(24,32)\big\},\,
\big\{(7,48),(8,47),(11,45),(13,42),(16,40),(27,29)\big\},\,
\big\{(5,50),(6,49),(12,43),(15,39),(17,38),(26,30)\big\}, 
}
\bee{
&\scriptstyle\big\{(4,52),(8,46),(10,45),(18,37),(19,35),(25,31)\big\},\,
\big\{(3,53),(11,44),(12,42),(14,39),(22,33),(23,32)\big\},\,
\big\{(2,54),(13,41),(15,40),(18,38),(20,35),(21,34)\big\}\ .
}
\caption{The index sets $I_s^{\omega_6}\ (i<j),\ 28\le s\le 63$, for E$_7$.}
\label{fig:E7I}
\end{figure}
The coordinates are given by $\ve_{ij}^{q,s}=-(-q)^{6-|(i,j)|}$, $i<j$, $1\le s\le 63$.

For $71\le s\le 133$, we have
\bee{
I_s^{\omega_6}=\{(\ol{j},\ol{i}):(i,j)\in I_{134-s}^{\omega_6}\}\ ,\quad \ve_{ij}^{q,s}=\ve_{\ol{j}\,\ol{i}}^{q,134-s}\ .
}
Here $(\ol{j},\ol{i})$ has the same position in $I_s^{\omega_6}$ as $(i,j)$ in $I^{\omega_6}_{134-s}$. These sets correspond to the negative roots of $L_{\omega_6}$.

For $64\le s\le 70$, the sets $I_s^{\omega_6}$ are all the same. These sets are related to the zero weight vectors in $L_{\omega_6}$,
\bee{
I_s^{\omega_6}=\{(i,\ol{i}):1\le i\le 56\}\ .
}
The coordinates  $\{\ve_{i\,\ol{i}}^{q,s}:1\le i\le 28\}$, for $64\le s\le 70$ are listed in Figure \ref{fig:E7 parities}. Here $c_3=q^{-3}+q^{-1}-q^3$, $c_5^\pm=q^{\pm 5}+q^{\pm3}+q^{\pm1}-q^{\mp3}-q^{\mp5}$, and we use the notation $\{a\}^{k}$ to indicate repetitions, so that $\{0\}^4$ means $0, 0, 0, 0$.
\begin{figure}
\bee{\frac{q^{1/2}}{\sqrt{[2]}}\bigg\{q^4, q^5, \{0\}^{13}, -q^3, q^2, 0, -q, \{0\}^2, 1, \{-q^{-1} \}^2, q^{-2}, -q^{-3}, q^{-4}, -q^{-5}\bigg\}\ ,\ \frac{q^{1/2}}{\sqrt{[2][3]}}\bigg\{-q^4, q^3, q^5[2], \{0\}^9, -q^3[2], \\
0, q^2[2], -q^5, q^4, -q[2], -q^3, [2], -q^{-1}[2], q^2, -q, q^{-1}, -q^{-2}, q^{-3}, -q^{-4}, q^{-5}\bigg\}\ ,\ \frac{q^{1/2}}{\sqrt{[3][4]}}\bigg\{q^4, -q^3, q^2, q^5[3], \\ 
\{0\}^6, -q^3[3], q^2[3], -q^6, -q[3], \{q^5\}^2, \{-q^4\}^2, q^3, -[2], \{q^{-1}[2]\}^2, -q^{-2}[2], q^2[2], -q[2], -q^{-3}, q^{-4}, -q^{-5}\bigg\}\ , }
\bee{\frac{q^{1/2}}{\sqrt{[3][4][3]_3^\mr{i}}}\bigg\{ -q^{4}[3], q^{3}[3], -q^{2}[3], q[3], -q^{5}c_3[2]_2, \{q^{4}c_3[2]_2\}^2, \{-q^{3}c_3[2]_2\}^2, q^{2}c_3[2]_2, q^{4}c_5^-, \{-q^{3}c_5^-\}^2, \\
\{q^{2}c_5^-\}^3, \{-qc_5^-\}^2, c_5^-, -q^{2}[2]^\mr{i}[3]^2, \{q[2]^\mr{i}[3]^2\}^2, \{-[2]^\mr{i}[3]^2\}^2, q^{-1}[2]^\mr{i}[3]^2, -c_5^+, q^{-1}c_5^+, -q^{-2}c_5^+\bigg\}\ ,}
\bee{\frac{q^{1/2}}{\sqrt{[2][3]}}\bigg\{\{0\}^4, q^{4}[2], -q^{3}[2], q^{6}, \{-q^{5}\}^2, q^{4}, -q^{2}, \{q\}^2, q^{3}[2], \{-1\}^2, q^{-1}, -q^{2}[2], q[2], \{0\}^6, -q^{-1}[2], -q, 1 \bigg\}, }
\bee{\frac{q^{1/2}}{\sqrt{[2]}}\bigg\{\{0\}^6, q^4, -q^{3}, q^{5}, -q^{4}, q^{2}, q^{3}, -q, 0, -q^{2}, 1, q, \{0\}^9, -q^{-1}, -1\bigg\}\ , }
\bee{\frac{q^{1/2}}{\sqrt{[2]}}\bigg\{\{0\}^4, q^4, q^5, -q^3, -q^4, q^2, q^3,\{0\}^9, -q, -q^2, 1, q, -q^{-1}, -1,\{0\}^3\bigg\}\ .
}
\caption{The coordinates $\{\ve_{i\,\ol{i}}^{q,s}:1\le i\le 28\},\,64\le s\le 70$ corresponding to $L_{\omega_6}$ for E$_7$.}
\label{fig:E7 parities}
\end{figure}

\subsection{Type F\texorpdfstring{$_4$}{2}}
\label{F4 app}

A diagram of the first fundamental module $L_{\omega_1}$ is shown in Figure \ref{fig:F4 first fundamental}.
\comment{\dia{v_1\ar[r,teal,"f_4"'] \& v_2\ar[r,blue,"f_3"'] \& v_3\ar[r,red,"f_2"'] \& v_{13}\ar[lld,"f_1"]\ar[r,red,"f_2"'] \& v_4\ar[lld,"f_1"]\ar[r,blue,"f_3"] \& v_5\ar[lld,"f_1"]\ar[r,teal,"f_4"] \& v_6\ar[lld,"f_1"] \& \& \\ \& v_{14}\ar[r,red,"f_2"'] \& v_{15}\ar[lld,"f_1"] \ar[r,blue,"f_3"'] \& v_{16}\ar[lld,"f_1"] \ar[r,teal,"f_4"']\ar[d,red,"f_2"] \& v_{17}\ar[lld,"f_1"] \ar[d,red,"f_2"] \& \& \& \& \\ v_7\ar[r,blue,"f_2"] \& v_8\ar[rrd,red,"f_2"']\ar[r,teal,"f_4"] \& v_9\ar[rrd,red,"f_2"'] \& v_{18}\ar[r,teal,"f_4"']\ar[d,"f_1"] \& v_{20}\ar[r,blue,"f_3"']\ar[d,"f_1"] \& v_{22}\ar[r,red,"f_2"']\ar[d,"f_1"] \& v_{24}\ar[r,"{[2]}f_1"]\ar[d,"{[2]}f_1"] \& v_{28}\ar[d,"{\frac{[2]}{[3]}}f_1"] \& \\ \& \& \& v_{19}\ar[r,teal,"f_4"'] \& v_{21}\ar[r,blue,"f_3"'] \& v_{23}\ar[r,red,"f_2"'] \& v_{27}\ar[r,"{[2]}f_1"']\ar[d,red,"f_2"] \& v_{\ol{12}}\ar[d,red,"f_2"] \& \\ \& \& \& \& \& \& v_{\ol{11}}\ar[d,blue,"f_3"]\ar[r,"f_1"] \& v_{\ol{10}}\ar[d,blue,"f_3"] \& \\ \& \& \& \& \& \&v_{\ol{9}}\ar[r,"f_1"']\ar[d,teal,"f_4"] \& v_{\ol{8}}\ar[r,red,"f_2"']\ar[d,teal,"f_4"] \& v_{\ol{5}}\ar[d,teal,"f_4"]\\ \& \& \& \& \& \& v_{\ol{7}}\ar[r,"f_1"']\& v_{\ol{6}}\ar[r,red,"f_2"'] \& v_{\ol{4}}\ar[d,blue,"f_3"] \\ \& \& \& \& \& \& \& \& v_{\ol{3}}\ar[d,red,"f_2"] \\ \& \& \& \& \& \& \& \& v_{\ol{2}}\ar[d,"f_1"] \\ \& \& \& \& \& \& \& \& v_{\ol{1}}}}
\begin{figure}
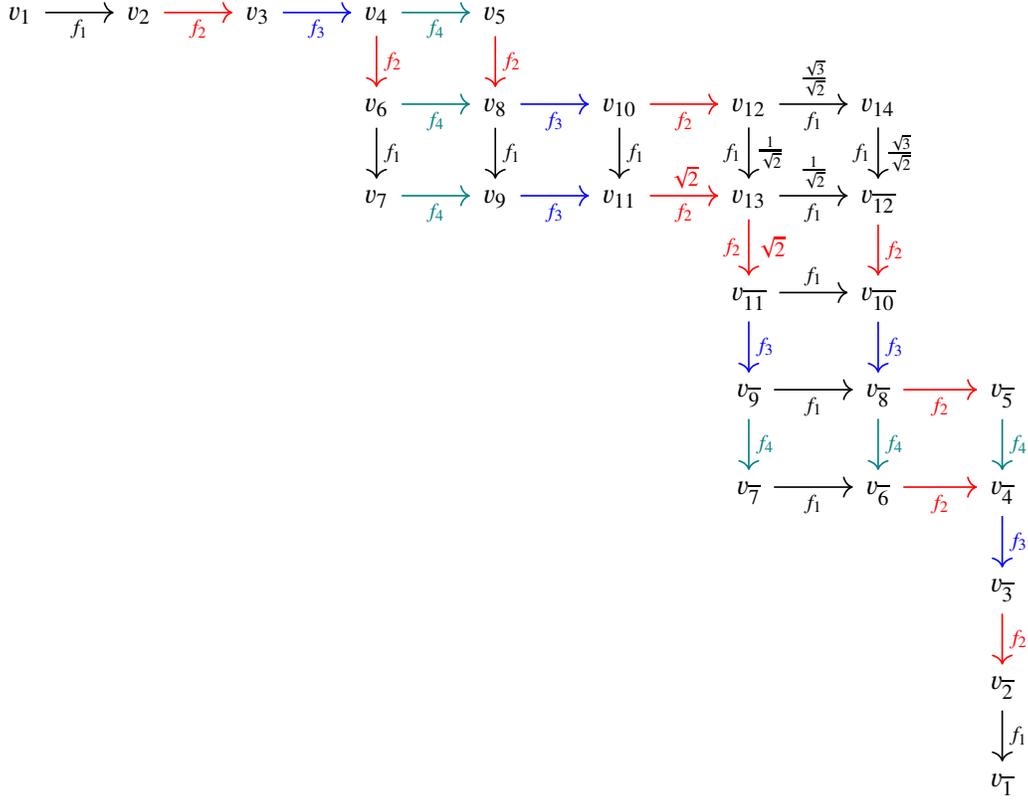

\dia{v_1\ar[r,"f_1"'] \& v_2\ar[r,red,"f_2"'] \& v_3\ar[r,blue,"f_3"'] \& v_4\ar[r,teal,"f_4"']\ar[d,red,"f_2"] \& v_5\ar[d,red,"f_2"] \& \& \& \& \\ \& \& \& v_6\ar[r,teal,"f_4"']\ar[d,"f_1"] \& v_8\ar[r,blue,"f_3"']\ar[d,"f_1"] \& v_{10}\ar[r,red,"f_2"']\ar[d,"f_1"] \& v_{12}\ar[r,"\frac{\sqrt{3}}{\sqrt{2}}","f_1"']\ar[d,"\frac{1}{\sqrt{2}}","f_1"'] \& v_{14}\ar[d,"\frac{\sqrt{3}}{\sqrt{2}}","f_1"'] \& \\ \& \& \& v_7\ar[r,teal,"f_4"'] \& v_9\ar[r,blue,"f_3"'] \& v_{11}\ar[r,red,"\sqrt{2}","f_2"'] \& v_{13}\ar[r,"\frac{1}{\sqrt{2}}","f_1"']\ar[d,red,"\sqrt{2}","f_2"'] \& v_{\ol{12}}\ar[d,red,"f_2"] \& \\ \& \& \& \& \& \& v_{\ol{11}}\ar[d,blue,"f_3"]\ar[r,"f_1"] \& v_{\ol{10}}\ar[d,blue,"f_3"] \& \\ \& \& \& \& \& \&v_{\ol{9}}\ar[r,"f_1"']\ar[d,teal,"f_4"] \& v_{\ol{8}}\ar[r,red,"f_2"']\ar[d,teal,"f_4"] \& v_{\ol{5}}\ar[d,teal,"f_4"]\\ \& \& \& \& \& \& v_{\ol{7}}\ar[r,"f_1"']\& v_{\ol{6}}\ar[r,red,"f_2"'] \& v_{\ol{4}}\ar[d,blue,"f_3"]\\ \& \& \& \& \& \& \& \& v_{\ol{3}}\ar[d,red,"f_2"] \\ \& \& \& \& \& \& \& \& v_{\ol{2}}\ar[d,"f_1"] \\ \& \& \& \& \& \& \& \& v_{\ol{1}}} 
    \caption{First fundamental module for F$_4$}
    \label{fig:F4 first fundamental}
\end{figure}
Here $v_j$ are ordered as their $\ell$-weights appear in the $q$-character of $\tl L_{\mr{1}_0}$ in \eqref{qchar F4} and $\ol{i}=27-i$, $1\leq i\leq 12$. The numbers in coefficients of arrows are quantum numbers, and if coefficient of an arrow is not given, it is assumed to be one. The action of $f_i$'s is indicated in this diagram. For example, $f_1\,v_{12}=\frac{1}{\sqrt{[2]}}v_{13}+\frac{\sqrt{[3]}}{\sqrt{[2]}}v_{14}$. The action of $e_i$'s is obtained by reversing all the arrows and keeping the same coefficient on each arrow. 

We now describe bases $\{w_s\}_{s=1}^{26}$ and $\{u_s\}_{s=1}^{52}$ for $L_{\om_1}\se L_{\om_1}^{\otimes 2}$ and $L_{\om_4}\se L_{\om_1}^{\otimes 2}$ respectively. These basis vectors are of the form 
\bee{
w_s=\sum_{(i,j)\in I_s^{\omega_1}}\mu_{ij}^{q,s}\,v_i\otimes v_j\ ,\quad 1\le s\le 26\ ,\quad u_s=\sum_{(i,j)\in I_s^{\omega_4}}\ve_{ij}^{q,s}v_i\otimes v_j\ ,\quad 1\le s\le 52\ .
}
The sets $I_s^{\omega_1}$ and corresponding coordinates $\mu_{ij}^{q,s}$, $1\le s\le 26$, are used in the expression of $S(z)$ in \eqref{F4TSz}, while the sets $I_s^{\omega_4}$ and the corresponding coordinates $\ve_{ij}^{q,s}$, $1\le s\le 52$, are used in the expression of $T(z)$ in \eqref{F4TSz}.

The sets $I_s^{\la}$, $\la=\om_1,\om_4$, have the property that if $(i,j)\in I_s^{\la}$ then $(j,i)\in I_s^{\la}$. The element $(j,i)$, $i<j$, is placed in $I_s^{\la}$ symmetrically, that is such that $|(i,j)|+|(j,i)|=|I_s^{\la}|+1$. 
The corresponding coordinates $\mu_{ij}^{q,s}$ and $\ve_{ij}^{q,s}$ satisfy 
\bee{
\mu_{ji}^{q,s}=\mu_{ij}^{q^{-1},s}\ ,\quad i\le j\ ,\ 1\le s\le 26\ ,\qquad \ve_{ji}^{q,s}=-\ve_{ij}^{q^{-1},s}\ ,\quad i\le j\ ,\ 1\le s\le 52\ .
}
Therefore, we list the sets $I_s^{\la}$ and the corresponding coordinates here only for $i\le j$. 
We have $i=j$ only for $i=j=13$ and $i=j=14$, in which case $v_i\otimes v_j$ has weight zero.

\medskip

For $1\le s\le 12$, we list the subsets of $I_s^{\omega_1}$ having first coordinate less than the second one in Figure \ref{fig:F4I}.
\begin{figure}
\bee{
& \scriptstyle \big\{(1,13),(1,14),(2,12),(3,10),(4,8),(5,6)\big\},\,
\scriptstyle \big\{(1,15),(2,13),(2,14),(3,11),(4,9),(5,7)\big\},\,
\scriptstyle \big\{(1,17),(2,16),(3,13),(3,14),(6,9),(7,8)\big\},
}
\bee{
& \scriptstyle \big\{(1,19),(2,18),(4,13),(4,14),(6,11),(7,10)\big\},\,
\scriptstyle \big\{(1,21),(2,20),(5,13),(5,14),(8,11),(9,10)\big\},\,
\scriptstyle \big\{(1,22),(3,18),(4,16),(6,13),(6,14),(7,12)\big\},
}
\bee{
& \scriptstyle \big\{(1,23),(3,20),(5,16),(8,13),(8,14),(9,12)\big\},\,
\scriptstyle \big\{(1,24),(4,20),(5,18),(10,13),(10,14),(11,12) \big\},\,
\scriptstyle \big\{(2,22),(3,19),(4,17),(6,15),(7,13),(7,14)\big\},
}
\bee{
& \scriptstyle \big\{(2,23),(3,21),(5,17),(8,15),(9,13),(9,14)\big\},\,
\scriptstyle \big\{(2,24),(4,21),(5,19),(10,15),(11,13),(11,14)\big\},\,
\scriptstyle \big\{(1,25),(6,20),(8,18),(10,16),(12,13),(12,14) \big\}.
}
\caption{The index sets $I_s^{\omega_1}\ (i<j),\ 1\le s\le 12$ for F$_4$.}
\label{fig:F4I}
\end{figure}

The corresponding coordinates $\big\{q^{-1/2}\sqrt{[2]}\,\mu_{ij}^{q,s}/\sqrt{[3]}:(i,j)\in I_s^{\omega_1},i<j\big\},\ 1\le s\le 12$, are listed below.
\bee{
\bigg\{0,\frac{q^{11/2}\sqrt{[2]}}{\sqrt{[3]}}, -q^4, q^3, -q, q^{-1}\bigg\},\ s=1\ ,\quad \bigg\{q^5, \frac{-q^{9/2}}{\sqrt{[2]}}, \frac{-q^{5/2}}{\sqrt{[2][3]}}, q^3, -q, q^{-1}\bigg\},\ s=2\ ,
}
\bee{
\bigg\{q^5, -q^4, \frac{q^{5/2}}{\sqrt{[2]}}, \frac{-q^{5/2}}{\sqrt{[2][3]}}, -q, 1\bigg\},\ 3\le s\le 5\ ,\quad \bigg\{q^5, -q^4, q^2, \frac{-q^{1/2}}{\sqrt{[2]}}, \frac{-q^{5/2}}{\sqrt{[2][3]}}, 1\bigg\},\ 6\le s\le 8\ ,
}
\bee{
\bigg\{q^5, -q^4, q^2, -q, 0, \frac{q^{-1/2}\sqrt{[2]}}{\sqrt{[3]}}\bigg\},\ 9\le s\le 11\ ,\quad \bigg\{q^5, -q^4, q^2, -1, \frac{q^{-3/2}}{\sqrt{[2]}}, \frac{-q^{5/2}}{\sqrt{[2][3]}}\bigg\},\ s=12\ .
}

For $13\le s\le 14$, the sets $I_s^{\omega_1}$ are the same. These sets correspond to zero weight vectors in $L_{\omega_1}$. We have 
$$I_s^{\omega_1}=\{(1,\ol{1}),\dots,(13,\ol{13}),(13,13),(14,14),(\ol{13},13),\dots,(\ol{1},1)\}\ ,\quad 13\le s\le 14\ .$$
The coordinates $\big\{\mu_{i\,\ol{i}}^{q,s}:1\le i\le 13\big\} \cup \big\{\mu_{13,13}^{q,s},\mu_{14,14}^{q,s}\big\}$, $13\le s\le 14$, are listed below.
\bee{
\frac{1}{\sqrt{[2]}}\bigg\{0, q^5, q^6, -q^4, q^2, -q^5, 0, q^3, 0, -q, 0, -q^2, \frac{[2]}{\sqrt{[3]}}, 0, 0\bigg\}\ , 
}
\bee{
\frac{1}{\sqrt{[2][3]}}\bigg\{q^5[2], q^7, -q^6, q^4, -q^2, -q^3, -q^5[2], q, q^3[2], -q^{-1}, -q[2], q^{-2}, 0, [2], -[2]_3 \bigg\}\ .
}

For $15\le s\le 26$, we have 
\bee{
I_s^{\omega_1}=\{(\ol{j},\ol{i}):(i,j)\in I_{27-s}^{\omega_1}\}\ ,\quad \mu_{ij}^{q,s}=\mu_{\ol{j}\,\ol{i}}^{q,27-s}\ .
}
Here $(\ol{j},\ol{i})$ has the same position in $I_s^{\omega_1}$ as $(i,j)$ in $I_{27-s}^{\omega_1}$, and $\ol{13}=13$, $\ol{14}=14$.

\medskip

\comment{\dia{
v_1 \ar[rrd,teal,"f_4"] \& \&  \& \& \& \& \\
\& \& v_2 \ar[rd,blue,"f_3"] \& \& \& \& \\
\& \& \& v_3 \ar[d,red,"f_2^{(1)}"] \& \& \& \\
\& \& \& v_{13}\ar[d,red,"f_2^{(1)}"] \ar[ld,"f_1",pos=0.8] \& \& \& \\
\& \& v_{14}\ar[d,red,"f_2"] \& v_4\ar[ld,"f_1^{(1)}"] \ar[rd,blue,"f_3"] \& \& \& \\
\& \& v_{15}\ar[rd,blue,"f_3"]\ar[ld,"f_1^{(1)}"] \& \& v_5\ar[rrd,teal,"f_4"]\ar[ld,"f_1^{(1)}"] \& \& \\
\& v_7\ar[rd,blue,"f_3"] \& \& v_{16} \ar[ld,"f_1^{(1)}"] \ar[d,red,"f_2"] \ar[rrd,teal,"f_4",pos=0.6] \& \& \& v_6\ar[ld,"f_1^{(1)}"] \\
\& \& v_8 \ar[d,red,"f_2^{(1)}"] \ar[rrd,teal,"f_4",pos=0.7] \& v_{18} \ar[ld,"f_1"] \ar[rrd,teal,"f_4"] \& \& v_{17}\ar[ld,"f_1^{(1)}",pos=0.3] \ar[d,red,"f_2"] \& \\
\& \& v_{19}\ar[d,red,"f_2^{(1)}"]\ar[rrd,teal,"f_4"] \& \& v_9\ar[d,red,"f_2^{(1)}"] \& v_{20}\ar[ld,"f_1"] \ar[rd,blue,"f_3"] \& \\
\& \& v_{10}\ar[rd,blue,"f_3",pos=0.9] \ar[rrd,teal,"f_4"] \& \& v_{21} \ar[d,red,"f_2^{(1)}"] \ar[rd,blue,"f_3"] \& \& v_{22}\ar[ld,"f_1"] \ar[d,red,"f_2"] \\
\& \& \& v_{11}\ar[d,teal,"f_4"]\ar[rd,teal,"f_4"] \& v_{12} \ar[d,blue,"f_3"]\ar[rd,blue,"f_3"] \& v_{23}\ar[d,red,"f_2"] \ar[rd,red,"f_2"] \& v_{24} \ar[d,"f_1"] \\
\& \& \& v_{25} \ar[d,teal,"f_4"] \& v_{26} \ar[d,blue,"f_3"] \ar[ld,teal,"f_4"] \& v_{27}\ar[ld,blue,"f_3"] \ar[d,red,"f_2"] \& v_{28}\ar[d,"f_1"] \ar[ld,red,"f_2"] \\
\& \& \& v_{\ol{11}} \& v_{\ol{12}} \& v_{\ol{23}} \& v_{\ol{24}}
}

\dia{
v_1 \& \& \& \\
\& v_2 \& \& \\
\& \& v_3 \& \\
\& \& v_{13} \& \\
\& v_{14} \& v_4 \& \\
\& v_{15} \& v_5 \& \\
v_7 \& v_{16} \& \& v_6 \\
v_8 \& v_{18} \& v_{17} \& \\
v_{19} \& v_9 \& v_{20} \& \\
v_{10} \& v_{21} \& \& v_{22} \\
v_{11} \& v_{12} \& v_{23} \& v_{24} \\
v_{25} \& v_{26} \& v_{27} \& v_{28} \\
v_{\ol{11}} \& v_{\ol{12}} \& v_{\ol{23}} \& v_{\ol{24}}
}}

A diagram of the adjoint representation $L_{\omega_4}$ is shown in Figure \ref{fig:F4 adjoint}. Here $v_{25}, v_{26},v_{27}, v_{28}$ are zero weight vectors spanning the Cartan subalgebra. Negative roots denoted by dots can be added symmetrically.
\begin{figure}
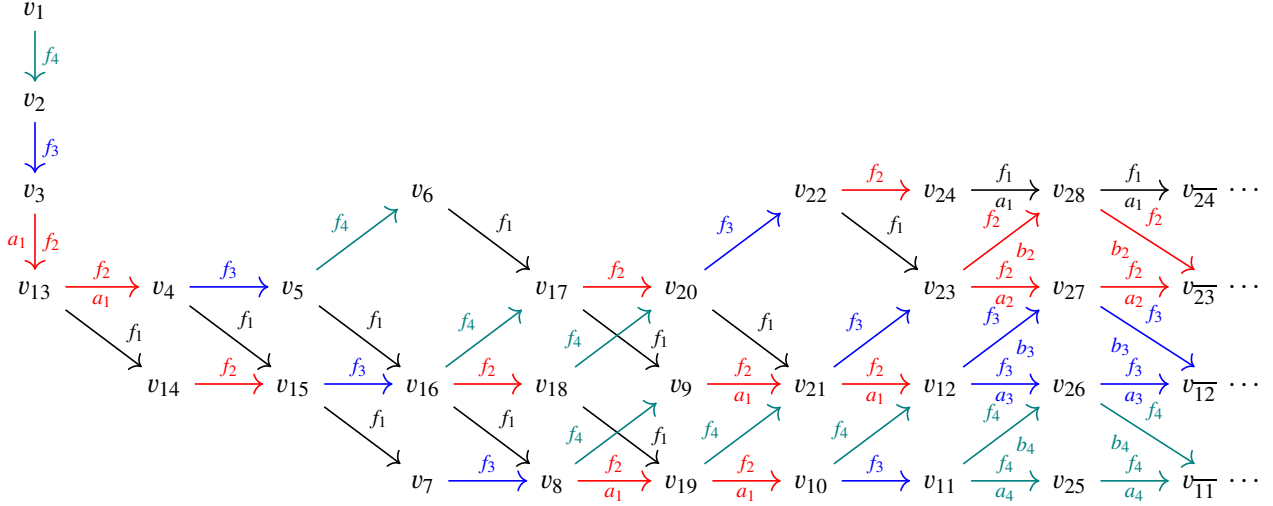

\dia{
v_1\ar[d,teal,"f_4"] \& \& \& \& \& \& \& \& \& \\
v_2\ar[d,blue,"f_3"] \& \& \& \& \& \& \& \& \& \\
v_3\ar[d,red,"f_2","a_1"'] \& \& \& v_6 \ar[rd,"f_1"] \& \& \& v_{22} \ar[r,red,"f_2"] \ar[rd,"f_1"] \& v_{24} \ar[r,"f_1","a_1"'] \& v_{28} \ar[r,"f_1","a_1"'] \ar[rd,red,"f_2","b_2"',pos=0.4] \& v_{\ol{24}} \ \cdots \\
v_{13} \ar[r,red,"f_2","a_1"'] \ar[rd,"f_1",pos=0.7] \& v_4\ar[r,blue,"f_3"]\ar[rd,"f_1"] \& v_5\ar[ru,teal,"f_4"]\ar[rd,"f_1"] \& \& v_{17} \ar[r,red,"f_2"] \ar[rd,"f_1",pos=0.8] \& v_{20} \ar[ru,blue,"f_3"] \ar[rd,"f_1"] \& \& v_{23} \ar[r,red,"f_2","a_2"'] \ar[ru,red,"f_2","b_2"',pos=0.6] \& v_{27} \ar[r,red,"f_2","a_2"'] \ar[rd,blue,"f_3","b_3"',pos=0.4] \& v_{\ol{23}} \ \cdots \\
\& v_{14}\ar[r,red,"f_2"] \& v_{15} \ar[r,blue,"f_3"] \ar[rd,"f_1"] \& v_{16} \ar[ru,teal,"f_4"] \ar[r,red,"f_2"] \ar[rd,"f_1"] \& v_{18} \ar[ru,teal,"f_4",pos=0.2] \ar[rd,"f_1",pos=0.8] \& v_9 \ar[r,red,"f_2","a_1"'] \& v_{21} \ar[r,red,"f_2","a_1"'] \ar[ru,blue,"f_3"] \& v_{12} \ar[r,blue,"f_3","a_3"'] \ar[ru,blue,"f_3","b_3"',pos=0.6] \& v_{26}\ar[r,blue,"f_3","a_3"'] \ar[rd,teal,"f_4","b_4"',pos=0.4] \& v_{\ol{12}} \ \cdots \\
\& \& \& v_7 \ar[r,blue,"f_3"] \& v_8 \ar[r,red,"f_2","a_1"'] \ar[ru,teal,"f_4",pos=0.2] \& v_{19} \ar[r,red,"f_2","a_1"'] \ar[ru,teal,"f_4",pos=0.3] \& v_{10} \ar[r,blue,"f_3"] \ar[ru,teal,"f_4",pos=0.3] \& v_{11} \ar[r,teal,"f_4","a_4"'] \ar[ru,teal,"f_4","b_4"',pos=0.6] \& v_{25} \ar[r,teal,"f_4","a_4"'] \& v_{\ol{11}} \ \cdots
}
    \caption{The adjoint module for F$_4$.}
    \label{fig:F4 adjoint}
\end{figure}
We have $\ol{i}=53-i$, $1\le i \le 24$, $\displaystyle a_1=\sqrt{[2]},\ a_2=\frac{\sqrt{[3]}}{\sqrt{[2]}},\ a_3=\frac{\sqrt{[2]_4}}{\sqrt{[3]}},\ a_4=\frac{\sqrt{[3]_3^{\mr{i}}}}{\sqrt{[2]_4}},\ b_2=a_1^{-1},\ b_3=\sqrt{[2]}\,a_2^{-1},\ b_4=a_3^{-1}$.
If coefficient of an arrow is not given, it is assumed to be one. The action of $f_i$'s is indicated in this diagram. For example, $f_2\,v_{23}=a_2\,v_{27}+b_2\,v_{28}$. The action of $e_i$'s is obtained by reversing all the arrows and keeping the same coefficient on each arrow.

We note that $[3]_3^{\mr{i}}=\kappa_{36}(q)$ is the symmetric form of 36-th cyclotomic polynomial and $[2]_4=\kappa_{16}(q)$ is the symmetric form of 16-th cyclotomic polynomial.

\medskip

For $1\le s\le 12$, we list below the subsets of $I_s^{\omega_4}$ having first coordinate less than the second one. 
\bee{
& \scriptstyle \big\{(1,7),(2,6),(3,4)\big\},\, \big\{(1,9),(2,8),(3,5)\big\},\, \big\{(1,11),(2,10),(4,5)\big\},\, \big\{(1,16),(3,12),(6,8)\big\},\, \big\{(1,18),(4,12),(6,10)\big\}, \big\{(1,20),(5,12),(8,10)\big\},
}
\bee{
& \scriptstyle \big\{(2,17),(3,15),(7,9)\big\},\, \big\{(2,19),(4,15),(7,11)\big\},\, \big\{(2,21),(5,15),(9,11)\big\},\, \big\{(3,22),(6,17),(7,16)\big\},\, \big\{(3,23),(8,17),(9,16)\big\},\, \big\{(4,22),(6,19),(7,18)\big\}.
}
The corresponding coordinates are given by $\ve_{ij}^{q,s}=-(-q)^{4-|(i,j)|},\ (i,j)\in I_s,\,i<j,\ 1\le s\le 12$.

For $13\le s\le 24$, $I_s^{\omega_4}=I_{s-12}^{\omega_1}$.  The corresponding coordinates $\{q^{-1/2}\sqrt{[2]}\,\ve_{ij}^{q,s}:(i,j)\in I_s,\,i<j\}$ are listed below.
\bee{
\bigg \{q^{3/2}\sqrt{[2]}, 0, -q, -q^2, 1, -q^{-2}\bigg\},\ s=13\ ,\quad \bigg\{q^2, \frac{q^{7/2}}{\sqrt{[2]}}, \frac{-q^{3/2}\sqrt{[3]}}{\sqrt{[2]}}, -q^2, 1, -q^{-2}\bigg\},\ s=14\ ,
}
\bee{
\bigg\{q^2, q^3, \frac{-q^{3/2}}{\sqrt{[2]}}, \frac{-q^{3/2}\sqrt{[3]}}{\sqrt{[2]}}, 1, q\bigg\},\ 15\le s\le 17\ ,\ \ \bigg\{q^2, q^3, -q, \frac{q^{-1/2}}{\sqrt{[2]}}, \frac{-q^{3/2}\sqrt{[3]}}{\sqrt{[2]}}, q\bigg\},\ 18\le s\le 20\ ,
}
\bee{
\bigg\{q^2, q^3, -q, -q^2, q^{1/2}\sqrt{[2]}, 0\bigg\},\ 21\le s\le 23\ ,\quad \bigg\{q^2, q^3, -q, q^{-1}, \frac{-q^{-5/2}}{\sqrt{[2]}}, \frac{-q^{3/2}\sqrt{[3]}}{\sqrt{[2]}}\bigg\},\ s=24\ .
}

For $25\le s\le 28$, the sets $I_s^{\omega_4}$ correspond to zero weight vectors in $L_{\omega_4}$ and are given by $$I_s^{\omega_4}=\{(1,\ol{1}),\dots,(13,\ol{13}), (13,13), (14,14),(\ol{13},13),\dots,(\ol{1},1)\}\ .$$
We list below the corresponding coordinates $\{\ve_{i\ol{i}}^{q,s}: 1\le i\le 13\}\cup \big\{\ve_{13,13}^{q,s},\ve_{14,14}^{q,s}\big\}$, $25\le s\le 28$.
\bee{
\frac{1}{\sqrt{[2]_4[3]_3^\mr{i}}}\bigg\{-q^2, q, -1, q^{-2}, q^5[2]_3^\mr{i}, -q^{-3}, q^{-4}, -q^4[2]_3^\mr{i}, q^3[2]_3^\mr{i}, q^2[2]_3^\mr{i}, -q[2]_3^\mr{i}, -q[2]_3^\mr{i}, 0, [2]_3^\mr{i}, [2]_3^\mr{i}\bigg\}\ ,\\ 
\frac{1}{\sqrt{[2]_4[3]}}\bigg\{q^2, -q, 1, q^6, q[2], -q^5, q^4, -[2], q^{-1}[2], -q^2[2]_3^\mr{i}, q[2]_3^\mr{i}, q[2]_3^\mr{i}, 0, -[2]_3^\mr{i}, -[2]_3^\mr{i} \bigg\}\ ,\\ 
\frac{1}{[2]\sqrt{[3]}}\bigg\{-q^2, q, q^3[2], q^3[2], -q[2], q^5, -q^4, -q^3, q^2, -q[3], [3], -q[2]_3^\mr{i}, 0, [2]_3^\mr{i}, [2]_3^\mr{i} \bigg\}\ ,\\ 
\frac{1}{[2]}\bigg\{q^2, q^3, 0, 0, 0, q^3, q^4, -q, -q^2, q^{-1}, 1, -[2]_2, 0, [2]^\mr{i}, -[2]_3^\mr{i}\bigg\}\ .
}

For $29\le s\le 52$, we have 
\bee{
I_s^{\omega_4}=\{(\ol{j},\ol{i}):(i,j)\in I_{53-s}^{\omega_4}\}\ ,\quad \ve_{ij}^{q,s}=\ve_{\ol{j}\,\ol{i}}^{q,53-s}\ .
}
Here $(\ol{j},\ol{i})$ in $I_s^{\om_4}$ has the same position as $(i,j)$ in $I_{53-s}^{\om_4}$, and $\ol{13}=13$, $\ol{14}=14$.

\subsection{Type G\texorpdfstring{$_2$}{2}}
\label{G2 app}

The following diagram shows the first fundamental representation $L_{\omega_1}$: 
\bigskip

\dia{v_1\ar[r,red,"f_1"'] \& v_2\ar[r,blue,"f_2"'] \& v_3\ar[r,red, "\sqrt{2}","f_1"'] \& v_4\ar[r,red,"\sqrt{2}","f_1"'] \& v_{\ol{3}}\ar[r,blue,"f_2"'] \& v_{\ol{2}}\ar[r,red,"f_1"'] \& v_{\ol{1}}} Here $v_j$ are ordered as their $\ell$-weights appear in the $q$-character of $\tl L_{\mr{1}_0}$ in \eqref{qchar G2} and $\ol{i}=8-i$, $1\le i\le 3$. The numbers in coefficients of arrows are quantum numbers, and if coefficient of an arrow is not given, it is assumed to be one. The action of $f_i$'s is indicated in the diagram above. For example, $f_1\,v_{3}=\sqrt{[2]}\,v_{4}$. The action of $e_i$'s is obtained by reversing all the arrows and keeping the same coefficient on each arrow.

\medskip

The sets $I_s^{\omega_1}$, $1\le s\le 7=\dim L_{\omega_1}$, appearing in the expression of $S(z)$ in \eqref{G2Sz},  have the property that if $(i,j)\in I_s^{\omega_1}$ then $(j,i)\in I_s^{\omega_1}$. For $1\le s\le 7$, $s\ne 4$, the sets $I_s^{\omega_1}$ have cardinality 4, and do not contain pairs of the form $(i,i)$. Moreover, the positions of $(i,j)$ and $(j,i)$ are symmetric, that is $|i,j|+|(j,i)|=5$.
We list below the subsets of $I_s^{\omega_1}$ with $s\neq 4$ which have the first coordinate less than the second one. 
\bee{
\big\{(1,4),(2,3)\big\},\, 
\big\{(1,5),(2,4)\big\},\, 
\big\{(1,6),(3,4)\big\},\, 
\big\{(2,7),(4,5)\big\},\, 
\big\{(3,7),(4,6)\big\},\, 
\big\{(4,7),(5,6)\big\}.
}
The set $I_4^{\omega_1}$ corresponds to the zero weight vector in $L_{\omega_1}\se L_{\omega_1}^{\otimes 2}$, and is given by $\{(i,\ol{i}):1\le i\le 7\}$. 

The corresponding coordinates $\mu_{ij}^{q,s}$ have the property that $\mu_{ij}^{q,s} = -\mu_{ji}^{q^{-1},s}$. The sets $\{\mu_{ij}^{q,s}:(i,j)\in I_s^{\omega_1},\ i<j\}$ are listed below for $1\le s\le 7$.
\bee{
\big\{q^3,-q^{\frac{3}{2}}\sqrt{[2]}\big\},\,
\big\{q^{\frac{5}{2}}\sqrt{[2]},-q\big\},\,
\big\{q^{\frac{5}{2}}\sqrt{[2]},-q\big\},\,
\big\{q^2,q^3,-1\big\},\,
\big\{q^{\frac{5}{2}}\sqrt{[2]}, -q\big\},\,
\big\{q^{\frac{5}{2}}\sqrt{[2]}, -q\big\},\,
\big\{q^3,-q^{\frac{3}{2}}\sqrt{[2]}\big\},
}
and $\mu_{4,4}^{q,4}=-[2]^\mr{i}$.

\subsection{Type E\texorpdfstring{$_8$}{2}}
\label{E8 app}

The $q$-character of $\tl{L}_{\mr{1}_0}$ has 248 monomials (one with coefficient two), with 8 (one with coefficient two) being zero-weight terms which are shown in the box.
$$\chi_q(\mr{1}_0)=\mr{1}_{0} + \bigg( \ul{\mr{1}_{2}^{-1}\mr{2}_{1}} + \Big[ \mr{2}_{3}^{-1}\mr{3}_{2} + \mr{3}_{4}^{-1}\mr{4}_{3} + \mr{4}_{5}^{-1}\mr{5}_{4} + \mr{5}_{6}^{-1}\mr{6}_{5}\mr{8}_{5} + \mr{6}_{5}\mr{8}_{7}^{-1} + \mr{6}_{7}^{-1}\mr{7}_{6}\mr{8}_{5} + \mr{5}_{6}\mr{6}_{7}^{-1}\mr{7}_{6}\mr{8}_{7}^{-1} $$
$$+ \mr{7}_{8}^{-1}\mr{8}_{5} + \mr{5}_{6}\mr{7}_{8}^{-1}\mr{8}_{7}^{-1} + \mr{4}_{7}\mr{5}_{8}^{-1}\mr{7}_{6} + \mr{4}_{7}\mr{5}_{8}^{-1}\mr{6}_{7}\mr{7}_{8}^{-1} + \mr{3}_{8}\mr{4}_{9}^{-1}\mr{7}_{6} + \mr{4}_{7}\mr{6}_{9}^{-1} + \mr{3}_{8}\mr{4}_{9}^{-1}\mr{6}_{7}\mr{7}_{8}^{-1} $$ 
$$+ \mr{2}_{9}\mr{3}_{10}^{-1}\mr{7}_{6} + \mr{2}_{9}\mr{3}_{10}^{-1}\mr{6}_{7}\mr{7}_{8}^{-1} + \mr{3}_{8}\mr{4}_{9}^{-1}\mr{5}_{8}\mr{6}_{9}^{-1} + \mr{2}_{9}\mr{3}_{10}^{-1}\mr{5}_{8}\mr{6}_{9}^{-1} + \mr{3}_{8}\mr{5}_{10}^{-1}\mr{8}_{9} + \mr{3}_{8}\mr{8}_{11}^{-1} + \mr{2}_{9}\mr{3}_{10}^{-1}\mr{4}_{9}\mr{5}_{10}^{-1}\mr{8}_{9} $$
$$+ \mr{2}_{9}\mr{3}_{10}^{-1}\mr{4}_{9}\mr{8}_{11}^{-1} + \mr{2}_{9}\mr{4}_{11}^{-1}\mr{8}_{9} + \mr{2}_{9}\mr{4}_{11}^{-1}\mr{5}_{10}\mr{8}_{11}^{-1} + \mr{2}_{9}\mr{5}_{12}^{-1}\mr{6}_{11} + \mr{2}_{9}\mr{6}_{13}^{-1}\mr{7}_{12} + \mr{2}_{9}\mr{7}_{14}^{-1} \Big] $$
$$+\, \Big[ \mr{1}_{10}\mr{2}_{11}^{-1}\mr{7}_{6} + \mr{1}_{10}\mr{2}_{11}^{-1}\mr{6}_{7}\mr{7}_{8}^{-1} + \mr{1}_{10}\mr{2}_{11}^{-1}\mr{5}_{8}\mr{6}_{9}^{-1} + \mr{1}_{10}\mr{2}_{11}^{-1}\mr{4}_{9}\mr{5}_{10}^{-1}\mr{8}_{9} + \mr{1}_{10}\mr{2}_{11}^{-1}\mr{4}_{9}\mr{8}_{11}^{-1} + \mr{1}_{10}\mr{2}_{11}^{-1}\mr{3}_{10}\mr{4}_{11}^{-1}\mr{8}_{9} $$
$$+ \mr{1}_{10}\mr{2}_{11}^{-1}\mr{3}_{10}\mr{4}_{11}^{-1}\mr{5}_{10}\mr{8}_{11}^{-1} + \mr{1}_{10}\mr{3}_{12}^{-1}\mr{8}_{9} + \mr{1}_{10}\mr{3}_{12}^{-1}\mr{5}_{10}\mr{8}_{11}^{-1} + \mr{1}_{10}\mr{2}_{11}^{-1}\mr{3}_{10}\mr{5}_{12}^{-1}\mr{6}_{11} + \mr{1}_{10}\mr{3}_{12}^{-1}\mr{4}_{11}\mr{5}_{12}^{-1}\mr{6}_{11} $$
$$+ \mr{1}_{10}\mr{2}_{11}^{-1}\mr{3}_{10}\mr{6}_{13}^{-1}\mr{7}_{12} + \mr{1}_{10}\mr{2}_{11}^{-1}\mr{3}_{10}\mr{7}_{14}^{-1} + \mr{1}_{10}\mr{3}_{12}^{-1}\mr{4}_{11}\mr{6}_{13}^{-1}\mr{7}_{12} + \mr{1}_{10}\mr{4}_{13}^{-1}\mr{6}_{11} + \mr{1}_{10}\mr{3}_{12}^{-1}\mr{4}_{11}\mr{7}_{14}^{-1} $$
$$+ \mr{1}_{10}\mr{4}_{13}^{-1}\mr{5}_{12}\mr{6}_{13}^{-1}\mr{7}_{12} + \mr{1}_{10}\mr{4}_{13}^{-1}\mr{5}_{12}\mr{7}_{14}^{-1} + \mr{1}_{10}\mr{5}_{14}^{-1}\mr{7}_{12}\mr{8}_{13} + \mr{1}_{10}\mr{7}_{12}\mr{8}_{15}^{-1} + \mr{1}_{10}\mr{5}_{14}^{-1}\mr{6}_{13}\mr{7}_{14}^{-1}\mr{8}_{13} $$
$$+ \mr{1}_{10}\mr{6}_{13}\mr{7}_{14}^{-1}\mr{8}_{15}^{-1} + \mr{1}_{10}\mr{6}_{15}^{-1}\mr{8}_{13} + \mr{1}_{10}\mr{5}_{14}\mr{6}_{15}^{-1}\mr{8}_{15}^{-1} + \mr{1}_{10}\mr{4}_{15}\mr{5}_{16}^{-1} + \mr{1}_{10}\mr{3}_{16}\mr{4}_{17}^{-1} + \mr{1}_{10}\mr{2}_{17}\mr{3}_{18}^{-1} \Big] + \mr{1}_{10}\mr{1}_{18}\mr{2}_{19}^{-1} \bigg) $$
$$+\, \bigg( \Big[ \ul{\mr{1}_{12}^{-1}\mr{7}_{6}} + \mr{1}_{12}^{-1}\mr{6}_{7}\mr{7}_{8}^{-1} + \mr{1}_{12}^{-1}\mr{5}_{8}\mr{6}_{9}^{-1} + \mr{1}_{12}^{-1}\mr{4}_{9}\mr{5}_{10}^{-1}\mr{8}_{9} + \mr{1}_{12}^{-1}\mr{4}_{9}\mr{8}_{11}^{-1} + \mr{1}_{12}^{-1}\mr{3}_{10}\mr{4}_{11}^{-1}\mr{8}_{9} + \mr{1}_{12}^{-1}\mr{3}_{10}\mr{4}_{11}^{-1}\mr{5}_{10}\mr{8}_{11}^{-1} $$
$$+ \mr{1}_{12}^{-1}\mr{2}_{11}\mr{3}_{12}^{-1}\mr{8}_{9} + \mr{1}_{12}^{-1}\mr{2}_{11}\mr{3}_{12}^{-1}\mr{5}_{10}\mr{8}_{11}^{-1} + \mr{1}_{12}^{-1}\mr{3}_{10}\mr{5}_{12}^{-1}\mr{6}_{11} + \mr{1}_{12}^{-1}\mr{2}_{11}\mr{3}_{12}^{-1}\mr{4}_{11}\mr{5}_{12}^{-1}\mr{6}_{11} + \mr{1}_{12}^{-1}\mr{3}_{10}\mr{6}_{13}^{-1}\mr{7}_{12} $$
$$+ \mr{1}_{12}^{-1}\mr{3}_{10}\mr{7}_{14}^{-1} + \mr{1}_{12}^{-1}\mr{2}_{11}\mr{3}_{12}^{-1}\mr{4}_{11}\mr{6}_{13}^{-1}\mr{7}_{12} + \mr{1}_{12}^{-1}\mr{2}_{11}\mr{4}_{13}^{-1}\mr{6}_{11} + \mr{1}_{12}^{-1}\mr{2}_{11}\mr{3}_{12}^{-1}\mr{4}_{11}\mr{7}_{14}^{-1} + \mr{1}_{12}^{-1}\mr{2}_{11}\mr{4}_{13}^{-1}\mr{5}_{12}\mr{6}_{13}^{-1}\mr{7}_{12}$$ 
$$+ \mr{1}_{12}^{-1}\mr{2}_{11}\mr{4}_{13}^{-1}\mr{5}_{12}\mr{7}_{14}^{-1} + \mr{1}_{12}^{-1}\mr{2}_{11}\mr{5}_{14}^{-1}\mr{7}_{12}\mr{8}_{13} + \mr{1}_{12}^{-1}\mr{2}_{11}\mr{7}_{12}\mr{8}_{15}^{-1} + \mr{1}_{12}^{-1}\mr{2}_{11}\mr{5}_{14}^{-1}\mr{6}_{13}\mr{7}_{14}^{-1}\mr{8}_{13} + \mr{1}_{12}^{-1}\mr{2}_{11}\mr{6}_{13}\mr{7}_{14}^{-1}\mr{8}_{15}^{-1}$$
$$+ \mr{1}_{12}^{-1}\mr{2}_{11}\mr{6}_{15}^{-1}\mr{8}_{13} + \mr{1}_{12}^{-1}\mr{2}_{11}\mr{5}_{14}\mr{6}_{15}^{-1}\mr{8}_{15}^{-1} + \mr{1}_{12}^{-1}\mr{2}_{11}\mr{4}_{15}\mr{5}_{16}^{-1} + \mr{1}_{12}^{-1}\mr{2}_{11}\mr{3}_{16}\mr{4}_{17}^{-1} + \mr{1}_{12}^{-1}\mr{2}_{11}\mr{2}_{17}\mr{3}_{18}^{-1} \Big] $$
$$+\, \Big[ \mr{2}_{13}^{-1}\mr{8}_{9} + \mr{2}_{13}^{-1}\mr{5}_{10}\mr{8}_{11}^{-1} + \mr{2}_{13}^{-1}\mr{4}_{11}\mr{5}_{12}^{-1}\mr{6}_{11} + \mr{2}_{13}^{-1}\mr{3}_{12}\mr{4}_{13}^{-1}\mr{6}_{11} + \mr{2}_{13}^{-1}\mr{4}_{11}\mr{6}_{13}^{-1}\mr{7}_{12} + \mr{2}_{13}^{-1}\mr{3}_{12}\mr{4}_{13}^{-1}\mr{5}_{12}\mr{6}_{13}^{-1}\mr{7}_{12} $$
$$+\, \mr{2}_{13}^{-1}\mr{4}_{11}\mr{7}_{14}^{-1} + \mr{3}_{14}^{-1}\mr{6}_{11} + \mr{3}_{14}^{-1}\mr{5}_{12}\mr{6}_{13}^{-1}\mr{7}_{12} + \mr{2}_{13}^{-1}\mr{3}_{12}\mr{5}_{14}^{-1}\mr{7}_{12}\mr{8}_{13} + \mr{2}_{13}^{-1}\mr{3}_{12}\mr{4}_{13}^{-1}\mr{5}_{12}\mr{7}_{14}^{-1} + \mr{3}_{14}^{-1}\mr{4}_{13}\mr{5}_{14}^{-1}\mr{7}_{12}\mr{8}_{13} $$
$$+\, \mr{3}_{14}^{-1}\mr{5}_{12}\mr{7}_{14}^{-1} + \mr{2}_{13}^{-1}\mr{3}_{12}\mr{5}_{14}^{-1}\mr{6}_{13}\mr{7}_{14}^{-1}\mr{8}_{13} + \mr{2}_{13}^{-1}\mr{3}_{12}\mr{7}_{12}\mr{8}_{15}^{-1} + \mr{4}_{15}^{-1}\mr{7}_{12}\mr{8}_{13} + \mr{3}_{14}^{-1}\mr{4}_{13}\mr{5}_{14}^{-1}\mr{6}_{13}\mr{7}_{14}^{-1}\mr{8}_{13} + \mr{3}_{14}^{-1}\mr{4}_{13}\mr{7}_{12}\mr{8}_{15}^{-1} $$
$$+\, \mr{2}_{13}^{-1}\mr{3}_{12}\mr{6}_{15}^{-1}\mr{8}_{13} + \mr{2}_{13}^{-1}\mr{3}_{12}\mr{6}_{13}\mr{7}_{14}^{-1}\mr{8}_{15}^{-1} + \mr{4}_{15}^{-1}\mr{6}_{13}\mr{7}_{14}^{-1}\mr{8}_{13} + \mr{4}_{15}^{-1}\mr{5}_{14}\mr{7}_{12}\mr{8}_{15}^{-1} + \mr{3}_{14}^{-1}\mr{4}_{13}\mr{6}_{15}^{-1}\mr{8}_{13} + \mr{3}_{14}^{-1}\mr{4}_{13}\mr{6}_{13}\mr{7}_{14}^{-1}\mr{8}_{15}^{-1} $$
$$+\, \mr{2}_{13}^{-1}\mr{3}_{12}\mr{5}_{14}\mr{6}_{15}^{-1}\mr{8}_{15}^{-1} + \mr{4}_{15}^{-1}\mr{5}_{14}\mr{6}_{15}^{-1}\mr{8}_{13} + \mr{4}_{15}^{-1}\mr{5}_{14}\mr{6}_{13}\mr{7}_{14}^{-1}\mr{8}_{15}^{-1} + \mr{5}_{16}^{-1}\mr{6}_{15}\mr{7}_{12} + \mr{3}_{14}^{-1}\mr{4}_{13}\mr{5}_{14}\mr{6}_{15}^{-1}\mr{8}_{15}^{-1} + \mr{2}_{13}^{-1}\mr{3}_{12}\mr{4}_{15}\mr{5}_{16}^{-1} $$
$$+\, \mr{2}_{13}^{-1}\mr{3}_{12}\mr{3}_{16}\mr{4}_{17}^{-1} + \mr{3}_{14}^{-1}\mr{4}_{13}\mr{4}_{15}\mr{5}_{16}^{-1} + \mr{4}_{15}^{-1}\mr{5}_{14}^{2}\mr{6}_{15}^{-1}\mr{8}_{15}^{-1} + \mr{5}_{16}^{-1}\mr{6}_{13}\mr{6}_{15}\mr{7}_{14}^{-1} + \mr{6}_{17}^{-1}\mr{7}_{12}\mr{7}_{16} + \mr{5}_{16}^{-1}\mr{8}_{13}\mr{8}_{15} $$
$$+\,\boxed{\ul{\mr{1}_{20}^{-1}\mr{1}_{10}}} + \boxed{\mr{1}_{12}^{-1}\mr{1}_{18}\mr{2}_{19}^{-1}\mr{2}_{11}} + \boxed{\mr{2}_{13}^{-1}\mr{2}_{17}\mr{3}_{18}^{-1}\mr{3}_{12} + \mr{3}_{14}^{-1}\mr{3}_{16}\mr{4}_{17}^{-1}\mr{4}_{13} + {\it 2}\cdot\mr{5}_{16}^{-1}\mr{5}_{14} + \mr{6}_{17}^{-1}\mr{6}_{13}\mr{7}_{14}^{-1}\mr{7}_{16} + \mr{7}_{18}^{-1}\mr{7}_{12} + \mr{8}_{17}^{-1}\mr{8}_{13} }$$
$$+\mr{5}_{14}\mr{8}_{15}^{-1}\mr{8}_{17}^{-1} + \mr{6}_{13}\mr{7}_{14}^{-1}\mr{7}_{18}^{-1} + \mr{5}_{14}\mr{6}_{15}^{-1}\mr{6}_{17}^{-1}\mr{7}_{16} + \mr{4}_{15}\mr{5}_{16}^{-2}\mr{6}_{15}\mr{8}_{15} + \mr{3}_{16}\mr{4}_{15}^{-1}\mr{4}_{17}^{-1}\mr{5}_{14} + \mr{2}_{17}\mr{3}_{14}^{-1}\mr{3}_{18}^{-1}\mr{4}_{13}$$
$$+\mr{2}_{17}\mr{3}_{18}^{-1}\mr{4}_{15}^{-1}\mr{5}_{14} + \mr{3}_{16}\mr{4}_{17}^{-1}\mr{5}_{16}^{-1}\mr{6}_{15}\mr{8}_{15} + \mr{5}_{14}\mr{6}_{15}^{-1}\mr{7}_{18}^{-1} + \mr{4}_{15}\mr{5}_{16}^{-1}\mr{6}_{17}^{-1}\mr{7}_{16}\mr{8}_{15} + \mr{4}_{15}\mr{5}_{16}^{-1}\mr{6}_{15}\mr{8}_{17}^{-1} + \mr{2}_{17}\mr{3}_{18}^{-1}\mr{5}_{16}^{-1}\mr{6}_{15}\mr{8}_{15}$$
$$+\mr{3}_{16}\mr{4}_{17}^{-1}\mr{6}_{17}^{-1}\mr{7}_{16}\mr{8}_{15} + \mr{3}_{16}\mr{4}_{17}^{-1}\mr{6}_{15}\mr{8}_{17}^{-1} + \mr{4}_{15}\mr{5}_{16}^{-1}\mr{7}_{18}^{-1}\mr{8}_{15} + \mr{4}_{15}\mr{6}_{17}^{-1}\mr{7}_{16}\mr{8}_{17}^{-1} + \mr{2}_{17}\mr{3}_{18}^{-1}\mr{6}_{17}^{-1}\mr{7}_{16}\mr{8}_{15} + \mr{2}_{17}\mr{3}_{18}^{-1}\mr{6}_{15}\mr{8}_{17}^{-1}$$
$$+\mr{3}_{16}\mr{4}_{17}^{-1}\mr{7}_{18}^{-1}\mr{8}_{15} + \mr{3}_{16}\mr{4}_{17}^{-1}\mr{5}_{16}\mr{6}_{17}^{-1}\mr{7}_{16}\mr{8}_{17}^{-1} + \mr{4}_{15}\mr{7}_{18}^{-1}\mr{8}_{17}^{-1} + \mr{2}_{17}\mr{3}_{18}^{-1}\mr{7}_{18}^{-1}\mr{8}_{15} + \mr{2}_{17}\mr{3}_{18}^{-1}\mr{5}_{16}\mr{6}_{17}^{-1}\mr{7}_{16}\mr{8}_{17}^{-1} + \mr{3}_{16}\mr{5}_{18}^{-1}\mr{7}_{16}$$
$$+\mr{3}_{16}\mr{4}_{17}^{-1}\mr{5}_{16}\mr{7}_{18}^{-1}\mr{8}_{17}^{-1} + \mr{2}_{17}\mr{3}_{18}^{-1}\mr{4}_{17}\mr{5}_{18}^{-1}\mr{7}_{16} + \mr{2}_{17}\mr{3}_{18}^{-1}\mr{5}_{16}\mr{7}_{18}^{-1}\mr{8}_{17}^{-1} + \mr{3}_{16}\mr{5}_{18}^{-1}\mr{6}_{17}\mr{7}_{18}^{-1} + \mr{3}_{16}\mr{6}_{19}^{-1} + \mr{2}_{17}\mr{4}_{19}^{-1}\mr{7}_{16}$$
$$+\mr{2}_{17}\mr{3}_{18}^{-1}\mr{4}_{17}\mr{5}_{18}^{-1}\mr{6}_{17}\mr{7}_{18}^{-1} + \mr{2}_{17}\mr{4}_{19}^{-1}\mr{6}_{17}\mr{7}_{18}^{-1} + \mr{2}_{17}\mr{3}_{18}^{-1}\mr{4}_{17}\mr{6}_{19}^{-1} + \mr{2}_{17}\mr{4}_{19}^{-1}\mr{5}_{18}\mr{6}_{19}^{-1} + \mr{2}_{17}\mr{5}_{20}^{-1}\mr{8}_{19} + \mr{2}_{17}\mr{8}_{21}^{-1} \Big]$$
$$+\, \Big[ \mr{1}_{18}\mr{2}_{13}^{-1}\mr{2}_{19}^{-1}\mr{3}_{12} + \mr{1}_{18}\mr{2}_{19}^{-1}\mr{3}_{14}^{-1}\mr{4}_{13} + \mr{1}_{18}\mr{2}_{19}^{-1}\mr{4}_{15}^{-1}\mr{5}_{14} + \mr{1}_{18}\mr{2}_{19}^{-1}\mr{5}_{16}^{-1}\mr{6}_{15}\mr{8}_{15} + \mr{1}_{18}\mr{2}_{19}^{-1}\mr{6}_{15}\mr{8}_{17}^{-1}$$
$$+ \mr{1}_{18}\mr{2}_{19}^{-1}\mr{6}_{17}^{-1}\mr{7}_{16}\mr{8}_{15} + \mr{1}_{18}\mr{2}_{19}^{-1}\mr{5}_{16}\mr{6}_{17}^{-1}\mr{7}_{16}\mr{8}_{17}^{-1} +\mr{1}_{18}\mr{2}_{19}^{-1}\mr{7}_{18}^{-1}\mr{8}_{15} + \mr{1}_{18}\mr{2}_{19}^{-1}\mr{5}_{16}\mr{7}_{18}^{-1}\mr{8}_{17}^{-1} + \mr{1}_{18}\mr{2}_{19}^{-1}\mr{4}_{17}\mr{5}_{18}^{-1}\mr{7}_{16}$$
$$+ \mr{1}_{18}\mr{2}_{19}^{-1}\mr{4}_{17}\mr{5}_{18}^{-1}\mr{6}_{17}\mr{7}_{18}^{-1} + \mr{1}_{18}\mr{2}_{19}^{-1}\mr{3}_{18}\mr{4}_{19}^{-1}\mr{7}_{16} + \mr{1}_{18}\mr{2}_{19}^{-1}\mr{4}_{17}\mr{6}_{19}^{-1} + \mr{1}_{18}\mr{2}_{19}^{-1}\mr{3}_{18}\mr{4}_{19}^{-1}\mr{6}_{17}\mr{7}_{18}^{-1} +\mr{1}_{18}\mr{3}_{20}^{-1}\mr{7}_{16}$$
$$+ \mr{1}_{18}\mr{3}_{20}^{-1}\mr{6}_{17}\mr{7}_{18}^{-1} + \mr{1}_{18}\mr{2}_{19}^{-1}\mr{3}_{18}\mr{4}_{19}^{-1}\mr{5}_{18}\mr{6}_{19}^{-1} + \mr{1}_{18}\mr{3}_{20}^{-1}\mr{5}_{18}\mr{6}_{19}^{-1} + \mr{1}_{18}\mr{2}_{19}^{-1}\mr{3}_{18}\mr{5}_{20}^{-1}\mr{8}_{19} + \mr{1}_{18}\mr{2}_{19}^{-1}\mr{3}_{18}\mr{8}_{21}^{-1} $$
$$+ \mr{1}_{18}\mr{3}_{20}^{-1}\mr{4}_{19}\mr{5}_{20}^{-1}\mr{8}_{19} + \mr{1}_{18}\mr{3}_{20}^{-1}\mr{4}_{19}\mr{8}_{21}^{-1} + \mr{1}_{18}\mr{4}_{21}^{-1}\mr{8}_{19} + \mr{1}_{18}\mr{4}_{21}^{-1}\mr{5}_{20}\mr{8}_{21}^{-1} + \mr{1}_{18}\mr{5}_{22}^{-1}\mr{6}_{21} + \mr{1}_{18}\mr{6}_{23}^{-1}\mr{7}_{22} + \mr{1}_{18}\mr{7}_{24}^{-1} \Big]\bigg)$$
$$+ \bigg( \mr{1}_{12}^{-1}\mr{1}_{20}^{-1}\mr{2}_{11} + \Big[ \mr{1}_{20}^{-1}\mr{2}_{13}^{-1}\mr{3}_{12} + \mr{1}_{20}^{-1}\mr{3}_{14}^{-1}\mr{4}_{13} +\mr{1}_{20}^{-1}\mr{4}_{15}^{-1}\mr{5}_{14} + \mr{1}_{20}^{-1}\mr{5}_{16}^{-1}\mr{6}_{15}\mr{8}_{15} + \mr{1}_{20}^{-1}\mr{6}_{15}\mr{8}_{17}^{-1} + \mr{1}_{20}^{-1}\mr{6}_{17}^{-1}\mr{7}_{16}\mr{8}_{15} $$
$$+ \mr{1}_{20}^{-1}\mr{5}_{16}\mr{6}_{17}^{-1}\mr{7}_{16}\mr{8}_{17}^{-1} + \mr{1}_{20}^{-1}\mr{7}_{18}^{-1}\mr{8}_{15} +\mr{1}_{20}^{-1}\mr{5}_{16}\mr{7}_{18}^{-1}\mr{8}_{17}^{-1} + \mr{1}_{20}^{-1}\mr{4}_{17}\mr{5}_{18}^{-1}\mr{7}_{16} + \mr{1}_{20}^{-1}\mr{4}_{17}\mr{5}_{18}^{-1}\mr{6}_{17}\mr{7}_{18}^{-1} $$
$$+ \mr{1}_{20}^{-1}\mr{3}_{18}\mr{4}_{19}^{-1}\mr{7}_{16} + \mr{1}_{20}^{-1}\mr{4}_{17}\mr{6}_{19}^{-1} + \mr{1}_{20}^{-1}\mr{3}_{18}\mr{4}_{19}^{-1}\mr{6}_{17}\mr{7}_{18}^{-1} +\mr{1}_{20}^{-1}\mr{2}_{19}\mr{3}_{20}^{-1}\mr{7}_{16} + \mr{1}_{20}^{-1}\mr{2}_{19}\mr{3}_{20}^{-1}\mr{6}_{17}\mr{7}_{18}^{-1} $$
$$+ \mr{1}_{20}^{-1}\mr{3}_{18}\mr{4}_{19}^{-1}\mr{5}_{18}\mr{6}_{19}^{-1} + \mr{1}_{20}^{-1}\mr{2}_{19}\mr{3}_{20}^{-1}\mr{5}_{18}\mr{6}_{19}^{-1} + \mr{1}_{20}^{-1}\mr{3}_{18}\mr{5}_{20}^{-1}\mr{8}_{19} + \mr{1}_{20}^{-1}\mr{3}_{18}\mr{8}_{21}^{-1}+\mr{1}_{20}^{-1}\mr{2}_{19}\mr{3}_{20}^{-1}\mr{4}_{19}\mr{5}_{20}^{-1}\mr{8}_{19} $$
$$+ \mr{1}_{20}^{-1}\mr{2}_{19}\mr{3}_{20}^{-1}\mr{4}_{19}\mr{8}_{21}^{-1} + \mr{1}_{20}^{-1}\mr{2}_{19}\mr{4}_{21}^{-1}\mr{8}_{19} + \mr{1}_{20}^{-1}\mr{2}_{19}\mr{4}_{21}^{-1}\mr{5}_{20}\mr{8}_{21}^{-1} + \mr{1}_{20}^{-1}\mr{2}_{19}\mr{5}_{22}^{-1}\mr{6}_{21} + \mr{1}_{20}^{-1}\mr{2}_{19}\mr{6}_{23}^{-1}\mr{7}_{22}+\mr{1}_{20}^{-1}\mr{2}_{19}\mr{7}_{24}^{-1} \Big]$$
$$+ \Big[ \mr{2}_{21}^{-1}\mr{7}_{16} + \mr{2}_{21}^{-1}\mr{6}_{17}\mr{7}_{18}^{-1} + \mr{2}_{21}^{-1}\mr{5}_{18}\mr{6}_{19}^{-1} + \mr{2}_{21}^{-1}\mr{4}_{19}\mr{5}_{20}^{-1}\mr{8}_{19} + \mr{2}_{21}^{-1}\mr{4}_{19}\mr{8}_{21}^{-1} +\mr{2}_{21}^{-1}\mr{3}_{20}\mr{4}_{21}^{-1}\mr{8}_{19} $$
$$+ \mr{2}_{21}^{-1}\mr{3}_{20}\mr{4}_{21}^{-1}\mr{5}_{20}\mr{8}_{21}^{-1} + \mr{3}_{22}^{-1}\mr{8}_{19} + \mr{3}_{22}^{-1}\mr{5}_{20}\mr{8}_{21}^{-1} + \mr{2}_{21}^{-1}\mr{3}_{20}\mr{5}_{22}^{-1}\mr{6}_{21} + \mr{3}_{22}^{-1}\mr{4}_{21}\mr{5}_{22}^{-1}\mr{6}_{21} +\mr{2}_{21}^{-1}\mr{3}_{20}\mr{6}_{23}^{-1}\mr{7}_{22} + \mr{2}_{21}^{-1}\mr{3}_{20}\mr{7}_{24}^{-1} $$
$$+ \mr{3}_{22}^{-1}\mr{4}_{21}\mr{6}_{23}^{-1}\mr{7}_{22} + \mr{4}_{23}^{-1}\mr{6}_{21} + \mr{3}_{22}^{-1}\mr{4}_{21}\mr{7}_{24}^{-1} + \mr{4}_{23}^{-1}\mr{5}_{22}\mr{6}_{23}^{-1}\mr{7}_{22} +\mr{4}_{23}^{-1}\mr{5}_{22}\mr{7}_{24}^{-1} + \mr{5}_{24}^{-1}\mr{7}_{22}\mr{8}_{23} + \mr{7}_{22}\mr{8}_{25}^{-1} $$
$$+ \mr{5}_{24}^{-1}\mr{6}_{23}\mr{7}_{24}^{-1}\mr{8}_{23} + \mr{6}_{23}\mr{7}_{24}^{-1}\mr{8}_{25}^{-1} + \mr{6}_{25}^{-1}\mr{8}_{23} +\mr{5}_{24}\mr{6}_{25}^{-1}\mr{8}_{25}^{-1} + \mr{4}_{25}\mr{5}_{26}^{-1} + \mr{3}_{26}\mr{4}_{27}^{-1} + \mr{2}_{27}\mr{3}_{28}^{-1} \Big] + \mr{1}_{28}\mr{2}_{29}^{-1} \bigg) + \ul{\mr{1}_{30}^{-1}}\ .$$

\medskip

Here we group the monomials in the parenthesis and square brackets according to the restriction of $U_q(\hat{\text{E}}_8)$-module $\tl L_{1}$ to $U_q(\hat{\text{E}}_7)$ and $U_q(\hat{\text{E}}_6)$ subalgebras respectively. 
On the level of $q$-characters, the restriction to $U_q(\hat{\text{E}}_7)$ subalgebra amounts to  $\mr{1}_a\mapsto 1$ and $\mr{i}_a\mapsto\mr{(i-1)}_a$, $2\le i\le 8$. Then the restriction of $\chi_q^{\text{E}_8}(\mr{1}_0)$ is 
$$1+\chi_q^{\text{E}_7}(\mr{1}_1)+\chi_q^{\text{E}_7}(\mr{6}_6)+1+\chi_q^{\text{E}_7}(\mr{1}_{11}) +1\ .$$ 
The restriction to $U_q(\hat{\text{E}}_6)$ subalgebra amounts to $\mr{1}_a\mapsto 1$, $\mr{2}_a\mapsto 1$, $\mr{i}_a\mapsto \mr{(i-2)}_a$, $3\le i\le 8$. Then the restriction of $\chi_q^{\text{E}_8}(\mr{1}_0)$ is  
$$1+\Big(1+\chi_q^{\text{E}_6}(\mr{1}_2)+\chi_q^{\text{E}_6}(\mr{5}_6)+1\Big)+\Big(\chi_q^{\text{E}_6}(\mr{5}_6)+\chi_q^{\text{E}_6}(\mr{6}_9)+1+\chi_q^{\text{E}_6}(\mr{1}_{12})\Big)+1+\Big(1+\chi_q^{\text{E}_6}(\mr{1}_{12})+\chi_q^{\text{E}_6}(\mr{5}_{16})+1\Big)+1\ .$$

\medskip

The structure of the representation around the weight $0$ part is shown in Figure \ref{fig:E8 adjoint}. 
\begin{figure}[h!]
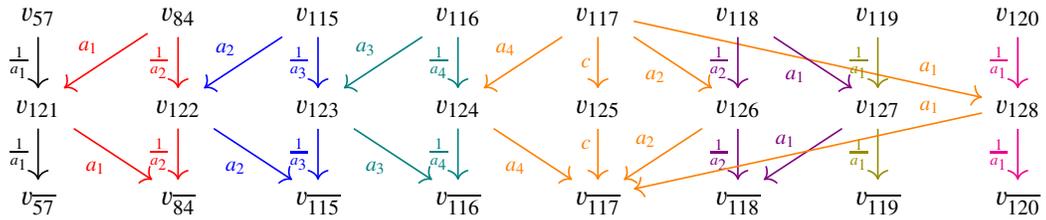

\dia{
v_{57}\ar[d,"{\frac{1}{a_1}}"'] \& v_{84}\ar[d,red,"{\frac{1}{a_2}}"']\ar[ld,red,"a_1"'] \& v_{115}\ar[ld,blue,"a_2"']\ar[d,blue,"{\frac{1}{a_3}}"'] \& v_{116}\ar[d,teal,"{\frac{1}{a_4}}"']\ar[ld,teal,"a_3"'] \& v_{117}\ar[d,orange,"c"']\ar[rrrd,orange,"a_1",pos=0.8]\ar[rd,orange,"a_2"']\ar[ld,orange,"a_4"'] \& v_{118}\ar[d,violet,"{\frac{1}{a_2}}"']\ar[rd,violet,"a_1"'] \& v_{119}\ar[d,olive,"{\frac{1}{a_1}}"'] \& v_{120}\ar[d,magenta,"{\frac{1}{a_1}}"'] \\ 
v_{121}\ar[d,"{\frac{1}{a_1}}"']\ar[rd,red,"a_1"'] \& v_{122}\ar[d,red,"{\frac{1}{a_2}}"']\ar[rd,blue,"a_2"'] \& v_{123}\ar[d,blue,"{\frac{1}{a_3}}"']\ar[rd,teal,"a_3"'] \& v_{124}\ar[d,teal,"{\frac{1}{a_4}}"']\ar[rd,orange,"a_4"'] \& v_{125}\ar[d,orange,"c"', pos=0.3] \& v_{126}\ar[d,violet,"{\frac{1}{a_2}}"']\ar[ld,orange,"a_2"'] \& v_{127}\ar[d,olive,"{\frac{1}{a_1}}"']\ar[ld,violet,"a_1"'] \& v_{128} \ar[d,magenta,"{\frac{1}{a_1}}"']\ar[llld,orange,"a_1"',pos=0.1] \\
v_{\ol{57}} \& v_{\ol{84}} \& v_{\ol{115}} \& v_{\ol{116}} \& v_{\ol{117}} \& v_{\ol{118}} \& v_{\ol{119}} \& v_{\ol{120}}
}
    \caption{The first fundamental/adjoint module for E$_8$ (shown around weight zero vectors $v_i,121\le i\le 128$).}
    \label{fig:E8 adjoint}
\end{figure}

Here $\ol{i}=249-i$, $a_i=\sqrt{\frac{[i]}{[i+1]}}$, $1\le i\le 4$, $c=\sqrt{\frac{[2]_{8}+[2]_{6}-[3]}{[2]\,[3]\,[5]}}$, and the colors of arrows correspond to simple roots as follows: 
\medskip
\dia{ \,\ar[r,"f_1"'] \& \,\,\,\,\,\,\,\,\ar[r,red,"f_2"'] \& \,\,\,\,\,\,\,\, \ar[r,blue,"f_3"'] \&  \,\,\,\,\,\,\,\, \ar[r,teal,"f_4"'] \&  \,\,\,\,\,\,\,\, \ar[r,orange,"f_5"'] \&  \,\,\,\,\,\,\,\, \ar[r,violet,"f_6"'] \&  \,\,\,\,\,\,\,\, \ar[r,olive,"f_7"'] \&  \,\,\,\,\,\,\,\, \ar[r,magenta,"f_8"'] \& \, } 

We note that $[2]_8+[2]_6-[3]=\kappa_{60}(q)$ is the symmetric form of 60-th cyclotomic polynomial.

To complete the  diagram one has to add vectors for all other 224 monomials of the $q$-character and connect by arrows of color $i$ the pairs of monomials which differ by an $i$-th affine root. All these arrows have coefficient one.
Then the total diagram describes the action of $f_i$, $i\in \mr{I}$.
For example, $f_3 v_{115}= a_2v_{122}+\frac{1}{a_3}v_{123}$,  $f_5 v_{125}= c v_{\ol{117}}$, etc. The action of $e_i$'s is obtained by reversing all the arrows and keeping the same coefficient on each arrow.


\bigskip

{\bf Acknowledgments.\ }
The authors are partially supported by Simons Foundation grant number \#709444.

\bigskip

\end{document}